\documentclass[12pt,a4paper,reqno]{amsart}

\usepackage{amsmath, amsfonts, amssymb, amsthm, latexsym, nameref, enumerate} 
\usepackage{amsrefs} 
\usepackage{xspace}
\usepackage{a4wide}
\usepackage{graphicx, subfig}
\usepackage{hyperref}
\usepackage[usenames, dvipsnames]{xcolor}
\usepackage{tikz}
\usetikzlibrary{arrows,decorations.markings,decorations.pathreplacing,plotmarks}
\usepackage{float}
\usepackage{scrextend}
\usepackage{blindtext}

\newtheorem{proposition}{Proposition}[section]
\newtheorem{theorem}[proposition]{Theorem}

\newtheorem{lemma}[proposition]{Lemma}
\newtheorem{thmi}{Theorem}
\newtheorem{cori}[thmi]{Corollary}

\theoremstyle{definition}
\newtheorem{definition}[proposition]{Definition}

\newtheorem{example}[proposition]{Example}
\newtheorem{remark}[proposition]{Remark}

\newcommand{\set}[1]{\left\{#1\right\}}
\newcommand{\setcon}[2]{\left\{#1\ \left|\ #2\right.\right\}}

\newcommand{\abs}[1]{\left\lvert#1\right\rvert}
\newcommand{\into}{\hookrightarrow}

\newcommand{\R}{\mathbb{R}}

\newcommand{\N}{\mathbb{N}}
\newcommand{\eps}{\varepsilon}

\newcommand{\diam}{\textup{diam}}
\newcommand{\Cay}{\textup{Cay}}

\newcommand{\Lab}{\textup{Lab}}

\newcommand{\TCG}{\mathcal{TC}(\mathcal P)}
\newcommand{\PFin}{\mathcal P(\omega)/Fin}

\newcommand{\ngen}[1]{\left\langle\!\left\langle #1 \right\rangle\!\right\rangle}
\newcommand{\fpres}[2]{\left\langle #1 \left| #2 \right.\right\rangle}

\title[Actions of small cancellation groups on hyperbolic spaces]{Actions of small cancellation groups\\on hyperbolic spaces}

\author[Abbott]{Carolyn R. Abbott}
 \email{c\underline{\,\,\,}abbott@math.berkeley.edu}
\address{Department of Mathematics, University of California, Berkeley, Berkeley, CA 94720}

\author[Hume]{David Hume}
 \email{david.hume@maths.ox.ac.uk}
\address{Mathematical Institute, University of Oxford, Woodstock Road, Oxford OX2 6GG}

\date{\today}

\begin{document}

\begin{abstract}
We generalize Gruber--Sisto's construction of the coned--off graph of a small cancellation group to build a partially ordered set $\mathcal{TC}$ of cobounded actions of a given small cancellation group whose smallest element is the action on the Gruber--Sisto coned--off graph. In almost all cases $\mathcal{TC}$ is incredibly rich: it has a largest element if and only if it has exactly 1 element, and given any two distinct comparable actions $[G\curvearrowright X] \preceq [G\curvearrowright Y]$ in this poset, there is an embeddeding $\iota:P(\omega)\to\mathcal{TC}$ such that $\iota(\emptyset)=[G\curvearrowright X]$ and $\iota(\N)=[G\curvearrowright Y]$. We use this poset to prove that there are uncountably many quasi--isometry classes of finitely generated group which admit two cobounded acylindrical actions on hyperbolic spaces such that there is no action on a hyperbolic space which is larger than both.
\end{abstract}

\maketitle

\section{Introduction}
The study of acylindrical actions on hyperbolic spaces is a powerful tool for understanding algebraic properties of groups that admit aspects of non-positive curvature. The class of groups that admit such actions on non-elementary hyperbolic spaces, called acylindrically hyperbolic groups, is incredibly rich, including non-elementary hyperbolic and relatively hyperbolic groups, non-exceptional mapping class groups, $\operatorname{Out}(\mathbb F_n)$ for $n\geq 2$, and non-directly decomposable, non-virtually cyclic right-angled Artin and Coxeter groups, among many others.  Moreover, the consequences of being acylindrically hyperbolic are far-reaching.  Such groups are SQ-universal, have non-abelian free normal subgroups, a maximal finite normal subgroup, infinite dimensional second bounded cohomology, and a well-developed small cancellation theory \cite{DGO17, Hull, BestvinaFujiwara, Hamenstadt}.  

A single acylindrically hyperbolic group will admit many different acylindrical actions on different hyperbolic spaces, and it is natural to ask how these actions relate to each other.  This kind of question was made precise in  \cite{AHO17}, where the authors and Osin define a partial order on the set of actions of a group on a metric space as follows: $G\curvearrowright X \preceq G\curvearrowright Y$ if given any points $x\in X$, $y\in Y$, the map $(G.y,d_Y)\to (G.x,d_X)$ given by $g.y\mapsto g.x$ is coarsely Lipschitz\footnote{Strictly speaking the partial order is on the set of classes of equivalent actions.}.  The largest action of a group in this partial ordering is always the action on its Cayley graph and the smallest action is the action  on a point.

Under this partial ordering, the set of (equivalence classes of) cobounded actions of a given group $G$ on metric spaces  has a natural poset structure; we call this poset $\mathcal A_{cb}(G)$.\cite{ABO17}  Moreover, it is shown in \cite{ABO17} that $\mathcal A_{cb}(G)$ is isomorphic (as a poset) to the set of (possibly) infinite generating sets of $G$, which we call $\mathcal G(G)$.

Let $\mathcal H(G)\subset \mathcal G(G)$ be the set of equivalence classes $[X]$ of generating sets of $G$ such that $\Gamma(G,X)$ is hyperbolic for some (equivalently, any) representative $X$ of $[X]$, and we let $\mathcal {AH}(G)\subset \mathcal H(G)$ be the set of equivalence classes $[Y]$ of generating sets of $G$ such that $\Gamma(G,Y)$ is hyperbolic and the action $G\curvearrowright \Gamma(G,Y)$ is acylindrical.  

\subsection{Small cancellation groups}
In this paper, we investigate the structure of $\mathcal H(G)$ and $\mathcal{AH}(G)$ for the class of small cancellation groups.  Small cancellation theory provides a rich class of finitely generated groups which can be constructed to satisfy rather exotic properties.  Graphical small cancellation theory, a generalization of classical small cancellation introduced by Gromov, is a tool that allows one to construct groups whose Cayley graphs have  prescribed subgraphs.  In \cite{GruberSisto}, it is shown that all infinitely presented $Gr'(\frac16)$ graphical small cancellation groups are acylindrically hyperbolic.   Thus it is natural to look for hyperbolic spaces on which such groups act acylindrically.  We describe an uncountable family of such spaces as a subset of $\mathcal G(G)$.  

Let $\mathcal P=\fpres{S}{r_1,r_2,\ldots}$ be a presentation defining a group $G$ where each $r_i$ is cyclically reduced, and let $\overline R$ be the set of all cyclic conjugates of the $r_i$ and their inverses. Roughly, a piece in $r\in\overline R$ is a subword of $r$ that also appears as a subword of some distinct $r'\in\overline R$. Let $L$ be the union of $S$ and the set of all initial subwords of all $r\in \overline R$, and let $P^4$ be the set of all words in $G$ which are a product of at most 4 pieces. Let $\mathcal G^4_L(\mathcal P)$ be the set of equivalences classes $[X]$ of generating sets with a representative $P^4\subseteq X\subseteq L$. For each $r_i$ let $C_i$ be a cycle labelled by $r_i$, and let $X_i$ be the subset of $X$ consisting of subwords of the cyclic conjugates of $r_i$.  For each $i$ and each $x\in X_i^{\pm 1}$, add an edge to $C_i$ between the initial and terminal vertex of any subpath of $C_i$ labeled by $x$. By doing so, for each $i$, we get a new graph which we call $C_i^X$.

\begin{definition} The {\bf poset of thin cones} $\TCG$ is the subset of all $X\in\mathcal G^4_L(\mathcal P)$ with the property that there exists a constant $\delta\geq 0$ such that, for every $i$, $C_i^X$ is $\delta$-hyperbolic. 
\end{definition}

Given $\lambda>0$, we say $\mathcal P$ satisfies the $C'(\lambda)$ small cancellation condition (or just $\mathcal P$ is $C'(\lambda)$) if the length of any piece in $r$ is no longer that $\lambda$ times the length of $r$.

It was shown by Gruber and Sisto in \cite{GruberSisto} that whenever $\mathcal P$ satisfies $C'(\frac16)$, then $[L]\in\mathcal H(G)$.  Moreover, Coulon and Gruber show in \cite{Coulon-Gruber} that if $\mathcal P$ is {\bf uniformly power-free}, that is, there is some $n$ such that $x^n\not\in L$ for all $x\in F(S)\setminus\set{1}$, then $[L]\in\mathcal{AH}(G)$, and every infinite order element of $G$ is loxodromic with respect to the action of $G$ on $\Cay(G,L)$\footnote{In fact both of these results are proved in the much more general setting of graphical small cancellation theory}.  We will show that under a slightly stronger hypothesis, the same results can be obtained for all elements of $\TCG$.

\begin{thmi} \label{thm:hypandacyl}  Let $\mathcal P=\fpres{S}{r_1,r_2,\dots}$ be a $C'(\frac{1}{24})$ presentation defining a group $G$.  Then $\TCG\subset \mathcal H(G)$.  Moreover, if $G$ is uniformly power-free, then $\TCG\subset \mathcal{AH}(G)$.
\end{thmi}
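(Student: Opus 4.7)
My plan is to extend Gruber--Sisto's hyperbolicity argument for $\Cay(G,L)$, and under uniform power-freeness Coulon--Gruber's acylindricity argument, to an arbitrary $X\in\TCG$. The key ingredients will be the strengthened $C'(1/24)$ hypothesis together with the defining property of $\TCG$ that the modified relator cycles $C_i^X$ are uniformly $\delta$-hyperbolic.

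First I would set up a geometric model for $\Cay(G,X)$. Since $S\subseteq X$, the Cayley graph $\Cay(G,X)$ has the same vertex set as $\Cay(G,S)$ together with chord edges for each element of $X\setminus S$. Because every element of $X$ is a subword of some cyclic conjugate of a relator, these chord edges are precisely the chords attached to each $G$-translate of the relator cycles $C_i$ in the construction of $C_i^X$. Thus $\Cay(G,X)$ is obtained from $\Cay(G,S)$ by gluing a copy of $C_i^X$ along each $G$-translate of $C_i$. Since $X\subseteq L$, the $G$-equivariant identity on vertices gives a $1$-Lipschitz map $\pi\colon\Cay(G,X)\to\Cay(G,L)$, under which each $C_i^X$-subspace collapses to the bounded-diameter coned-off relator in $\Cay(G,L)$.

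For hyperbolicity I would analyse geodesic triangles in $\Cay(G,X)$ by lifting them to reduced disk diagrams over $\mathcal P$ and applying Strebel's classification of reduced $C'(1/6)$ diagrams (which applies \emph{a fortiori} under $C'(1/24)$). The outcome is a dichotomy for long sides of such a triangle: either the side projects under $\pi$ to a long geodesic in $\Cay(G,L)$, which is thin by Gruber--Sisto, or the side traverses a large portion of a single $C_i^X$, which is thin by the uniform $\delta$-hyperbolicity of $C_i^X$. The main technical obstacle is controlling the interface between these regimes---how a geodesic enters and exits $C_i^X$-subspaces and how the two thin-triangles constants combine; the extra combinatorial slack afforded by $C'(1/24)$ over $C'(1/6)$ is what absorbs the additive error introduced by $\delta$.

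For the acylindricity statement under uniform power-freeness, I would adapt Coulon--Gruber's scheme. Loxodromicity is immediate: since $\pi$ is $1$-Lipschitz on vertices, stable translation lengths satisfy $\tau_X(g)\geq\tau_L(g)$, so every infinite-order element of $G$ acts loxodromically on $\Cay(G,X)$. For the acylindricity estimates themselves, the starting point is that an $\eps$-almost-stabiliser in $\Cay(G,X)$ is contained in the $\eps$-almost-stabiliser in $\Cay(G,L)$. One must then handle the case in which $d_L(x,y)$ is small while $d_X(x,y)$ is large: here $y$ is close to $x$ in $\Cay(G,L)$ but separated from $x$ by a long path inside some $C_i^X$, and an element almost-fixing such a pair must almost-stabilise a long axis-like segment of that $C_i^X$. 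Uniform control over such stabilisers across all $C_i^X$, using the small-cancellation structure exactly as in Coulon--Gruber, is the main obstacle; once it is in hand, combining it with acylindricity of $\Cay(G,L)$ yields acylindricity of $\Cay(G,X)$.
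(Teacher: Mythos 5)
Your high-level setup (the geometric model of $\Cay(G,X)$ as $\Cay(G,S)$ with a copy of $C_i^X$ glued along each relator, and the $1$--Lipschitz vertex map to $\Cay(G,L)$) matches the paper, but the core of your hyperbolicity argument rests on a dichotomy that does not hold. A geodesic $\gamma$ in $\Cay(G,X)$ does \emph{not} project to a geodesic (or even an a priori uniform quasi-geodesic) in $\Cay(G,L)$: typically $\gamma$ is a concatenation of many segments, each travelling a long way inside a distinct cone $C_i^X$, and each such segment collapses to a single edge of $\Cay(G,L)$. Such a $\gamma$ satisfies neither horn of your dichotomy -- it is not close to an $L$--geodesic and it does not stay in one cone -- so neither Gruber--Sisto's thinness nor the $\delta$--hyperbolicity of a single cone applies, and the ``interface'' you defer is in fact the entire proof. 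The paper does not bootstrap off Gruber--Sisto at all: it associates to each $X$--geodesic an \emph{essential $S$--path} in $\Cay(G,S)$ (a concatenation of $S$--geodesics in the successive relators met by $\gamma$, with tree-like self-intersections excised), proves via the $C'(\frac{1}{24})$ arithmetic that these paths bound combinatorial geodesic bigons in the sense of Strebel (Proposition \ref{prop:combgeods}), and then classifies the resulting diagrams. The constant $\frac{1}{24}$ is needed to make inequalities like $\abs{\partial F}<(\frac12+\frac{7}{24})\abs{\partial F}$ contradictory -- i.e.\ to force every boundary face of such a diagram to have interior degree at least $4$ -- not to ``absorb the additive error introduced by $\delta$''; the constant $\delta$ simply enters additively in the final thinness bound $7+2\delta$, in the single case where matched relators put two subsegments of the bigon into a common cone.

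The acylindricity reduction has the same problem in sharper form. You correctly note that $d_X\geq d_L$ gives containment of almost-stabilisers, but acylindricity of $\Cay(G,L)$ yields a bound only when $d_L(x,y)$ exceeds the threshold $M_L(\eps)$, and the case you must handle -- $d_X(x,y)$ large, $d_L(x,y)$ small -- is not reducible to ``$x$ and $y$ separated by a long path in a single $C_i^X$'': the $X$--geodesic can pass through arbitrarily many cones, spending moderate time in each. The paper never invokes acylindricity of $\Cay(G,L)$; instead it proves a classification of long thin essential $S$--quadrangles (Theorem \ref{thm:quadclass}): either the two long sides use a common relator, or their essential $S$--paths share a long common subpath. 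The counting of elements $g$ then uses uniform power-freeness directly in both cases (bounding $\abs{\operatorname{Aut}}$ of a relator cycle, resp.\ precluding $N$--th powers as labels of long overlaps). To make your route work you would need to prove, at minimum, a fellow-travelling statement relating the cone sequences of $X$-- and $L$--geodesics and a separate stabiliser estimate for long multi-cone segments; as written, the essential steps are named as obstacles rather than carried out.
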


By construction, $[L]$ is the smallest element in $\TCG$, i.e., $[L]$ is comparable to and smaller than every other element of $\TCG$.  

We note that our thin cones construction starts with the Gruber-Sisto action and then builds \emph{larger} actions.  This is in contrast to previous constructions, as in \cite{DGO17} and \cite{ABO17}, which typically start with a given action and produce smaller actions.  The main difficulty in producing larger actions is in managing to construct spaces for the actions which are hyperbolic.

We next describe the structure of $\TCG$.  Recall that $\mathcal P(\omega)/Fin$ is the poset of equivalence classes of subsets of $\mathbb N$, where two subsets $A,B\subseteq \mathbb N$ are equivalent if $|A\triangle B|<\infty$ and $A\leq B$ if $|A\setminus B|<\infty$.  We note that $\mathcal P(\omega)/Fin$ contains a copy of $\mathcal P(\omega)$, as follows.  Write $\N$ as a union of infinitely many infinite subsets $A_1,A_2,\ldots$.  Then the set of all subsets of $\N$ equal to a union of $A_i$'s is an embedded copy of $\mathcal P(\omega)$.  

\begin{thmi}\label{thmi:TCG}
Let $\mathcal P=\fpres{S}{r_1,\ldots}$ be a $C'(\frac{1}{24})$ presentation of a group $G$.  If $\mathcal P$ is uniformly power-free, then $\TCG\subseteq \mathcal{AH}(G)$.  Moreover, if $\mathcal P$ is power-free but not uniformly so (for every $x\in F(S)\setminus\set{1}$ there exists an $n$ such that $x^n\not\in L$), then $\TCG\subseteq \mathcal H(G)\setminus \mathcal{AH}(G)$.   Additionally, one of the following occurs.
\begin{enumerate}
\item $|\TCG|=1$, which occurs if and only if each $C_i^{\mathcal P^4}$ has bounded diameter, or, equivalently, each $r_i$ is a product of a uniformly bounded number of pieces.

\item $|\TCG|=2^{\aleph_0}$, and $\TCG$ has the following structure:
	\begin{itemize}
	\item There exist $[X],[Y]\in\TCG$ such that there is no $[Z]\in\TCG$ satisfying $[X]\preceq [Z]$ and $[Y]\preceq [Z]$.
	\item For every distinct pair $[X],[Y]\in\TCG$ such that $[X]\preceq[Y]$, there is an embedding of posets $\mathcal P(\omega)/Fin\into \TCG$ such that for each $[Z]\in\mathcal P(\omega)/Fin$, $[X]\preceq [Z] \preceq [Y]$.
	
	\item Every $[X]\in\TCG$ which is not the minimal element is contained in an uncountable chain and in an uncountable antichain in $\TCG$.
	
	\end{itemize}

\end{enumerate}
\end{thmi}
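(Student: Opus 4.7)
The plan is to handle the three claims in order. The inclusions $\TCG\subseteq\mathcal H(G)$ and, in the uniformly power-free case, $\TCG\subseteq\mathcal{AH}(G)$ are precisely Theorem~\ref{thm:hypandacyl}. For the power-free but not uniformly power-free case, I would rule out acylindricity of every $[X]\in\TCG$ by exhibiting, for each $R>0$, a pair of axis points at distance $\geq R$ together with an unbounded family of group elements that displace both by a uniformly bounded amount. The hypothesis of non-uniform power-freeness yields a sequence $(x_k,n_k)$ with $n_k\to\infty$ and $x_k^{n_k}\in L$; the powers of $x_k$ acting on its axis in $\Cay(G,X)$ produce exactly this obstruction.

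For the dichotomy, I would first verify the equivalence: $\diam(C_i^{\mathcal P^4})$ is uniformly bounded if and only if each $r_i$ is a product of a uniformly bounded number of pieces. This is essentially a direct unpacking, since edges of $C_i^{\mathcal P^4}$ beyond those of $C_i$ correspond precisely to pieces. In the bounded case, any element of $L\setminus\mathcal P^4$ is then a product of a bounded number of elements of $\mathcal P^4$, so $|\cdot|_{\mathcal P^4}$ and $|\cdot|_L$ are bi-Lipschitz on $G$. Since every $[X]\in\TCG$ satisfies $\mathcal P^4\subseteq X\subseteq L$, this forces $[X]=[L]$ uniformly, giving $|\TCG|=1$.

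In the unbounded case the core construction is an injective order-preserving map $\iota:\mathcal P(\omega)\to\TCG$. Fix a subsequence $(r_{i_k})_{k\in\N}$ with $\diam(C_{i_k}^{\mathcal P^4})\to\infty$ and, for each $k$, pick an element $y_k\in L\setminus\mathcal P^4$ lying on $C_{i_k}$ whose addition as a chord significantly shortens the cycle. For $A\subseteq\N$ set $X_A:=\mathcal P^4\cup\set{y_k\mid k\in A}$ and $\iota(A):=[X_A]$. Three things need checking: (a) $X_A\in\TCG$, meaning $C_i^{X_A}$ is uniformly $\delta$-hyperbolic in both $i$ and $A$; (b) order-preservation, $A\subseteq B\Rightarrow X_A\subseteq X_B\Rightarrow [X_B]\preceq[X_A]$, which is automatic; and (c) strict comparability plus injectivity, via the observation that $k\in A\setminus B$ forces $d_{X_A}(1,y_k)=1$ while $d_{X_B}(1,y_k)$ grows like $\diam(C_{i_k}^{\mathcal P^4})$, incompatible with any coarse Lipschitz condition. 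To upgrade $\mathcal P(\omega)$ to $\mathcal P(\omega)/Fin$ between a given comparable pair $[X]\preceq[Y]$, I restrict the construction to the subset of indices along which $Y$ strictly enlarges $X$, obtaining an embedding with image between $[X]$ and $[Y]$. Uncountable chains and antichains containing any non-minimal $[X]$ follow from the corresponding facts in $\mathcal P(\omega)/Fin$ (a continuum-sized chain and an uncountable almost-disjoint antichain). The incomparable pair with no common upper bound is produced by splitting the index subsequence into two infinite sets and taking the corresponding $X_A,X_B$: any common upper bound would need to keep all $y_k$ short while remaining inside $L$, contradicting the piece structure along the cycles $C_{i_k}$.

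The main obstacle, I expect, is verification (a): showing that the cycles $C_i^{X_A}$ are uniformly $\delta$-hyperbolic as $A$ varies over all subsets of $\N$, with $\delta$ independent of $A$. This requires a robust quantitative form of the hyperbolicity mechanism behind Theorem~\ref{thm:hypandacyl}, tracking carefully how each added chord affects the cycle without compounding the constant. A secondary obstacle is the lower bound $d_{X_B}(1,y_k)\gtrsim\diam(C_{i_k}^{\mathcal P^4})$ in (c), which must preclude shortcuts through relators other than $r_{i_k}$; this is where the $C'(\tfrac1{24})$ hypothesis is indispensable, as it forces any short expression for $y_k$ in $G$ to trace essentially along $r_{i_k}$ itself.
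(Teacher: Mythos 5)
Your reduction of the inclusions to Theorem \ref{thm:hypandacyl} and your treatment of part (i) are fine, but the core construction for part (ii) has a genuine gap that is not merely the bookkeeping issue you flag. You set $X_A=\mathcal P^4\cup\set{y_k\mid k\in A}$, i.e.\ you add a single chord $y_k$ to each selected cycle and nothing to the others. The resulting graphs $C_i^{X_A}$ are then (for $i$ outside your subsequence, and for $i_k$ with $k\notin A$) just $C_i^{\mathcal P^4}$, which is quasi-isometric to a circle of circumference comparable to the number of pieces needed to write $r_i$; a circle of circumference $n$ is only $\delta$-hyperbolic for $\delta\approx n/4$. Since we are precisely in the case where these diameters are unbounded, there is \emph{no} $\delta$ that works for all $i$, so $X_A\notin\TCG$ for any $A$ (indeed $[\mathcal P^4]$ itself is not a thin cone). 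Adding one chord per long cycle does not repair this: the cone is still essentially a long circle. So the obstacle you identify as ``(a): uniformity in $A$'' is not a quantitative difficulty to be tracked carefully --- the statement is simply false for your $X_A$. A secondary symptom is that your embedding sends $\emptyset$ to $[\mathcal P^4]$ rather than to the given $[X]$.

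The paper circumvents this in two ways, and you need one of them. First, to manufacture new thin cones from scratch (for the ``no common upper bound'' pair and for showing $|\TCG|>1$), it uses \emph{laced cones} $LC((x_i)_i)$: for each $i$ one adds chords between \emph{all} pairs of points of $C_i$ equidistant from a basepoint $x_i$, which collapses $C_i^{X}$ onto an interval-like graph that is $1$-hyperbolic for every $i$ simultaneously. Second, for the $\PFin$ embedding between a given comparable pair $[X^1]\preceq[X^2]$, it does not build generating sets out of $\mathcal P^4$ at all; it interpolates coordinatewise, setting $X^A_j$ equal to either $X^1_j$ or $X^2_j$ depending on $j$ and $A$, so that every cone of $X^A$ is literally a cone of $X^1$ or of $X^2$ and uniform hyperbolicity is inherited for free, with the endpoints $\emptyset\mapsto[X^1]$, $\N\mapsto[X^2]$ coming out correctly. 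Your injectivity/comparability mechanism (a word $y_k$ that is a single generator on one side but has length tending to infinity on the other) is the right one and matches the paper's, and your argument that a common upper bound of two suitably ``transverse'' thin cones would biLipschitzly contain arbitrarily long circles is also the paper's Lemma \ref{lem:nothingabove}; but both must be run on generating sets that actually lie in $\TCG$. Finally, note that your sketch for the ``power-free but not uniformly power-free'' clause is also underdeveloped: if $x^{n}\in L$ for unbounded $n$ then $x$ need not have an axis in $\Cay(G,X)$ at all, so the non-acylindricity argument has to be set up differently (via the failure of uniform bounds on coarse stabilizers of pairs of far points inside a single relator), though the paper's own displayed proof of Theorem \ref{thmi:TCG} does not spell this part out either.
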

The first point of (ii) is particularly striking, since in many natural examples  $\mathcal{AH}(G)$ is known to have largest elements. Our next goal is use $\TCG$ to study the larger posets $\mathcal H(G)$ and $\mathcal{AH}(G)$.

\subsection{Accessibility}

 A largest element in $\mathcal{H}(G)$ (respectively, $\mathcal{AH}(G)$) corresponds to a ``best" action (respectively, acylindrical action) of $G$ on a hyperbolic space.  If a largest element in $\mathcal H(G)$ (respectively, $\mathcal{AH}(G)$)  exists, we say the group is {\bf $\mathcal H$--accessible} (respectively, {\bf $\mathcal{AH}$--accessible}).  Notice that if a group is not hyperbolic, then the action on its Cayley graph will not be an element of either poset, and thus the largest element, if it exists, will not be a proper cocompact action.  All hyperbolic groups (and all their finitely generated subgroups), mapping class groups, fundamental groups of 3-manifolds, and a class of CAT(0) cubical groups which includes all virtually special groups are $\mathcal{AH}$--accessible \cite{ABO17,ABD17}.

One obstruction to $\mathcal{AH}$--accessibility can be found by considering the set of loxodromic elements in the different acylindrical actions.   An acylindrical action of a group on a hyperbolic space in which every  element that is loxodromic in some acylindrical action on a hyperbolic space is loxodromic in this action is called a {\bf universal acylindrical action}. 

A group which does not admit a universal acylindrical action cannot be $\mathcal{AH}$--accessible. The first author used this obstruction in \cite{Abbott_Dunwoody} to show that Dunwoody's inaccessible group, which is finitely generated but infinitely presented, is not $\mathcal{AH}$--accessible.  Moreover, in \cite{AH18}, the authors construct $2^{\aleph_0}$ quasi-isometry classes of torsion-free groups which do not admit universal acylindrical actions.  However, this is not the only obstruction to $\mathcal{AH}$--accessibility.  In \cite{ABO17}, an example is given of a finitely presented group which admits a universal acylindrical action on a hyperbolic space but is neither $\mathcal H$-- nor $\mathcal{AH}$--accessible.  

By construction, the Gruber-Sisto action is a universal acylindrical action, which implies that every element in $\TCG$ is a universal acylindrical action, as well.

\begin{definition}
A group $G$ is \emph{weakly $\mathcal{H}$--accessible} (respectively, \emph{weakly $\mathcal{AH}$--accessible}) if there  exists an action (respectively, acylindrical action) of $G$ on a hyperbolic space $X$ such that $G\curvearrowright \Gamma(G,Y)\preceq G\curvearrowright X$ for all actions $[Y]\in\mathcal{H}(G)$ (respectively, all actions $[Y]\in\mathcal{AH}(G)$).  We do not require that $G\curvearrowright X$ corresponds to an element of $\mathcal{H(G)}$ (respectively, $\mathcal{AH}(G)$), that is, the action may not be cobounded.
\end{definition}

Clearly, if $G$ is $\mathcal{H}$--accessible, then it is weakly $\mathcal{H}$--accessible, and similarly, $\mathcal{AH}$--accessibility implies weak $\mathcal{AH}$--accessibility.

\begin{thmi} \label{thmi:notwklyaccess}
If $\mathcal{P}=\fpres{S}{R}$ is a presentation of a group $G$ which satisfies $C'(\frac{1}{24})$ and has ``enough pieces" (to be defined precisely later), then there is an uncountable set $\mathcal U\subset \TCG$ such that for any two elements $[X_1],[X_2]\in U$, if a (not necessarily cobounded) action $G\curvearrowright Y$ dominates $G\curvearrowright\Gamma(G,X_i)$ for $i=1,2$, then $Y$ is not hyperbolic.  In particular, $G$ is not weakly $\mathcal H$--accessible. 
\end{thmi}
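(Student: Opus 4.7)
The plan is to construct an uncountable set $\mathcal U \subseteq \TCG$ such that any two distinct elements differ on infinitely many relator cycles by ``transversal'' chord systems whose union creates a non-hyperbolic, grid-like configuration, and then to argue via an asymptotic cone argument that no hyperbolic $G$-space can simultaneously dominate the corresponding two actions.

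First, from the ``enough pieces'' hypothesis I would extract an infinite subsequence of relators $r_{i_1}, r_{i_2}, \ldots$ admitting combinatorial decompositions that support two orthogonal systems of chords. For each $n$ I would identify two chord families $\mathcal F_n^0, \mathcal F_n^1$ on the cycle $C_{i_n}$ with two properties: (a) each family, when added to $P^4$, makes $C_{i_n}^{P^4 \cup \mathcal F_n^\epsilon}$ uniformly $\delta$-hyperbolic (using the $C'(\tfrac{1}{24})$ hypothesis and Theorem~\ref{thm:hypandacyl}); and (b) their union contains a quasi-isometric copy of an $m_n \times m_n$ grid with $m_n \to \infty$. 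Setting $X_\alpha := P^4 \cup \bigcup_n \mathcal F_n^{\alpha_n}$ for each $\alpha \in 2^\N$ then gives $[X_\alpha] \in \TCG$, and choosing an almost-disjoint family in $2^\N$ of size $2^{\aleph_0}$ produces the uncountable $\mathcal U$, with any distinct $\alpha, \beta \in \mathcal U$ disagreeing at infinitely many indices $n$.

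Next, suppose for contradiction that a (not-necessarily-cobounded) $G$-action on a $\delta_Y$-hyperbolic space $Y$ dominates both $\Cay(G, X_\alpha)$ and $\Cay(G, X_\beta)$ for distinct $\alpha, \beta \in \mathcal U$. Let $n_1 < n_2 < \ldots$ list the indices at which they disagree. For each $k$, the Lipschitz orbit maps give $d_{X_\alpha}, d_{X_\beta} \leq K\, d_Y$ on the orbit of a basepoint $y_0$; combined with the small cancellation quasi-convexity of relator cycles in $\Cay(G, X_\alpha \cup X_\beta)$, this forces the $Y$-metric on the grid vertices of $C_{i_{n_k}}$ to dominate the intrinsic grid metric from below by a universal constant. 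The matching upper bound comes from the boundedness of $M := \max_{s \in S} d_Y(y_0, s y_0)$, which propagates along the cycle and along each chord generator. Rescaling by $1/m_{n_k}$ and passing to an asymptotic cone of $Y$, the rescaled grids converge to an isometrically embedded Euclidean rectangle, which cannot exist in the $\R$-tree asymptotic cone of a $\delta$-hyperbolic space. This forces $Y$ to be non-hyperbolic.

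The principal obstacle is the construction in step one: engineering the pair $\mathcal F_n^0, \mathcal F_n^1$ so that each alone is a sufficiently dense ``tree-like'' coning-off granting uniform hyperbolicity, while their union remains genuinely ``grid-like''. This requires a delicate combinatorial exploitation of the piece structure of each $r_{i_n}$, leveraging both the $C'(\tfrac{1}{24})$ hypothesis and the precise meaning of ``enough pieces''. A secondary but substantial task is verifying quasi-convexity of relator cycles in $\Cay(G, Z)$ for arbitrary $Z$ with $P^4 \subseteq Z \subseteq L$, so that intrinsic grid distances actually survive the passage to Cayley-graph distances and remain visible in $Y$ via the domination hypothesis.
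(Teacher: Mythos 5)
Your overall architecture (an uncountable family of thin cones indexed by an almost-disjoint family in $2^{\N}$, differing on infinitely many relator cycles, plus a metric obstruction extracted from those cycles inside any dominating hyperbolic space) matches the paper's in spirit, but the obstruction you propose is not the one that works, and the two places you flag as "obstacles" are genuine gaps rather than technicalities.

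First, the grid. The paper's mechanism (Lemma \ref{lem:nothingabove} and Theorem \ref{thm:notMA}) uses two \emph{laced cones} based at points $x_i,y_i$ that are antipodal in $\overline{C_i}=C_i^{P^4}$; the only property needed is that $\max\{d_{X_1}(a,b),d_{X_2}(a,b)\}\geq \tfrac12 d_{\overline{C_i}}(a,b)$ for $a,b$ on a common cycle, so that domination of both actions forces the coned-off cycle $\overline{C_n}$ --- a fat \emph{circle} --- to survive metrically in $Y$. A coned-off cycle $C_{i_n}^{\mathcal F_n^0\cup\mathcal F_n^1}$ has only the cycle's vertices, each carrying essentially one parameter per arc (its level with respect to each basepoint), so there is no $2$-parameter family of vertices from which to build a quasi-isometrically embedded $m_n\times m_n$ grid; nothing in the "enough pieces" hypothesis supplies one. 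The object you should be chasing is the circle, not the grid.

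Second, and more seriously, the "matching upper and lower bounds by a universal constant" step is false under the actual hypothesis. Domination gives the lower bound $d_Y(a.y_0,b.y_0)\gtrsim \max\{d_{X_1}(a,b),d_{X_2}(a,b)\}\gtrsim d_{\overline{C_n}}(a,b)$, i.e.\ in the \emph{coned-off} metric, while the Lipschitz orbit map gives the upper bound $d_Y(a.y_0,b.y_0)\lesssim d_{C_n}(a,b)$ in the \emph{$S$-metric} of the cycle. These two metrics on $C_n$ differ by a factor of order $p(r_n)$, and "enough pieces" in the paper means precisely $\inf_n |r_n|_S^{1/2}\big(p(r_n)\log_2|r_n|_S\bigr)^{-1}>0$, which permits $p(r_n)\to\infty$. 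So the embedded circles are \emph{not} uniformly biLipschitz, the rescaled limits in an asymptotic cone may collapse or degenerate, and your contradiction with the $\R$-tree structure does not follow. The paper instead avoids cones entirely: it applies the logarithmic-chord property of $\delta$-hyperbolic spaces (\cite[III.H.1.6]{BH99}) to the images of the two arcs of $C_n$ between maximally separated points, producing points $c_n,d_n$ on opposite arcs with $d_Y(c_n.z,d_n.z)=O(\log|r_n|_S)$, hence $d_{\overline{C_n}}(c_n,d_n)=O(\log|r_n|_S)$, hence $d_{C_n}(c_n,d_n)=O(p(r_n)\log|r_n|_S)$; but any path in $C_n$ between opposite arcs far from the endpoints has $S$-length $\gtrsim |r_n|_S/(p(r_n)\log|r_n|_S)$. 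The hypothesis is tuned exactly so these two bounds collide. Your route would only close under the much stronger assumption that $p(r_n)$ is uniformly bounded, which is not what the theorem asserts.
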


The above theorem gives the first examples of groups which are not weakly $\mathcal H$--accessible.  We also have the following immediate corollary.

\begin{cori}
There are $2^{\aleph_0}$ quasi-isometry classes of finitely generated groups which admit a universal acylindrical action on a hyperbolic space, but are neither $\mathcal{H}$-- nor $\mathcal{AH}$--accessible.
\end{cori}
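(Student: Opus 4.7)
The plan is to combine Theorem~\ref{thmi:notwklyaccess} with a genericity argument producing a continuum of pairwise non-quasi-isometric infinitely presented small cancellation groups. As observed just before the statement of the corollary, the Gruber--Sisto action is universal acylindrical and every element of $\TCG$ inherits this property. Hence, if $\mathcal P$ is any $C'(\frac{1}{24})$ presentation with ``enough pieces'', then Theorem~\ref{thmi:notwklyaccess} produces a group $G$ which is not weakly $\mathcal H$--accessible (so neither $\mathcal H$-- nor $\mathcal{AH}$--accessible), while the class $[L]\in\TCG$ provides a universal acylindrical action of $G$ on a hyperbolic space. It therefore suffices to exhibit $2^{\aleph_0}$ quasi-isometry classes of groups admitting such a presentation.

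To do so I would fix a finite alphabet $S$ and, by a standard genericity argument in the spirit of those in \cite{AH18} and \cite{GruberSisto}, inductively construct a sequence of cyclically reduced words $w_1,w_2,\dots\in F(S)$ with rapidly increasing lengths (say $|w_{i+1}|\geq 100|w_i|$) such that for \emph{every} subset $A\subseteq\N$ the presentation $\mathcal P_A=\fpres{S}{\{w_i:i\in A\}}$ simultaneously satisfies $C'(\frac{1}{24})$ and has ``enough pieces''. Writing $G_A$ for the group so defined, the previous paragraph shows that each $G_A$ has all the properties demanded by the corollary.

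To separate the groups $G_A$ up to quasi-isometry I would invoke the Greendlinger lemma: in a $C'(\frac{1}{24})$ Cayley graph any embedded cycle must contain a long subpath of some relator, so the lengths of ``short'' embedded cycles in $\Cay(G_A,S)$ are confined to controlled neighbourhoods of the integers $\{|w_i|:i\in A\}$. Since the presence of embedded cycles at each multiplicative scale is a quasi-isometry invariant, and since the $|w_i|$ grow at least geometrically, the set $\{|w_i|:i\in A\}$ modulo finite symmetric difference is a QI-invariant of $G_A$. There are $2^{\aleph_0}$ classes of subsets of $\N$ modulo finite symmetric difference, yielding $2^{\aleph_0}$ quasi-isometry classes of groups.

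The main obstacle will be the inductive construction of the sequence $(w_i)$: one must ensure that choosing or omitting any $w_i$, in any combination over all subsets $A$, preserves both the $C'(\frac{1}{24})$ condition \emph{and} the ``enough pieces'' condition. This is a careful elaboration of the classical generic small cancellation construction, where each $w_{i+1}$ is taken long and ``random'' enough with respect to all previous choices so that no piece produced by any subset $A$ becomes too long relative to the relators in $A$. Once the required sequence has been produced, the QI-distinction step follows from routine small-cancellation geometry.
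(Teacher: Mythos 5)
Your overall strategy coincides with the paper's: build $C'(\frac{1}{24})$ presentations satisfying the ``enough pieces'' hypothesis, apply Theorem~\ref{thmi:notwklyaccess}, and separate quasi-isometry classes by the lengths of short embedded cycles (your Greendlinger argument is precisely Bowditch's taut loop length spectrum, which is what the paper cites). But there are two genuine gaps. First, you never arrange \emph{uniform power-freeness}, and this is load-bearing twice over: the claim that $[L]$ (and every element of $\TCG$) gives a \emph{universal acylindrical} action requires the action to be acylindrical at all, which by Theorem~\ref{thm:acyl} needs a uniform bound $N$ with no relator containing an $N$-th power; and the passage from ``not weakly $\mathcal H$--accessible'' to ``not $\mathcal{AH}$--accessible'' uses that the uncountable family $\mathcal U$ lies in $\mathcal{AH}(G)$, not merely in $\mathcal H(G)$, which again needs uniform power-freeness (otherwise a largest element of $\mathcal{AH}(G)$ need not dominate the elements of $\mathcal U$). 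Genericity does not supply this for free: a long random word typically contains powers of short subwords of length growing like $\log n$, so a generic sequence is power-free but not uniformly so, landing you in the case $\TCG\subseteq\mathcal H(G)\setminus\mathcal{AH}(G)$ of Theorem~\ref{thmi:TCG}. The paper's construction assembles each relator from pairwise distinct cube-free binary words precisely to force the uniform bound $N=3$.

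Second, the separation step overstates the invariant. The taut loop spectrum is a quasi-isometry invariant only up to a bounded multiplicative (and additive) distortion depending on the quasi-isometry constants, not up to finite symmetric difference. With a fixed ratio $|w_{i+1}|\geq 100|w_i|$, a quasi-isometry with constant $k\geq 100$ is compatible with matching the spectrum of $A$ equal to the even indices with that of $B$ equal to the odd indices, even though $|A\triangle B|=\infty$; so your claimed implication ``infinite symmetric difference implies distinct QI classes'' fails. You must take the relator lengths sparse at all scales (ratios of consecutive chosen lengths tending to infinity), which is exactly the role of the ``various sparse infinite subcollections'' in the paper's proof. Relatedly, ``enough pieces'' is the quantitative condition $\inf_n |r_n|_S^{1/2}\big/\bigl(p(r_n)\log_2|r_n|_S\bigr)>0$ of Theorem~\ref{thm:notMA}, constraining the \emph{longest} piece $p(r_n)$ rather than the $C'$ ratio; a genericity argument can plausibly deliver it, but it is a separate condition that your induction must track explicitly.
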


\subsection*{Small cancellation constants} We have no reason to believe that our results cannot be improved to $C'(\frac16)$ presentations. The current method can be used to prove all of the above results in the $C'(\frac{1}{14})$ setting (which is the largest $\lambda$ such that Proposition \ref{prop:combgeods} holds). Any improvement beyond this seems to require a different approach, as this is the first time we are able to apply known classifications of polygons in small cancellation groups. We choose to work in the $C'(\frac{1}{24})$ setting as it is still technically difficult, and presents many of the same challenges as the $C'(\frac{1}{14})$ setting, but avoids 25 additional pages of agonising case-by-case proofs. It is worth noting that one can further reduce the difficulty of the arguments by working with $C'(\frac{1}{30})$ presentations.

\subsection*{Plan of the paper}
In section \ref{sec:conedoffgraph} we give the construction of the new graphs and explain how to relate paths in the Cayley graph to geodesics in our new graph.  In section \ref{sec:hyperbolicity}, we gather various properties of bigons in our new graphs and prove that our new graphs are hyperbolic. In section \ref{sec:acylindricity}, gather various properties of quadrangles in our new graphs and show that under certain hypotheses the action of the group on the new graph is acylindrical, which proves Theorem \ref{thm:hypandacyl}.  Finally we use the actions on these new graphs to prove Theorems \ref{thmi:TCG} and \ref{thmi:notwklyaccess}
 in section \ref{sec:largest}.

\subsection*{Acknowledgements}
The authors are grateful to R\'{e}mi Coulon and Dominik Gruber for interesting conversations, and for sharing with us the results of their paper. The first author was partially supported by the NSF RTG awards DMS-1502553. The second author was supported by the NSF grant DMS-1440140 while the author was in residence at the Mathematical Sciences Research Institute in Berkeley, California, during the Fall 2016 semester, and by a Titchmarsh Research Fellowship from the University of Oxford.

\section{Preliminaries}
\subsection{Hyperbolicity} A geodesic metric space $X$ is $\delta$--hyperbolic if, for every geodesic triangle with sides $\gamma_1,\gamma_2,\gamma_3$ we have
\begin{equation}\label{spacehyp}
 \gamma_1 \subseteq N_{\delta}(\gamma_2\cup\gamma_3):= \setcon{x\in X}{d_X(x,\gamma_2\cup\gamma_3)\leq\delta}.
\end{equation}
We say a geodesic metric space is hyperbolic if it is $\delta$--hyperbolic for some $\delta$.

For a graph $\Gamma$ equipped with the shortest path metric, $\Gamma$ is $\delta$--hyperbolic for some $\delta$ if and only if there exists some $\delta'$ such that for every geodesic bigon $\gamma_1,\gamma_2$ we have
\begin{equation}\label{graphhyp}
 \gamma_1 \subseteq N_{\delta'}(\gamma_2),
\end{equation}
by \cite[Theorem 2]{Papas_bigons}.

\subsection{Acylindrically hyperbolic groups}

An action of a group $G$ by isometries on a metric space $X$ is \textbf{acylindrical} if for all $\varepsilon>0$ there exist constants $M,N\geq 0$ such that for all $x,y\in X$ with $d(x,y)\geq M$, the number of elements $g\in G$ satisfying $d(x,gx)\leq \varepsilon$ and $d(y,gy)\leq \varepsilon$ is at most $N$. Recall that given a group $G$ acting on a hyperbolic metric space $X$, an element $g\in G$ is {\bf loxodromic} if the map $\mathbb Z\to X$ defined by $n\mapsto g^nx$ is a quasi-isometric embedding for some (equivalently any) $x\in X$.  However, an element of $G$ may be loxodromic for some actions and not for others.  
Consider, for example, the free group on two generators acting on its Cayley graph and acting on the Bass-Serre tree associated to the splitting $\mathbb F_2\simeq \langle x\rangle *\langle y\rangle$.  In the former action, every non-trivial element is loxodromic, while in the latter action, no powers of $x$ and $y$ are loxodromic.  An element $g$ of a group $G$ is a \textbf{generalized loxodromic} if there is an acylindrical action of $G$ on a hyperbolic space $X$ such that $g$ acts loxodromically. A non-virtually cyclic group is \textbf{acylindrically hyperbolic} if and only if it contains a generalized loxodromic element \cite{Os16}.  An acylindrical action of a group on a hyperbolic space is a {\bf universal acylindrical action} if every generalized loxodromic element is loxodromic.

The following notions are discussed in detail in \cite{ABO17}.  We give a brief overview here.
 Fix a group $G$.  Given a (possibly infinite) generating set $X$ of $G$, let $|\cdot|_X$
 denote the word metric with respect to $X$.  Given two generating
 sets $X$ and $Y$, we say $X$ is {\bf dominated} by $Y$ and write
 $X\preceq Y$ if
\[\sup_{y\in Y}|y|_X<\infty.\] Note that when 
$X\preceq Y$, then the action $G\curvearrowright
\Gamma(G,Y)$ provides more information about the group than
$G\curvearrowright\Gamma(G,X)$, and so, in some sense, is a ``larger"
action.
The two generating sets $X$ and $Y$ are equivalent if $X\preceq Y$ and
$Y\preceq X$; when this happens we write $X\sim Y$.  

We let $\mathcal G(G)$ be the set of all equivalence classes of generating sets of $G$ and let $\mathcal H(G)$ (respectively, $\mathcal{AH}(G)$) be the set of equivalence classes of generating
sets $X$ of $G$ such that $\Gamma(G,X)$ is hyperbolic (respectively, $\Gamma(G,X)$ is hyperbolic and the action
$G\curvearrowright \Gamma(G,X)$ is acylindrical), where $\Gamma(G,X)$ 
is the Cayley graph of $\Gamma$ with respect to the generating set $X$.  We denote the equivalence class of $X$ by $[X]$.  The preorder
$\preceq$ induces an order on $\mathcal{AH}(G)$, which we also denote 
$\preceq$.   We say an equivalence class of generating sets is {\bf largest} if it is the largest
element in $\mathcal{(A)H}(G)$ under this ordering.

Let $\mathcal A_{cb}(G)$ be the set all equivalence classes of cobounded $G$--actions on geodesic metric spaces.  Given a cobounded action of $G$ on a geodesic metric space $S$, a Svarc-Milnor argument gives a (possibly infinite) generating set $Y$ of $G$ such that there is a $G$--equivariant quasi-isometry between $S$ and $\Gamma(G,Y)$.  We define
\begin{equation}
\sigma\colon \mathcal A_{cb}(G)\to \mathcal G(G)\end{equation} to be the map sending $[G\curvearrowright S]$ to $[Y]$, which is an isomorphism of posets by \cite[Proposition 3.12]{ABO17}.  It is clear that if $G\curvearrowright S$ is an action (respectively, acylindrical action) on a hyperbolic space, then $[Y]\in\mathcal H(G)$ (respectively, $\mathcal{AH}(G)$).

\subsection{Small cancellation theory} Given a group $G$ which is generated by a symmetric set $X$ we denote the word metric on $G$ with respect to $X$ by $d_X$ and define $\abs{g}_X=d_X(1,g)$ for all $g\in G$.

Given a set $S$, we denote by $\mathcal M(S)$ the free monoid over $S$. We define a formal inversion in $\mathcal M(S\sqcup S^{-1})$ by the rule
\[
 (s_1^{\varepsilon_1}s_2^{\varepsilon_2}\dots s_n^{\varepsilon_n})^{-1} = s_n^{-\varepsilon_n}\dots s_2^{-\varepsilon_2} s_1^{-\varepsilon_1},
\]
where we associate each $s\in S$ with $s^{+1}$. 

Let $S$ be a set, and let $F(S)$  denote the free group freely generated by $S$. Let $R$ be a set of cyclically reduced elements of $F(S)$ (that is, each $r\in R$ is of minimal length in its conjugacy class), and define $\overline{R}$ to be the closure of $R$ under reduced cyclic conjugation and inversion. A word $u\in F(S)$ is an \textbf{initial subword} of a $r\in F(S)$ if there exists some $t\in F(S)$ such that $r=ut$ is a reduced decomposition of $r$, i.e. the equality holds in $F(S)$ and $\abs{r}_S=\abs{u}_S+\abs{t}_S$. A \textbf{piece} of $R$ is a word $u\in F(S)$ which is an initial subword of at least two distinct elements of $\overline{R}$. Given $\lambda>0$, we say that the presentation $\fpres{S}{R}$ satisfies the $C'(\lambda)$ \textbf{small cancellation condition} if for any piece $u$ which is an initial subword of $r\in\overline{R}$ we have
\begin{equation} 
 \abs{u}_S < \lambda \abs{r}_S.
\end{equation}
A group $G$ is called a $C'(\lambda)$ group if it admits a presentation $\fpres{S}{R}$ which satisfies the $C'(\lambda)$ small cancellation condition. We will not assume that $S$ is finite in general.

\subsection{Diagrams}

\begin{definition}[Diagram]
  A \textbf{diagram} is a finite, simply-connected,
  2--dimensional CW complex with an embedding into the plane,
  considered up to orientation-preserving homeomorphisms of the plane.
A diagram is called a \textbf{disc diagram} if it is homeomorphic to a disc.
\end{definition}

An \textbf{arc} in a diagram $D$ is a maximal path of length at least 1 all of whose interior vertices have valence 2 in $D$.
An \textbf{interior arc} is an arc whose interior is contained in the
interior of $D$, and an \textbf{exterior arc} is an arc contained in the
boundary of $D$.
A \textbf{face} is the image of a closed 2-cell of $D$.
If $\Pi$ is a face, its \textbf{interior degree} $i(\Pi)$ is the number
of maximal interior arcs in its boundary. 
Likewise, its \textbf{exterior degree} $e(\Pi)$ is the number of
maximal exterior arcs. An \textbf{interior face} is one with exterior degree 0; an \textbf{exterior face} is one with positive exterior degree.

One key result we will use for diagrams is Strebel's curvature formula \cite{Str90}. Let $D$ be a disc diagram without vertices of degree $2$. Then
\begin{equation}\label{Strebcurv}
6=2\sum_v (3-d(v)) +\sum_{e(B)=0} (6-i(B)) +\sum_{e(B)=1}(4-i(B)) +\sum_{e(B)\geq 2}(6-2e(B)-i(B)).
\end{equation}

One obvious consequence of this is the following special case of Greendlinger's lemma.

\begin{lemma}\label{lem:Greendl} Let $D$ be a disc diagram. There is a face $B\subseteq D$ whose boundary consists of 1 exterior arc and at most 3 interior arcs.
\end{lemma}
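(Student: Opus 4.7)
Plan: The argument is a direct application of Strebel's curvature formula (2.2). I would first reduce to the case where $D$ has no vertex of valence $2$, by absorbing every such vertex into the arc on which it lies; this operation preserves $i(B)$ and $e(B)$ for every face $B$, so it loses no information. Since $D$ is homeomorphic to a disc, every remaining vertex has valence at least $3$, and so $2\sum_v(3-d(v))\le 0$. Strebel's formula therefore reduces to
\[
6 \le \sum_{e(B)=0}(6-i(B)) + \sum_{e(B)=1}(4-i(B)) + \sum_{e(B)\geq 2}(6-2e(B)-i(B)).
\]

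Next I would argue by contradiction: suppose every face $B$ with $e(B)=1$ has $i(B)\geq 4$. Then every summand in the middle sum is non-positive. The first sum is controlled by noting that interior faces in the disc diagrams under consideration have $i(B) \geq 6$, making $6-i(B)\le 0$. The third sum is bounded term-by-term: whenever $e(B) \geq 3$ we have $6 - 2e(B) - i(B) \leq 0$ automatically, so only the case $e(B) = 2$ with small $i(B)$ can potentially contribute positively. Combining these non-positivities yields $6 \le 0$, a contradiction, forcing the existence of a face $B$ with $e(B)=1$ and $i(B)\leq 3$.

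The main obstacle --- glossed over by the author's description as an ``obvious consequence'' --- is the handling of the potentially positive degenerate terms, namely the exterior faces with $e(B)=2$ and $i(B) \leq 1$, whose contributions $6 - 2e(B) - i(B) \in \{1,2\}$ are the only source of positive mass other than the targeted $e(B)=1$ terms. I expect these to be treated by a short case check: such a face either forces $D$ to consist of very few faces (in which case the lemma is verified by inspection, for instance by observing that a single-face disc diagram has a unique face with $e(B)=1$, $i(B)=0$), or else it can be absorbed into an adjacent face to reduce to a smaller disc diagram, after which one applies the result inductively.
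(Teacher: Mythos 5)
Your overall strategy is the same as the paper's: pass to the diagram with no valence-$2$ vertices and hunt for the positive term in Strebel's formula (\ref{Strebcurv}). The paper's own proof is a one-line assertion that the only possible positive contribution comes from a face with $e(B)=1$ and $i(B)\leq 3$, and you are right to distrust this: you correctly isolate the two other potential sources of positive curvature, namely interior faces with $i(B)\leq 5$ and boundary faces with $e(B)=2$, $i(B)\leq 1$. In that sense your write-up is more honest than the paper's.

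However, your disposal of the interior faces is a genuine gap, not a presentational one. You assert that ``interior faces in the disc diagrams under consideration have $i(B)\geq 6$,'' but this is not a hypothesis of the lemma, which is stated for an arbitrary disc diagram, and it cannot be proved in that generality. Indeed the lemma as literally stated is false: take a central hexagonal face $v_1\cdots v_6$, a spoke $v_iw_i$ at each corner, a vertex $u_i$ joined to $w_i$ and $w_{i+1}$, a spoke $u_it_i$, and boundary circle $t_1\cdots t_6$. This is a disc diagram with $24$ vertices, all of valence $3$, $36$ edges and $13$ faces; the six faces $v_iv_{i+1}w_{i+1}u_iw_i$ are interior with $i=5$ and supply the entire $+6$ in (\ref{Strebcurv}), while every boundary face has $e=1$ and $i=4$, so no face of the required type exists. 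The statement is only correct under an extra hypothesis such as the $(3,7)$ condition on interior faces --- which does hold wherever the lemma is applied in the paper, since there $D$ is always a reduced diagram over the $C'(\frac{1}{24})$ presentation and interior faces then have interior degree at least $7$ --- but that hypothesis has to be stated and invoked rather than smuggled in. Your treatment of the $e(B)=2$, $i(B)\leq 1$ faces is likewise only a sketch; these degenerate faces do need to be excluded or absorbed (for instance by the reduction along a separating face that you suggest, followed by induction), though this is a smaller issue than the interior-face one.
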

\begin{proof} Replace all interior arcs and exterior arcs by edges, so that the resulting diagram has no vertices of degree $\leq 2$. By (\ref{Strebcurv}) the only possible positive contribution from the right hand side is from a face satisfying $e(B)=1$ and $i(B)\leq 3$. Since the left hand side is positive there must be such a face.
\end{proof}

We will use the classifications of certain diagrams very heavily throughout the paper, and so we recall the definitions and main results here.

\begin{definition}
  A \textbf{$(3,7)$--diagram} is a diagram such that every interior vertex
  has valence at least three and every interior face has interior
  degree at least seven.
\end{definition}

\begin{definition}\label{def:combinatorialgeodesicpolygon}
  A \textbf{combinatorial geodesic $n$--gon} $(D,(\gamma_i)_i)$ is a
$(3,7)$--diagram $D$ whose boundary is a concatenation of immersed subpaths
$\gamma_0,\dots,\gamma_{n-1}$ (called \textbf{sides}) such that each boundary face whose exterior part is a single arc that is contained in one of the
  $\gamma_i$ has interior degree at least 4.
A valence 2 vertex that belongs to more than one side is called a
\textbf{distinguished vertex}.
A face whose exterior part contains an arc not contained in one of the
sides is a \textbf{distinguished face}. A combinatorial geodesic $n$--gon is \textbf{simple} if its boundary is a simple cycle, and \textbf{non--degenerate} if the same diagram cannot be expressed as a combinatorial geodesic $k$--gon for any $k<n$.
\end{definition}
We use the terms bigon, triangle and quadrangle in place of $2$--, $3$-- and $4$--gon respectively. 

\begin{theorem}[Strebel's classification]\label{thm:Strebel} A simple combinatorial geodesic bigon has the form $I_1$ below, a simple non--degenerate combinatorial triangle has one of the forms $I_2$, $I_3$, $II$, $III_1$, $IV$ or $V$:
\end{theorem}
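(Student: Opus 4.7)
The plan is to run Strebel's curvature formula (\ref{Strebcurv}) as a budget argument on the diagram. First I would reduce to the case where every non--distinguished vertex has degree at least $3$, since collapsing a non--distinguished valence--$2$ vertex along an arc does not change the combinatorial type of the polygon. Under this assumption the only positive contributions to the right hand side are: each distinguished vertex (which has valence $2$) contributes exactly $2$ to $2\sum_v(3-d(v))$, and a distinguished face $B$ with $e(B)\geq 2$ contributes $6-2e(B)-i(B)$, which is strictly positive only when $e(B)=2$ and $i(B)\in\{0,1\}$, giving $2$ or $1$ respectively. On the negative side, every interior face contributes at most $-1$ (since $i(B)\geq 7$) and every side face (those with $e(B)=1$ whose exterior arc lies in a single $\gamma_i$) contributes at most $0$, since the combinatorial geodesic polygon hypothesis forces $i(B)\geq 4$ for such faces.

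For the bigon, the two distinguished vertices contribute $4$, so the face contributions must sum to exactly $2$. This immediately forces no interior faces to appear, every side face to have interior degree exactly $4$ (and so contribute $0$), and the total contribution of distinguished faces to be exactly $2$. Walking along the boundary from one distinguished vertex to the other, one verifies that the diagram is a chain of distinguished faces, each sharing a single interior arc with the next, where the two end faces have $i(B)=1$ (contribution $1$) and the intermediate faces have $i(B)=2$ (contribution $0$); the degenerate case of a single face with $e(B)=2$ and $i(B)=0$ already contributes $2$ on its own. This produces precisely the chain form $I_1$.

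For the simple non--degenerate combinatorial triangle the three distinguished vertices contribute $6$, so the face contributions must sum to $0$. Every positive contribution from a distinguished face of type $(e(B),i(B))\in\{(2,0),(2,1)\}$ must be balanced by an equal negative contribution, which can only come from distinguished faces of other types, from interior faces, or from non--distinguished vertices of degree strictly greater than $3$. I would classify diagrams first by the multiset of distinguished face types (there are at most three distinguished faces, one per pair of adjacent sides), and within each class by the local combinatorics around the three distinguished vertices and the possible chains of side faces connecting the distinguished regions. Simplicity rules out pinch points and non--degeneracy excludes configurations that present themselves as bigons, while the negative budget forces the chains of side faces to be extremely short. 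A systematic case check produces exactly the six listed forms $I_2, I_3, II, III_1, IV$ and $V$.

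The main obstacle is the sheer size of the triangle case analysis: while individually each case is pinned down by the budget, one must enumerate all combinations of distinguished face types, all ways distinguished faces can be mutually adjacent or separated by side--face chains, and all possible interior configurations consistent with the budget. Distinguishing the finer forms (for instance $IV$ from $V$) is where the real technical work lies; the setup via the curvature formula is comparatively painless once one sees that every positive contribution is localised at the distinguished vertices and distinguished faces.
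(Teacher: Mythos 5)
This statement is not proved in the paper at all: it is quoted verbatim from Strebel's work and carries only the citation to \cite{Str90}. So there is no internal proof to compare against; what you have written is an attempt to reconstruct Strebel's own argument, and your strategy (combinatorial Gauss--Bonnet via the curvature formula \eqref{Strebcurv}, with all positivity localised at distinguished vertices and distinguished faces, followed by exhaustive case analysis) is indeed the strategy of the cited source. The bigon case is sketched convincingly.

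There is, however, a genuine gap: for the triangle, the entire classification is asserted rather than carried out. The sentence ``a systematic case check produces exactly the six listed forms'' is the theorem, not a proof of it; the budget argument only tells you that positive and negative contributions must cancel, and nothing in your write-up explains why the cancellation patterns that survive are precisely $I_2$, $I_3$, $II$, $III_1$, $IV$, $V$ and not some further configuration (for instance, Strebel's full list also contains a form $III_2$, which the paper itself invokes later in the proof of Proposition \ref{prop:essentialquads} --- a discrepancy your enumeration would have to confront). Two bookkeeping points also need justification before the budget is even trustworthy. First, \eqref{Strebcurv} is stated only for diagrams \emph{without} degree-$2$ vertices, whereas distinguished vertices have valence $2$; your convention (keep them, count $2(3-2)=2$ each, and break exterior arcs at them) is salvageable --- splitting an arc at a new degree-$2$ vertex raises the vertex sum by $2$ and lowers the adjacent face's term by $2$, so the identity persists --- but this must be said, since applying the formula with the distinguished vertices kept but the arcs \emph{not} broken gives a total of $10$ rather than $6$ already for the two-face lens. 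Second, your inventory of positive contributions omits faces with $e(B)=2$ whose two exterior arcs lie in the \emph{same} side: these are not distinguished, are not covered by the ``interior degree at least 4'' clause of Definition \ref{def:combinatorialgeodesicpolygon}, and contribute $2-i(B)$, which is positive when $i(B)\le 1$. Such faces must be excluded (or classified) separately, and doing so is part of the case analysis you have deferred.
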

\begin{figure}[H]
\centering
\begin{tikzpicture}[yscale=1,xscale=1, 
vertex/.style={draw,fill,circle,inner sep=0.3mm}]

\draw[thin] (0,3) .. controls (-0.5,2.33) and (-0.5,1.66) .. (0,1) .. controls (0.5,1.66) and (0.5,2.33) .. (0,3);

\path (0,1) node[below] {I${}_1$};

\begin{scope}
\clip (0,3) .. controls (-0.5,2.33) and (-0.5,1.66) .. (0,1) .. controls (0.5,1.66) and (0.5,2.33) .. (0,3);

\draw[thick, dotted]
				(-2,2.75) -- (2,2.75)  
				(-2,2.5) -- (2,2.5)
				(-2,1.5) -- (2,1.5)
				(-2,1.25) -- (2,1.25);
\end{scope}

\draw[xshift=4.5cm, thin] (-0.3,3) .. controls (-0.5,2.33) and (-0.5,1.66) .. (0,1) .. controls (0.5,1.66) and (0.5,2.33) .. (0.3,3) -- (-0.3,3);

\path (4.5,1) node[below] {I${}_2$};

\begin{scope}[xshift=4.5cm]
\clip (-0.3,3) .. controls (-0.5,2.33) and (-0.5,1.66) .. (0,1) .. controls (0.5,1.66) and (0.5,2.33) .. (0.3,3) -- (-0.3,3);

\draw[thick, dotted]
				(-2,2.75) -- (2,2.75)  
				(-2,2.5) -- (2,2.5)
				(-2,1.5) -- (2,1.5)
				(-2,1.25) -- (2,1.25);

\end{scope}

\draw[xshift=9cm, thin]    (0,3) -- (-1,1) -- (1,1) -- (0,3);

\path (9,1) node[below] {I${}_3$};

\begin{scope}[xshift=9cm]
\clip (0,3) -- (-1,1) -- (1,1) -- (0,3);

\draw[very thin]    (-1,1.9) -- (0,0.9)
					(1,1.9) -- (0,0.9);

\draw[thick, dotted]		
					(-1,1.4) -- (0,0.4)
					(-1,1.2) -- (0,0.2)
					(1,1.4) -- (0,0.4)
					(1,1.2) -- (0,0.2);

\end{scope}

\draw[thin]    (0,0) -- (-1,-2) -- (1,-2) -- (0,0);

\path (0,-2) node[below] {II};

\begin{scope}
\clip (0,0) -- (-1,-2) -- (1,-2) -- (0,0);

\draw[thick, dotted]
					(-1,-1.6) -- (0,-2.6)
					(-1,-1.8) -- (0,-2.8)
					(1,-1.6) -- (0,-2.6)
					(1,-1.8) -- (0,-2.8)
					(-1,-0.5) -- (1,-0.5)
					(-1,-0.3) -- (1,-0.3);
					
\draw[very thin]
				    (-1,-1.2) -- (0,-2.2)
					(1,-1.2) -- (0,-2.2)
					(-1,-0.8) -- (1,-0.8);
					
\end{scope}

\draw[xshift=3cm, thin]    (0,0) -- (-1,-2) -- (1,-2) -- (0,0);

\path (3,-2) node[below] {III${}_1$};

\begin{scope}[xshift=3cm]
\clip (0,0) -- (-1,-2) -- (1,-2) -- (0,0);

\draw[thick, dotted]
					(-1,-1.6) -- (0,-2.6)
					(-1,-1.8) -- (0,-2.8)
					(1,-1.6) -- (0,-2.6)
					(1,-1.8) -- (0,-2.8)
					(-1,-0.5) -- (1,-0.5)
					(-1,-0.3) -- (1,-0.3);
					
\draw[very thin]
				    (-1,-1) -- (0,-2)
					(1,-1) -- (0,-2)
					(-1,-0.8) -- (1,-0.8);
\end{scope}

\draw[xshift=6cm, thin]    (0,0) -- (-1,-2) -- (1,-2) -- (0,0);

\path (6,-2) node[below] {IV};

\begin{scope}[xshift=6cm]
\clip (0,0) -- (-1,-2) -- (1,-2) -- (0,0);

\draw[very thin]    (-1,-1.2) -- (0,-2.2)
					(1,-1.2) -- (0,-2.2)
					(-1,-0.8) -- (1,-0.8);

\draw[thick, dotted]
					(-1,-1.6) -- (0,-2.6)
					(-1,-1.8) -- (0,-2.8)
					(1,-1.6) -- (0,-2.6)
					(1,-1.8) -- (0,-2.8)
					(-1,-0.5) -- (1,-0.5)
					(-1,-0.3) -- (1,-0.3);
					
	\begin{scope}
		\clip (-1,-1.2) -- (0,-2.2) -- (1,-1.2) -- (1,-0.8) -- (-1,-0.8) -- (-1,-1.2);
		
		\draw[very thin]	(0,-1.4) -- (-1,-2)
							(0,-1.4) -- (1,-2)
							(0,-1.4) -- (0,0);
	\end{scope}
					
\end{scope}

\draw[xshift=9cm, thin]    (0,0) -- (-1,-2) -- (1,-2) -- (0,0);

\path (9,-2) node[below] {V};

\begin{scope}[xshift=9cm]
\clip (0,0) -- (-1,-2) -- (1,-2) -- (0,0);

\draw[very thin]
					(0,-1.6) -- (0,-2)
					(0,-1.6) -- (-1.6,0)
					(0,-1.6) -- (1.6,0);

\draw[thick, dotted]
					(-1,-1.6) -- (0,-2.6)
					(-1,-1.8) -- (0,-2.8)
					(1,-1.6) -- (0,-2.6)
					(1,-1.8) -- (0,-2.8)
					(-1,-0.5) -- (1,-0.5)
					(-1,-0.3) -- (1,-0.3);

\end{scope}

\end{tikzpicture}
\caption{Combinatorial geodesic bigons and triangles. The dotted lines indicate optional additional interior arcs; if they appear, there may be any finite number of them.}
\end{figure}
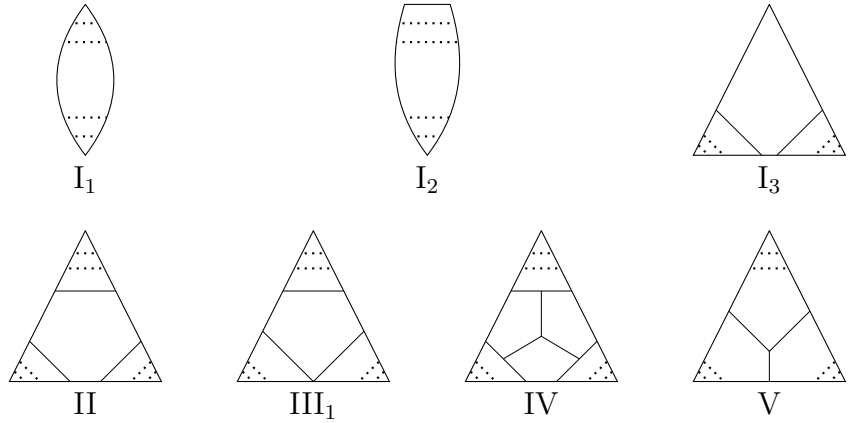

Given a presentation $\fpres{S}{R}$, a diagram over this presentation is a diagram where all edges are oriented and labelled by an element of $S$, and such that the label of the boundary of any $2$-cell is equal (as an element of $F(S)$) to an element of $\overline{R}$. If the edge $e=xy$ is labelled by $s$ and is oriented from $x$ to $y$ then we say that the label of the directed edge $(x,y)$ is $s$ and the label of the directed edge $(y,x)$ is $s^{-1}$. We denote this by $\Lab(x,y)=s$ and $\Lab(y,x)=s^{-1}$. The label of a directed path $P=(x=x_0,\ldots,x_n=y)$ (so each $x_ix_{i+1}$ is an edge) is
\[
 \Lab(p)= \Pi_{i=0}^{n-1} \Lab(x_i,x_{i+1})
\]
considered as a word in the free monoid $\mathcal M(S\sqcup S^{-1})$. Notice that since $\overline{R}$ contains all cyclically reduced conjugates of the elements of $R\cup R^{-1}$ it does not matter which vertex on the boundary we choose to start from or which orientation of the loop we choose, since the resulting words are either all in $\overline{R}$ or all not in $\overline{R}$.

The \textbf{boundary word} of a diagram $D$ over a presentation $\fpres{S}{R}$ is the label of a shortest length closed path $P$ in the $1$--skeleton of $D$ whose image contains $\partial D$. It is well--defined up to formal inversion and cyclic permutation of letters.

Diagrams are the main tool for studying small cancellation groups.  The existence of diagrams with given boundary word is guaranteed by the Van-Kampen lemma.

\begin{lemma}
Let $\fpres{S}{R}$ be a $C'(\frac16)$ presentation, and let $w\in \mathcal M(S\sqcup S^{-1})$ represent the identity in $G=\fpres{S}{R}$. There exists a diagram $D$ over this presentation with boundary word $w$ in which the label of every interior arc is a piece.
\end{lemma}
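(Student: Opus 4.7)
The plan is to deduce the lemma from the classical (unrestricted) van Kampen lemma via a minimality argument; the small cancellation hypothesis plays no role beyond setting the context. First I would apply the classical van Kampen lemma, which is valid for any presentation, to produce \emph{some} disk diagram $D_0$ over $\fpres{S}{R}$ whose boundary word is $w$. Among all such diagrams, let $D$ be one with the minimum number of $2$--cells.

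Next I would show that, in this $D$, every interior arc $\alpha$ is labelled by a piece. Suppose for contradiction that some interior arc $\alpha$ has label $u$ which is not a piece. Since every interior vertex of $\alpha$ has valence $2$, the two $2$--cells incident to $\alpha$ are constant along it; call them $F_1$ and $F_2$. Reading each boundary $\partial F_i$ starting at a chosen endpoint of $\alpha$ and beginning with the traversal of $\alpha$ produces words $u s_1, u s_2 \in \overline{R}$. By definition $u$ is not a piece means that $u$ is an initial subword of at most one element of $\overline{R}$, which forces $u s_1 = u s_2$ as elements of $F(S)$ and hence $s_1 = s_2$ as words in $\mathcal{M}(S \sqcup S^{-1})$.

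I would then apply the classical fold/diamond reduction: delete the two $2$--cells $F_1$ and $F_2$ together with the interior of $\alpha$, and identify the two complementary boundary paths carrying the common label $s_1 = s_2$ letter by letter. The result $D'$ is again a diagram over $\fpres{S}{R}$ whose boundary word is $w$ (interpreted, as in the excerpt, as the label of a shortest closed path containing $\partial D'$, which may traverse a tree-like remnant of the fold), but with strictly fewer $2$--cells. This contradicts the minimality of $D$, so every interior arc of $D$ is labelled by a piece.

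The main obstacle is the topological bookkeeping in the fold step: one must confirm that identifying $s_1$ with $s_2$ in the plane---two paths that may meander through other parts of $D$ and share vertices in complicated ways---yields a valid planar, simply connected $2$--complex, and that the boundary word is preserved after absorbing the collapsed remnant. This is classical but slightly delicate, and rather than reprove it I would cite the standard references in combinatorial small cancellation theory (e.g.\ \cite{Str90}).
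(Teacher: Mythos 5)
Your argument is correct and is exactly the standard one: the paper offers no proof of this lemma, treating it as the classical van Kampen lemma combined with diagram reduction (minimal area forces the diagram to be reduced, and in a reduced diagram the label of an interior arc read into the two adjacent faces gives two elements of $\overline{R}$ which must be distinct, hence the label is a piece). The only point worth flagging is the degenerate case where the two sides of $\alpha$ belong to the \emph{same} face, which your phrasing ``the two $2$--cells incident to $\alpha$, call them $F_1$ and $F_2$'' silently assumes away; the cancellation surgery in that case is slightly different but is covered by the same standard references you cite, so this does not affect the correctness of the proof.
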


\section{Coned-off graphs}\label{sec:conedoffgraph}
In this section, we construct new graphs on which small cancellation groups act and gather some properties of these graphs which will be useful for the proofs of hyperbolicity and acylindricity.  
\medskip

\subsection{Constructing new graphs}

Let $\mathcal P=\fpres{S}{R}$ be a $C'(\frac{1}{24})$ presentation, and define $G=F(S)/\ngen{R}$.\footnote{We will implicitly assume throughout that every $s\in S$ appears in some $r\in R$. If $S'$ is the set of $s\in S$ which do not appear in any $r\in R$, then $G = F(S\setminus S')/\ngen{R} \ast F(S')$.  All our results can be applied to $F(S\setminus S')/\ngen{R}$ and immediately lifted to the original presentation.} Enumerate $R=\set{r_1,r_2,\ldots}$. Recall the following sets, which were defined in the introduction:
	\begin{itemize}
	\item $L$ is the union of $S$ and the set of all initial subwords of the cyclic conjugates of $r_i$ and their inverses, 
	\item $P^4$ is the set of all words in $F(S)$ which are a product of at most 4 pieces, 
	\item $\mathcal G^4_L(\mathcal P)$ is the set of equivalences classes $[X]$ of generating sets with a representative $P^4\subseteq X\subseteq L$, and  
	\item  for each $i$, $X_i$ is the subset of $X$ consisting of subwords of the cyclic conjugates of $r_i$ and $r_i^{-1}$.
	\end{itemize}

For any $[X]\in\mathcal G^4_L(\mathcal P)$ and any representative $X$ of $[X]$, $G$ acts on the Cayley graph $\Gamma(G,X)$.  We now give a more geometric description of this Cayley graph for a particular representative $X$ satisfying $P^4\subseteq X\subseteq L$.   

For each $r_i$ let $C_i$ be an oriented cyclic graph of length $\abs{r_i}_S$ whose edges are labeled by elements of $S$ so that the concatenation of these labels (respecting the orientation) is a cyclic conjugate of $r_i$.  Let $X_i$ be the subset of $X$ consisting of subwords of the cyclic conjugates of $r_i$.  For each $i$ and each $x\in X_i^{\pm 1}$, add an edge to $C_i$ between the initial and terminal vertex of any subpath of $C_i$ whose label (respecting the orientation) is $x$. By doing so, for each $i$, we get a new graph which we call $C_i^X$.
The cycles $C_i$ embed in $\Cay(G,S)$, and $\Cay(G,X)$ is precisely the graph formed by replacing each embedded copy of $C_i$ with an embedded copy of $C_i^X$.  

Recall that the {\bf poset of thin cones} $\mathcal{TC}(\mathcal P)$ is the subset of all $X\in\mathcal G^4_L(\mathcal P)$ with the property that there exists a constant $\delta\geq 0$ such that for each $i$, $C_i^X$ is $\delta$-hyperbolic.

We call each embedded copy of $C_i^X$ in $\Cay(G,X)$ a {\bf cone}, the copies of $C_i$ in $\Cay(G,X)$ the {\bf join} of the cone, and the added edges {\bf cone edges}.  To distinguish between embedded copies of $C_i$ in $\Cay(G,S)$ or $\Cay(G,X)$ and $C_i$ as a component of $\Gamma$, we call the embedded copies of $C_i$ {\bf relators}.  Each relator $R$ is the join of a unique cone which we denote $R^X$.  We call  edges of $\Gamma(G,X)$ labeled by elements of $X\setminus S$ {\bf cone edges}; note that these are the images of cone edges in $C^X_i$ under the embedding into $\Cay(G,X)$.  We refer to edges that are not cone edges (that is, the edges of $\Cay(G,S)$) as {\bf $S$--edges}.  We use $d_S$ and $d_X$ to denote the natural metrics on $\Cay(G,S)$ and $\Cay(G,X)$, respectively.

\begin{remark} It is sufficient to replace the requirement that $X\supseteq P^4$ with the following weaker condition: the diameter of any piece in any $C_i$ has uniformly bounded diameter in $C_i^X$. The graph $\Cay(G,X')$, where $X'=X\cup P^4$, is $G$-equivariantly quasi-isometric to $\Cay(G,X)$, and so $X\sim X'$, which implies $[X]\in\mathcal{TC}(\mathcal P)$.
\end{remark}

The obvious example of a thin cone is $[L]$, the coned-off graph considered by Gruber and Sisto \cite{GruberSisto}. Before continuing with more of the theory, let us pause to give one other important example of a thin cone.

\begin{example}\label{def:lacedcone1} Let $\mathcal{P}=\fpres{S}{R}$ be a $C'(\frac{1}{24})$ presentation. Enumerate $R=\set{r_1,r_2,\ldots}$, let $C_i$ be a cycle with label $r_i$, set $\overline{C_i}=C_i^{P^4}$, fix a vertex $x_i$ in each $\overline{C_i}$ and define $\mathcal{P}_i$ to be the set of all paths in $C_i$ which connect two points $y,z$ such that $d_{\overline{C_i}}(x_i,y)=d_{\overline{C_i}}(x_i,z)$. Now set
\[
 X((x_i)_i) = S\cup P^4 \cup \bigcup_{i\geq 1} \setcon{\Lab(P)}{P\in\mathcal P_i} \subseteq L.
\]
It is easy to see that the resulting graphs $C_i^X$ are always $1$--hyperbolic, and therefore $[LC((x_i)_i)]\in\mathcal{TC}(\mathcal P)$. We call $LC((x_i)_i)$ the \textbf{laced cone based at $(x_i)_i$}.
\end{example}

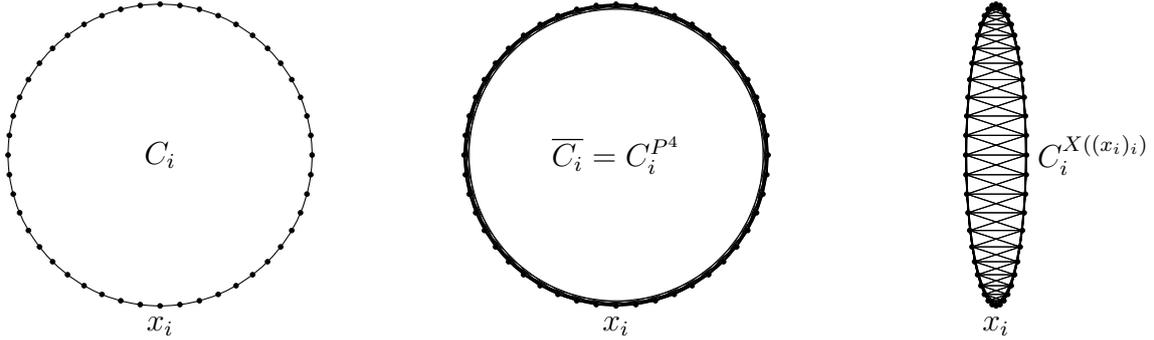
\begin{figure}[H]
\centering
\begin{tikzpicture}[yscale=1,xscale=1, 
vertex/.style={draw,fill,circle,inner sep=0.3mm}]

\draw[thin] (0,0) circle (2cm);

\foreach \r in {0,...,47}{
 \filldraw[]
  (0,0) +(7.5*\r:2cm) circle (0.3mm);
  }

\path (0,0) node[] {$C_i$};
\path (0,-2) node[below] {$x_i$};
\begin{scope}[xshift=6cm]
\draw[thin] (0,0) circle (2cm);

\foreach \r in {0,...,47}{
 \filldraw[]
  (7.5*\r:2cm) circle (0.3mm);
 \draw[very thin]
  (7.5*\r:2cm) -- (7.5*\r+7.5:2cm)
(7.5*\r:2cm) -- (15+7.5*\r:2cm)
(7.5*\r:2cm) -- (22.5+7.5*\r:2cm)
(7.5*\r:2cm) -- (30+7.5*\r:2cm);
  }

\path (0,0) node[] {$\overline{C_i}=C_i^{P^4}$};
\path (0,-2) node[below] {$x_i$};
\end{scope}

\begin{scope}[xshift=11cm, xscale=0.2]
\draw[thin] (0,0) circle (2cm);

\foreach \r in {0,...,47}{
 \filldraw[]
  (7.5*\r:2cm) ellipse (1.5mm and 0.3mm);
 \draw[very thin]
  (7.5*\r:2cm) -- (7.5*\r+7.5:2cm)
(7.5*\r:2cm) -- (15+7.5*\r:2cm)
(7.5*\r:2cm) -- (22.5+7.5*\r:2cm)
(7.5*\r:2cm) -- (30+7.5*\r:2cm);

 \draw[]
 (7.5*\r:2cm) -- (180-7.5*\r:2cm)
 (7.5*\r+7.5:2cm) -- (180-7.5*\r:2cm)
 (7.5*\r-7.5:2cm) -- (180-7.5*\r:2cm);
  }

\path (2,0) node[right] {$C_i^{X((x_i)_i)}$};
\path (0,-2) node[below] {$x_i$};
\end{scope}

\end{tikzpicture}
\caption{The construction of the laced cone based at $(x_i)_i$.}\label{fig:lacedcone}
\end{figure}

\subsection{Associated paths}
In order to prove the hyperbolicity of spaces $\Cay(G,X)$ we will show that all bigons are uniformly thin, and to prove acylindricity of actions $G\curvearrowright \Cay(G,X)$ we will need to study geodesic quadrangles. In preparation for both cases, we will begin by constructing combinatorial geodesic bigons and quadrangles in $\Cay(G,S)$ ``associated to'' geodesic bigons and quadrangles in $\Cay(G,X)$ using the small cancellation assumptions.  In order to do this, we must first define paths in $\Cay(G,S)$ which are associated to geodesics in $\Cay(G,X)$.  In this subsection we present the construction of such paths and record a few of their properties.

Let $\gamma$ be any geodesic in $\Cay(G,X)$. Number the vertices $x_0,\ldots,x_m$ in the order they occur on $\gamma$ (in particular, this means that $d_{X}(x_i,x_j)=\abs{i-j}$).

Choose a subsequence $x_{i_j}$ of the $x_i$ such that $x_{i_0}=x_0$ and, for each $j\geq 1$, $x_{i_j}$ is the last vertex in the sequence such that $x_{i_{j-1}},x_{i_{j-1}+1},\ldots,x_{i_j}$ all lie on some common relator $R_j$. By construction, $i_j>i_{j-1}$, since our standing assumption is that any pair of neighbouring vertices in $\Cay(G,X)$ are contained in some common relator.

For convenience we write $y_j=x_{i_j}$. For each geodesic $\gamma$ in $\Cay(G,X)$, we fix the following data:
\begin{itemize}
\item a choice of relators $R_1,\ldots,R_l$ satisfying the above conditions; and
\item a choice of geodesics $P_j\subset R_j$ from $y_{j-1}$ to $y_j$.
\end{itemize}
We now define a path $P$ in $\Cay(G,S)$ (which \textit{a priori} is not embedded) from $y_0=x_0$ to $y_l=x_m$ to be the concatenation of geodesics $P_j$ from $y_{j-1}$ to $y_j$ in $\Cay(G,S)$.  We call $P$ the {\bf $S$--path associated to $\gamma$}.

While there appear to be many choices here, they will not make much difference. If there is a choice of relators $R_j$ and $R_{j'}$, then $y_{j-1}$ and $y_j$ lie on the piece $R_j\cap R_{j'}$, so $d_{X}(y_{j-1},y_j)=1$. If there is a choice of geodesics $P_j\subset R_j$, then $y_{j-1}$ and $y_j$ lie at antipodal points on $R_j$, and we must choose which way to go around this relator.

\begin{figure}
\def\svgwidth{4in}  
  \centering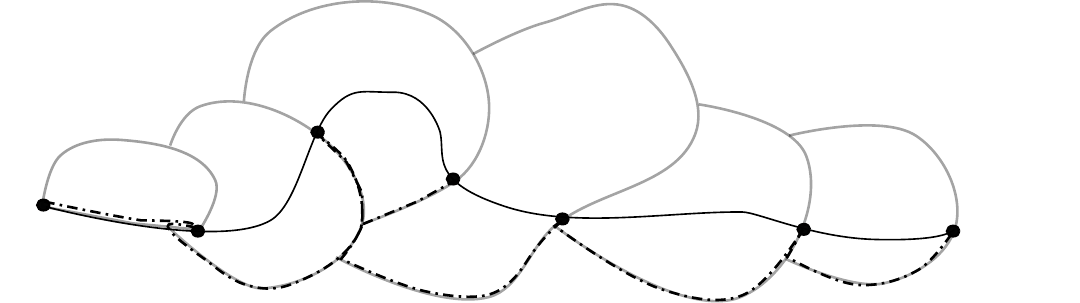 \\
	\caption{The dotted path is an $S$--path associated to $\gamma$ (solid), a geodesic in $\Cay(G,X)$.  Outlined in grey are relators $R_1,\dots, R_6$.} \label{fig:Spath}
\end{figure}

We now consider self-intersections of $P$, which come in two types: 
\begin{enumerate}
\item a single closed subpath $Q$ of $P$ whose image in $\Cay(G,X)$ is a tree;
\item a pair of subpaths $Q_1,Q_2$ of $P$ with the same endpoints, such that the image of $Q_1\cup Q_2$ in in $\Cay(G,X)$ is a tree.
\end{enumerate}

We call a self-intersection of type (i) \textbf{simple} if the initial/terminal vertex of $Q$ is not contained in the interior of $Q$.

The complexity of these self-intersections is limited by the following proposition, which we will prove via a series of lemmas.

\begin{proposition}\label{prop:limitselfint} Any self-intersection of type (i) is contained in a union of at most $4$ consecutive $P_i$. There are no self-intersections of type (ii).
\end{proposition}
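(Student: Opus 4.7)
The plan is to prove both parts by contradiction, leveraging the $C'(\frac{1}{24})$ small cancellation condition together with the fact that $\gamma$ is a geodesic in $\Cay(G,X)$.

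First I would reformulate the hypotheses. The image of $Q$ in $\Cay(G,X)$ is actually a subgraph of $S$-edges, since $Q$ itself is a walk in $\Cay(G,S)\subseteq\Cay(G,X)$. Since $\pi_1$ of a tree is trivial, the tree-image hypothesis in (i) translates to the statement that the label of $Q$ is freely trivial in $F(S)$; analogously, in (ii) the labels of $Q_1$ and $Q_2$ reduce to the same element of $F(S)$. This is the central reduction that turns a geometric hypothesis into a combinatorial one about cancellation.

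Next I would analyze the cancellation pattern. Each $P_k$ is a $\Cay(G,S)$-geodesic, so its label $w_k$ is reduced. Hence all free cancellation in the concatenation $\sigma_1\cdot w_{j_1+1}\cdots w_{j_2-1}\cdot\sigma_2$ must occur at the joints $y_k$. Because consecutive relators are distinct (by the maximality in the choice of the $y_j$), the cancellation at the joint $y_k$ is contained in the piece $R_k\cap R_{k+1}$, hence has length $<\tfrac{1}{24}\min(|r_k|,|r_{k+1}|)$. Moreover, the common endpoint $u$ of $Q$ sits in $R_{j_1}\cap R_{j_2}$, which again is a piece when $R_{j_1}\neq R_{j_2}$. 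I would then chase how cancellations propagate: whenever a joint cancellation fully consumes one side $w_k$, the remainder must continue cancelling with $w_{k\pm 1}$ (and hence sits in yet another piece $R_{k-1}\cap R_{k+1}$), and iteratively one concludes that every segment of $Q$'s label sits inside a piece between some pair of the relators $R_{j_1},\ldots,R_{j_2}$.

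The heart of the proof is to convert these piece overlaps into short $X$-paths and contradict geodesicity of $\gamma$. Because $P^4\subseteq X$, every piece is an edge of $\Cay(G,X)$; hence each of the pieces identified above becomes a single cone-edge joining two vertices of the respective relator cone $R_k^X$. Chaining the $S$-geodesic steps $y_{k-1}\to y_k$ alternately with these cone-edge jumps produces an $X$-path from $y_{j_1-1}$ to $y_{j_2}$ of length at most a bounded constant (which works out to $4$ under $C'(\tfrac{1}{24})$). On the other hand, since $y_{j_1-1},\ldots,y_{j_2}$ lie on the $\Cay(G,X)$-geodesic $\gamma$, we have $d_X(y_{j_1-1},y_{j_2})=j_2-j_1+1$. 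Thus any self-intersection of type (i) with $j_2-j_1\geq 4$ would force $d_X(y_{j_1-1},y_{j_2})\leq 4$, a contradiction. The type (ii) case is handled similarly: applied to the loop $Q_1\cdot Q_2^{-1}$, the same chaining yields an even sharper shortcut in $\Cay(G,X)$, which would force two of the vertices on $\gamma$ to coincide, impossible for a geodesic.

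The main obstacle, and the place where the small cancellation constant is actually used, is step three: carrying out the combinatorial bookkeeping that turns the chained piece cancellations into an explicit linear bound on $d_X(y_{j_1-1},y_{j_2})$, and verifying that the bound is tight at $4$ under $C'(\tfrac{1}{24})$. Once that is done, both claims of the proposition fall out of the geodesic contradiction.
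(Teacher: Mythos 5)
Your treatment of type (i) is in the right spirit: the paper also converts the tree-like backtracking into overlaps of relators, observes that these overlaps are pieces and hence single edges of $\Cay(G,X)$ (since $P^4\subseteq X$), and derives a contradiction with $d_X(y_k,y_l)=\abs{l-k}$ along the geodesic $\gamma$. But the step you defer as ``combinatorial bookkeeping'' contains a genuine missing ingredient: your chained cancellations involve overlaps $R_k\cap R_l$ of \emph{non-consecutive} relators, and these are only pieces if $R_k\neq R_l$. The paper must first prove (Lemma \ref{diffrels}) that $R_k=R_l$ forces $k=l$ or $\abs{l-k}\geq 4$, and the case $\abs{l-k}=3$ there already requires a Greendlinger-type diagram argument. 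Without this, the iterated cancellation could be absorbed into a single repeated relator and your piece bounds collapse. The subsequent reduction to the cases $\abs{l-k}\in\set{4,5,6}$ and their elimination (Lemmas \ref{treemeets} and \ref{no456trees}) is a nontrivial case analysis, not a routine computation, though your overall mechanism for deriving the contradiction is the correct one.

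The type (ii) part of your proposal does not work. Ruling out type (ii) self-intersections is exactly the statement that the essential path $P_{ess}$ is embedded, i.e.\ contains no simple cycle; a simple cycle in $\Cay(G,S)$ is labelled by a word that is trivial in $G$ but \emph{not} freely trivial, so your entire framework of free cancellation at the joints is inapplicable there. The paper instead takes a reduced van Kampen diagram $D$ bounded by the simple cycle, uses Strebel's curvature formula to produce a face $F$ with $e(F)=1$ and $i(F)\leq 3$, and then invokes Lemma \ref{numberoffacesbound}(iii) to bound how many of the $P_i$ can meet $\partial F\cap\partial D$; this covers $\partial F$ by at most $8$ pieces, or by a geodesic and at most $7$ pieces, contradicting $C'(\frac{1}{24})$ via $\frac{8}{24}<1$ and $\frac12+\frac{7}{24}<1$. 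Your alternative conclusion that a ``sharper shortcut'' would ``force two vertices of $\gamma$ to coincide'' is not justified and does not engage with the actual obstruction; as written, this half of the proposition is unproved.
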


We begin with a simple observation about the choice of relators.

\begin{lemma}\label{diffrels} If $R_k=R_l$, then $k= l$ or $\abs{l-k}\geq 4$.
\end{lemma}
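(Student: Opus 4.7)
I will argue by contradiction: assume $R_k=R_l$ with $0<l-k<4$, and split into cases by $m=l-k$.

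The base case $m=1$ is an immediate application of maximality. By the definition of $y_k$, the vertices $y_{k-1},x_{i_{k-1}+1},\ldots,y_k$ all lie on $R_k$, and by the definition of $y_{k+1}$ together with the hypothesis $R_{k+1}=R_k$, the vertices $y_k,x_{i_k+1},\ldots,y_{k+1}$ also lie on $R_k$. Concatenating, the whole sequence $y_{k-1},x_{i_{k-1}+1},\ldots,y_{k+1}$ lies on the single common relator $R_k$, so $i_{k+1}$ is a valid candidate for $i_k$; since $i_{k+1}>i_k$, this contradicts the maximality of $i_k$.

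For the cases $m\in\{2,3\}$ the direct maximality argument breaks down: only the marked vertices $y_{k-1},y_k,y_{l-1},y_l$ are guaranteed to lie on $R_k=R_l$, while the vertices of $\gamma$ strictly between $y_k$ and $y_{l-1}$ need not. My plan is to close up the $S$-path $P_{k+1}P_{k+2}\cdots P_l$ (from $y_k$ to $y_l$) by the shorter arc of the cycle $R_k$ from $y_l$ back to $y_k$, and to fill the resulting loop in $\Cay(G,S)$ by a reduced van~Kampen disc diagram $D$. Applying Lemma~\ref{lem:Greendl} and Strebel's classification (Theorem~\ref{thm:Strebel}) to $D$, together with the $C'(\tfrac{1}{24})$ small cancellation bound on piece lengths, I expect to force one of two conclusions: either (a) two consecutive relators among $R_k,R_{k+1},\ldots,R_l$ coincide, reducing to the base case; or (b) two consecutive marked vertices $y_j,y_{j+1}$ lie in a common connected component of the intersection of two distinct relators. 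In conclusion (b), the subword of $R_j$ between $y_j$ and $y_{j+1}$ is itself a piece and hence an element of $P^4\subseteq X$, so $d_X(y_j,y_{j+1})=1$ and there are no intermediate vertices of $\gamma$ between $y_j$ and $y_{j+1}$; the base-case maximality argument then produces the contradiction.

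The main obstacle is the diagrammatic case analysis in the second step: Strebel's list contains several small bigon, triangle and quadrangle shapes, and for each one I must verify that the boundary arcs of $D$ (whose labels come from $P_{k+1},\ldots,P_l$ and from the $R_k$-arc) can only be compatible with the piece-length bound in the configurations leading to (a) or (b). A minor side issue is that the return arc inside $R_k$ may be chosen to wind either way around the cycle; using the shorter arc will suffice, and the argument should handle both orientations uniformly.
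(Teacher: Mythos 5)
Your case $m=1$ is correct and is exactly the paper's observation that consecutive relators are distinct by the maximality in the choice of $i_k$. The gap is in $m\in\{2,3\}$, where your argument is only a plan whose central claim --- that the diagram analysis must terminate in conclusion (a) or (b) --- is asserted rather than proved, and I do not believe that dichotomy is exhaustive. The paper's actual mechanism for $m=3$ is of a third type: one shows (via a small Greendlinger argument applied to a triangle with sides in $R_k$, $R_{k+1}$, $R_{k+2}$) that $R_k\cap R_{k+1}\cap R_{k+2}$ contains a vertex, whence $y_k$ and $y_{k+2}$ are joined by a path lying in a union of two pieces of $R_k=R_{k+3}$; since a product of at most four pieces is a single letter of $X$, this gives $2\le d_X(y_k,y_{k+2})\le 1$. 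In that configuration neither of your outcomes need occur: no two \emph{consecutive} relators coincide, and $y_{k+1}$ may sit far from both $y_k$ and $y_{k+2}$ inside the folded tree, so no consecutive pair $y_j,y_{j+1}$ lies in a common piece. Your case analysis, if carried out, would have to discover this third outcome and the accompanying contradiction with $d_X(y_k,y_{k+2})=i_{k+2}-i_k\ge 2$; as written it would not close. (For $m=2$ the paper needs no diagram at all: $y_k$ and $y_{k+1}$ both lie on the piece $R_k\cap R_{k+1}$, so they are consecutive on $\gamma$ and $y_{k+1}\in R_k$, contradicting maximality --- this is the special case of your (b), which is fine.)

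A secondary issue: you invoke Strebel's classification (Theorem \ref{thm:Strebel}), but that theorem applies to combinatorial geodesic polygons, and the fact that diagrams bounded by $S$-paths are such polygons is Proposition \ref{prop:combgeods}, which is proved later and rests on Lemmas \ref{treemeets}--\ref{numberoffacesbound}, which in turn use the present lemma. To avoid circularity you must work only with Lemma \ref{lem:Greendl}; this is actually sufficient here because each side $P_j$ is a genuine geodesic of $\Cay(G,S)$ (and the closing arc can be taken of length at most $\tfrac12|R_k|$), so the face produced by Greendlinger's lemma is bounded by at most a geodesic and a bounded number of pieces, contradicting $C'(\tfrac1{24})$. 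That forces the diagram to be faceless, i.e.\ the closed path folds to a tree, and the remaining work --- which is where the substance of the lemma lies --- is to read off from the tree either your conclusion (b) or the bound $d_X(y_k,y_{k+2})\le 1$ described above.
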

\begin{proof}
Suppose $k\neq l$, and assume without loss of generality that $l>k$. It is clear that consecutive relators defining the $S$-path are distinct by construction, so $l-k\neq 1$.  If $l-k=2$, then $y_k$ and $y_{k+1}$ are both in $R_k\cap R_{k+1}$ and so are connected by an edge in $\Cay(G,X)$.  Thus they are consecutive vertices in $\gamma$, which contradicts the choice of $y_k$ in the construction of the $S$-path. Finally, if $l-k=3$ we claim that $R_k\cap R_{k+1}\cap R_{k+2}$ contains a vertex.  To see this, notice that if it does not, then there is a simple geodesic triangle $T$ whose boundary consists of subgeodesics (in $\Cay(G,S)$) of $R_k$, $R_{k+1}$ and $R_{k+2}$, but this is easily seen to be impossible using the small cancellation assumption and Strebel's classification (Theorem \ref{thm:Strebel}).  Specifically, by Lemma \ref{lem:Greendl} there will always be a face $F$ in any reduced diagram $D$ with boundary $T$ such that the boundary of $F$ is contained in: a union of a geodesic and at most 3 pieces, if $\partial F$ is equal to one of $R_k$, $R_{k+1}$, $R_{k+2}$; or at most 5 pieces if not, both of which contradict the small cancellation assumption. It follows that there is a path from $y_k$ to $y_{k+2}$ contained in the union of pieces $R_k\cap R_{k+1}$ and $R_{k+2}\cap R_{k+3}$. Since $R_k=R_{k+3}$, $2\leq d_{X}(y_k,y_{k+2})\leq 1$, which is a contradiction.
\end{proof}

Equipped with this we can now start limiting the self-intersections of type (i).

\begin{lemma}\label{treemeets} Suppose $P$ contains a closed subpath $Q$ of type (i) which intersects each of $P_i,\ldots,P_j$ (not necessarily consecutive indices) in at least an edge, and is contained in their union. Then either $j-i\leq 3$ or there exist $k,l$ with $i\leq k\leq l \leq j$ and $l-k\in\set{4,5,6}$ such that each of $P_k$ and $P_l$ intersect $Q$ in an edge, and $P_k\cap P_l$ contains a vertex in $Q$.
\end{lemma}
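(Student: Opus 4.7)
The strategy is to apply the small cancellation machinery---Greendlinger's Lemma and Strebel's classification---to a van Kampen diagram bounded by the closed loop $Q$, and then translate the combinatorial structure of that diagram back into constraints on the indices of relators.

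Assume $j - i \geq 4$; otherwise the first conclusion holds. Since $Q$ is a closed loop in $\Cay(G,S)$, its label represents the identity in $G$, so $Q$ bounds a reduced disc diagram $D$ whose interior arcs are labelled by pieces. The boundary $\partial D = Q$ decomposes naturally into subarcs $A_k = Q \cap P_k$ for $k \in \{i,\ldots,j\}$: each $A_k$ lies on the relator $R_k$, consecutive arcs $A_k, A_{k+1}$ meet at $y_k$, and $A_i$ meets $A_j$ at the single vertex where $Q$ opens and closes. After collapsing runs of valence-2 vertices in the interior, the resulting diagram is a $(3,7)$-diagram to which Lemma \ref{lem:Greendl} applies.

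The heart of the argument is to apply Greendlinger's Lemma to find a face $B\subseteq D$ with exactly one exterior arc and at most three interior arcs. Because the exterior arc of $B$ lies in some $A_{k_0}\subseteq R_{k_0}$ and each piece has length less than $|r_{k_0}|/24$, the boundary of $B$ must coincide (up to cyclic conjugation) with $R_{k_0}$, identifying $B$ with a copy of this relator. The at-most-three faces of $D$ sharing interior arcs with $B$ can themselves be identified, by the same label-matching argument, with relators $R_{k_1},R_{k_2},R_{k_3}$ whose indices lie in $\{i,\ldots,j\}$. Along each shared interior arc, the relators $R_{k_0}$ and $R_{k_m}$ share a piece whose endpoints lie on $Q$, so $P_{k_0}\cap P_{k_m}$ contains a vertex of $Q$. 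By Lemma \ref{diffrels}, if $R_{k_0}=R_{k_m}$ then $|k_0-k_m|\geq 4$; if the two relators are distinct then Lemma \ref{diffrels} applied to triples of neighbouring relators in our list limits how far apart in index they can be before a shortcut appears. Strebel's classification (Theorem \ref{thm:Strebel}) then restricts the global arrangement enough to force at least one pair $(k,l):=(\min\{k_0,k_m\},\max\{k_0,k_m\})$ to satisfy $l-k\in\{4,5,6\}$.

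The main obstacle is the final case analysis. One must verify in each Strebel configuration that the index gap produced by $B$ and its neighbours cannot be smaller than $4$ (ruled out by the maximality in the construction of the subsequence $y_j$, which is the content of Lemma \ref{diffrels}), cannot exceed $6$ (because a face with at most three interior arcs can ``bridge'' at most three adjacent relator-faces, each of which can shift the index by only a bounded amount without producing an interior arc longer than a piece, contradicting $C'(\tfrac{1}{24})$), and does produce a shared vertex lying on $Q$ rather than in the interior of $D$ (this last point relies on $B$ being the Greendlinger face, so its interior arcs are flanked by $Q$ on one side).
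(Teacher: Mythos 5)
Your approach fails at the very first step, and the failure is structural rather than a fixable detail. By definition, a self-intersection of type (i) is a closed subpath $Q$ of $P$ whose \emph{image is a tree}. A closed walk on a tree has a label that is already trivial in the free group $F(S)$, so the reduced van Kampen diagram bounded by $Q$ is degenerate: it is a tree with no $2$-cells at all. Greendlinger's Lemma (Lemma \ref{lem:Greendl}) produces a face $B$ only when the diagram has faces, so there is no Greendlinger face, no relator $R_{k_0}$ to identify it with, and no interior arcs to analyze. The entire small-cancellation machinery you invoke (Greendlinger, Strebel, piece-length counting inside a face) has nothing to act on here. Even granting the setup, your final paragraph asserts rather than proves the key numerical conclusion $l-k\in\{4,5,6\}$; ``Strebel's classification then restricts the global arrangement enough'' is not an argument.

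The actual proof is purely combinatorial and exploits the tree structure directly. One assumes $j-i\geq 4$ and, for contradiction, that no pair $k,l$ with $l-k\in\{4,5,6\}$ has $P_k\cap P_l$ meeting $Q$ in a vertex; one then passes to a closed subpath of $Q$ with $j-i$ minimal among those with $j-i\geq 4$ and $P_i\cap P_j\neq\emptyset$. Writing $o$ for the basepoint of $Q$, one splits into cases according to whether $P_i\cap Q$ is contained in $P_{i+1}\cup P_{i+2}\cup P_{i+3}$: if so, either $j-i\leq 6$ (done, with $k=i$, $l=j$) or one extracts a smaller closed subpath contradicting minimality; if not, one takes the point $p$ of $P_i\cap Q\cap(P_{i+1}\cup P_{i+2}\cup P_{i+3})$ closest to $o$ and builds a closed subpath based at $p$ starting in some $P_{i'}$ with $1\leq i'-i\leq 3$ and returning in some $P_{j'}$ with $j'\geq i+4$; minimality forces $j'-i'\leq 3$, hence $4\leq j'-i\leq 6$ and the conclusion holds with $k=i$, $l=j'$. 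No diagram with faces appears anywhere in this argument; the small-cancellation input enters only in the neighbouring lemmas (Lemma \ref{diffrels} and Lemma \ref{no456trees}), not in this one.
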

\begin{proof} Let $o$ be the initial/terminal vertex of $Q$, which is contained in $P_i$ and $P_j$. Let us suppose for a contradiction that $j-i\geq 4$, and that for all $k,l$ with $i\leq k\leq l \leq j$ and $l-k\in\set{4,5,6}$, $P_k\cap P_l=\emptyset$. We take a closed subpath of $Q$ with $j-i$ minimal; that is, if $i\leq i'\leq j'\leq j$, $j'-i'\geq 4$ and $P_{i'}\cap P_{j'}\neq\emptyset$, then $i=i'$ and $j=j'$.

First, suppose $P_i\cap Q$ is contained in $P_{i+1}\cup P_{i+2}\cup P_{i+3}$. Then either $j-i\leq 6$, in which case the conclusion holds with $k=i$ and $l=j$, or we can find a closed subpath of $Q$ contradicting the minimality assumption.  In the latter case, we consider $o$ as a point on some $P_{i'}$ with $1\leq i'-i\leq 3$ and take the closed subpath of $Q$ starting at $o$ which contains the edge of $P_i\cap Q$ with end vertex $o$ and ending at the original end of $Q$ in $P_j$.

Otherwise, let $p$ be the point closest to $o$ contained in $P_i\cap Q\cap (P_{i+1}\cup P_{i+2}\cup P_{i+3})$.  Consider a closed subpath of $T$ with initial and terminal vertex $p$ which starts in some $P_{i'}$ with $1\leq i'-i\leq 3$ and finishes back at $p$ in some $P_{j'}$ which contains the unique edge in $Q$ with end vertex $p$ on the geodesic connecting $p$ to $o$ in $Q$. It follows by assumption that $j'\geq i+4$. By minimality $j'-i'\leq 3$, so $4\leq j'-i\leq 6$, in which case the result holds with $k=i$ and $l=j'$, since $p\in P_i\cap P_{j'}$ and both $P_i$ and $P_{j'}$ contain an edge in $Q$.
\end{proof}

\begin{lemma}\label{no456trees} Suppose $P$ contains a self-intersection $Q$ of type (i). If $Q\cap P_i$ and $Q\cap P_j$ contain edges, $Q\cap P_i\cap P_j$ contains a vertex, and $\abs{i-j}\leq 6$, then $\abs{i-j}\leq 3$.
\end{lemma}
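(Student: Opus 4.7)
The plan is to argue by contradiction, assuming $|j-i|\in\{4,5,6\}$ and WLOG $j>i$. Set $N:=j-i+1\in\{5,6,7\}$.

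First, extract the closed sub-walk. Since $v\in Q\cap P_i\cap P_j$, the vertex $v$ appears on the path $P$ at two positions, one as a vertex of the $P_i$-segment and one of the $P_j$-segment, both lying in the position interval defining $Q$. Let $\tilde{Q}\subseteq Q$ be the closed sub-walk of $P$ between these two positions; its label is
\[
w\;=\;w_i\,w_{i+1}\cdots w_{j-1}\,w_j,
\]
where $w_k=\Lab(P_k)$ is a sub-word of the relator $r_k$ for $i<k<j$, while $w_i$ and $w_j$ are the labels of the sub-arcs of $P_i,P_j$ ending at $v$. Each $w_k$ is freely reduced in $F(S)$, being a sub-word of the cyclically reduced $r_k$.

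Second, show $w$ trivialises freely. Because $Q$'s image in $\Cay(G,X)$ is a tree and $\tilde{Q}\subseteq Q$ is connected, the image of $\tilde{Q}$ in $\Cay(G,X)$ is a sub-tree. Since $\tilde{Q}$ uses only $S$-edges, this sub-tree lies already in $\Cay(G,S)$, and therefore $w$ freely reduces to the identity in $F(S)$.

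Third, build a disc diagram. Assemble the relator discs $D_i,\ldots,D_j$ (each with $\partial D_k$ labelled by $r_k$) into a cyclic necklace by identifying the pieces $R_k\cap R_{k+1}$ (at $y_k$) for $i\le k<j$, together with $R_i\cap R_j$ (at $v$). The result is an annular $2$-complex whose inner boundary is $w$ and whose outer boundary is $\tilde{w}=\tilde{w}_i\cdots\tilde{w}_j$, where $\tilde{w}_k$ labels the complementary arc of $r_k$. Because $w$ is freely trivial, identifying the inner boundary along its tree trace pinches this annulus into a genuine disc diagram $K'$ with exactly $N\le 7$ faces and boundary $\tilde{w}$.

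Fourth, apply Greendlinger. By Lemma \ref{lem:Greendl}, some face $B\cong D_m$ of $K'$ has exterior degree $1$ and interior degree at most $3$. Each interior arc of $B$ is either (i) a piece $R_m\cap R_k$ attaching $D_m$ to one of its necklace neighbours, or (ii) a piece produced when the tree trace identifies an edge of $w_m$ with an edge of some $w_{m'}$ with $m'\ne m$. By Lemma \ref{diffrels}, $R_m\ne R_{m'}$ whenever $|m-m'|<4$, so the latter arcs are genuine pieces of length $<|r_m|/24$, as are the pieces of type (i). With $i(B)\le 3$, the exterior arc of $B$ inside $\tilde{w}_m$ therefore has length exceeding $7|r_m|/8$, which forces $|w_m|<|r_m|/8$.

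Fifth, extract the contradiction. A careful enumeration of the interior arcs contributing to $i(B)$, using $N\le 7$ and the non-crossing matching of letters of $w$ imposed by $w\equiv 1$ in $F(S)$, shows that $i(B)\le 3$ is achievable only if $|w_m|=0$. For interior $m$ this gives $y_{m-1}=y_m$, impossible since these are distinct vertices on the geodesic $\gamma$ in $\Cay(G,X)$; for $m\in\{i,j\}$ it forces $v=y_i$ or $v=y_{j-1}$, producing a triple intersection $R_i\cap R_{i+1}\cap R_j$ (respectively $R_i\cap R_{j-1}\cap R_j$), which after renumbering admits the triangle argument of Lemma \ref{diffrels} (the $l-k=3$ case) via Strebel's classification (Theorem \ref{thm:Strebel}) and yields the final contradiction.

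The main obstacle will be the precise accounting in Step 5: the tree trace can identify $w_m$ with portions of several different $w_{m'}$'s, and one must rule out degenerate overlaps between type-(i) and type-(ii) pieces. This is exactly the place where the slack in $C'(1/24)$ (rather than $C'(1/14)$) is exploited, since each piece is short enough that three of them comfortably fit below the Greendlinger threshold $|r_m|/8$, which in turn gives the elbow room to enumerate combinatorial configurations on $N\le 7$ relators.
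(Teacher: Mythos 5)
Your proposal takes a genuinely different route from the paper, but it has a gap at exactly the point you flag as the ``main obstacle,'' and I do not believe that gap can be closed within your framework. The paper's proof is a distance computation in $\Cay(G,X)$: a path in the tree $Q$ from $y_i$ to $y_{j-1}$ decomposes into two subpaths, one covered by the pieces $P_i\cap P_{i+l}$ ($l=1,2,3$) and the other by $P_j\cap P_{j-l}$ ($l=1,2,3$); since $P^4\subseteq X$, each half contributes $X$--distance at most $1$, giving $d_X(y_i,y_{j-1})\leq 2$, while the $X$--geodesic $\gamma$ forces $d_X(y_i,y_{j-1})=|i-j|-1\geq 3$. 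The contradiction lives entirely in the interaction between the tree structure and the $X$--metric. Your Step 5 tries instead to extract a contradiction from Greendlinger applied to a single face $D_m$ of a van Kampen diagram, and the key assertion --- that $i(B)\leq 3$ forces $|w_m|=0$ --- is false. What $i(B)\leq 3$ actually gives is that $w_m=\Lab(P_m)$ is covered by at most three interior arcs, i.e.\ at most three pieces; but $P_m$ being a union of at most three pieces is entirely consistent (it merely says $d_X(y_{m-1},y_m)\leq 1$, which is true by construction) and yields no contradiction. A tree-shaped configuration spanning five relators is not impossible in $\Cay(G,S)$ for a generic $C'(\tfrac{1}{24})$ presentation; what is impossible is for it to arise from an $S$--path associated to an $X$--geodesic, and your argument never brings the geodesity of $\gamma$ to bear quantitatively across the whole span $i,\dots,j$.

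There are also secondary problems with the diagram construction in Step 3. Pinching the annulus along the ``tree trace'' of $w$ need not produce a reduced planar disc diagram with the faces you list (foldings can create cancellable pairs or non-planar identifications), and the identifications along $R_k\cap R_{k+1}$ conflate the pieces where the relators meet with the single vertex $y_k$ where the paths $P_k,P_{k+1}$ meet. Moreover, for $|m-m'|\in\{4,5,6\}$ Lemma \ref{diffrels} does \emph{not} rule out $R_m=R_{m'}$, so the arc shared by $D_i$ and $D_j$ in your necklace need not be a piece, and your small cancellation accounting in Step 4 would not apply to it. I would recommend abandoning the van Kampen construction here and arguing directly as the paper does: decompose the path in $Q$ between $y_i$ and $y_{j-1}$ into two pieces-unions anchored at $R_i$ and $R_j$ respectively, and compare the resulting upper bound on $d_X(y_i,y_{j-1})$ with the lower bound coming from $\gamma$ being a geodesic in $\Cay(G,X)$.
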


\begin{proof} Firstly suppose $\abs{i-j}=4$ and consider a path in $Q$ from $y_i$ to $y_{i+3}$.  A case analysis of the different possible configuration of such a subtree (see Figure \ref{fig:treecases}) shows that this path can be decomposed into two paths such that the first is contained in $\bigcup_{l=1}^3 (P_i\cap P_{i+l})$, and the other is contained in $\bigcup_{l=1}^3 (P_j\cap P_{j-l})$.   By Lemma \ref{diffrels}, each of these is a union of at most 3 pieces contained in $R_i$ and $R_j$ respectively. Thus $3\leq d_{X}(y_i,y_{i+3})\leq 2$ which is a contradiction.

\begin{figure}[h!] 
\centering
\subfloat[]{
  \resizebox{2.2in}{!}{
  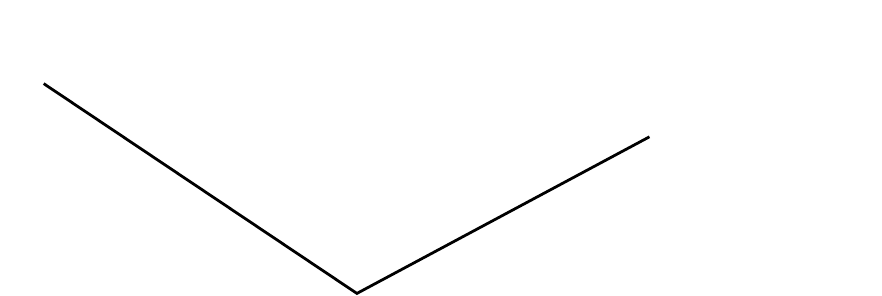}
}
\subfloat[]{
  \resizebox{2.2in}{!}{
  \input{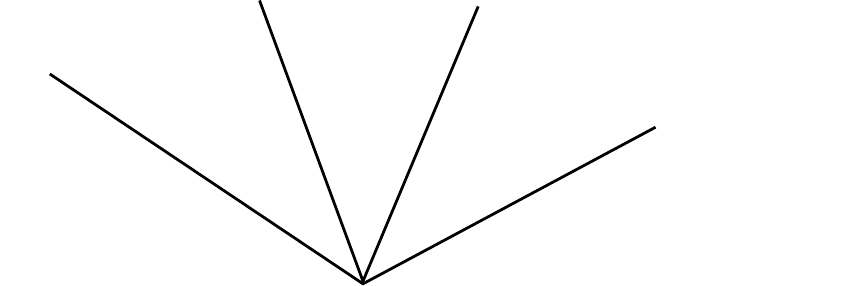_tex }}
}
\subfloat[]{
  \resizebox{2.2in}{!}{
  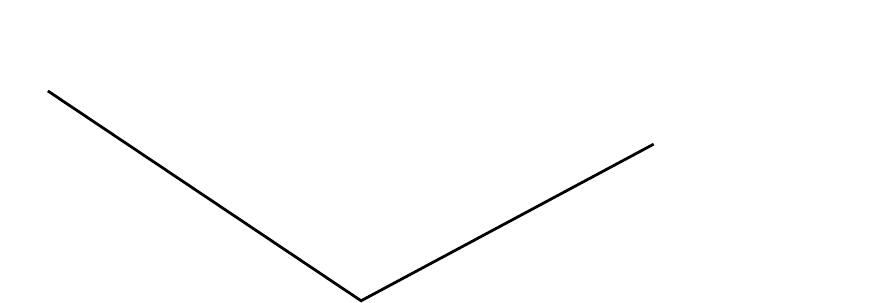}
}
\hspace{0mm}
\subfloat[]{
   \resizebox{2.2in}{!}{
  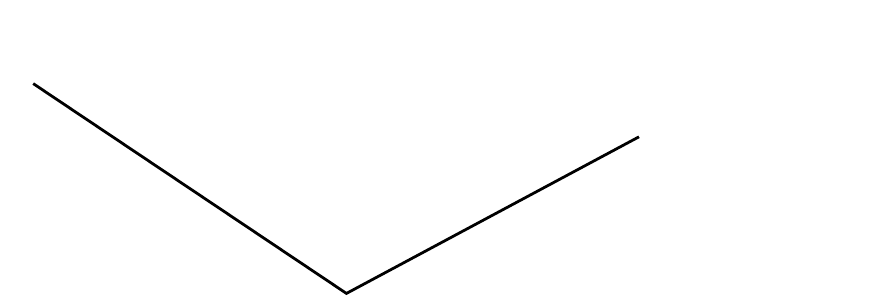}
}
\subfloat[]{
  \resizebox{2.2in}{!}{
  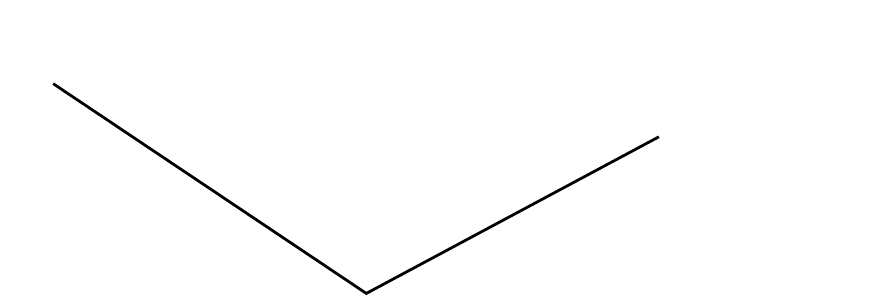}
}

\subfloat[]{
  \resizebox{2.2in}{!}{
  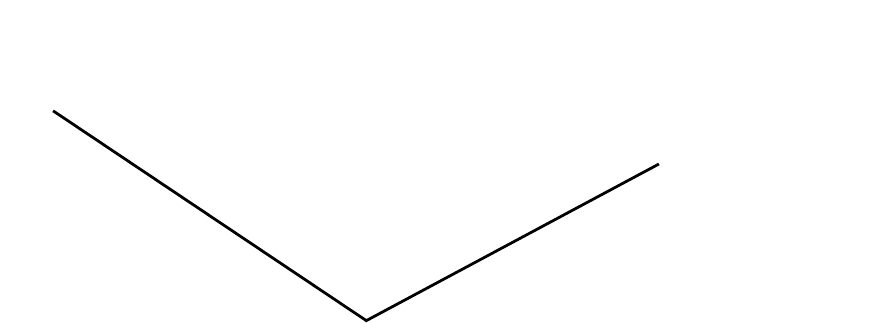}
}
\hspace{0mm}
\subfloat[]{

  \resizebox{2.2in}{!}{
  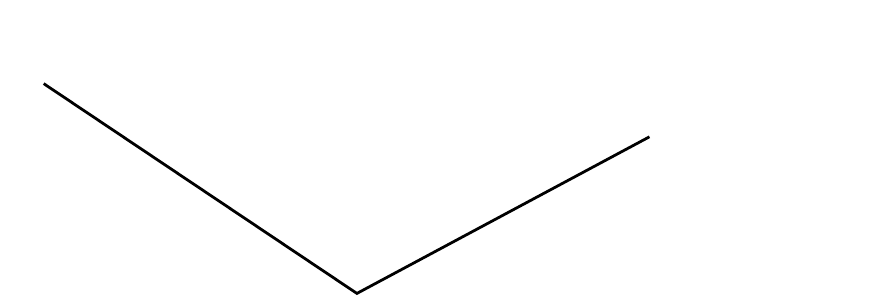}
}
\subfloat[]{  
  \resizebox{2.2in}{!}{
  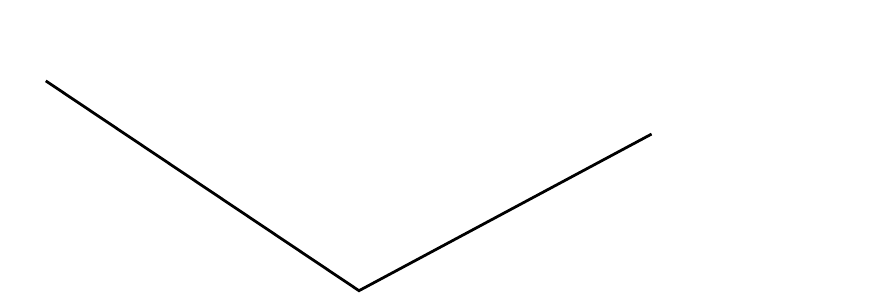}
}
\subfloat[]{
  \resizebox{2.2in}{!}{
  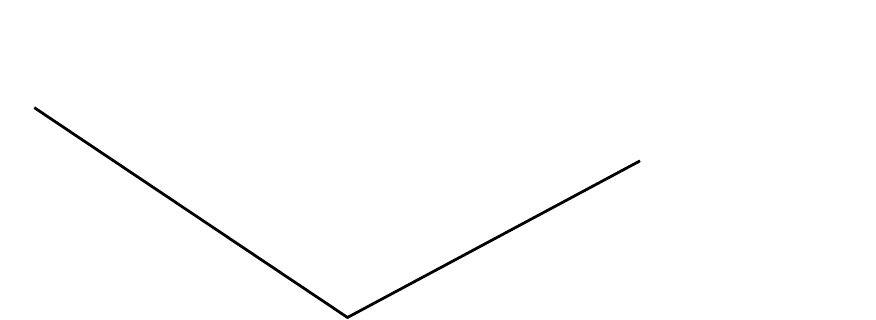}
}
\caption{The 9 possible configurations when $|i-j|=4$.  Note that by Lemma \ref{diffrels}, all $R_k$ in the figure are distinct, except possibly $R_i$ and $R_{i+4}$.} \label{fig:treecases}
\end{figure}

Next suppose $\abs{i-j}=5$.  If $P_1\cap P_{i+4}\neq\emptyset$ or $P_{i+1}\cap P_{i+5}\neq\emptyset$, then by considering the appropriate subtree, we reach a contradiction as above.  Thus we may assume that $P_i\cap P_{i+4}=\emptyset$ and $P_{i+1}\cap P_{i+5}=\emptyset$. Consider a path in $Q$ from $y_i$ to $y_{i+4}$.  As above, a case analysis shows that this path can be decomposed into two paths such that the first is contained in $\bigcup_{l=1}^3 (P_i\cap P_{i+l})$, and the other is contained in $\bigcup_{l=1}^3 (P_j\cap P_{j-l})$.   Thus $4\leq d_{X}(y_i,y_{i+4})\leq 2$ which is a contradiction.

Finally suppose $\abs{i-j}=6$.  If $P_{i'}\cap P_{j'}\neq\emptyset$ and $\abs{i'-j'}\in\set{4,5}$, then we are in one of the previous two cases and reach a contradiction.  Thus we may assume that if $P_{i'}\cap P_{j'}\neq\emptyset$, then $\abs{i'-j'}\in\set{0,1,2,3,6}$.  Consider a path in $T$ from $y_i$ to $y_{i+5}$. A case analysis again shows that this path can be decomposed into two paths such that the first is contained in $\bigcup_{l=1}^3 (P_i\cap P_{i+l})$, and the other is contained in $\bigcup_{l=1}^3 (P_j\cap P_{j-l})$. Thus $5\leq d_{X}(y_i,y_{i+5})\leq 2$ which is a contradiction.
\end{proof}

Combining Lemma \ref{no456trees} with Lemma \ref{treemeets}, we deduce that any self-intersection of type (i) is contained in a union of at most $4$ consecutive $P_i$ as follows.  If $T$ intersects $R_k$ and $R_l$ in edges, and $l-k\geq 4$, then by Lemma \ref{treemeets} there exist $k\leq i < j \leq l$ satisfying the hypotheses of Lemma \ref{no456trees}, and so $l-k\leq 3$, which is a contradiction.

In order to deal with self-intersections of type (ii), we introduce a refinement of the $S$--path associated to $\gamma$.  The {\bf essential $S$--path associated to $\gamma$}, which we denote by $P_{ess}$, is formed from $P$ by removing the interiors of all self-intersections of type (i).  By definition $P_{ess}$ is connected and has the same end vertices as $P$ (and therefore as $\gamma$, as well). We now prove that $P_{ess}$ is embedded in $\Cay(G,S)$.

\begin{figure}
\def\svgwidth{4in}  
  \centering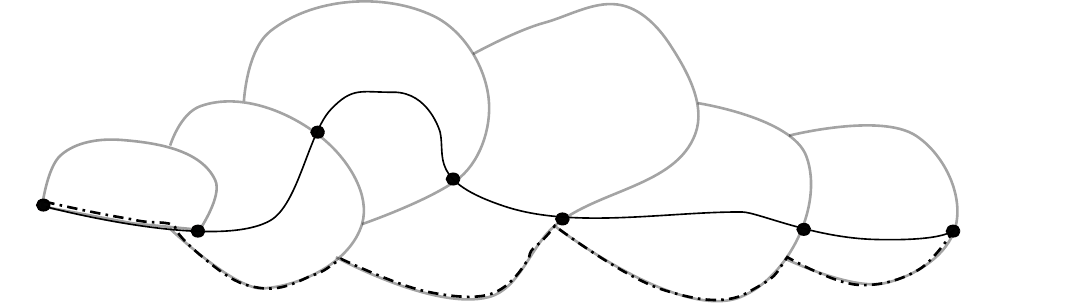 \\
	\caption{The dotted path is an essential $S$--path associated to a $\gamma$ (solid).  Compare this to the $S$--path in Figure \ref{fig:Spath}.}
\end{figure}

\begin{lemma} \label{disttoPess}
Any point on a self-intersection of type (i) is at distance at most $2$ from $P_{ess}$ in $\Cay(G,X)$.  Moreover, if $P_i\cap P_{ess}\neq\emptyset$, then every vertex on $P_i\setminus P_{ess}$ is connected to $P_{i}\cap P_{ess}$ by an edge in $R^X_i$. 
\end{lemma}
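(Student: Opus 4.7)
The plan is to exploit the tree structure of a self-intersection together with the small cancellation hypothesis. After passing to an outermost self-intersection containing the given point $v$ and re-indexing, I may assume $v$ lies on some $Q\subseteq P_j\cup\cdots\cup P_{j+a}$ with $a\leq 3$ (by Proposition~\ref{prop:limitselfint}), where the common endpoint $u'$ of $Q$ lies strictly in the interior of both $P_j$ and $P_{j+a}$ (the cases $u'=y_m$ can be absorbed by shrinking the range of indices: if $u'$ were $y_{j-1}$ or $y_{j+a}$, the equality $y_{j-1}=y_{j+a}$ would contradict the distinctness of distinct vertices on $\gamma$). Under this normalization $u'\in P_{ess}$, each intermediate $P_k$ with $j<k<j+a$ is entirely contained in $Q$, and $P_j\cap Q$ and $P_{j+a}\cap Q$ are the subarcs from $u'$ to $y_j$ and from $y_{j+a-1}$ to $u'$ respectively.

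The central technical step will be a Decomposition Claim: for each $k\in\{j,\ldots,j+a\}$, the subarc $P_k\cap Q$ is a concatenation of at most $a\leq 3$ pieces. Since the image of $Q$ in $\Cay(G,S)$ is a tree, every $S$-edge of $Q$ is traversed exactly twice; and since $P_k$ is a simple geodesic in the cycle $R_k$, each edge of $P_k\cap Q$ must be matched with an edge of some $P_{k'}\cap Q$ with $k'\neq k$ in $\{j,\ldots,j+a\}$, and therefore lies in $R_k\cap R_{k'}$, a piece by the $C'(\tfrac{1}{24})$ hypothesis. Grouping the edges of $P_k\cap Q$ by the matching index $k'$ produces at most $a\leq 3$ piece-subarcs; since subwords of pieces are themselves pieces, any subarc of $P_k\cap Q$ has label a product of at most $3$ pieces, hence an element of $P^4\subseteq X$. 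Being moreover a subword of $r_k^{\pm 1}$, this label lies in $X_k$, and so yields a cone edge in $R_k^X$.

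From this claim both assertions follow quickly. For the second assertion, $v\in P_i\setminus P_{ess}$ forces $i\in\{j,j+a\}$, so $u'\in P_i\cap P_{ess}$ and the subarc of $P_i$ from $u'$ to $v$ lies in $P_i\cap Q$, producing a cone edge in $R_i^X$ from $u'$ to $v$. For the first assertion, the case $k\in\{j,j+a\}$ reduces to distance $1$ by the second assertion, while for middle $k$ (necessarily $a\geq 2$), the Decomposition Claim applied to $P_k$ gives a cone edge in $R_k^X$ from $v$ to $y_{k-1}$ (if $k=j+1$) or to $y_k$ (if $k=j+a-1$), and a second application to $P_j\cap Q$ or $P_{j+a}\cap Q$ gives a cone edge in the corresponding relator from that vertex to $u'\in P_{ess}$; chaining yields $d_X(v,P_{ess})\leq 2$.

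The main obstacle is justifying the Decomposition Claim rigorously: the sketch above tacitly assumes that the pairing of $S$-edges induced by the tree structure of $Q$ assembles into at most one piece-subarc per matching index $k'$, whereas a priori distinct components of $R_k\cap R_{k'}$ could interleave along the geodesic $P_k$. I expect this to be ruled out by analyzing the Euler-tour structure of $Q$ on its image tree, since an interleaving would force a cycle in the tree; alternatively, one can invoke Strebel's classification (Theorem~\ref{thm:Strebel}) applied to a minimal diagram bounded by $Q$, as is done in previous sections of the paper.
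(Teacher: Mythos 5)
Your argument follows essentially the same route as the paper's: the paper also reduces to the fact that a type (i) self-intersection spans at most four consecutive $P_k$, observes that the relevant subarc of $P_i$ connecting a vertex of $P_i\setminus P_{ess}$ back to $P_i\cap P_{ess}$ is contained in $\bigcup_{l=1}^{3}R_i\cap R_{i\pm l}$ (hence has label in $P^4\subseteq X$, giving a cone edge in $R_i^X$), and obtains the bound $2$ by chaining two such cone edges through the tree, exactly as in your first-assertion argument; the paper simply compresses the tree analysis into ``a case analysis of the possible configurations'' with one worst case drawn.

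The one gap you flag --- that the edges of $P_k\cap Q$ matched to a fixed index $k'$ might a priori form several interleaved components --- is real but minor, and it is closed not by an Euler-tour argument but by the standard small cancellation fact that two distinct embedded relators $R_k\neq R_{k'}$ intersect in a connected path (if the intersection had two components, a reduced diagram bounded by a subpath of $R_k$ and a subpath of $R_{k'}$ would, by Lemma \ref{lem:Greendl}, contain a face whose boundary is a union of too few pieces, violating $C'(\frac{1}{24})$); note Lemma \ref{diffrels} guarantees the relators in your range of at most four consecutive indices are pairwise distinct. Once each $R_k\cap R_{k'}$ is a single piece, $P_k\cap Q$ is an interval covered by at most three intervals each labeled by a piece, and any such interval decomposes into at most three subintervals each contained in one of them (subwords of pieces being pieces), so your Decomposition Claim and hence the rest of the proof go through.
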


\begin{proof}
That the distance from any point on a self-intersections of type (i) to $P_{ess}$ is uniformly bounded follows immediately from the fact that any such self-intersection is contained in a union of at most 4 consecutive $P_i$ and that the initial/terminal vertex of such a closed subpath must, by definition, lie on $P_{ess}$.  That the bound is 2 follows from a case analysis of the possible configurations of such trees.  We illustrate one instance of the worst-case scenario  in Figure \ref{fig:worstcase} below. To prove the last statement, notice that if $P_i\cap P_{ess}\neq\emptyset$ then there is a path from any vertex in $P_i\setminus P_{ess}$ to $P_i\cap P_{ess}$ contained in $\bigcup_{l=1}^3 R_i\cap R_{i+l}$ or in $\bigcup_{l=1}^3 R_i\cap R_{i-l}$, and so by Lemma \ref{diffrels} they are connected by an edge in $R_i^X$.
\end{proof}

\begin{figure}[h!]
 
  \resizebox{3in}{!}{
  \centering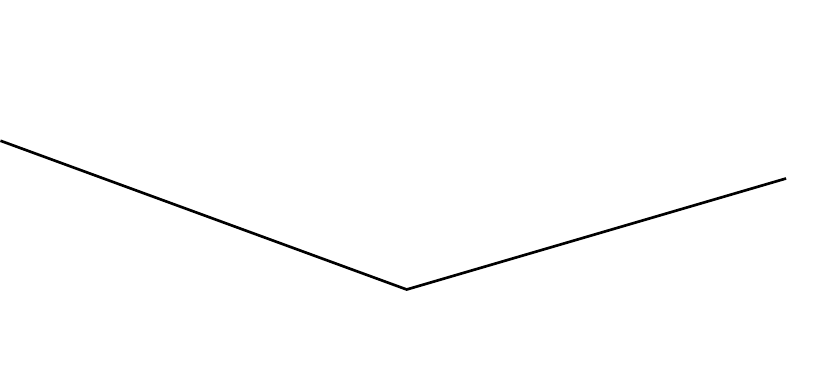} \\
    	\caption{There is a path (dotted) from $x$ to a point $y\in P_{ess}$ consisting of a piece $R_{i}\cap R_{i+3}$ and the union of two pieces  $R_{i+1}\cap R_{i+2}$ and $R_{i+1}\cap R_{i+3}$, so the distance in $\Cay(G,X)$ from $x$ to $y$ is 2.}\qedhere \label{fig:worstcase}
	\end{figure}

\begin{lemma} Let $P$ be an $S$-path constructed from geodesics $P_i$ in relators $R_i$ and let $P_{ess}$ be the corresponding essential $S$-path. If there exists an $i$ such that $P_i\cap P_{ess}=\emptyset$, then $d_{X}(y_{i-1},y_i)= 1$.  Moreover, at most two consecutive $P_i$'s can be disjoint from $P_{ess}$.
\end{lemma}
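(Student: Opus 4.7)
The plan is to unpack what $P_i\cap P_{ess}=\emptyset$ tells us about the location of $P_i$ inside the self-intersection structure of $P$, and then to exploit Proposition \ref{prop:limitselfint} together with Lemma \ref{diffrels}. First I would observe that the hypothesis forces every vertex of $P_i$ to have been deleted when forming $P_{ess}$, so $P_i$ lies entirely in the interior of some type-(i) self-intersection $Q$. By Proposition \ref{prop:limitselfint}, $Q$ is contained in at most four consecutive sub-geodesics $P_k,\ldots,P_{k+3}$. Because the endpoints of $Q$ lie on $P_{ess}$ whereas $P_i$ does not, a quick positional argument rules out $i=k$ and $i=k+3$, so $i\in\{k+1,k+2\}$. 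In either case both $y_{i-1}$ and $y_i$ are vertices of the tree image of $Q$ lying on the relator $R_i$.

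Next I would invoke an argument modelled on the last assertion of Lemma \ref{disttoPess}: using the tree structure of $Q$, I would construct a path in $R_i$ from $y_{i-1}$ to $y_i$ contained in $\bigcup_{j}(R_i\cap R_j)$, where $j$ ranges over the at most three other relators in the tree. By Lemma \ref{diffrels} these relators are all distinct from $R_i$, so this path is a product of at most three pieces of $r_i$. The assumption $P^4\subseteq X$ then implies that the corresponding word is an element of $X_i$, and therefore the arc is a cone edge in $R_i^X$. This gives $d_X(y_{i-1},y_i)\leq 1$, and equality is automatic because $y_{i-1}$ and $y_i$ are distinct consecutive vertices along the $X$-geodesic $\gamma$.

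For the second statement I would argue by contradiction. If $P_i,P_{i+1},P_{i+2}$ were all disjoint from $P_{ess}$, then, since the connected components of $P\setminus P_{ess}$ are the interiors of maximal type-(i) self-intersections, all three arcs would lie in the interior of a single such self-intersection $Q'$. By Proposition \ref{prop:limitselfint}, $Q'$ spans at most four consecutive arcs $P_k,\ldots,P_{k+3}$. Repeating the middle-index argument from the first statement, any sub-geodesic entirely contained in the interior of $Q'$ has index in $\{k+1,k+2\}$. But the three-element set $\{i,i+1,i+2\}$ cannot be contained in the two-element set $\{k+1,k+2\}$, giving the required contradiction.

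The principal technical obstacle will be the case analysis needed to actually extract the three-piece path in $R_i$ from $y_{i-1}$ to $y_i$; this is the same sort of bookkeeping that drives the proofs of Lemmas \ref{no456trees} and \ref{disttoPess}, and one has to take care about how the tree closes up (which requires an extra piece joining two of the outer relators $R_k$ and $R_{k+3}$). A secondary concern is justifying that when several type-(i) self-intersections overlap they combine into a single type-(i) self-intersection, so that the connected components of $P\setminus P_{ess}$ are themselves interiors of maximal type-(i) self-intersections, as is needed in the second statement.
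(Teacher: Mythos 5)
Your proposal is correct and follows essentially the same route as the paper: place $P_i$ inside a type-(i) self-intersection spanning at most four consecutive $P_k,\ldots,P_{k+3}$, rule out the extreme indices because the endpoints of the self-intersection lie on $P_{ess}$, cover $P_i$ by at most three pieces $R_i\cap R_{j}$ to get $d_X(y_{i-1},y_i)=1$, and count interior indices for the second claim. The paper's proof is in fact terser than yours (it does not spell out the merging of overlapping self-intersections or the piece-extraction bookkeeping that you flag), so your added caution is reasonable but not a divergence in method.
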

\begin{proof} If $P_i\cap P_{ess}=\emptyset$ then $P_i$ is contained in a self-intersection of $P$ of type (i) with the additional property that the initial/terminal vertex of this self-intersection  which is itself contained in a union of paths $P_j,\ldots,P_{j+l}$ with $l\leq 3$ by Lemmas \ref{treemeets} and \ref{no456trees}. It is clear that $P_j\cap P_{ess}$ and $P_{j+l}\cap P_{ess}$ are not empty, so $i\neq j,j+l$ .  Thus we have at most 2 choices for $i$, and, it follows that  $P_i$ is contained in a union of at most 3 pieces: the intersections with the other paths in this self-intersection of $P$. Hence $d_{X}(y_{i-1},y_i)=1$.
\end{proof}

Before continuing, we pause for one remark. The $1$-skeleton of a reduced diagram can be naturally embedded as a labelled subgraph of $\Cay(G,S)$, so given a face $F$ in a reduced diagram $D$ whose boundary is explicitly fixed as a closed path in $\Cay(G,S)$, we may make sense of the statement $\partial F$ is equal (or not equal) to a certain relator $R$. If $\partial F=R$ and $R$ is the join of a unique cone $R^X$, we may write $(\partial F)^X$ for this cone.  This will be invaluable when constructing diagrams to bound $S$--distances using the assumptions on thin cones. 

\begin{lemma} \label{numberoffacesbound}
Let $Q$ be any closed path in $\Cay(G,S)$ and let $D$ be any reduced diagram with boundary $Q$. Let $F$ be a face of $D$ with $e(F)\geq 1$.  Suppose there is a subpath $\alpha$ of $\partial D\cap \partial F$ such that $\alpha\subseteq P_{ess}$, and let $I$ be the set of indices $i$ such that $P_i\cap \alpha\neq \emptyset$, with minimal and maximal elements $i_1,i_2$, respectively.
	\begin{enumerate}
	\item If $\partial F\neq R_i$ for all $i\in I$, then $i_2-i_1\leq 4$.
	\item If $\partial F=R_i$ for some $i\in I$, then $\partial F\neq R_j$ for all $j\in I\setminus\{i\}$.
	\item If, in addition, $\partial F\setminus \alpha$ is contained in a union of at most $N$ pieces, then $\abs{I}\leq 4+\lceil \frac{N}{4}\rceil$.
	\end{enumerate}
\end{lemma}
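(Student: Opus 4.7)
All three parts rest on a single small-cancellation observation: whenever $\partial F \neq R_i$, the intersection $R_i \cap \partial F$ in $\Cay(G,S)$ is a piece by applying $C'(\tfrac{1}{24})$ to the distinct relators $R_i$ and $\partial F$. Hence $\alpha \cap P_i$ lies in a piece, and since $P^4 \subseteq X$ each piece, or indeed any concatenation of up to four consecutive pieces, corresponds to a single edge in $\Cay(G,X)$.

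For part (i), I would argue by contradiction, supposing $i_2 - i_1 \geq 5$. Under the hypothesis that $\partial F \neq R_i$ for every $i \in I$, each intermediate $P_i$ (with $i_1 < i < i_2$ and $i \in I$) lies entirely in $\alpha$ and hence is itself contained in a piece. Batching four consecutive such pieces yields a $\Cay(G,X)$-path from $y_{i_1}$ to $y_{i_2 - 1}$ of length at most $\lceil (i_2 - i_1 - 1)/4 \rceil$, while the geodesicity of $\gamma$ in $\Cay(G,X)$ forces $d_X(y_{i_1}, y_{i_2 - 1}) \geq i_2 - i_1 - 1$. For $i_2 - i_1 \geq 5$ these two estimates are incompatible. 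The care needed is in handling indices skipped from $I$ due to self-intersections in $P$: the preceding lemmas on $P_{ess}$ bound the number of such skips by at most two consecutive, and the counting argument extends with minor adjustments.

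For part (ii), suppose $\partial F = R_i = R_j$ with $i < j$ both in $I$. Lemma \ref{diffrels} forces $j - i \geq 4$, and $P_i, P_j$ are both diameters of the embedded cycle $\partial F$. Since $\alpha$ is a proper subarc of $\partial F$ containing both $P_i$ and $P_j$, the two diameters must overlap in a nondegenerate subarc; in particular $y_i$ and $y_{j-1}$ lie on $\partial F$ separated by an arc strictly shorter than a diameter. Decomposing this shorter arc into pieces shared with the intermediate relators $R_k$ and batching four at a time should produce a $\Cay(G,X)$-path from $y_i$ to $y_{j-1}$ of length strictly less than $j - i - 1$, contradicting the geodesicity of $\gamma$.

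For part (iii), part (ii) guarantees that at most one index $i^* \in I$ satisfies $R_{i^*} = \partial F$, so for all other $i \in I$ the arc $\alpha \cap P_i$ is a piece. I would then detour from $y_{i_1 - 1}$ to $y_{i_2}$ through the complementary arc $\partial F \setminus \alpha$ rather than through $\alpha$: batching its $N$ pieces into $P^4$-edges yields $\lceil N/4 \rceil$ edges in $\Cay(G,X)$, and bridging from $y_{i_1 - 1}, y_{i_2}$ to the endpoints of the detour through the cones $R_{i_1}^X, R_{i_2}^X$ costs at most two edges per side. This gives $d_X(y_{i_1 - 1}, y_{i_2}) \leq 4 + \lceil N/4 \rceil$, which combined with the geodesic estimate $\abs{I} \leq i_2 - i_1 + 1 \leq d_X(y_{i_1 - 1}, y_{i_2})$ yields the claim. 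The main technical obstacle throughout is the bookkeeping for bridging edges and for indices skipped from $I$; I expect these details to follow the pattern of the preceding lemmas about $P_{ess}$ without essentially new ideas.
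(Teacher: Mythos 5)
Your overall strategy is the paper's: sandwich the $\Cay(G,X)$--distance between $y$--vertices from below by geodesicity of $\gamma$ and from above by batching pieces $\partial F\cap R_k$ four at a time into single $X$--edges. But part (i) as written has a genuine gap. The vertices $y_{i_1}$ and $y_{i_2-1}$ need not lie on $\alpha$, nor even on $P_{ess}$, and $P_{i_1}\cap\alpha$, $P_{i_2}\cap\alpha$ are in general proper subpaths of $P_{i_1}$, $P_{i_2}$; so the concatenation of pieces along $\alpha$ connects two interior points of $\alpha$, not $y_{i_1}$ to $y_{i_2-1}$. Your claimed inequality $d_X(y_{i_1},y_{i_2-1})\leq\lceil(i_2-i_1-1)/4\rceil$ cannot be correct: combined with the geodesic lower bound it would force $i_2-i_1\leq 2$, which is stronger than the (sharp) conclusion $i_2-i_1\leq 4$. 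The missing ingredient is Lemma \ref{disttoPess}: $y_{i_1}$ and $y_{i_2-1}$ are each joined by a \emph{single $X$--edge} to points $z_1\in\alpha\cap R_{i_1}$ and $z_2\in\alpha\cap R_{i_2-1}$, and it is the resulting estimate $(i_2-1)-i_1\leq 2+\lceil((i_2-1)-i_1)/4\rceil$ that yields $i_2-i_1\leq 4$. You flag ``bookkeeping for bridging edges'' at the end, but this is not bookkeeping: without those two edges the stated bound is false, and with them the arithmetic changes.

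Two smaller points. In (ii), the assertion that $P_i$ and $P_j$ are diameters of $\partial F$ is unjustified (they are just geodesics of $R_i$ between the relevant $y$--vertices) and is not needed: after reducing to consecutive occurrences of $\partial F$ among the $R_k$ ($k\in I$), the vertices $y_i,y_{j-1}$ lie on $\partial F$ itself, the arc of $\alpha$ between them is a union of $(j-1)-i$ pieces with the intermediate relators, and $(j-1)-i\leq\lceil((j-1)-i)/4\rceil$ forces $j-i\leq 2$, contradicting Lemma \ref{diffrels}. In (iii), your detour through $\partial F\setminus\alpha$ from the outer vertices $y_{i_1-1},y_{i_2}$ is a legitimate variant of the paper's argument and gives the same bound $4+\lceil N/4\rceil$ when $\partial F\notin\{R_{i_1},R_{i_2}\}$; but when $\partial F=R_{i_1}$ (say), $\partial F\cap R_{i_1}$ is not a piece and $\alpha\cap P_{i_1}$ may be a long subgeodesic, so ``two edges per side'' fails on that side. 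The paper treats this case separately (using part (ii) and the path through $\alpha$) and in fact gets the stronger bound $\abs{I}\leq 4$ there.
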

\begin{proof}
Assume $i_2-i_1\geq 2$. By Lemma \ref{disttoPess}, the vertices $y_{i_1}$ and $y_{i_2-1}$ are connected by an edge in $\Cay(G,X)$ to points $z_1\in \alpha\cap R_{i_1}$ and $z_2\in\alpha\cap R_{i_2-1}$, respectively.  There is a path in $\partial F$ from $z_1$ to $z_2$ which is contained in the union of pieces $\partial F\cap R_k$ for $k\in I$ and $i_1<k\leq i_2-1$.  Therefore, we have 
\begin{equation}
(i_2-1)-i_1\leq d_{X}(y_{i_1},y_{i_2-1})\leq 2+\left\lceil \frac{(i_2-1)-i_1}{4}\right\rceil,
\end{equation}
and so $i_2-i_1-1\leq 3$ and part (i) follows.

To prove (ii), suppose that $\partial F = R_i=R_j$ for some $i<j$ and $R_k\neq F$ for all $k\in I$ satisfying $i<k<j$.  In this case, $y_i$ and $y_{j-1}$ lie on $\partial F$.  By a slight modification of the argument in (i), we see that
\begin{equation}
(j-1)-i_1\leq d_{X}(y_{i},y_{j-1})\leq \left\lceil \frac{(j-1)-i_1}{4}\right\rceil,
\end{equation} so $j-i\leq 2$. This contradicts Lemma \ref{diffrels}.

For part (iii), first suppose $\partial F\not\in\set{R_{i_1},R_{i_2}}$. By Lemma \ref{disttoPess} there are paths which are a union of at most 3 pieces in $R_{i_1}$ (resp. $R_{i_2}$) connecting $y_{i_1}$ and $y_{i_2-1}$ to two vertices of $\alpha$, extending these by pieces in $\partial F\cap R_{i_1}$ and $\partial F\cap R_{i_2}$ respectively, we see that $y_{i_1}$ and $y_{i_2}$ can be connected by edges in $\Cay(G,X)$ to the end vertices of $\alpha$. Therefore
\[
 (i_2-1)-i_1 \leq d_{X}(y_{i_1},y_{i_2-1}) \leq 2 + \left\lceil \frac{N}{4}\right\rceil,
\]
so $i_2-i_1\leq 3+\lceil \frac{N}{4}\rceil$ and $\abs{I}\leq 4+\lceil \frac{N}{4}\rceil$.

Next suppose $\partial F = R_{i_1}$; the case $\partial F = R_{i_2}$ is handled similarly.  By (ii), $\partial F\neq R_j$ for all $j\in I\setminus\set{i_1}$. There is an edge connecting $y_{i_2-1}$ to $\partial F$ so
\[
(i_2-1) - i_1 \leq d_{X}(y_{i_1},y_{i_2-1}) \leq 1 + \left\lceil \frac{(i_2-1) - i_1}{4}\right\rceil,
\]
so $i_2-i_1\leq 3$ and $\abs{I}\leq 4$.
\end{proof}

We are now ready to prove Proposition \ref{prop:limitselfint}.

\begin{proof}[Proof of Proposition $\ref{prop:limitselfint}$]
The first statement  follows immediately from Lemmas \ref{treemeets} and \ref{no456trees}.  

If $P$ has a self-intersection of type (ii), then $P_{ess}$ must contain a simple cycle.  We will show that this is not possible.  Suppose for a contradiction that $C$ is a simple cycle in $P_{ess}$ and let $D$ be a reduced diagram with boundary $C$. 

Since $D$ contains a face which contributes positively to the curvature formula (\ref{Strebcurv}), either there is a face $F$ in $D$ with $e(F)=1$ and $1\leq i(F)\leq 3$, or $D$ is a single face $F$. In either case, applying Lemma \ref{numberoffacesbound}(iii) to $\alpha=\partial F\cap\partial D$, we see that $\alpha$ is contained in a union of either 5 pieces (if $\partial F\neq R_i$ for all $i$), or a geodesic and at most 4 pieces (if $\partial F= R_i$ for some $i$). Therefore, $\partial F$ is contained in a union of either 8 pieces, or a geodesic and at most 7 pieces.  Since the presentation $\mathcal P$ is $C'(\frac{1}{24})$, we see that
\[
 \abs{\partial F} < \abs{\partial F} \max\set{\frac12 + \frac{7}{24},\frac{8}{24}}
\]
which is a contradiction.
\end{proof}

In order to be able to utilise the small cancellation assumptions we will need to construct combinatorial geodesic polygons.  The following proposition shows that our essential $S$-paths are suitable for this purpose.

\begin{proposition}\label{prop:combgeods} Let $Q$ be a closed path in $\Cay(G,S)$ which is a union of paths $P^1,\ldots,P^n$ each of which is either a geodesic in $\Cay(G,S)$ or an essential $S$-path of a geodesic in $\Cay(G,X)$. Let $D$ be a reduced diagram with boundary $Q$. Then $D$ is a (possibly degenerate) combinatorial geodesic $n$-gon (cf. Definition $\ref{def:combinatorialgeodesicpolygon}$).
\end{proposition}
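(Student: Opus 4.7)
The plan is to verify the two conditions in Definition~\ref{def:combinatorialgeodesicpolygon} for $D$ equipped with its natural decomposition into the $P^1,\dots,P^n$: first, that $D$ is a $(3,7)$-diagram; second, that every boundary face $F$ whose exterior part is a single arc $\alpha$ contained in one of the sides $P^k$ satisfies $i(F)\geq 4$.

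The $(3,7)$-property is the usual small cancellation conclusion. After the standard preparation (suppressing valence-$2$ interior vertices into interiors of arcs), every interior vertex has valence at least $3$; and any interior face $F$ must satisfy $i(F)\geq 7$, since its boundary is a union of $i(F)$ interior arcs, each of which is a piece of length $<|\partial F|/6$ by $C'(\tfrac{1}{24})\subseteq C'(\tfrac16)$, so $i(F)\leq 6$ would force $|\partial F|<|\partial F|$.

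For the second condition I argue by contradiction: suppose $i(F)\leq 3$ and write $R=\partial F$. Then the interior arcs of $F$ are pieces of total $S$-length strictly less than $3|R|/24=|R|/8$, so $|\alpha|_S>7|R|/8$. In Case A where $P^k$ is a geodesic in $\Cay(G,S)$, the interior arcs give an alternate path of $S$-length $<|R|/8$ between the endpoints of $\alpha$, which shortcuts the geodesic subpath $\alpha$, contradiction. In Case B where $P^k$ is an essential $S$-path $P_{ess}$ of some geodesic in $\Cay(G,X)$, I apply Lemma~\ref{numberoffacesbound}(iii) with $N=i(F)\leq 3$ to conclude that the index set $I=\{j:P_j\cap\alpha\neq\emptyset\}$ has $|I|\leq 5$. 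Decomposing $\alpha=\bigcup_{j\in I}(\alpha\cap P_j)$, Lemma~\ref{numberoffacesbound}(ii) allows at most one index $j_0\in I$ with $R_{j_0}=R$; for every other $j\in I$, $\alpha\cap P_j\subseteq R\cap R_j$ is a piece and so has length $<|R|/24$, and if $j_0$ exists, $\alpha\cap P_{j_0}$ is a subpath of the geodesic $P_{j_0}$ in the cycle $R$ and so has length $\leq|R|/2$. Summing the at-most-four ordinary piece contributions with the possible long subarc yields
\[
|\alpha|_S \;<\; 4\cdot\frac{|R|}{24}+\frac{|R|}{2} \;=\; \frac{2|R|}{3} \;<\; \frac{7|R|}{8},
\]
contradicting the lower bound on $|\alpha|_S$.

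The main obstacle is Case B: since $P_{ess}$ is not a geodesic in $\Cay(G,S)$, ordinary shortcutting is unavailable and one must exploit the piecewise structure of $P_{ess}$ via Lemma~\ref{numberoffacesbound}(ii)--(iii), carefully isolating the one possible ``long'' subarc of $\alpha$ coming from the exceptional case $R=R_{j_0}$, where $\alpha\cap P_{j_0}$ may occupy as much as half of $R$ but no more.
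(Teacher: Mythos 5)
Your proof is correct and follows essentially the same route as the paper: both reduce the statement to showing that a boundary face whose exterior part is a single arc in one side has interior degree at least $4$, invoke Lemma \ref{numberoffacesbound} to decompose that arc into at most five subpaths of the $P_j$ (at most one of which, by part (ii), can be a geodesic subpath of $\partial F$ rather than a piece), and derive a length contradiction from the $C'(\frac{1}{24})$ condition. The only differences are cosmetic: you spell out the $(3,7)$-condition and the $S$-geodesic case explicitly, and you bound $\abs{\alpha}$ directly rather than $\abs{\partial F}$.
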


\begin{proof}
It suffices to show that any face $F\subseteq D$ whose exterior side is completely contained in some $P^t_{ess}$ satisfies $i(F)\geq 4$.  Towards a contradiction, suppose $i(F)\leq 3$.  

Then by Lemma \ref{numberoffacesbound}, $\partial F\cap \partial D$ is contained in a union of either 5 pieces (if $\partial F\neq F^t_i$ for any $i$) or a geodesic and at most 4 pieces (if $\partial F=R^t_i$ for some $i$).  Thus $\partial F$ is contained in a union of either 8 pieces or a geodesic and at most 7 pieces.  Thus we obtain a contradiction \[\abs{\partial F} <\abs{\partial F} \max\set{\frac12+\frac{7}{24},\frac{8}{24}}.\]
\end{proof}

A very useful consequence of this is that cones are convex in $\Cay(G,X)$.

\begin{lemma} \label{lem:qcxity}
Let $R$ be a relator in $\Cay(G,X)$. Any  geodesic in $\Cay(G,X)$ with endpoints in $R^X$ is contained in $R^X$.
\end{lemma}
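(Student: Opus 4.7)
The plan is to argue by contradiction: assume $\gamma$ is a geodesic in $\Cay(G,X)$ with $u,v\in R^X$ but $\gamma\not\subseteq R^X$, and derive a small-cancellation contradiction from an appropriately constructed van Kampen diagram. First I would let $P_{ess}$ be the essential $S$--path associated to $\gamma$, and let $P'$ be the shorter of the two arcs of $R$ from $u$ to $v$ in $\Cay(G,S)$. Since $\abs{P'}_S\leq\abs{R}/2<\tfrac{23}{24}\abs{R}$, the $C'(\tfrac{1}{24})$ hypothesis guarantees $P'$ is a geodesic in $\Cay(G,S)$, so the loop $P_{ess}\cdot(P')^{-1}$ bounds a reduced van Kampen diagram $D$. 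By Proposition \ref{prop:combgeods}, $D$ is a combinatorial geodesic bigon, and by Theorem \ref{thm:Strebel}, $D$ has form $I_1$ (possibly with no faces).

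Next, I would show that every face $F$ of $D$ must have boundary labeled by $R$, with image coinciding with the specific embedded relator $R\subseteq\Cay(G,S)$. The exterior arc $\beta_F\subseteq P'$ of $F$ is a subarc of $R$. If either $\partial F$ labels a different relator, or $\partial F=R$ but its image is a different embedded copy, then $\beta_F\subseteq\partial F\cap R$ is a piece. Combined with the (at most two) interior arcs of $F$ and a bound on the exterior arc $\alpha_F\subseteq P_{ess}$ via Lemma \ref{numberoffacesbound}, $\partial F$ would be covered by either at most $8$ pieces, or by a geodesic subword plus at most $7$ pieces. Either option gives $\abs{\partial F}<\abs{\partial F}$ by the $C'(\tfrac{1}{24})$ condition, a contradiction, just as in the proof of Proposition \ref{prop:combgeods}.

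Once I have established that every face has image $R$, it follows that $P_{ess}\subseteq R$ (trivially if $D$ has no faces). In particular each $y_j$ of the $S$--path construction lies on $R$, and each constituent geodesic $P_j$ is contained in a single piece of $R\cap R_j$. If $R_j=R$ then the intermediate vertices of $\gamma$ lie on $R$ and the segment is in $R^X$; if $R_j\neq R$, the piece containing $P_j$ has label in $X$ (since $X\supseteq P^4$) and provides a cone edge in $R^X$ from $y_{j-1}$ to $y_j$, forcing $i_j=i_{j-1}+1$ by the geodesic property of $\gamma$, so there are no intermediate vertices to worry about. Either way, the segment of $\gamma$ between consecutive $y_j$'s lies in $R^X$, hence $\gamma\subseteq R^X$, contradicting our assumption. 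The main obstacle will be the piece-counting step for each type of face, especially when $\partial F$ coincides with some $R_j$ along $P_{ess}$, which replaces a piece bound by a geodesic subword in the estimate; a secondary subtlety is aligning specific edges of $\gamma$ with edges of $R^X$, which uses that the label of a shared piece lies in both $X_{i(R)}$ and $X_{i(R_j)}$.
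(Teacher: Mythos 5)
Your setup is essentially the paper's: associate the essential $S$--path to the geodesic, close it up with an arc of $R$ that is a geodesic in $\Cay(G,S)$, apply Proposition \ref{prop:combgeods} and Strebel's form $I_1$, and count pieces via Lemmas \ref{numberoffacesbound} and \ref{lem:Pesslocgeod}. That part of your argument is sound and matches the paper's computation (at most $8$ pieces, or a geodesic and at most $7$ pieces). The problem is the endgame. From ``every surviving face of $D$ has boundary the embedded relator $R$'' you correctly get $P_{ess}\subseteq R$, but the next inference --- ``each $y_j$ lies on $R$ and each constituent geodesic $P_j$ is contained in a single piece of $R\cap R_j$'' --- does not follow. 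The path $P_{ess}$ is obtained from the full $S$--path $P=P_1\cdots P_l$ by deleting the interiors of the type (i) self-intersections (Proposition \ref{prop:limitselfint}), and these deleted trees may contain some of the vertices $y_j$ and substantial portions of the $P_j$; nothing you have established places those deleted vertices on $R$. Since the $y_j$ are themselves vertices of $\gamma$, this is precisely what you need to prove, so the argument is circular at this point: knowing only that $P_{ess}\cap R_j\subseteq R\cap R_j$ is a piece, Lemma \ref{lem:Pesslocgeod} yields $d_X(y_{j-1},y_j)\leq 3$, not $=1$, and the intermediate vertices of $\gamma$ lying on $R_j$ need not lie on $R$.

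The paper avoids this by first passing to a minimal offending subgeodesic $\gamma'$ with $\gamma'\cap R^X=\{x,x'\}$ and then splitting on whether $R$ equals some relator $R'_i$ used in building the $S$--path. If it does, the construction of the $S$--path forces all vertices of $\gamma'$ between $y'_{i-1}$ and $y'_i$ onto $R$, so $\gamma'$ is a single edge of $R^X$ --- a contradiction with no diagram needed. If it does not, then even a face with $\partial F=R$ is impossible, because Lemma \ref{numberoffacesbound}(i) then bounds $\partial F\cap P_{ess}$ by $5$ pieces and $\partial F$ becomes a geodesic plus at most $6$ pieces; the diagram argument therefore produces a direct contradiction rather than the weaker conclusion that all faces equal $R$. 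If you insert that initial reduction and case split, your face analysis goes through and the endgame closes; as written, the final paragraph has a genuine gap.
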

\begin{proof}
Suppose for a contradiction that there is a geodesic $\gamma'$ of positive length in $\Cay(G,X)$ such that both end vertices $x,x'$ of $\gamma'$ are contained in a cone $R^X$ and $\gamma'\cap R^X=\set{x,x'}$. Consider an essential $S$--path $P'$ associated to $\gamma'$ which is a concatenation of geodesics $P'_1,\ldots P'_l$ contained in relators $R'_1,\ldots,R'_l$. 

Suppose first that $R=R'_i$ for some $i$. Since $\gamma'\cap R^X=\set{x,x'}$, and, by definition, each $R'_i$ contains at least two vertices $y'_{i-1}$ and $y'_i$ of $\gamma'$, we see that $\set{x,x'}=\set{y_{i-1},y_i}$ and therefore all of the vertices in $\gamma'$ are contained in $R^X$. The choice of the relator $R'_i$ then ensures that there are only two vertices in $\gamma'$, so $\gamma'$ is an edge $e=xx'$. Thus $\Lab(x,x')\in X$, and $e\in R^X$, which is a contradiction.

Hence, we may assume $R\neq R'_k$ for all $k$. Let $\gamma_S$ be a geodesic in $\Cay(G,S)$ from $x$ to $x'$ which is contained in $R$, and consider a reduced diagram $D$ with boundary $\gamma_S\cup P'$. If $\gamma_S\subseteq P'$ then $l\leq d_X(x,x')\leq \lceil\frac{l}{4}\rceil$, so $l=1$. Therefore there is a path connecting $x$ to $x'$ in $\Cay(G,S)$ which is a piece in $R\cap R'_1$. Thus $xx'$ is an edge $e$ in $\Cay(G,X)$, $\Lab(x,x')\in X$, and $e\subset R^X$ which is a contradiction. Otherwise, by Proposition \ref{prop:combgeods}, $D$ is a combinatorial geodesic bigon which contains a face, so contains a sub-bigon $D'$ with $\partial D'\subseteq \partial D$ which is either a single face or has the form $I_1$ from Strebel's classification.

Suppose $F$ is a face of $D'$ with $i(F)\leq 1$. If $\partial F\neq R_j'$ for any $j=1,2,\dots, l$, then by Lemma \ref{numberoffacesbound}(i)  $\partial F$ is contained in either a union of at most 7 pieces, or a geodesic and at most 6 pieces. Both of these contradiction the small cancellation assumption. If $\partial F= R_j'$ for some $j$, then by applying Lemma \ref{numberoffacesbound}(iii) to $\alpha=\partial F\cap P'$ we see that $\partial F$ is contained in a union of a geodesic and at most 6 pieces, which is again a contradiction since $\mathcal P$ is a $C'(\frac{1}{24})$ presentation.
\end{proof}
 
We now collect several lemmas about essential $S$--paths that will be useful in the construction of associated bigons and quadrangles, as well as in the proof of hyperbolicity.  

\begin{lemma} \label{lem:essRconn} Let $P_{ess}$ be an essential $S$--path associated to a geodesic $\gamma$ in $\Cay(G,X)$. 
	\begin{enumerate}
	\item  For every relator $R$, $R\cap P_{ess}$ is empty or connected.
	\item  If $R$ is a relator such that $P_{ess}\cap R$ has diameter at least $3$ in $\Cay(G,X)$, then $R=R_j$ for some $R_j$ used in the construction of $P_{ess}$.
	\end{enumerate}
\end{lemma}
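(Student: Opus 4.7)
For (i), I would argue by contradiction: suppose $R \cap P_{ess}$ is non-empty but disconnected. Picking two consecutive components of this intersection (in the order induced by $P_{ess}$), I would let $u$ be the last vertex of the earlier and $v$ the first of the later, and let $\beta$ be the subpath of $P_{ess}$ from $u$ to $v$; by construction the interior of $\beta$ is disjoint from $R$. Choose $\beta'$ to be a $\Cay(G,S)$-geodesic from $u$ to $v$ contained in $R$ (the short arc of $R$; existence is standard for $C'(\tfrac{1}{24})$), and form a reduced diagram $D$ with boundary $\beta \cdot \beta'^{-1}$. Since $\beta \neq \beta'$, $D$ has at least one face. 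Proposition \ref{prop:combgeods} says $D$ is a combinatorial geodesic bigon, and Lemma \ref{lem:Greendl} supplies a face $F$ with $e(F) = 1$ and $i(F) \leq 3$. Since Proposition \ref{prop:combgeods} also forces any face whose exterior arc lies in $\beta$ to have $i(F) \geq 4$, the exterior arc of $F$ must lie in $\beta' \subseteq R$. I would then split: if $\partial F = R$, the interior arcs of $F$ must cover the complement of the exterior arc in $R$, an arc of length at least $|R|/2$, using at most three arcs each of length $< |R|/24$, which is absurd; if $\partial F \neq R$, then $\partial F \cap R$ is a single piece containing the exterior arc, so $\partial F$ is a union of at most $4$ pieces, again contradicting $C'(\tfrac{1}{24})$.

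For (ii), I would assume for contradiction that $R \neq R_j$ for every $j$ used in the construction of $P_{ess}$. By (i), $\sigma := P_{ess} \cap R$ is a single subarc of $R$; let $j_1 \leq j_2$ be the smallest and largest indices of $P_j$ meeting $\sigma$. The crux is showing $j_2 - j_1 \leq 2$. If $j_2 - j_1 \geq 3$, then the three transition vertices $y_{j_1}, y_{j_1+1}, y_{j_1+2}$ all lie in $\sigma \subseteq R$, and since $R \neq R_{j_1+1}, R_{j_1+2}$, the arcs $R \cap R_{j_1+k}$ are honest pieces for $k = 1, 2$. Their concatenation along $R$ from $y_{j_1}$ through $y_{j_1+1}$ to $y_{j_1+2}$ is a subword of $R$ expressible as a product of two pieces, hence lies in $P^2 \subseteq P^4 \subseteq X$ and labels an edge in the cone $R^X$. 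Convexity (Lemma \ref{lem:qcxity}) then gives $d_X(y_{j_1}, y_{j_1+2}) \leq 1$. On the other hand, the single-piece version of the same argument applied to each consecutive pair forces $i_{j_1+1} - i_{j_1} = i_{j_1+2} - i_{j_1+1} = 1$, and since $\gamma$ is a $\Cay(G,X)$-geodesic we get $d_X(y_{j_1}, y_{j_1+2}) = 2$, a contradiction.

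With $j_2 - j_1 \leq 2$ in hand, $\sigma$ is the concatenation of at most three (sub-)pieces of $R$, so its label lies in $P^3 \subseteq X$. If $\sigma$ is the short arc of $R$ between $u$ and $v$, then this label defines an edge in $R^X$ and forces $d_X(u,v) \leq 1$, contradicting the assumption that the $\Cay(G,X)$-diameter of $\sigma$ is at least $3$. If $\sigma$ is the long arc, then $|\sigma|_S \geq |R|/2$, but at the same time $|\sigma|_S < 3 \cdot |R|/24 = |R|/8$, also absurd. Either outcome contradicts the assumption that no $R_j$ equals $R$, completing the proof.

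The main obstacle is the bound $j_2 - j_1 \leq 2$ in (ii); it is essentially tight and relies crucially on the inclusion $P^2 \subseteq X$ (which follows from $P^4 \subseteq X$). This is what lets us promote two consecutive pieces of $R$ to a single edge in the cone $R^X$, and together with the $\Cay(G,X)$-geodesicity of $\gamma$ forces the transition vertices lying on $R$ to cluster very tightly. For part (i), the analogous delicate point is ensuring that the two-sub-case analysis of $\partial F = R$ versus $\partial F \neq R$ both close under the $C'(\tfrac{1}{24})$ constant.
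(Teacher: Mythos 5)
Your overall strategy for (i) is the same as the paper's (cap the offending subpath of $P_{ess}$ with an $S$--geodesic inside $R$, invoke Proposition \ref{prop:combgeods}, and kill the resulting bigon by a length count), but the step ``the exterior arc of $F$ must lie in $\beta'$'' is a genuine gap. Definition \ref{def:combinatorialgeodesicpolygon} only forces $i(F)\geq 4$ for faces whose exterior part is a single arc \emph{contained in one side}; a face whose exterior arc contains a corner $u$ or $v$ in its interior is a distinguished face and is exempt. In a type $I_1$ bigon the faces produced by Lemma \ref{lem:Greendl} are precisely the two end faces, and these straddle the corners, so the case you dismiss is in fact the generic one. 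That case can still be closed, but it needs more than your dichotomy: one must bound $\partial F\cap\beta$ via Lemma \ref{numberoffacesbound} (when $\partial F\neq R_j$ for all $j$) or Lemma \ref{lem:Pesslocgeod} (when $\partial F=R_j$), and in the latter subcase the count $\frac12+\frac34>1$ fails unless one also observes that $\partial F\cap\beta'\subseteq\partial F\cap R$ is a single piece --- which requires first noting (via Lemma \ref{lem:qcxity}) that $R$ itself is not one of the $R_j$, a point your write-up never establishes.

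Part (ii) has a more serious error: the assertion that the transition vertices $y_{j_1},y_{j_1+1},y_{j_1+2}$ lie in $\sigma\subseteq R$. These vertices are the endpoints of the geodesics $P_j\subseteq R_j$; they lie on $P$ (and possibly not even on $P_{ess}$), and when $R$ is distinct from every $R_j$ there is no reason for any of them to lie on $R$. Everything downstream --- the concatenation of two pieces ``from $y_{j_1}$ through $y_{j_1+1}$ to $y_{j_1+2}$'' inside $R$, and hence the bound $j_2-j_1\leq 2$ --- collapses with it. The correct route, which is what the paper does, is to apply Lemma \ref{numberoffacesbound}(i) (whose proof uses Lemma \ref{disttoPess} to connect $y_{j_1}$ and $y_{j_2-1}$ to $P_{ess}\cap R$ by single edges of $\Cay(G,X)$ and then compares $(j_2-1)-j_1\leq d_X(y_{j_1},y_{j_2-1})\leq 2+\bigl\lceil\frac{(j_2-1)-j_1}{4}\bigr\rceil$): this yields $j_2-j_1\leq 4$, so $P_{ess}\cap R$ lies in at most $5$ pieces and has $X$--diameter at most $2$, which already contradicts the hypothesis of diameter at least $3$. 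Your sharper bound is neither needed nor correctly proved. (Your closing ``short arc versus long arc'' dichotomy is harmless but redundant: a union of at most three pieces has $S$--length less than $\frac{3}{24}\abs{R}$ and so is automatically the short arc.)
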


\begin{proof} 
To prove (i), suppose there is a relator $R$ such that $R\cap P_{ess}$ is not connected (so it is clear from Lemma \ref{lem:qcxity} that $R$ is not one of the relators $R_i$), and let $Q$ be a positive length subpath of $P_{ess}$ with end vertices $x,x'$ such that $R\cap Q=\set{x,x'}$. Let $D$ be a reduced diagram whose boundary is a simple cycle comprising a geodesic $\gamma$ in $\Cay(G,S)$ which is contained in $R$ and a subpath of $P_{ess}$. Since $\partial D$ is the union of a geodesic in $\Cay(G,S)$ and a subpath of $P_{ess}$, by Proposition \ref{prop:combgeods}, $D$ is a combinatorial geodesic bigon.  By Strebel's classification, $D$ is therefore of type $I_1$.  Choose $F\subseteq D$. Then $\partial F$ can be written as the union of at most 2 interior pieces, $\partial F\cap \gamma$ and $\partial F\cap P_{ess}$. By Lemma \ref{numberoffacesbound}(iii), $\partial F\cap P_{ess}$ is contained in a union of at most 5 pieces (since $R\neq R_i$ for all $i$), and so
\begin{equation}\label{eq:Fbound}
  \abs{\partial F} < \abs{\partial F\cap \gamma} + \frac{7}{24}\abs{\partial F}.
\end{equation}
However, since $\gamma$ is a geodesic in $\Cay(G,S)$, we have $\abs{\partial F\cap \gamma}\leq\frac12 \abs{\partial F}$ which contradicts ($\ref{eq:Fbound}$).

For (ii), suppose $R\neq R_j$ for all $j$. Then by Lemma \ref{numberoffacesbound}(i), $P_{ess}\cap R$ is contained in a union of at most 5 pieces $R\cap R_j$, so the distance between the endpoints of $P_{ess}\cap R$ is at most 2.
\end{proof}

\begin{lemma}\label{lem:Pesslocgeod} Let $P_{ess}$ be an essential $S$--path associated to a geodesic $\gamma$ in $\Cay(G,X)$. For every relator $R_i$ used in the construction of the path we have $\abs{P_{ess}\cap R_i}< \frac34\abs{R_i}$.  Moreover, if $R_i\cap P_{ess}\neq \emptyset$, then $y_i$ and $y_{i-1}$ can be connected by an edge in $\Cay(G,X)$ to the initial and terminal vertices of $P_{ess}\cap R_i$, respectively.
\end{lemma}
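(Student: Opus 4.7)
Both assertions will be established through a joint analysis of the subarc $\alpha := P_{ess} \cap R_i$, which by Lemma \ref{lem:essRconn}(i) is connected and hence, when nonempty, is a subarc of the cyclic relator $R_i$. If $\alpha = \emptyset$, both claims are vacuous, so assume $\alpha \neq \emptyset$ with endpoints $u$ and $v$. Let $I := \{j : P_j \cap \alpha \neq \emptyset\}$. For the moment, assume $P_i \subseteq P_{ess}$, so $i \in I$; the case $P_i \cap P_{ess} = \emptyset$ can be handled separately by the preceding lemma, which limits the number of consecutive $P_j$ absent from $P_{ess}$.

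The crucial first step is to apply Lemma \ref{numberoffacesbound} to the one-face diagram $D$ whose single face $F = R_i$ has $\partial F \cap P_{ess} = \alpha$. Part (ii) of that lemma gives $R_j \neq R_i$ for every $j \in I \setminus \{i\}$, so each $P_j \cap R_i \subseteq R_j \cap R_i$ is a piece of length strictly less than $\frac{1}{24}|R_i|$. Since $P_{ess}$ has no self-intersections of type (ii) (Proposition \ref{prop:limitselfint}), the subarcs $P_j \cap R_i$ for distinct $j \in I$ are pairwise disjoint and together constitute $\alpha$. Combined with $|P_i| \leq \tfrac{1}{2}|R_i|$---which holds because $P_i$ is a $\Cay(G,S)$-geodesic between two vertices of the cycle $R_i$---this yields the master estimate
\[
 |\alpha| \;\leq\; |P_i| + \sum_{j \in I \setminus \{i\}} |P_j \cap R_i| \;<\; \tfrac{1}{2}|R_i| + (|I| - 1) \cdot \tfrac{1}{24}|R_i|.
\]

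The first assertion then reduces to establishing $|I| \leq 7$. For the moreover assertion, if we can show in addition that $i - \min I \leq 3$ and $\max I - i \leq 3$, then the subarcs of $R_i$ from each endpoint of $\alpha$ to the nearer of $y_{i-1}, y_i$ are each concatenations of at most three pieces, whose labels therefore lie in $P^4 \subseteq X$; this produces the required single edges in $R_i^X \subseteq \Cay(G,X)$. The strategy for these index bounds combines the convexity of the cone $R_i^X$ (Lemma \ref{lem:qcxity}), which forces the sub-geodesic of $\gamma$ between the first and last $y$-vertex in $\alpha$ to lie entirely in $R_i^X$, with the fact that $\gamma$ is a $\Cay(G,X)$-geodesic, so its length on that sub-segment equals the $\Cay(G,X)$-distance between its endpoints. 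Comparing this $d_X$-distance against an explicit detour along $\alpha$ built by grouping consecutive pieces $P_j \cap R_i$ into blocks of four elements of $P^4$ (each providing a single edge of $R_i^X$) yields the desired cardinality bound on $I$.

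The main obstacle is the tight bound on $|I|$ (and on $i - \min I$ and $\max I - i$): it requires balancing the $\Cay(G,S)$-length contributions of the pieces on the cycle $R_i$ against the $\Cay(G,X)$-geodesic length of the associated sub-segment of $\gamma$, exploiting both convexity of $R_i^X$ and the fact that every product of at most four pieces lies in $P^4 \subseteq X$.
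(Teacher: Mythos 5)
Your overall architecture is the same as the paper's: decompose $P_{ess}\cap R_i$ as the geodesic $P_i$ together with a bounded number of pieces $R_j\cap R_i$, get the length bound from $\abs{P_i}\leq\frac12\abs{R_i}$ plus $6\cdot\frac{1}{24}\abs{R_i}$, and obtain the ``moreover'' statement by showing each endpoint of $P_{ess}\cap R_i$ is joined to $y_{i-1}$ (resp.\ $y_i$) by a concatenation of at most $3$ pieces, hence by a single edge of $R_i^X$ since $P^4\subseteq X$. However, the decisive step --- the index bounds $i-\min I\leq 3$, $\max I-i\leq 3$ (equivalently $\abs{I}\leq 7$) --- is exactly what you defer as ``the main obstacle,'' offering only a strategy (``grouping consecutive pieces into blocks of four'') rather than an argument. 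This is the mathematical heart of the lemma, and neither part (i) nor part (iii) of Lemma \ref{numberoffacesbound} applies to your one-face diagram (part (i) assumes $\partial F\neq R_j$ for all $j\in I$, and part (iii) needs $\partial F\setminus\alpha$ to be a union of boundedly many pieces, which the complementary arc of $R_i$ need not be). The paper closes this gap with a short contradiction argument: if the arc of $P_{ess}\cap R_i$ from its initial vertex $z$ to $y_{i-1}$ were not a union of at most $3$ pieces, then the smallest index $j$ with $y_j$ joined by an edge to $P_{ess}\cap R_i$ would satisfy $j\leq i-4$, whence $3\leq (i-1)-j\leq d_X(y_j,y_{i-1})\leq 1+\lceil\frac34\rceil=2$, using that $\gamma$ is an $X$--geodesic for the lower bound and the $P^4$--edges for the upper bound. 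You need to actually supply an estimate of this kind; as written the proof is circularly close to assuming its conclusion.

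A secondary gap: your case division (``$P_i\subseteq P_{ess}$'' versus ``$P_i\cap P_{ess}=\emptyset$'') omits the intermediate case where $P_i$ meets $P_{ess}$ but $y_{i-1}$ or $y_i$ does not lie on $P_{ess}$ (because it sits inside a removed type (i) self-intersection). In that case the endpoint of $P_{ess}\cap R_i$ is not a $y$-vertex, and the required edge comes instead from Lemma \ref{disttoPess}, which guarantees that $y_{i-1}$ is joined by an edge of $R_i^X$ to the corresponding endpoint. This case must be treated explicitly, as the paper does, before the ``moreover'' clause is established in full generality.
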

\begin{proof} Consider all the vertices $x_j,\ldots,x_k$ contained in $R_i$. Since the set of indices appearing in this list is a subinterval of $\set{0,\ldots,m}$ by Lemma \ref{lem:qcxity}, we have $y_i=x_k$. Moreover, if we define the set of all vertices contained in $R_{i-1}$ to be $x_p,\ldots,x_q$, we have $q\leq j+1$, since for any two distinct relators, $R$ and $S$ if $x_k,x_l\in R\cap S$ with $k<l$ then there is a path in $R\cap S$ connecting $x_k$ to $x_l$ and so $l-k \leq d_{X}(x_k,x_l)= 1$.  

Moreover, $q<k$, for  otherwise we contradict the choice of $R_i$ in the construction of the $S$--path associated to $\gamma$. Hence, $y_{i-1}\in R_{i-1}\cap R_i\cap \set{x_j,x_{j+1}}$. 

Let $z,z'$ be the initial and terminal endpoints of $R_i\cap P_{ess}$, respectively.  If $y_{i-1}$ does not lie on $P_{ess}$, then it follows from the proof of Lemma \ref{disttoPess} that $y_{i-1}$ is connected by an edge to the point $z$.  Similarly, if $y_i$ does not lie on $P_{ess}$, then it is connected to $z'$ by an edge.  

If $y_{i-1}$ lies on $P_{ess}$, we will show that there is a path connecting $z$ to $y_{i-1}$ which is contained in the union of at most 3 pieces $R_{i'}\cap R_i$, and thus $y_{i-1}$ can be connected to $z$ by an edge.  To this end, assume that any geodesic from $z$ and $y_i$ cannot be connected by the union of at most 3 pieces $R_{i'}\cap R_i$, and let $j$ be the smallest index such that $y_j$ is connected to $P_{ess}\cap R_i$ by an edge.  By assumption, $j\leq i-4$.  However, this implies that $3\leq d_X(y_{j},y_{i-1})\leq 1+\lceil\frac34\rceil$, which is a contradiction.  Similarly, if $y_i$ lies on $P_{ess}$, then there is a path connecting $z'$ to $y_i$ which is contained in the union of at most 3 pieces $R_i\cap R_{i'}$, and so $y_i$ can be connected to $z'$ by an edge.

It follows that $P_{ess}\cap R_i$ is contained in the union of the geodesic $P_i$ and at most six pieces $R_i\cap R_{i'}$.  Since $P_{ess}\cap R_i$ is connected by Lemma \ref{lem:essRconn}(i), we have $\abs{P_{ess}\cap R_i}< (\frac12 + \frac{6}{24})\abs{R_i}$.

\end{proof}

\section{Hyperbolicity of coned-off graphs}\label{sec:hyperbolicity}
The main goal of this section is the following theorem.

\begin{theorem}\label{thm:thinconehyp} Let $\mathcal P=\fpres{S}{r_1,r_2,\dots}$ be a $C'(\frac{1}{24})$ presentation of a group $G$.  Then $\TCG\subseteq \mathcal H(G)$.
\end{theorem}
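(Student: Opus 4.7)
The plan is to verify Papasoglu's bigon criterion (Equation \ref{graphhyp}) for $\Cay(G,X)$: I want to produce a uniform $\delta' \geq 0$ such that for every geodesic bigon $(\gamma_1, \gamma_2)$ in $\Cay(G,X)$ we have $\gamma_1 \subseteq N_{\delta'}(\gamma_2)$. Fix $\delta$ such that every cone $C_i^X$ is $\delta$--hyperbolic, available by the thin-cone hypothesis. After replacing the bigon by a maximal sub-bigon in which $\gamma_1 \cap \gamma_2$ consists only of the two shared endpoints, I construct the essential $S$--paths $P_{ess}^1, P_{ess}^2$ associated to $\gamma_1, \gamma_2$ from Section \ref{sec:conedoffgraph}. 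By Lemma \ref{disttoPess}, every vertex of $\gamma_i$ lies within $X$--distance $2$ of $P_{ess}^i$, so it is enough to bound $d_X(P_{ess}^1, P_{ess}^2)$ uniformly.

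Using the Van-Kampen lemma, I fill the closed loop $P_{ess}^1 \cdot \overline{P_{ess}^2}$ (viewed in $\Cay(G,S)$) with a reduced diagram $D$. By Proposition \ref{prop:combgeods}, $D$ is a combinatorial geodesic bigon, and then Strebel's classification (Theorem \ref{thm:Strebel}) forces $D$ to have form $I_1$: a chain of faces $F_1, \ldots, F_k$ glued along interior arcs that are pieces. Write $\partial F_j = R^{(j)}$ and let $e_j^1 \subseteq P_{ess}^1$, $e_j^2 \subseteq P_{ess}^2$ denote the exterior arcs of $F_j$, with endpoints $a_j, a_{j+1}$ and $b_j, b_{j+1}$. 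Because $X \supseteq P^4$, every piece bounds an edge in the corresponding cone, so the interior arcs of $F_j$ yield $d_X(a_j, b_j), d_X(a_{j+1}, b_{j+1}) \leq 1$ inside $(R^{(j)})^X$.

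Next I localize inside each cone $(R^{(j)})^X$, which is $\delta$--hyperbolic. By convexity (Lemma \ref{lem:qcxity}), the subarcs $\gamma_1 \cap (R^{(j)})^X$ and $\gamma_2 \cap (R^{(j)})^X$ are geodesics in the cone. Using Lemma \ref{lem:Pesslocgeod} when $R^{(j)}$ coincides with a relator $R_i$ used in the construction of $P_{ess}^1$ (so that the endpoints of $P_{ess}^1 \cap R_i$ are $X$--adjacent to the $y$--vertices $y_{i-1}, y_i$ on $\gamma_1$), and Lemma \ref{numberoffacesbound}(i) in the complementary case (which restricts $e_j^1$ to spanning at most five consecutive relators $R_i$ and hence bounds its $X$--diameter via pieces), I would show that the endpoints of $\gamma_1 \cap (R^{(j)})^X$ lie within a uniform $X$--distance $K$ of $\{a_j, a_{j+1}\}$ in the cone, and similarly for $\gamma_2$ at $\{b_j, b_{j+1}\}$. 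Combined with the interior-arc bound, this produces two geodesics in $(R^{(j)})^X$ whose endpoints differ by at most $2K+1$, and the $\delta$--hyperbolicity of $(R^{(j)})^X$ then gives that these geodesics lie in a uniform $O(\delta + K)$--neighborhood of each other.

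Piecing this estimate across the faces $F_1, \ldots, F_k$ and applying Lemma \ref{disttoPess} once more to pass from $\gamma_i$ back to $P_{ess}^i$, I would obtain the uniform $\delta'$ required by Papasoglu's criterion. The main obstacle will be the endpoint bound in each cone: the entry/exit points of $\gamma_1$ into $(R^{(j)})^X$ need not be $y$--vertices, so a careful case analysis distinguishing whether $R^{(j)}$ equals one of the relators $R_i$ used in the $S$--path construction is needed in order to extract a constant $K$ that is independent of $j$, and this analysis is what absorbs the bulk of the technical work.
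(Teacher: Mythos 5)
Your overall frame (Papasoglu's bigon criterion, essential $S$--paths, Strebel type $I_1$, pieces giving $X$--edges across interior arcs, cone hyperbolicity for matched relators) is the right one, but two steps as written do not go through. First, Lemma \ref{disttoPess} bounds the distance from the $S$--path $P$ to $P_{ess}$, not from $\gamma$ to $P_{ess}$: a vertex of $\gamma$ strictly between $y_{i-1}$ and $y_i$ lies on the relator $R_i$, but its $X$--distance to $P_i$ is a priori only bounded by the diameter of the cone $R_i^X$, which is unbounded. So the reduction ``it is enough to bound $d_X(P^1_{ess},P^2_{ess})$'' is unjustified (and the symmetric passage from $P^2_{ess}$ back to $\gamma_2$ has the same problem). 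Controlling these interior vertices is exactly where the $\delta$--hyperbolicity of the cones must be spent; it cannot be disposed of up front. The paper accordingly never compares the essential paths to each other: it compares $\gamma^1$ to $\gamma^2$ directly, one segment $\gamma^1_i$ (from $y^1_{i-1}$ to $y^1_i$) at a time.

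Second, the face-by-face organization misses the critical configuration. The case that forces the thin-cone hypothesis is when a single relator satisfies $R=R^1_i=R^2_j$, so that both geodesics make long excursions through the same cone $R^X$. In the paper's sub-case $(\ast_a)$ the two essential paths actually overlap along a segment inside $R$; that segment is not on the boundary of any face of any filling diagram, so there is no face $F_j$ with $\partial F_j=R$ carrying exterior arcs on both sides, and your scheme never compares the two excursions. (Relatedly, $P^1_{ess}\cup P^2_{ess}$ is not a simple cycle in general -- one must first decompose it into shared segments and simple sub-bigons before invoking van Kampen.) Conversely, for faces whose boundary is not a relator of either $S$--path, or is an $R^1_i$ but no $R^2_j$, the geodesics need not enter the cone $(R^{(j)})^X$ at all (Lemma \ref{lem:essRconn}(ii)), so localizing a pair of cone-geodesics there is vacuous; the paper handles these unmatched segments purely by piece counting (Lemmas \ref{numberoffacesbound} and \ref{lem:Pesslocgeod}), showing $\gamma^1_i$ has uniformly bounded length and endpoints within distance $3$--$4$ of $\gamma^2$, with no appeal to $\delta$. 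The correct dichotomy is the paper's condition $(\ast)$ on whether the relator is shared, not a face-by-face sweep of the diagram; once $R^1_i=R^2_j$ is detected, the quadrangle on $y^1_{i-1},y^1_i,y^2_j,y^2_{j-1}$ lies in a single $\delta$--hyperbolic cone by Lemma \ref{lem:qcxity} and has two short sides, which is where the constant $7+2\delta$ comes from.
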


To prove the theorem, we show for any thin cone $X$, every geodesic bigon in $\Cay(G,X)$ satisfies (\ref{graphhyp}) with $\delta'=7+2\delta$, where $\delta$ is a hyperbolicity constant of the cones $C_i^X$ in the sense of (\ref{spacehyp}): for every geodesic triangle in $C_i^X$ each side is contained in the closed $\delta$--neighborhood of the other two. This suffices to deduce hyperbolicity by \cite[Theorem 2]{Papas_bigons}.

\subsection{Essential $S$--bigons}

Given two vertices $x,x'\in\Cay(G,X)$ which are contained in distinct cones, and a pair of geodesics  $\gamma^1$ from $x$ to $x'$ and $\gamma^2$ from $x'$ to $x$ in $\Cay(G,X)$.  We define an {\bf $S$--bigon corresponding to $(\gamma^1,\gamma^2)$} to be $(P^1, P^2)$ where $P^1$ and $P^2$ be $S$--paths corresponding to $\gamma^1$ and $\gamma^2$, respectively.  We analogously define {\bf essential $S$--bigons $(P^1_{ess}, P^2_{ess})$ corresponding to $(\gamma^1, \gamma^2)$}. Note that $P^t$, $P^t_{ess}$ and $\gamma^t$ have the same endpoints, and thus $(P^1, P^2)$ and $(P^1_{ess}, P^2_{ess})$ are bigons in $\Cay(G,X)$. Moreover, by Proposition \ref{prop:combgeods}, $(P^1_{ess}, P^2_{ess})$ is a combinatorial geodesic bigon in $\Cay(G,S)$.

We append the superscript $t=1,2$ to any notation already defined in Section \ref{sec:conedoffgraph} for an (essential) $S$--path; for example, the vertices of $\gamma^t$ will be denoted $x_i^t$ and relators used in the construction of $P^t$ will be denoted $R_i^t$.

The essential $S$--bigon $P^1_{ess}\cup P^2_{ess}$ is composed of (possibly degenerate) maximal subpaths $S_1,\dots, S_k$ contained in $P^1_{ess}\cap P^2_{ess}$ and simple cycles $B_1,\dots, B_l$, which are formed by taking closures of connected components of $P^1_{ess}\cup P^2_{ess}\setminus \bigcup_{i=1}^k S_i$. Let $\mathcal S=\{S_1,\dots, S_k\}$ and $\mathcal B=\{B_1,\dots, B_l\}$.

\begin{figure}[H]
\def\svgwidth{4in}  
  \centering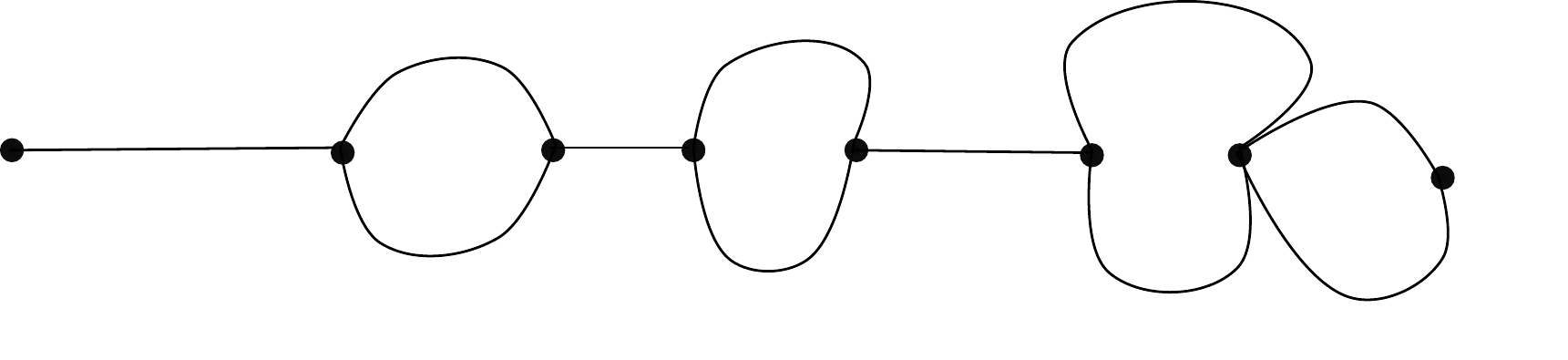 \\
  	\vspace*{5mm}
	\caption{Subdividing the bigon $A_S$ into pieces.  The (possibly degenerate) segments are elements of $\mathcal S$ and the simple cycles are elements of $\mathcal B$.}
\end{figure}

By construction, the simple cycles $B_i$ are bigons.  Moreover, any reduced diagram $D$ with boundary $B_i$ is a combinatorial geodesic bigon (see Definition \ref{def:combinatorialgeodesicpolygon}) by Proposition \ref{prop:combgeods}.

Our goal is to prove that for each $i$, $\gamma^1_i$ is contained in the closed $7+2\delta$--neighborhood of $\gamma^2$.

\begin{lemma}\label{hyppf:selfintersections} Suppose consecutive edges of $P^1_{ess}$ are contained in $R^1_i$ and $R^1_{i'}$ with $i<i'$. Then $d_X(y^1_i,y^1_{i'-1})\leq 2$.
\end{lemma}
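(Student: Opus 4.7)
The plan is to locate a vertex $v$ that is close to both $y^1_i$ and $y^1_{i'-1}$ in the $X$-metric, and to conclude by the triangle inequality. The natural candidate for $v$ is the common endpoint of the two consecutive edges.

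Let $e_1 \subseteq R^1_i$ be the edge of $P^1_{ess}$ immediately preceding $e_2 \subseteq R^1_{i'}$, and let $v$ be the vertex they share. In the case $i' = i+1$ we simply have $v = y^1_i = y^1_{i'-1}$, so the inequality holds trivially. Otherwise $i' \geq i+2$, and the portion of $P^1$ excised between $e_1$ and $e_2$ is a type (i) self-intersection; Proposition \ref{prop:limitselfint} then forces $i' - i \leq 3$, and combined with Lemma \ref{diffrels} this gives $R^1_i \neq R^1_{i'}$ as relators.

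Next I would identify $v$ as the terminal vertex of $P^1_{ess} \cap R^1_i$ and as the initial vertex of $P^1_{ess} \cap R^1_{i'}$. Since type (ii) self-intersections of $P^1$ are precluded by Proposition \ref{prop:limitselfint} and type (i) ones have been excised, the path $P^1_{ess}$ is simple; combined with the connectedness statement of Lemma \ref{lem:essRconn}(i), this means the vertices of $P^1_{ess}$ lying in any given relator form a single contiguous subpath of $P^1_{ess}$. The edge $e_2$ after $v$ leaves $R^1_i$, so $v$ is the terminal vertex of $P^1_{ess} \cap R^1_i$; symmetrically, the edge $e_1$ before $v$ leaves $R^1_{i'}$, so $v$ is the initial vertex of $P^1_{ess} \cap R^1_{i'}$.

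Applying Lemma \ref{lem:Pesslocgeod} to the relators $R^1_i$ and $R^1_{i'}$ in turn yields $d_X(y^1_i, v) \leq 1$ and $d_X(y^1_{i'-1}, v) \leq 1$, and the triangle inequality gives $d_X(y^1_i, y^1_{i'-1}) \leq 2$. The principal subtlety lies in ensuring that $v$ genuinely plays the role of the appropriate endpoint in each of the two intersections, which is why the distinctness $R^1_i \neq R^1_{i'}$ needs to be verified explicitly before appealing to Lemma \ref{lem:Pesslocgeod}.
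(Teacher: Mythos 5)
Your proof is correct and follows essentially the same route as the paper's, which simply takes the common endpoint $x$ of the two consecutive edges, applies Lemma \ref{lem:Pesslocgeod} to get $d_X(y^1_i,x)\leq 1$ and $d_X(x,y^1_{i'-1})\leq 1$, and concludes by the triangle inequality. The additional verifications you supply (the trivial case $i'=i+1$, the bound $i'-i\leq 3$ via Proposition \ref{prop:limitselfint}, and the distinctness $R^1_i\neq R^1_{i'}$ via Lemma \ref{diffrels}) are sound and merely make explicit what the paper leaves implicit.
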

\begin{proof} Let $x$ be the common end vertex of the two edges. By Lemma \ref{lem:Pesslocgeod}, $d_X(y^1_i,x)\leq 1$ and $d_X(x,y^1_{i'-1})\leq 1$. The result follows by the triangle inequality.
\end{proof}

By Lemmma \ref{hyppf:selfintersections}, we need only consider $i$ such that $P_{ess}^1\cap R^1_i$ contains an edge, up to increasing the hyperbolicity constant by $1$. We now split into two cases depending on whether the following condition is satisfied.

\textbf{Condition $(\ast)$:} There exists some $j$ such that $R^1_i=R^2_j$ and either
\begin{enumerate}
 \item[$(\ast_a)$] $P^1_{ess}\cap R^1_i \cap P^2_{ess}$ contains an edge; or
 \item[$(\ast_b)$] there is a face $F\subseteq D$ such that each of $\partial F\cap P^1_{ess}\cap R^1_i$ and $\partial F\cap P^2_{ess}\cap R^2_j$  contains an edge.
\end{enumerate}
Notice that $(\ast_a)$ and $(\ast_b)$ are mutually exclusive: if both occurred then $R^1_i$ would contain an edge in a segment and edges on each of $\partial F\cap P^1_{ess}\cap R^1_i$ and $\partial F\cap P^2_{ess}\cap R^2_j$. This is not possible as such a subgraph must have a vertex of degree at least $3$ and $R^1_i$ is a simple cycle.

\begin{figure}[H]
\begin{center}
\resizebox{2.5in}{!}{
 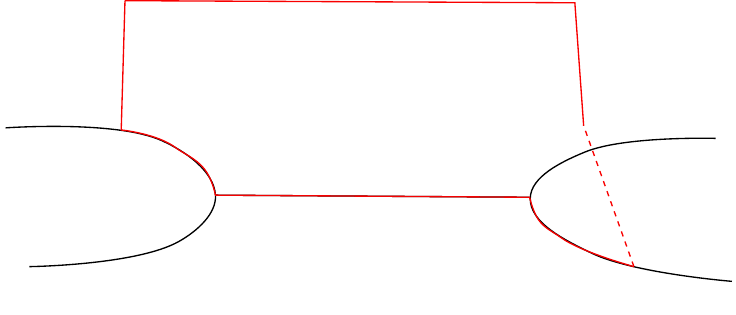} \\
	\caption{One possible configuration satisfying case (a) of $(\ast)$.} \label{fig:daggera}
	\end{center}
\end{figure}

\begin{figure}[H]
\begin{center}
\resizebox{4in}{!}{
 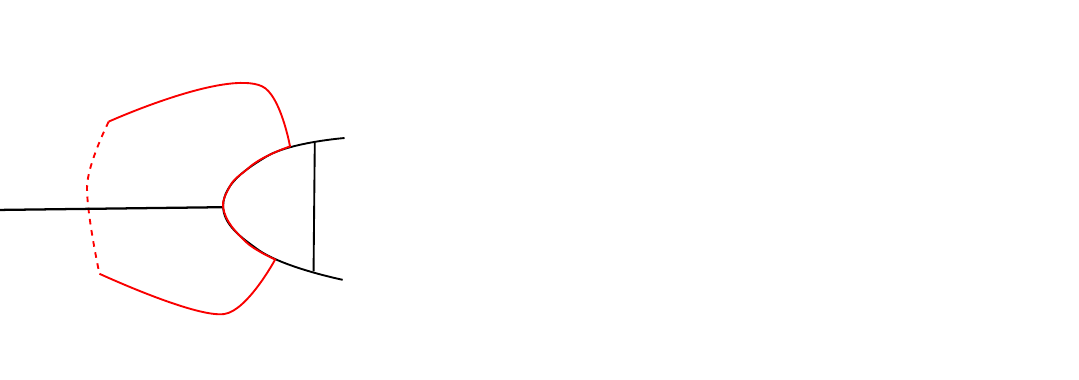} \\
	\caption{Two possible configurations satisfying case (b) of $(\ast)$.} \label{fig:daggerb}
	\end{center}
\end{figure}

\begin{proposition}\label{hyppf:inotj} Suppose $i$ does not satisfy $(\ast)$. Then $\gamma^1_i$ is contained in the closed $6$ neighborhood of $\gamma^2$.
\end{proposition}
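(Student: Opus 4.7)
The plan is to analyze a reduced disc diagram $D$ realizing the simple sub-bigon $B \in \mathcal{B}$ of the essential $S$-bigon decomposition that contains an edge of $\alpha := P^1_{ess} \cap R^1_i$. By Proposition \ref{prop:combgeods}, $D$ is a combinatorial geodesic bigon; Strebel's classification (Theorem \ref{thm:Strebel}) then forces $D$ to have the form $I_1$, a chain of faces, each touching both sides of $B$ and sharing (piece) interior arcs with its neighbours.

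\textbf{Classifying $\partial F$.} For a face $F$ of $D$ whose boundary contains an edge of $\alpha$, set $\epsilon^t = \partial F \cap P^t_{ess}$ and let $I^t$ be the relator-index set associated to $\epsilon^t$ as in Lemma \ref{numberoffacesbound}. Suppose, for contradiction, that $\partial F \neq R^1_k$ for every $k \in I^1$ and $\partial F \neq R^2_l$ for every $l \in I^2$. Then Lemma \ref{numberoffacesbound}(i) applied to each side of the bigon gives $|I^t| \leq 5$, so each $\epsilon^t$ is a union of at most five pieces. Together with the at most two interior arcs (each itself a piece), $\partial F$ is a union of at most twelve pieces, so $|\partial F| < 12 \cdot \tfrac{1}{24}|\partial F| = \tfrac{1}{2}|\partial F|$, contradicting $C'(\tfrac{1}{24})$. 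Hence $\partial F$ is either $R^1_k$ for a (unique, by Lemma \ref{numberoffacesbound}(ii)) $k \in I^1$, or $R^2_l$ for a unique $l \in I^2$.

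\textbf{Applying $\neg(\ast)$.} The delicate subcase is $\partial F = R^1_i$: here $\neg(\ast_b)$ prevents $\epsilon^2$ from containing any edge in an $R^2_j$ with $R^2_j = R^1_i$, so any edge of $\epsilon^2$ must lie in $R^1_i \cap R^2_j$ with $R^2_j \neq R^1_i$, and is therefore a piece. The two interior arcs separating $\epsilon^1$ from $\epsilon^2$ around $\partial F$ are also pieces, hence single $X$-edges (since $P^4 \subseteq X$). Tracing around $\partial F$ from a vertex of $\epsilon^2$ to a vertex of $\alpha$ through these short pieces yields a vertex of $\epsilon^2$ within $d_X$-distance at most $3$ of a vertex of $\alpha$. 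The cases $\partial F = R^1_k$ for $k \neq i$ and $\partial F = R^2_l$ are handled the same way, with the edge of $\alpha$ on $\partial F$ itself belonging to the piece $R^1_i \cap \partial F$.

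\textbf{Closing the loop.} To convert this into closeness between $\gamma^1_i$ and $\gamma^2$, I would chain three ingredients: Lemma \ref{disttoPess} (distance at most $2$ bridges $P^2_{ess}$ to $\gamma^2$), Lemma \ref{lem:Pesslocgeod} (single $X$-edges in $R^{1,X}_i$ connect $y^1_{i-1}, y^1_i$ to the endpoints of $\alpha$), and Lemma \ref{lem:qcxity} (convexity of cones, placing $\gamma^1_i$ inside $R^{1,X}_i$ and making cone-distances agree with $d_X$). The main obstacle is running this chain without paying for the full diameter of $R^{1,X}_i$: the failure of $(\ast)$ is exactly what prevents such a penalty, because it forces every face incident to $\alpha$ to admit a short, piece-bounded crossing from $P^1_{ess}$ to $P^2_{ess}$. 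Careful arithmetic across the resulting face-by-face bounds keeps the cumulative $d_X$-distance from any vertex of $\gamma^1_i$ to $\gamma^2$ at most $6$.
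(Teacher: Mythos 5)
Your overall strategy matches the paper's: pass to a reduced diagram for the relevant sub-bigon, use Strebel's form $I_1$, classify faces meeting $R^1_i$ by piece-counting against $C'(\frac{1}{24})$, and use the failure of $(\ast)$ to force all relevant intersections to be pieces. Your face classification (every face is some $R^1_k$ or some $R^2_l$) is a correct consequence of Lemma \ref{numberoffacesbound}, though the paper organizes the cases differently (whether or not some face satisfies $\partial F=R^1_i$). However, the proposal has a genuine gap exactly where the constant $6$ has to be earned, and the final paragraph defers this to ``careful arithmetic'' without supplying the two ideas that make the arithmetic work.

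First, landing on ``a vertex of $\epsilon^2=\partial F\cap P^2_{ess}$ within distance $3$ of $\alpha$'' is not the same as landing near $\gamma^2$: an arbitrary vertex of $P^2_{ess}$ can a priori be far in $d_X$ from every vertex $y^2_j$ of $\gamma^2$, and Lemma \ref{disttoPess} does not give a ``distance at most $2$ bridge from $P^2_{ess}$ to $\gamma^2$'' --- it bounds distances from the non-essential parts of $P$ to $P_{ess}$, and attaches $y_j$ only to the \emph{endpoints} of $P_j\cap P_{ess}$. The paper closes this gap by showing (via a length count using Lemma \ref{lem:Pesslocgeod}) that when $\partial F=R^1_i$ the arc $\partial F\cap P^2_{ess}$ meets \emph{exactly five} relators $R^2_{j_1},\dots,R^2_{j_5}$, and then routing both endpoints of $\partial F\cap P^1_{ess}$ to the \emph{middle} vertex $y^2_{j_3}$, so that each route is one interior arc plus at most three pieces, i.e.\ a single $X$-edge; this is what yields $d_X(y^1_{i-1},y^2_{j_3}),d_X(y^1_i,y^2_{j_3})\le 3$ and hence $\gamma^1_i\subseteq B_6(y^2_{j_3})$. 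Second, you never bound the length of $\gamma^1_i$ itself. Closeness of the two endpoints $y^1_{i-1},y^1_i$ to (possibly distant points of) $\gamma^2$ does not control the interior of the segment; you need either that both endpoints are close to a \emph{common} point of $\gamma^2$ (as in the five-piece argument above), or a separate bound such as the paper's $M\le 2$, $N\le 5$ count showing $d_X(y^1_{i-1},y^1_i)\le 4$ in the case where no face equals $R^1_i$. Without these two steps the argument does not produce the stated neighborhood bound.
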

\begin{proof}
Let $D$ be a diagram whose boundary is the bigon $A_S$. If $R^1_i=\partial F$ for some face $F\subseteq D$, then, since $(\ast_b)$ fails, $P^2_{ess}\cap \partial F$ is contained in a union of $5$ pieces $\partial F\cap R^2_{j_m}$ with $1\leq m\leq 5$ by Lemma \ref{numberoffacesbound}(i) (see Figure \ref{fig:4.3pf} (left)).  Moreover, $P^2_{ess}\cap\partial F$ cannot be contained in union of at most $4$ pieces in $\partial F$.  To see this, notice that $\partial F$ is the union of $\partial F\cap P^1_{ess}$, which has length less than $\frac34\abs{\partial F}$ by Lemma \ref{lem:Pesslocgeod}, at most 2 internal arcs and $N$ pieces $\partial F\cap R^2_j$, and so
\[
 \abs{\partial F} < \left(\frac34+\frac{2}{24} + \frac{N}{24}\right)\abs{\partial F}.
\] This is a contradiction if $N\leq 4$.
Assume $j_1<\ldots <j_5$. By Lemma \ref{disttoPess} there is an edge in $X$ connecting $y^2_{j_3}$ to some vertex $v$ on $R^2_{j_3}\cap\partial F$, and edges connecting $y^1_{i-1}$ and $y^1_i$ to the end vertices of $\partial F\cap P^1_{ess}$. Each of these end vertices is connected to $v$ by an edge in $X$, since there are paths in $D$ connecting them which are the union of at most $4$ pieces (one internal arc and at most $3$ pieces $R^2_{j_m}\cap\partial F$). Hence
\[
 \max\set{d_X(y^1_{i-1},y^2_{j_3}),d_X(y^1_{i},y^2_{j_3})}\leq 3,
\]
and so $\gamma^1_i \subset B_6(y^2_{j_3})\subset N_6(\gamma^2)$.

\begin{figure}
\begin{center}
\resizebox{5in}{!}{
 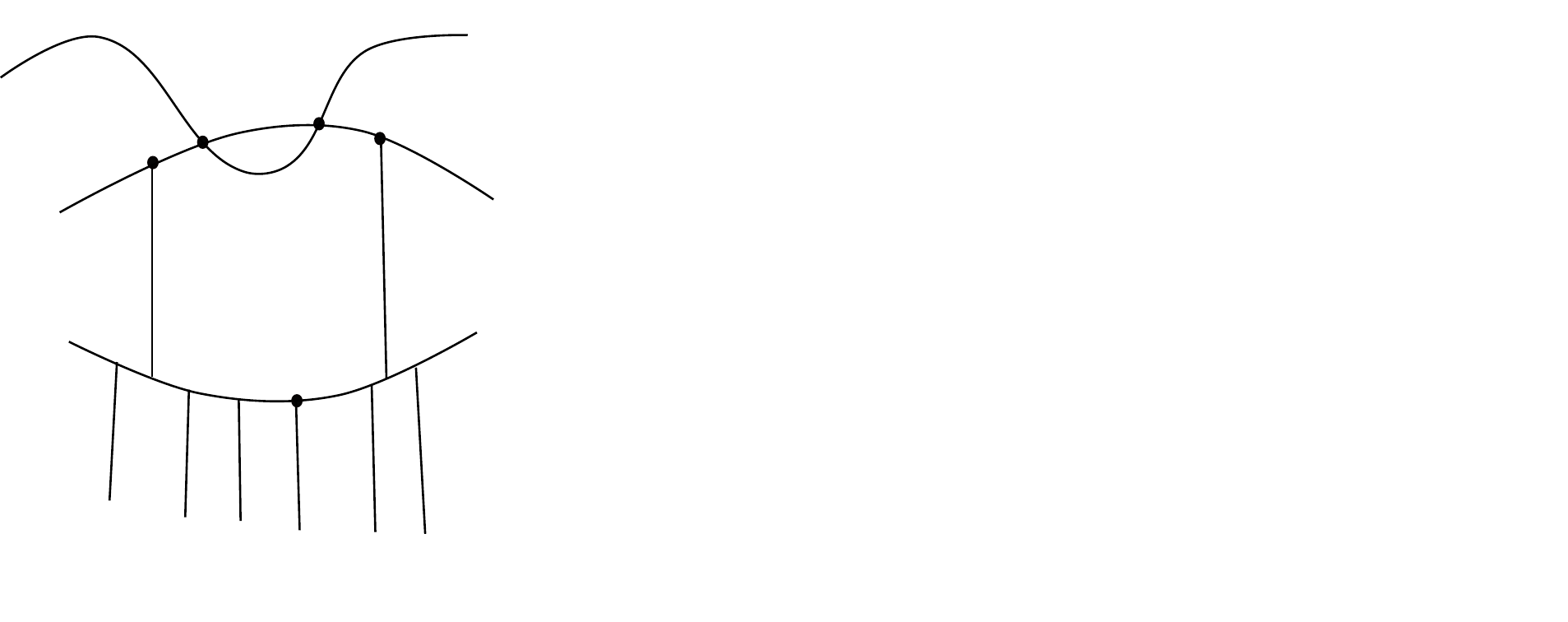} \\
	\caption{Two cases of the proof of Proposition \ref{hyppf:inotj}, when $R^1_i=\partial F$ (left) and when $R^1_i\neq\partial F$ (right).} \label{fig:4.3pf}
	\end{center}
\end{figure}

If $R^1_i\neq\partial F$ for all $F\subset D$, then by splitting $P^1_{ess}\cap R^1_i$ into its intersections with bigons and pieces, then  since $(\ast_a)$ fails, we deduce that $P^1_{ess}\cap R^1_i$ is contained in: some number $M$ of pieces which are the intersection of $R^1_i$ with boundaries of faces $F\subset D$; and some number $N$ of pieces which are intersections of $R^1_i$ and relators $R^2_j$ (see Figure \ref{fig:4.3pf} (right)).  We first show that $M\leq 2$, i.e., there are at most two faces $F_1,F_2$ in $D$ such that $\partial F_m\cap R^1_i$ contains an edge.  Notice that if there were at least three, then since $P^1_{ess}\cap R^1_i$ is connected, there is a face $F\subset D$ such that $\partial F\cap P^1_{ess}$ is contained in the piece $R^1_i\cap \partial F$, and so
\[
\abs{\partial F} < \left(\frac34+\frac{3}{24}\right)\abs{\partial F},
\]
which is a contradiction. We next show that $N\leq 5$.  To see this, notice that if $R^2_{j_1}\cap R^1_i$ and $R^2_{j_2}\cap R^1_i$ contain edges with $j_2>j_1$, then
\[
 (j_2-1)-j_1\leq d_X(y^2_{j_1},y^2_{j_2-1}) \leq 2+ \left\lceil\frac{(j_2-1)-j_1}{4}\right\rceil
\]
so $(j_2-1)-j_1\leq 3$.

We have shown that $P^1_{ess}\cap R^1_i$ is contained in at most two pieces $\partial F_m\cap R^1_i$ and at most $5$ pieces $R^2_j\cap R^1_i$, and so $d_X(y^1_{i-1},y^1_i)\leq 2+\lceil\frac74\rceil=4$.

Define $v^1_{i-1}$ to be an end vertex of $R^1_i\cap P^1_{ess}$ which is connected to $y^1_{i-1}$ by an edge. If $v^1_{i-1}\in P^2_{ess}$, then $v^1_{i-1}$ is connected to an end vertex of some $R^2_j\cap P^2_{ess}$ by a piece.  Thus $d_X(y^1_{i-1},\gamma^2)\leq 3$.

If $v^1_{i-1}\not\in P^2_{ess}$, then it lies in the boundary of a face $F\subseteq D$, and there is a path which is the union of at most 2 pieces in $\partial F$ from $v^1_{i-1}$ to an end vertex $w^1_{i-1}$ of $P^2_{ess}\cap \partial F$.  If $\partial F=R^2_j$ for some $j$, then $d_X(w^1_{i-1},\gamma^2)\leq 1$ by Lemma \ref{lem:Pesslocgeod}.  Otherwise, $\partial F\cap P^2_{ess}$ is a union of exactly five pieces $\partial F\cap R^2_j$ by Lemma \ref{numberoffacesbound}(i), as above.  In this case, there is a path from $w^1_{i-1}$ to an end vertex of some $P^2_{ess}\cap R^2_j$ which is contained in a piece, and so $d_X(y^1_{i-1},\gamma^2)\leq 1+\lceil\frac34\rceil+1=3$.  The same analysis proves that $d_X(y^1_i,\gamma^2)\leq 4$, and thus $\gamma^1_i\subseteq N_5(\gamma^2)$.

\end{proof}

\begin{proposition}\label{hyppf:iandj} Suppose $i$ satisfies $(\ast)$. Then $\gamma^1_i\subseteq N_{6+2\delta}(\gamma^2_j)$.
\end{proposition}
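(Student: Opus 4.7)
The strategy is to exploit the $\delta$--hyperbolicity of the common cone $R^X$, where $R:=R^1_i=R^2_j$. The first observation is that, by Lemma \ref{lem:qcxity}, since the endpoints of both $\gamma^1_i$ and $\gamma^2_j$ lie in $R^X$, the full geodesics $\gamma^1_i$ and $\gamma^2_j$ are contained in $R^X$, and each is a geodesic in the induced $X$--metric on $R^X$. The conclusion will then follow from the classical stability of geodesics in a $\delta$--hyperbolic space: if $\alpha,\beta$ are geodesics with $d(\alpha_-,\beta_-)\leq K$ and $d(\alpha_+,\beta_+)\leq K$ for an appropriate matching of endpoints, then $\alpha\subseteq N_{K+2\delta}(\beta)$. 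Applied inside $R^X$ with $K=6$ this yields $\gamma^1_i\subseteq N_{6+2\delta}(\gamma^2_j)$, so the problem reduces to matching the four vertices $y^1_{i-1},y^1_i,y^2_{j-1},y^2_j$ into two pairs, each at $X$--distance at most $6$.

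In case $(\ast_a)$, I will use the shared edge $xx'\subseteq P^1_{ess}\cap R\cap P^2_{ess}$. By Lemma \ref{lem:Pesslocgeod}, each $y^t_{*}$ is connected by a single $X$--edge to the corresponding initial or terminal vertex of $P^t_{ess}\cap R$, and these four vertices all lie on the cycle $R$. Since $xx'$ belongs to both $S$--paths while $P^1_{ess}$ and $P^2_{ess}$ traverse the bigon in opposite directions, a case analysis of the positions of the entry/exit vertices on the cycle $R$ will show that they can be matched in pairs so that each pair is connected by a subpath of $R$ that is the union of only a bounded number of pieces; by the inclusion $P^4\subseteq X$, each such path collapses to a short $X$--path in $R^X$, and after adding the two edges coming from Lemma \ref{lem:Pesslocgeod} each matched pair ends up at $X$--distance at most $6$. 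In case $(\ast_b)$ the role of the shared edge is played by the common face $F$: Lemma \ref{numberoffacesbound} limits how $\partial F$ can meet each $P^t_{ess}\cap R$, so one can travel from an endpoint of $\partial F\cap P^1_{ess}\cap R$ to an endpoint of $\partial F\cap P^2_{ess}\cap R$ along $\partial F$ through a bounded number of pieces, again producing the required endpoint bound.

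The main obstacle will be the endpoint matching in case $(\ast_a)$: the two essential $S$--paths may wind around the cycle $R$ in several ways, and one must carefully use the shared edge together with the constraint $\abs{P^t_{ess}\cap R}<\tfrac{3}{4}\abs{R}$ from Lemma \ref{lem:Pesslocgeod} and the small cancellation hypothesis $C'(\tfrac{1}{24})$ to translate $S$--closeness on the cycle into $X$--closeness in the cone. Once the endpoint estimate is in place, the $\delta$--hyperbolicity of $R^X$ finishes the proof immediately via the fellow--traveling lemma recalled above, and the additive constant $2\delta$ in the conclusion appears exactly at this step.
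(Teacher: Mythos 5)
Your overall architecture coincides with the paper's: both arguments reduce the proposition to showing that the endpoint pairs $y^1_{i-1},y^2_{j-1}$ and $y^1_i,y^2_j$ are uniformly close in $\Cay(G,X)$, and then finish using $\delta$--hyperbolicity inside the common cone $R^X$ (the paper phrases this as thinness of the geodesic quadrangle on these four vertices, which lies in $R^X$ by Lemma \ref{lem:qcxity}; your fellow--traveling formulation is equivalent and produces the same additive constant).

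However, the endpoint estimate is the entire content of the proposition, and your treatment of it in case $(\ast_a)$ has a genuine gap. You propose to join the entry/exit vertices $v^1_{i-1},v^2_{j-1}$ (and $v^1_i,v^2_j$) by subpaths \emph{of the cycle $R$} that are unions of boundedly many pieces. There is no a priori reason such subpaths exist: $P^1_{ess}\cap R$ and $P^2_{ess}\cap R$ can each have $S$--length up to $\frac34\abs{R}$, so the two entry vertices can be far apart along $R$ in the $S$--metric, and the arc of $R$ between them need not decompose into a bounded number of pieces. The paper's argument does not route through $R$ at all: it works in a reduced diagram $D$ for the essential $S$--bigon and shows --- using the connectivity of $P^t_{ess}\cap R$ (Lemma \ref{lem:essRconn}), the bound of Lemma \ref{lem:Pesslocgeod}, and the fact that under $(\ast_a)$ no face of $D$ can have boundary equal to $R^1_i$ --- that $v^1_{i-1}$ and $v^2_{j-1}$ either coincide or lie on the boundary of a common face $F$ of $D$; they are then joined along the interior part of $\partial F$ by at most $i(F)$ interior arcs, i.e.\ by a product of at most four pieces, hence by a single $X$--edge, giving $d_X(y^1_{i-1},y^2_{j-1})\leq 3$. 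Without this diagrammatic step (or a substitute for it), your ``case analysis of positions on the cycle $R$'' does not close. Your sketch of case $(\ast_b)$, by contrast, is essentially the paper's argument: the relevant pairs are equal or joined through $\partial F$ by an internal arc or a piece.
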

\begin{proof}
It suffices to show that $d_X(y^1_{i-1},y^2_{j-1})\leq 3$ and $d_X(y^1_{i-1},y^2_{j-1})\leq 3$, since the geodesic quadrangle with vertices $y^1_{i-1},y^1_i,y^2_j,y^2_{j-1}$ is contained in a single cone which is $\delta$-hyperbolic.

Let $v^1_{i-1}$ and $v^1_i$ be the end vertices of $R^1_i\cap P^1_{ess}$, and define $v^2_{j-1},v^2_j$ analagously.  By Lemma \ref{lem:Pesslocgeod}, $y^1_{i-1}$ and $y^1_i$ are each connected by an edge to $v^1_{i-1}$ and $v^1_i$, respectively, and similarly for $y^2_{j-1}$ and $y^2_j$.

Suppose $(\ast_a)$ holds (see Figure \ref{fig:daggera}).  If $v^1_{i-1}=v^2_{j-1}$, then $d_X(y^1_{i-1},y^2_{j-1})\leq 2$. Since $R^1_i$ and $R^2_j$ have exactly the same intersections with segments, and their intersections with essential paths are connected, if $v^1_{i-1}\neq v^2_{j-1}$ then they cannot both lie on a segment. It follows from the small cancellation assumption and the fact that $P^1_{ess}\cap R^1_i$ is connected that the only way $P^1_{ess}\cap R^1_i$ can contain $P^1_{ess}\cap \partial F$ for some face $F$ in $D$ is if $\partial F=R^1_i$.  Since $R^1_i$ contains an edge in a segment of $P^1_{ess}\cap P^2_{ess}$, this is impossible.  Thus there is a face $F$ in $D$ such that $v^1_{i-1}, v^2_{j-1}\in\partial F$.  
It follows that $d_X(v^1_{i-1},v^2_{j-1})\leq 1$, and $d_X(y^1_{i-1},y^2_{j-1})\leq 3$. The same reasoning proves $d_X(y^1_{i},y^2_{j})\leq 3$ as required.

Now suppose $(\ast_b)$ holds (see Figure \ref{fig:daggerb}). If $R^1_i=\partial F$, then the pairs $v^1_{i-1}, v^2_{j-1}$ and $v^1_i,v^2_j$ are either equal or connected by an internal arc in $D$. If $R^1_i\neq\partial F$, then  the pairs $v^1_{i-1}, v^2_{j-1}$ and $v^1_i,v^2_j$ are either equal or connected by a piece $\partial F\cap R^i$.  Thus, in either case, $d_X(y^1_{i-1},y^2_{j-1})\leq 3$ and $d_X(y^1_{i},y^2_{j})\leq 3$

\end{proof}

Theorem \ref{thm:thinconehyp} follows from Propositions \ref{hyppf:inotj} and \ref{hyppf:iandj}, and Lemma \ref{hyppf:selfintersections}.

\section{Acylindricity of actions on coned-off graphs}\label{sec:acylindricity}

In this section, we show that if $G$ is uniformly power-free, then $\TCG\subseteq \mathcal{AH}(G)$.  Recall that an action of a group $G$ by isometries on a metric space $Z$ is \textbf{acylindrical} if for all $\varepsilon>0$ there exist constants $M,N\geq 0$ such that for all $x,y\in Z$ with $d(x,y)\geq M$, the number of elements $g\in G$ satisfying $d(x,gx)\leq \varepsilon$ and $d(y,gy)\leq \varepsilon$ is at most $N$.  The proof of acylindricity will rely heavily on the following classification of essential quadrangles.

\subsection{Essential $S$-quadrangles}
\label{subsec:quads}
Let $Q_{X}=(\gamma^1,\gamma^2,\gamma^3,\gamma^4)$ be a geodesic quadrangle in $\Cay(G,X)$, so the terminal vertex of $\gamma^i$ is the initial vertex of $\gamma^{i+1}$ with indices considered modulo $4$. To each $\gamma^i$ associate an $S$-path $P^i$. We call $(P^1, P^2, P^3, P^4)$ an {\bf $S$--quadrangle} associated to $Q_{X}$.  We analogously define $Q_S=(P^1_{ess}, P^2_{ess}, P^3_{ess}, P^4_{ess})$ to be an {\bf essential $S$--quadrangle} associated to $Q_{X}$. We say $(Q^1,Q^2,Q^3,Q^4)$ is an essential $S$--quadrangle if it is an essential $S$-quadrangle associated to some geodesic quadrangle in $\Cay(G,X)$.  As for bigons, we append the superscript $t=1,2,3,4$ to any notation previously defined in Section \ref{sec:conedoffgraph}.

\begin{lemma} Let $Q$ be a simple closed path in an essential $S$-quadrangle $Q_S$. A reduced diagram $D$ with boundary $Q$ is a (possibly degenerate) combinatorial geodesic quadrangle.
\end{lemma}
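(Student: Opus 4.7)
The plan is to reduce the claim to Proposition \ref{prop:combgeods}. The essential $S$-quadrangle $Q_S$ is a union of four essential $S$-paths $P^1_{ess},\ldots,P^4_{ess}$, each associated to a side $\gamma^t$ of the geodesic quadrangle $Q_X$ in $\Cay(G,X)$. Given a simple closed subpath $Q$ of $Q_S$, I will write it as a cyclic concatenation
\[
 Q = Q_{i_1}\cdot Q_{i_2}\cdots Q_{i_n}
\]
of maximal subpaths, where each $Q_{i_r}$ is contained in a single $P^{i_r}_{ess}$ and consecutive terms lie on different essential $S$-paths (with indices read modulo $n$). Transitions between consecutive arcs occur at common vertices of $P^{i_r}_{ess}$ and $P^{i_{r+1}}_{ess}$: either corners of $Q_S$, or other intersection vertices where the essential paths meet.

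Next I will verify that each $Q_{i_r}$ satisfies the hypothesis of Proposition \ref{prop:combgeods}. Since $Q_{i_r}\subseteq P^{i_r}_{ess}$, its endpoints correspond to vertices $x^{i_r}_a,x^{i_r}_b$ of the geodesic $\gamma^{i_r}$ in $\Cay(G,X)$ (possibly after traversing at most one edge to the nearest such vertex, since the endpoints are shared with neighbouring arcs and hence with corners or intersection vertices of $Q_S$). The subgeodesic of $\gamma^{i_r}$ from $x^{i_r}_a$ to $x^{i_r}_b$ is itself a geodesic in $\Cay(G,X)$, and the restriction of the construction of $P^{i_r}_{ess}$ (choice of relators $R^{i_r}_j$, geodesics $P^{i_r}_j$, and removal of self-intersections of type (i)) to the range of indices between $a$ and $b$ produces exactly $Q_{i_r}$. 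Hence each $Q_{i_r}$ is itself the essential $S$-path of a geodesic in $\Cay(G,X)$, and Proposition \ref{prop:combgeods} yields that any reduced diagram $D$ with boundary $Q$ is a (possibly degenerate) combinatorial geodesic $n$-gon.

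Finally, I must show $n\leq 4$ so that $D$ can be viewed as a (possibly degenerate) combinatorial geodesic quadrangle. Because $D$ is a disc diagram, $\partial D$ is an embedded Jordan curve in the plane carrying $Q$. I will argue that $Q$ meets each $P^t_{ess}$ in a connected arc (possibly empty): if $Q\cap P^t_{ess}$ had two disjoint components $A,A'$, then the arcs of $Q$ joining $A$ to $A'$ would have to travel along other essential paths and re-enter $P^t_{ess}$, which—combined with the simplicity of $Q$ and the planar embedding inherited from $D$—would force $Q$ to cross $P^t_{ess}$ transversely or to traverse a segment of $P^t_{ess}$ already used, contradicting either simplicity of $Q$ or maximality of the $Q_{i_r}$. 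Granting this, $Q$ is a cycle with at most four arcs, one per essential $S$-path, so $n\leq 4$ and $D$ is a (possibly degenerate) combinatorial geodesic quadrangle.

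The main obstacle is the final step: verifying rigorously that simplicity of $Q$ forces each $Q\cap P^t_{ess}$ to be connected. This requires a careful planar argument using the embedding of $\partial D$ and the fact that the essential $S$-paths, being essential, are themselves embedded subpaths in $\Cay(G,S)$ (which Proposition \ref{prop:limitselfint} guarantees). The verification that subpaths of essential $S$-paths inherit the essential $S$-path structure is more routine, but also requires checking that the small cancellation bounds used inside Proposition \ref{prop:combgeods} via Lemma \ref{numberoffacesbound} apply unchanged to each $Q_{i_r}$.
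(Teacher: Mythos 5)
Your overall route is the paper's route: the paper's entire proof is ``This follows immediately from Proposition \ref{prop:combgeods}.'' However, two of the steps you interpose deserve comment. First, the claim that each maximal arc $Q_{i_r}$ is \emph{itself} the essential $S$--path of a subgeodesic of $\gamma^{i_r}$ is both shaky (its endpoints are intersection vertices with other sides, which need not be vertices $y_j$ of $\gamma^{i_r}$ nor endpoints of the chosen geodesics $P^{i_r}_j$, and your ``traverse at most one edge'' fix changes the path) and unnecessary: the proof of Proposition \ref{prop:combgeods} only ever uses Lemma \ref{numberoffacesbound}, whose hypothesis is that the exterior arc $\alpha$ of a face is a subpath of $\partial D\cap\partial F$ contained in some $P^t_{ess}$. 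Containment of each $Q_{i_r}$ in a single $P^{i_r}_{ess}$ is therefore all that is needed, and that holds by definition of the decomposition.

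Second, the step you flag as the ``main obstacle'' --- that $Q$ meets each $P^t_{ess}$ in a connected arc, so that $n\leq 4$ --- is indeed the only point at which the reduction is not purely formal, and you have not actually proved it; your sketch (``would force $Q$ to cross $P^t_{ess}$ transversely or to traverse a segment already used'') is not an argument, since a simple cycle in a union of embedded arcs can perfectly well alternate between two of them more than twice as an abstract graph-theoretic matter. Note the asymmetry in Definition \ref{def:combinatorialgeodesicpolygon}: an $n$--gon with $n<4$ is automatically a degenerate quadrangle (subdividing sides only creates more distinguished vertices and weakens the face condition), whereas an $n$--gon with $n>4$ is not, since merging sides strengthens the condition on boundary faces. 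So if you insist on the maximal-arc decomposition you genuinely need $n\leq 4$, and leaving that unproved leaves your write-up incomplete at exactly the point you identified. The paper sidesteps this by treating the statement as immediate; if you want a complete argument you should either establish the connectivity claim (e.g.\ using Lemma \ref{lem:essRconn} and the small cancellation condition to rule out extra alternations, in the spirit of Step (b) of Proposition \ref{prop:degenquads}), or observe that for the specific simple cycles actually used later the decomposition into at most four arcs is clear from their construction.
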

\begin{proof}
This follows immediately from Proposition \ref{prop:combgeods}.
\end{proof}

We now recall some features of the classification of combinatorial geodesic quadrangles from \cite{ACGH2} which we will require to prove that the action of $G$ on $\Cay(G,X)$ is acylindrical. The key results we will use to limit the possibilities are Lemmas \ref{lem:essRconn}(ii) and \ref{lem:Pesslocgeod}.  Proving acylindricity relies on studying ``long, thin" quadrangles in $\Cay(G,X)$, which we make precise by requiring that
\begin{equation}\label{eq:longthin}
\min\set{l_{X}(\gamma^1),l_{ X}(\gamma^3)}\geq 3\max\set{l_{ X}(\gamma^2),l_{ X}(\gamma^4)}+25,
\end{equation} where $l_X(\alpha)$ denotes the length of the path $\alpha$ in $\Cay(G,X)$. 
All of the results in this section will be under the assumption that (\ref{eq:longthin}) holds.

We will need to use the notions of edge and face reductions in diagrams introduced in \cite[\S 3.2]{ACGH2}. We sketch the ideas here, and refer the reader to  \cite[\S 3.2]{ACGH2} for a more detailed discussion.   
\begin{itemize}
\item Given a diagram $D$ with an edge $e$ such that $D\setminus e$ is not connected, \textbf{reducing the edge} $e$ is the process of collapsing $e$ to a vertex to obtain a diagram $D'$, then removing this vertex and reattaching copies of it to each connected component of $D'\setminus e$ to obtain a collection of at least 2 diagrams.
\item Given a diagram $D$ with a face $F$ such that $D\setminus F$ is not connected, \textbf{reducing the face} $F$ is the process of first adding an edge $e$ to $F$ whose endpoints are on $\partial F\cap\partial D$ such that $D\setminus e$ is not connected, then reducing this edge.  Notice that in each of the new diagrams $D'$ formed, there is a face $F'$ coming from $F$.  \end{itemize}

Our goal for the rest of this subsection is the following theorem.

\begin{theorem}\label{thm:quadclass}
Let $Q_S=(P^1_{ess},P^2_{ess},P^3_{ess},P^4_{ess})$ be an essential $S$-quadrangle associated to a geodesic quadrangle $(\gamma^1,\gamma^2,\gamma^3,\gamma^4)$ satisfying $(\ref{eq:longthin})$. Either there exist $i,j$ such that $R^1_i=R^3_j$, or $P^1_{ess}\cap P^3_{ess}$ contains a path whose end vertices are at distance at least $\max\set{l_{ X}(\gamma^2),l_{ X}(\gamma^4)} + 6$ apart in $\Cay(G,X)$.
\end{theorem}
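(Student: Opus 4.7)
The approach is to build a reduced disk diagram $D$ with boundary along $Q_S$, invoke Proposition~\ref{prop:combgeods} to see that $D$ is a combinatorial geodesic quadrangle, and then derive the dichotomy by a face-by-face analysis that exploits the absence of a common relator together with the classification of combinatorial geodesic quadrangles from \cite{ACGH2}.

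First, I would dispose of self-intersections of $Q_S$ in the same spirit as Proposition~\ref{prop:limitselfint}: any self-intersection between $P^1_{ess}$ and a short side $P^2_{ess}$ or $P^4_{ess}$ is constrained to lie within $X$-distance $O(\max\{l_X(\gamma^2), l_X(\gamma^4)\})$ of the associated corner, and similarly for $P^3_{ess}$. By (\ref{eq:longthin}), either the ``middle thirds'' of $P^1_{ess}$ and $P^3_{ess}$ already share a subpath realising the claim, or the essential portion of $Q_S$ bounds a reduced diagram $D$ to which Proposition~\ref{prop:combgeods} and the quadrangle classification of \cite{ACGH2} apply. Iterated edge and face reductions then split $D$ into a sequence of simple, non-degenerate pieces each of one of the listed types.

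For each simple piece, I would carry out a face-by-face estimate. A face $F \subseteq D$ whose boundary is neither a relator used in $P^1_{ess}$ nor one used in $P^3_{ess}$ has $\abs{\partial F \cap P^t_{ess}}$ controlled by Lemma~\ref{numberoffacesbound}, so by $C'(\tfrac{1}{24})$ it can advance only a bounded $X$-amount along $\gamma^t$ for each $t \in \{1,3\}$. If $\partial F = R^1_i$, then by hypothesis $\partial F \neq R^3_j$ for any $j$, so Lemma~\ref{lem:essRconn}(ii) bounds the $X$-diameter of $\partial F \cap P^3_{ess}$ by $2$; the symmetric bound applies when $\partial F = R^3_j$. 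Since the chain of faces between $P^1_{ess}$ and $P^3_{ess}$ must cover the full $X$-length of both long sides while the short sides contribute at most $2\max\{l_X(\gamma^2), l_X(\gamma^4)\}$, these per-face bounds fall short of what (\ref{eq:longthin}) demands unless the chain collapses, i.e.\ $P^1_{ess}$ and $P^3_{ess}$ share edges directly. Lemma~\ref{lem:essRconn}(i) then promotes any shared edge to a connected shared subpath, whose $X$-length is forced by the same deficit to be at least $\max\{l_X(\gamma^2), l_X(\gamma^4)\} + 6$.

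The main obstacle is the combinatorial bookkeeping inside the quadrangle classification: each type listed in \cite{ACGH2} requires a separate accounting of how corner faces, interior arcs, and the two short sides deplete the ``budget'' of $X$-length along $\gamma^1$ and $\gamma^3$, and one must verify that the constant $6$ in the conclusion absorbs all of these contributions simultaneously. This is in the same spirit as the case analyses carried out for Proposition~\ref{prop:limitselfint} and Propositions~\ref{hyppf:inotj}--\ref{hyppf:iandj}, but the quadrangle case is strictly more intricate because both long sides are essential $S$-paths rather than one being a genuine geodesic in $\Cay(G,S)$.
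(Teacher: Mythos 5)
Your overall framework (a reduced diagram over $Q_S$, Proposition~\ref{prop:combgeods}, the quadrangle classification of \cite{ACGH2}, and per-face small cancellation estimates) matches the paper's, but the pivotal step is wrong as stated. You claim that the per-face bounds ``fall short of what (\ref{eq:longthin}) demands unless the chain collapses,'' so that $P^1_{ess}$ and $P^3_{ess}$ must share edges directly. This does not follow: a long ladder of faces, each straddling $P^1_{ess}$ and $P^3_{ess}$ and each advancing a bounded $X$--amount along both long sides, is perfectly consistent with (\ref{eq:longthin}) --- the longer the ladder, the longer $\gamma^1$ and $\gamma^3$ may be, so no contradiction arises from the budget. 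Worse, the naive $C'(\frac{1}{24})$ count does not kill such faces: if $\partial F=R^1_i$ (and, by the standing assumption, $\partial F\neq R^3_j$ for all $j$), then $\partial F$ decomposes as $\partial F\cap P^1_{ess}$ (length $<\frac34\abs{\partial F}$ by Lemma~\ref{lem:Pesslocgeod}), two interior arcs, and at most five pieces $\partial F\cap R^3_j$ from Lemma~\ref{numberoffacesbound}, which sums to $\frac{25}{24}\abs{\partial F}$ and yields nothing. So faces alternately equal to relators of $P^1_{ess}$ and of $P^3_{ess}$ could a priori chain together indefinitely.

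The paper closes exactly this gap with the content your sketch omits: Lemma~\ref{lem:noladder} (an interior arc from $P^2_{ess}$ to $P^4_{ess}$ already forces $R^1_i=R^3_j$), Lemma~\ref{lem:nozipper} (no zippers, using (\ref{eq:longthin})), Proposition~\ref{prop:essentialquads} (pinning the diagram to the forms of Figure~\ref{fig:reduciblequads}), and then Case~1 of Proposition~\ref{prop:longthin}, which shows that two \emph{adjacent} straddling faces cannot coexist without a shared relator, hence $e_{1,3}\leq 2$. Only once the number of straddling faces is bounded by an absolute constant does the budget argument run --- in the direction opposite to yours: a \emph{short} chain forces $d_X(\gamma^2,\gamma^4)$ to be small, contradicting (\ref{eq:longthin}). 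Note also that in this non-degenerate case the correct conclusion is the first alternative (a shared relator $R^1_i=R^3_j$), not a long shared subpath; the shared subpath arises only in the degenerate case (Proposition~\ref{prop:degenquads}), where one must separately prove that $P^1_{ess}\cap P^3_{ess}\neq\emptyset$ (using Lemma~\ref{lem:R2R4match} to exclude $P^2_{ess}\cap P^4_{ess}\neq\emptyset$) and that this intersection is connected before estimating the distance from its endpoints to $\gamma^2$ and $\gamma^4$. Your proposal does not supply these steps, and without them the dichotomy is not established.
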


Let us collect a few basic observations about faces in (possibly degenerate) combinatorial geodesic quadrangles from Strebel's classification \cite{Str90} and \cite[\S 3]{ACGH2}:
\begin{lemma}\label{quadranglefacts} For a combinatorial geodesic quadrangle, the following hold.
\begin{enumerate}
 \item For each consecutive pair of sides, there is at most one face whose exterior boundary intersects both sides in edges and is contained in their union and whose interior degree at least 3. No such face has interior degree more than $4$.
 \item For each pair of opposite (non-consecutive) sides, there are at most two faces whose exterior boundary intersects both sides in edges and is contained in their union and whose interior degree at least 3.  There can only be one such face with interior degree at least 4, and there are no such faces with higher interior degree.
 \item If the exterior boundary of a face contains an edge and is contained in a single side, then the face has interior degree at most 6.
  \item If a face has no exterior boundary, then it has interior degree either 7 or 8.
\end{enumerate}
\end{lemma}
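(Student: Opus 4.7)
My plan is to prove all four statements via Strebel's curvature formula \eqref{Strebcurv} applied to a reduced disc diagram $D$ realizing the combinatorial geodesic quadrangle, following the classification approach of \cite{ACGH2}. First I would apply edge and face reductions (as in Section 3.2 of \cite{ACGH2}) to reduce to a simple non-degenerate combinatorial geodesic quadrangle (possibly after splitting off smaller sub-polygons). In such a diagram, Strebel's formula distributes total curvature $6$ among face contributions: interior faces ($e(F)=0$) contribute $6 - i(F) \leq -1$ since $i(F) \geq 7$; single-side boundary faces ($e(F)=1$) contribute $4 - i(F) \leq 0$ since $i(F) \geq 4$; and distinguished faces ($e(F) \geq 2$) contribute $6 - 2e(F) - i(F) \leq 2$, which is the sole source of positive curvature.

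In a quadrangle, the total positive curvature from distinguished faces is uniformly bounded (by a small absolute constant $C$, extracted from the combinatorics of the four corners: each $e(F)=2$ face contributes at most $2$ and higher $e(F)$ contributes non-positively). Since the formula sums to $6$, the total \emph{negative} curvature contribution is at most $C - 6$, a small constant. This tight budget immediately drives parts (iii) and (iv): a single-side face with $i(F) \geq 7$ would contribute $\leq -3$, and a purely interior face with $i(F) \geq 9$ would contribute $\leq -3$, either of which exceeds the available negative budget once one takes into account that this face's non-positive neighbors must also be accommodated. Hence $i(F) \leq 6$ for single-side boundary faces (yielding (iii)) and $i(F) \leq 8$ for interior faces, which combined with $i(F) \geq 7$ gives $i(F) \in \{7,8\}$ (yielding (iv)).

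For parts (i) and (ii), I would analyze distinguished faces directly. Any such face with $e(F) \geq 2$ and $i(F) \geq 5$ contributes $6 - 2e(F) - i(F) \leq -3$, again exceeding the negative budget: this establishes the bound $i(F) \leq 4$ in both parts. The counting claims (``at most one face'' per consecutive pair in (i), ``at most two faces'' per opposite pair in (ii)) require further topological analysis. I would argue that two distinguished faces sharing the same pair of sides would necessarily share or separate interior arcs in a way that exhausts the positive curvature supplied by the two corners involved. In (ii), the constraint that exterior arcs lie in opposite (non-consecutive) sides forces each such face to have interior arcs spanning the diagram, which limits coexisting configurations and in particular permits at most one face with $i(F) \geq 4$ because a second would need to nest inside or alongside the first without sufficient corner curvature to support it.

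The main obstacle is making the counting arguments in parts (i) and (ii) fully rigorous. While the negative-curvature budget immediately bounds $i(F)$ in all four parts, the uniqueness/pair bounds require a careful enumeration of how distinguished faces can topologically arrange around the four corners of the quadrangle, tracking which sides their exterior arcs can simultaneously occupy and how their interior arcs must intersect. This closely mirrors the Strebel-style classification of combinatorial geodesic quadrangles carried out in \cite{ACGH2}, and I expect that the rigorous justification proceeds by invoking (or paralleling) that classification in full.
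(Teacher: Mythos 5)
The paper does not actually prove this lemma; it is presented as a set of observations read off from Strebel's classification and the classification of combinatorial geodesic quadrangles in \cite{ACGH2}*{\S 3}, so the intended justification is precisely the structural classification that you defer to only at the end. The problem is that the part of your argument you present as complete --- the curvature budget --- does not in fact establish any of the four bounds. After collapsing arcs to edges, every vertex has degree at least $3$, so Strebel's formula gives $\sum_B \kappa(B)\geq 6$ with $\kappa(B)=6-2e(B)-i(B)$; the faces of positive curvature are exactly the faces with $e(B)=1$ whose exterior arc crosses a distinguished vertex (note that faces with $e(B)\geq 2$ satisfy $i(B)\geq e(B)$ and hence $\kappa(B)\leq 6-3e(B)\leq 0$, so your identification of the $e\geq 2$ faces as the source of positive curvature is already off). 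Each corner-crossing face contributes at most $4-i(B)\leq 3$, and a quadrangle has four corners, so the total positive curvature can be as large as $12$ and the total negative curvature as large as $6$ in absolute value. This bound is attained: the ``four-armed star'' quadrangle (a central face with one exterior arc in each side and four lens-shaped arms ending at the four corners) has four tip faces of curvature $3$ and a central face of curvature $6-8-4=-6$. Consequently a single face of curvature $-3$ is in no way excluded by the global identity, and your claims that parts (iii) and (iv) follow ``immediately'' (interior face with $i\geq 9$, or single-side face with $i\geq 7$, contributing $\leq -3$) and that $i(F)\leq 4$ in parts (i) and (ii) follows for the same reason, all have the same gap. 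The hedge ``once one takes into account that this face's non-positive neighbors must also be accommodated'' is pointing at a discharging argument that is never carried out and is exactly where the real work lies.

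What actually proves the lemma is the local/structural analysis: one cuts the diagram along the face in question (or along interior arcs joining opposite sides) and applies Strebel's classification of the resulting combinatorial geodesic bigons and triangles, together with the constraint that each distinguished vertex can lie in the exterior arc of at most one face. This is the content of \cite{ACGH2}*{\S 3}, and it is what both the existence bounds ($i\leq 4$, $i\leq 6$, $i\leq 8$) and the counting statements (``at most one face'' in (i), ``at most two'' in (ii)) rest on. So the correct response here is either to cite that classification outright, as the paper does, or to reproduce the cut-and-classify argument; the global curvature count alone cannot substitute for it.
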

Notice that Lemmas \ref{numberoffacesbound} and \ref{lem:Pesslocgeod}, along with the small cancellation assumption, immediately rule out the last two possibilities for faces in diagrams whose boundaries are (contained in) essential $S$-quadrangles.

\begin{lemma}\label{lem:R2R4match} Let $(P^1_{ess}, P^2_{ess}, P^3_{ess}, P^4_{ess})$ be an essential $S$-quadrangle associated to a geodesic quadrangle $(\gamma^1,\gamma^2,\gamma^3,\gamma^4)$ satisfying $(\ref{eq:longthin})$. 
If there exist $r,s$ such that $R^2_r=R^4_s=R$, then there exist $i,j$ such that $R=R^1_i=R^3_j$.
\end{lemma}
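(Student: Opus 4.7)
It suffices to show that $R=R^1_i$ for some $i$; swapping the roles of sides $1,3$ (respectively $2,4$) gives $R=R^3_j$ for some $j$. Set $L=\max\{l_X(\gamma^2),l_X(\gamma^4)\}$, so (\ref{eq:longthin}) gives $l_X(\gamma^1)\geq 3L+25$. The vertex $y^2_{r-1}$ sits on $\gamma^2$ at $X$--distance at most $L$ from corner $2$ and $y^4_s$ sits on $\gamma^4$ at $X$--distance at most $L$ from corner $1$, so the triangle inequality along $\gamma^1$ gives
\[
 d_X(y^2_{r-1},y^4_s)\geq l_X(\gamma^1)-2L\geq L+25.
\]
Since $d_X\leq d_S$, the shorter arc $\alpha_R$ of the cycle $R$ from $y^2_{r-1}$ to $y^4_s$ satisfies $|\alpha_R|\geq L+25$, and hence $|R|\geq 2(L+25)$.

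Under $C'(\tfrac{1}{24})$, any subword of $R$ of length at most $|R|/2$ is a geodesic in $\Cay(G,S)$, so $\alpha_R$ is a geodesic in $\Cay(G,S)$. Form the closed loop
\[
 Q := P^1_{ess}\cdot P^2_{ess}[\text{corner }2,y^2_{r-1}]\cdot \alpha_R\cdot P^4_{ess}[y^4_s,\text{corner }1]
\]
in $\Cay(G,S)$; since each side is either a geodesic in $\Cay(G,S)$ or an essential $S$--path of a (subgeodesic of a) geodesic in $\Cay(G,X)$, Proposition \ref{prop:combgeods} tells us that a reduced diagram $D$ with boundary a simple subcycle of $Q$ is a combinatorial geodesic quadrangle.

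Now suppose for contradiction that $R\neq R^1_i$ for all $i$. Then Lemma \ref{lem:essRconn}(ii) forces $P^1_{ess}\cap R$ to have $X$--diameter at most $2$, and by Lemma \ref{numberoffacesbound}(i) this intersection is contained in the union of at most five pieces $R\cap R^1_j$. We then trace the relator $R$ through $D$: a face $F$ of $D$ meeting $\alpha_R$ has either $\partial F=R$ (any interior arc of $\partial F$ is a piece of length $<|R|/24$ by reducedness of $D$) or $\partial F\neq R$ (so $\partial F\cap\alpha_R$ is itself a piece of length $<|R|/24$). Combining these $C'(\tfrac{1}{24})$ bounds on interior arcs with Lemmas \ref{lem:Pesslocgeod}, \ref{diffrels}, and \ref{numberoffacesbound}---which bound how much of $R$ can appear on each of the other three sides of $Q$, crucially using that at most five pieces can appear on $P^1_{ess}$---we conclude that no face $F$ with $\partial F=R$ can account for the full length $|R|$, while a chain of piece-faces along $\alpha_R$ similarly violates the piece budget. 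Either way we reach a contradiction.

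The principal obstacle is this final case analysis: one must enumerate the ways in which a face $F$ (or a chain of non-$R$-faces) can sit in $D$ near the corners of $Q$, touching multiple sides simultaneously, while respecting both Strebel's classification of combinatorial geodesic quadrangles and the $C'(\tfrac{1}{24})$ condition. This parallels the $(\ast_a)/(\ast_b)$ dichotomy of Propositions \ref{hyppf:inotj} and \ref{hyppf:iandj} in the bigon case, with the added subtlety that one side of $D$ sits on the relator $R$ itself.
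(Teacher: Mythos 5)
Your setup tracks the paper's proof closely: reduce to showing $R=R^1_i$, locate two points of $R$ near opposite ends of $\gamma^1$ that are $X$--far apart, close up a quadrangle using an arc of $R$ as one side, invoke Proposition \ref{prop:combgeods}, and play Lemma \ref{lem:essRconn}(ii) against Lemma \ref{numberoffacesbound}. (One small wrinkle: $y^2_{r-1}$ need not lie on $P^2_{ess}$ after type (i) self-intersections are removed, which is why the paper truncates $P^2_{ess}$ at a vertex $z^2$ that demonstrably does, and then checks $d_X(\gamma^2,z^2)\leq 1$.) But the decisive step is missing, and the mechanism you propose for it does not work. In the crucial case where some face $F$ satisfies $\partial F=R$, there is no ``piece budget'' contradiction on $|R|$: the exterior boundary of $F$ may legitimately contain all of $\alpha_R$ (up to $\tfrac12|R|$), up to five pieces of $P^1_{ess}\cap R$, several pieces along each of the truncated $P^2_{ess}$ and $P^4_{ess}$ sides (recall $R=R^2_r=R^4_s$, so $R$ genuinely runs along those sides), and up to four interior arcs; summing these fractions of $|R|$ comfortably exceeds $1$, so no contradiction with $|\partial F|=|R|$ arises. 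The paper's argument at this point is not a length count but an $X$--distance count: the combinatorial geodesic polygon structure forces $i(F)\leq 4$, which pins the endpoints of $P^1_{ess}\cap R$ to within $X$--distance $1$ of $z^2$ and $z^4$; since $d_X(z^2,z^4)$ is large, $P^1_{ess}\cap R$ has $X$--diameter at least $3$, and Lemma \ref{lem:essRconn}(ii) then yields $R=R^1_i$ directly (equivalently, contradicts your diameter-$\leq 2$ hypothesis). Similarly, in the complementary case the paper does not budget pieces along $\alpha_R$ face by face; it shows via small cancellation that every face meeting $\gamma_S$ must meet two further sides, that there are at most two such faces, hence $d_X(z^2,z^4)\leq 1$, contradiction.

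Beyond this, you also omit the degenerate case in which the arc of $R$ meets $P^1_{ess}$ (the paper's Case 2), where the quadrangle collapses and one must argue separately that the face containing $z^2$ is forced to touch $P^1_{ess}$, again feeding into Lemma \ref{lem:essRconn}(ii). Since you explicitly defer the entire case analysis as ``the principal obstacle,'' and the contradiction you are aiming for is the wrong one in the main case, the proof as written has a genuine gap rather than a routine omission: the correct conclusion when $\partial F=R$ is not absurdity but precisely the desired statement $R=R^1_i$.
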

\begin{proof}
Let $x^1,x^2$ be the end vertices of $\gamma^1$, where $x^2\in \gamma^2$. Suppose $R^2_r=R^4_s=R$ for some $r,s$. For $t=2,4$, define $Q^t$ to be the path obtained from the shortest subpath of $P^t_{ess}$ connecting some vertex $z^t$ on $P^2_r$ to $x^2$ (respectively $P^4_s$ to $x^1$).  

It is clear that $Q^t$ is embedded in $\Cay(G,S)$ and satisfies all the same conditions as an essential $S$-path. Since the chosen subpath has minimal length, either $z^2=x^2$ (respectively, $z^4=x^1$) or $z^2$ (respectively $z^4$) is also contained in another relator $R^2_{r'}$ (respectively $R^4_{s'}$). We either have $\abs{r-r'}=1$ or $z^2$ is the initial/terminal vertex of a self-intersection of type (i) in $P^2$.  In the former case, $\gamma^2$ and $z^2$ both lie on $R^2_{r}\cap R^2_{r'}$ and $d_{X}(\gamma^t,z^t)\leq 1$, while in the latter case, Lemma \ref{disttoPess} implies that $d_{X}(\gamma^2,z^2)\leq 1$. The same analysis can be used to show $d_{X}(\gamma^4,z^4)\leq 1$. Thus 
\begin{equation}\label{eq:z2z4}
d_{ X}(z^2,z^4)\geq l_{ X}(\gamma^1) - d_{ X}(x^2,z^2) - d_{ X}(z^4,x^1) \geq \max\set{l_X(\gamma^2),l_X(\gamma^4)} + 23,
\end{equation}
where the last inequality follows by (\ref{eq:longthin}).

Choose an $S$-geodesic $\gamma_S$ from $z^2$ to $z^4$. Since $z^2,z^4\in R$, $\gamma_S$ is necessarily contained in $R$. Consider the quadrangle $(\gamma_S, Q^2 ,P^1_{ess}, Q^4)$. Since $\gamma_S$ is contained in $R$, it is clear by construction that $Q^t$ intersects $\gamma_S$ only at the vertex $z^t$ for $t=2,4$.

\textbf{Case 1:}  $\gamma_S\cap P^1_{ess}=\emptyset$.  Let $Q$ be a simple cycle in the quadrangle $(\gamma_S, Q^2 ,P^1_{ess}, Q^4)$ containing $\gamma_S$. Let $D$ be a diagram with boundary $Q$. By construction, $R$ intersects $Q^2$ and $Q^4$ only at $z^2$ and $z^4$, respectively.  Thus, if there is a face $F\subset D$ with $\partial F=R$, then since our small cancellation assumption rules out case (iii) above, the exterior boundary of $F$ must be contained in $\gamma_S$ and $P^1_{ess}$ (see Figure \ref{fig:Lem68}(A)).   Therefore $F$ must belong to case (ii) above and satisfy $i(F)\leq 4$, and so the end vertices of $P^1_{ess}\cap R$ are at $X$--distance at most $1$ from $z^2$ and $z^4$ respectively.  In this case, it follows from Lemma \ref{lem:essRconn}(ii) and ($\ref{eq:z2z4}$) that $R=R^1_i$ for some $i$.

If no face in $D$ satisfies $\partial F=R$, then for $t=2,4$, there is no face whose  exterior boundary contains edges in both $\gamma_S$ and $Q^t$ and is contained in their union.  To see this, note that if $F$ was such a face, then $i(F)\leq 4$ by (i) above, so $\partial F$ is a union of $\partial F\cap Q^t$ and at most 5 pieces (the fifth being $\partial F\cap \gamma_S=\partial F\cap R$). Applying Lemmas \ref{numberoffacesbound} and \ref{lem:Pesslocgeod}, we see that $\partial F$ is either contained in the union of at most 11 pieces or a geodesic and the union of at most 11 pieces, both of which contradict the small cancellation assumption. By a similar argument, no face can have its exterior boundary contained in $\gamma_S$ and $P^1_{ess}$. It follows that any face whose boundary intersects $\gamma_S$ in an edge must intersect at least two additional sides of $Q$ in an edge.  There can be at most two such faces, and so $\gamma_S$ is contained in a union of at most two pieces.  Thus $d_{ X}(z^2,z^4)\leq 1$, which contradicts (\ref{eq:z2z4}).

\textbf{Case 2:}  $\gamma_S\cap P^1_{ess}\neq\emptyset$. If $z^2,z^4\in \gamma_S\cap P^1_{ess}$, then it follows from Lemma \ref{lem:essRconn}(ii) and ($\ref{eq:z2z4}$) that $R=R^1_i$ for some $i$, so suppose that $z^2\not\in \gamma_S\cap P^1_{ess}$.  Let $Q$ be a simple cycle in the quadrangle $(\gamma_S, Q^2 ,P^1_{ess}, Q^4)$ containing $z^2$.  It is clear that $Q$ is contained in $\gamma^S\cup P^1_{ess}\cup Q^2$. No face in a diagram with this boundary can have its exterior boundary contained in only one side, or in a pair of sides if one of those is $\gamma_S$.  Indeed, if such a face $F$ existed, then, as above, $\partial F$ must be contained in a union of either at most 11 pieces, or 11 pieces and a geodesic, both of which contradict the small cancellation assumption. It follows that the face $F$ containing $z^2$ must intersect $P^1_{ess}$ (see Figure \ref{fig:Lem68}(B)).

\begin{figure}[h!] 
\centering
\subfloat[]{
\resizebox{3in}{!}{
  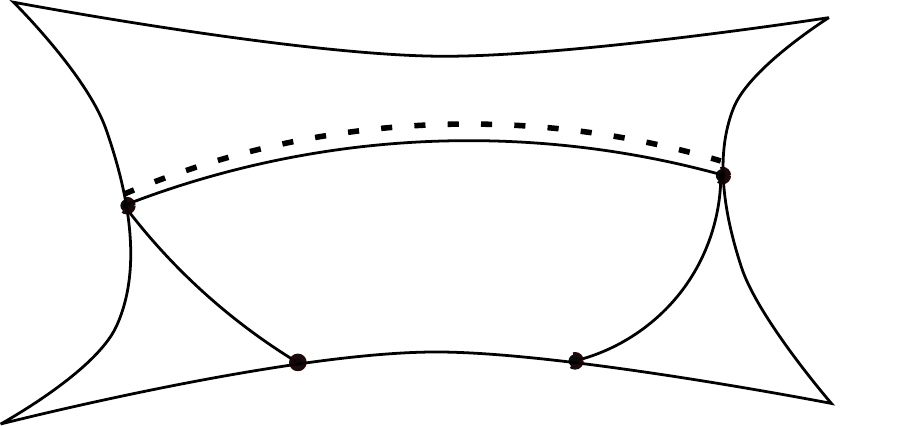}
}
\subfloat[]{
  \resizebox{3in}{!}{
  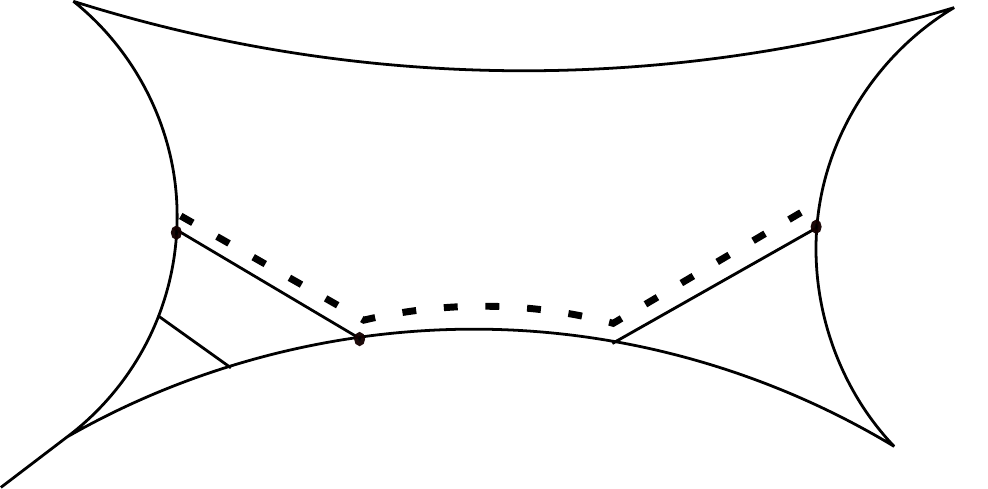}
}
\caption{(A) Proof of Lemma 6.8 in the case where $\gamma_S\cap P^1_{ess}=\emptyset$ and there is a face $F$ such that $\partial F=R$.  (B) Proof of Lemma 6.8 in the case where $\gamma_S\cap P^1_{ess}\neq\emptyset$.  The dotted path is $\gamma_S$, and the boundary of the shaded region is $R$.}\label{fig:Lem68} 
\end{figure}

Thus $P^1_{ess}\cap (R\cap\partial F)\neq\emptyset$, and hence $d_{ X}(z^2,P^1)\leq 1$. Applying the same argument to $z^4$ if necessary, it follows from Lemma \ref{lem:essRconn}(ii) and ($\ref{eq:z2z4}$) that $R=R^1_i$ for some $i$. 

Applying this argument to $P^3_{ess}$ instead of $P^1_{ess}$, it follows that $R=R^3_j$ for some $j$. This completes the proof.
\end{proof}

\begin{lemma} \label{lem:bdgammatoPess}
Let $(P^1_{ess},P^2_{ess},P^3_{ess},P^4_{ess})$ be an essential $S$--quadrangle associated to a geodesic quadrangle $(\gamma^1,\gamma^2,\gamma^3,\gamma^4)$.  Let $Q$ be a simple cycle in the quadrangle, and let $D$ be a diagram with boundary $Q$.  Suppose $F$ is a face of $D$ such that $\partial F$ shares an edge with at most 2 essential paths, $P^t_{ess}$ and $P^{t'}_{ess}$, and $i(F)\leq 4$. If $z$ is an end vertex of $\partial F\cap P^r_{ess}$ with $r\in\{t,t'\}$, then $d_{ X}(z,\gamma^r)\leq 3$.
\end{lemma}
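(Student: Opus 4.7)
The proof will proceed by case analysis on whether $R := \partial F$ coincides with one of the relators $R^r_i$ used in the construction of $P^r_{ess}$.

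Suppose first that $R = R^r_i$ for some $i$. Then $\partial F \cap P^r_{ess} = R^r_i \cap P^r_{ess}$, which by Lemma \ref{lem:essRconn}(i) is a connected subpath whose endpoints $v_1, v_2$ are adjacent in $\Cay(G,X)$ to $y^r_{i-1}, y^r_i \in \gamma^r$, respectively, by Lemma \ref{lem:Pesslocgeod}. Since $z$ is an endpoint of this intersection, $z \in \{v_1, v_2\}$, so $d_X(z, \gamma^r) \leq 1$.

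Suppose instead that $R \neq R^r_i$ for every $i$. Then Lemma \ref{lem:essRconn}(ii) yields that $\alpha := \partial F \cap P^r_{ess}$ has $\Cay(G,X)$-diameter at most $2$, since $\alpha \subseteq R \cap P^r_{ess}$. If some vertex $y^r_k$ of $\gamma^r$ lies on $\alpha$, then both $z$ and $y^r_k$ belong to $R \cap P^r_{ess}$, giving $d_X(z, y^r_k) \leq 2$ directly. Otherwise, $\alpha$ lies strictly between two consecutive vertices $y^r_{j-1}$ and $y^r_j$ on $P^r_{ess}$, and is therefore contained in the geodesic segment $P^r_j \subseteq R^r_j$; in particular $\alpha \subseteq R \cap R^r_j$ is a single piece. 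In this final subcase the aim is to bound $d_X(z, v)$ by $2$ for one of the endpoints $v \in \{v_1, v_2\}$ of $R^r_j \cap P^r_{ess}$, and then invoke the edge from $v$ to $y^r_{j-1}$ or $y^r_j$ furnished by Lemma \ref{lem:Pesslocgeod} to conclude $d_X(z, \gamma^r) \leq 3$.

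The main obstacle is this last subcase. A priori, $z$ can sit in the interior of $P^r_j$ at significant $\Cay(G,S)$-distance from both $v_1$ and $v_2$, and the cone $R^{X,r}_j$ is only assumed to be $\delta$-hyperbolic rather than of bounded diameter, so a naive cone-distance argument is unavailable. The bound must instead be extracted from the combination of: the description of $R^r_j \cap P^r_{ess}$ in Lemma \ref{lem:Pesslocgeod} as the geodesic $P^r_j$ extended by at most six pieces; the fact that $\alpha$ itself lies in the single piece $R \cap R^r_j$; and the hypothesis $P^4 \subseteq X$, which collapses any subpath of $R^r_j$ spanning at most four pieces into a single cone edge. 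Together these allow one to produce an explicit cone-edge shortcut in $R^{X,r}_j$ from $z$ to the appropriate endpoint of $R^r_j \cap P^r_{ess}$, finishing the proof.
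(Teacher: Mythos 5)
Your first two subcases are fine and broadly parallel the paper: when $\partial F=R^r_i$ you get $d_X(z,\gamma^r)\leq 1$ from Lemma \ref{lem:Pesslocgeod}, and when $\partial F\cap P^r_{ess}$ meets an end vertex of some $R^r_j\cap P^r_{ess}$ (or, in the paper's formulation, spans at least two pieces $R^r_j\cap\partial F$), the bound $1+\lceil 5/4\rceil=3$ follows. But the last subcase --- $\alpha=\partial F\cap P^r_{ess}$ a single piece sitting in the interior of one $P^r_j$ --- is a genuine gap: you explicitly stop at ``the bound must instead be extracted from\dots'' and ``together these allow one to produce an explicit cone-edge shortcut,'' without producing any argument. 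Moreover, the strategy you sketch is the wrong one. No shortcut in the cone $R^{X,r}_j$ from $z$ to an endpoint of $R^r_j\cap P^r_{ess}$ is available in general: the cone is only assumed hyperbolic, and the hypothesis $P^4\subseteq X$ collapses four pieces of $\partial F$, not four pieces of $R^r_j$, so it gives you nothing about the position of $z$ inside $P^r_j$. The correct resolution, which is how the paper proceeds, is that this subcase \emph{cannot occur}: if $\partial F\cap P^r_{ess}$ is a single piece, then either $\partial F\cap P^{t'}_{ess}$ contains no edge, in which case $\partial F$ is covered by at most $4+1=5$ pieces (using $i(F)\leq 4$), contradicting $C'(\frac{1}{24})$; or $\partial F=R^{t'}_j$ for some $j$, forcing $\abs{\partial F\cap P^{t'}_{ess}}>\frac{19}{24}\abs{\partial F}$ against the $\frac34$ bound of Lemma \ref{lem:Pesslocgeod}; or $\partial F\cap P^{t'}_{ess}$ is itself at most $5$ pieces and $\partial F$ is covered by at most $10$ pieces, again a contradiction.

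Note that your write-up never invokes the hypotheses $i(F)\leq 4$ and ``$\partial F$ shares an edge with at most two essential paths,'' which is a strong signal that the argument is incomplete: those are exactly the hypotheses needed to run the length count that kills the single-piece subcase. Separately, your dichotomy in the second case is not exhaustive as stated: the vertices $y^r_k$ need not lie on $P^r_{ess}$ at all (they may sit on removed type-(i) self-intersections), so $\alpha$ can avoid every $y^r_k$ while still straddling the junction of $P^r_j$ and $P^r_{j+1}$; that configuration is handled by the ``$s\geq 2$'' branch via the end vertices of $R^r_j\cap P^r_{ess}$ and Lemma \ref{lem:Pesslocgeod}, not by containment in a single $P^r_j$.
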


\begin{proof}
Suppose $\partial F$ shares an edge with $P^t_{ess}$, and let $z$ be an end vertex of $P^t_{ess}\cap\partial F$.  If there exists $i$ such that $\partial F=R^t_i$, then the result follows by Lemma \ref{disttoPess}.  Otherwise, $\partial F\cap P^t_{ess}$ is contained in $1\leq s\leq 5$ pieces $R^t_i\cap \partial F$ by Lemma \ref{numberoffacesbound}.  

If $s\geq 2$, then there is a point $y\in \gamma^t$ that is connected to $\partial F\cap P^t_{ess}$ by an edge in $\Cay(G,X)$, and thus $d_{ X}(y,z)\leq 1+\left\lceil\frac54\right\rceil=3$.

To complete the proof, we will show that the case $s=1$ is not possible. First, if $\partial F\cap P^{t'}_{ess}$ does not contain an edge, then $\partial F$ is contained in the union of at most $5$ pieces (at most 4 interior pieces and the exterior piece $\partial F\cap P^t_{ess}$), which contradicts the small cancellation assumption.  Thus, $\partial F\cap P^{t'}_{ess} $ must contain an edge.  If $\partial F=R^{t'}_j$ for some $j$, then $\partial F\setminus (\partial F\cap P^{t'}_{ess})$ is contained in the union of at most 5 pieces, and so $|\partial F\cap P^{t'}_{ess}|> \frac{19}{24}|\partial F|$, which contradicts Lemma \ref{lem:Pesslocgeod}.  If $\partial F\neq R^{t'}_j$ for all $j$, then $\partial F\cap P^{t'}_{ess}$ is contained in the union of at most 5 pieces, by Lemma \ref{numberoffacesbound}, and so $\partial F$ is contained in the union of at most $10$ pieces, contradicting the small cancellation bound.
\end{proof}

We are now ready to prove Theorem \ref{thm:quadclass} in two steps (Propositions \ref{prop:degenquads} and \ref{prop:longthin}).  We begin with the ``degenerate" case.

\begin{proposition}\label{prop:degenquads} If $Q_{ X}=(\gamma^1,\gamma^2,\gamma^3,\gamma^4)$ satisfies $(\ref{eq:longthin})$ and the essential quadrangle $Q_S=(P^1_{ess}, P^2_{ess}, P^3_{ess}, P^4_{ess})$ associated to $Q_{ X}$ does not admit a simple cycle $Q$ which intersects each $P^t_{ess}$ in an edge, then one of the following occurs:
	\begin{enumerate}
	\item there exist $i,j$ such that $R^1_i=R^3_j$; or 
	\item $P^1_{ess}\cap P^3_{ess}$ contains a subpath whose end vertices are at $ X$--distance at most \[\max\set{l_{ X}(\gamma^2),l_{ X}(\gamma^4)} + 3\] from the end vertices of $\gamma^1$.
	\end{enumerate} 
\end{proposition}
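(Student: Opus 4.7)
The strategy is to treat the closed walk
\[
W := P^1_{ess} \cdot P^2_{ess} \cdot P^3_{ess} \cdot P^4_{ess},
\]
oriented cyclically as around $Q_X$, as a candidate simple cycle hitting all four sides. If $W$ is injective away from its four corner vertices $v^1, \ldots, v^4$, then $W$ itself is such a simple cycle, violating the hypothesis; hence $W$ must possess a self-intersection at some non-corner vertex. Since each $P^t_{ess}$ is embedded by Proposition~\ref{prop:limitselfint}, every self-intersection of $W$ lies on an overlap of two distinct sides.

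I would then case-split on which pair of sides shares the intersection. If $P^2_{ess}$ and $P^4_{ess}$ share any relator $R^2_r = R^4_s$, then Lemma~\ref{lem:R2R4match} directly furnishes $i, j$ with $R^1_i = R^3_j$, giving case~(i). If $P^2_{ess}$ and $P^4_{ess}$ meet in an edge but share no relator, then the shared edge is a piece, and a short diagram argument using Proposition~\ref{prop:combgeods}, Lemma~\ref{lem:bdgammatoPess}, and the $C'(\tfrac{1}{24})$ bound shows that such sharing can be rerouted away from $W$ while preserving both the four-sided structure and the failure of the simple-cycle property. The same ``rerouting'' principle, applied to overlaps between consecutive pairs $P^t_{ess} \cap P^{t+1}_{ess}$ occurring away from their shared corner, either excises the overlap (strictly reducing a finite self-intersection count of $W$) or produces a smaller simple cycle still touching all four sides, contradicting the hypothesis. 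Iterating, we reduce to the case where all remaining self-intersections of $W$ lie between the opposite long sides $P^1_{ess}$ and $P^3_{ess}$.

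Hence $P^1_{ess} \cap P^3_{ess}$ contains a nondegenerate subpath $\alpha$; let $a, b$ be its endpoints, with $a$ closer to $v^1$ along $P^1_{ess}$ and $b$ closer to $v^2$. To conclude case~(ii), I need to bound $d_X(a, v^1)$ and $d_X(b, v^2)$ by $\max\{l_X(\gamma^2), l_X(\gamma^4)\} + 3$. The closed path formed by $P^1_{ess}$ from $v^1$ to $a$, the reverse of $P^3_{ess}$ from $a$ to $v^4$, and the reverse of $P^4_{ess}$ from $v^4$ to $v^1$, is (by the reduction above) a simple cycle bounding a combinatorial geodesic triangle via Proposition~\ref{prop:combgeods}. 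Applying Lemma~\ref{lem:Pesslocgeod} to control the cone-edge contribution along the $P^1 \cap P^3$ side, and using $l_X(\gamma^4) \leq \max\{l_X(\gamma^2), l_X(\gamma^4)\}$ on the $P^4$--side, yields the desired distance bound; the symmetric argument at the other end of $\alpha$ handles $b$ and $v^2$.

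The principal obstacle is executing the ``rerouting'' reductions rigorously: I must verify that each reduction strictly decreases a chosen finite complexity of $W$ (for instance the number of non-corner self-intersection vertices) and never destroys either the four-sided structure or the hypothesis that no simple cycle hits all four paths. The long-thin condition~(\ref{eq:longthin}) is crucial here, ensuring that the opposite long sides $P^1_{ess}, P^3_{ess}$ survive each reduction while the short sides $P^2_{ess}, P^4_{ess}$ contribute at most $\max\{l_X(\gamma^2), l_X(\gamma^4)\}$ to the eventual distance estimate.
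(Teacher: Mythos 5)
There is a genuine gap, concentrated in the ``rerouting'' step. The essential $S$--paths $P^t_{ess}$ are fixed objects determined by the geodesics $\gamma^t$ (via the chosen relators $R^t_i$ and geodesics $P^t_i$), so you are not free to modify them to ``excise'' an overlap; and even if you could, you give no argument that the modified walk still fails to contain a simple cycle hitting all four sides, which is the hypothesis you need to preserve. The correct way to dispose of the case where $P^2_{ess}\cap P^4_{ess}\neq\emptyset$ but no relator is shared is a direct metric contradiction, not a rerouting: any vertex $v$ in the intersection lies on some $R^2_i$ and some $R^4_j$ with $R^2_i\neq R^4_{j'}$ for all $j'$ (by Lemma \ref{lem:R2R4match}, assuming (i) fails), so Lemma \ref{numberoffacesbound} puts $P^4_{ess}\cap R^2_i$ inside at most $5$ pieces and Lemma \ref{disttoPess} then gives $d_X(v,\gamma^4)\le 3$ and symmetrically $d_X(v,\gamma^2)\le 3$; hence $d_X(\gamma^2,\gamma^4)\le 6$, contradicting (\ref{eq:longthin}). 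This is what forces $P^1_{ess}\cap P^3_{ess}\neq\emptyset$.

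Two further omissions would sink the endgame. First, you never establish that $P^1_{ess}\cap P^3_{ess}$ is \emph{connected}; without this, conclusion (ii) (a single subpath with one endpoint near each end of $\gamma^1$) does not follow from the mere nonemptiness of the intersection. Connectivity is proved by taking a disc diagram bounded by two arcs of $P^1_{ess}$ and $P^3_{ess}$, which by Proposition \ref{prop:combgeods} is a combinatorial geodesic bigon of type $I_1$, and showing via the $C'(\frac{1}{24})$ count that any face would have to satisfy $\partial F=R^1_i=R^3_j$, contradicting the failure of (i). Second, your bound on $d_X(a,v^1)$ via a triangle and Lemma \ref{lem:Pesslocgeod} does not work as stated: Lemma \ref{lem:Pesslocgeod} controls $S$--lengths inside single relators, not $X$--distances across a triangle. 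What is actually needed is to show that each endpoint $x$ of the overlap satisfies $d_X(x,\gamma^2)\le 3$ (resp.\ $d_X(y,\gamma^4)\le 3$), which requires a case analysis on the face of the relevant combinatorial geodesic triangle containing $x$ (types $III_1$ and $V$ of Strebel's classification), using Lemma \ref{disttoPess} and piece counts; the bound $\max\set{l_X(\gamma^2),l_X(\gamma^4)}+3$ then comes from traveling along $\gamma^2$ to its endpoint, not from a side-length estimate on $P^4_{ess}$.
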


\begin{proof}
We will assume that (i) does not hold, and deduce that (ii) does. The proof is in 4 steps:
\begin{enumerate}
 \item[a)] Prove $P^1_{ess}\cap P^3_{ess}\neq\emptyset$.
 \item[b)] Prove $P^1_{ess}\cap P^3_{ess}$ is connected.
 \item[c)] Let $F$ be any face in a disc diagram whose boundary intersects both $P^1_{ess}$ and $P^3_{ess}$ in an edge.  Then prove one of the following occurs: $F$ intersects one of $P^2_{ess}$ or $P^4_{ess}$ in an edge; or $F$ contains a point in $P^1_{ess}\cap P^3_{ess}$ and satisfies $e(F)=1$, $i(F)=2$, and $D$ is a combinatorial geodesic triangle of type $III_1$ or $V$.
 \item[d)] Deduce that (ii) holds.
\end{enumerate} 
\textbf{Step a)} Note that by Lemma \ref{lem:R2R4match}, if $R^1_i\neq R^3_j$ for all $i,j$ then $R^2_k\neq R^4_{k'}$ for all $k,k'$, as well.  As in the proof of Lemma \ref{lem:bdgammatoPess}, let $x$ be the initial endpoint of $\gamma^1$, so that $x\in \gamma^1\cap \gamma^2$.

Since $Q_S$ does not admit a simple cycle which intersects each $P^t_{ess}$ in an edge, $P^t_{ess}\cap P^{t'}_{ess}\neq \emptyset$ for either $(t,t')=(1,3)$ or $(t,t')=(2,4)$.  To complete step a) we show that $(t,t')=(1,3)$.

Suppose $P_{ess}^2\cap P_{ess}^4\neq \emptyset$, and let $v$  be the vertex on $P^2_{ess}\cap P^4_{ess}$  closest to  $x$ along $P^2_{ess}$.  Let $e$ be the edge in $P^2_{ess}\cap P^4_{ess}$ containing $v$ as an end vertex, and choose relators $R^2_i$ and $R^4_j$ containing $e$.  By assumption, $R^2_i\neq R^4_{j'}$ for any $j'$, so by Lemma \ref{numberoffacesbound}, $P_{ess}^4\cap R^2_i$ is contained in a union of at most 5 pieces.  By Lemma \ref{disttoPess}, there is an edge connecting a vertex in $\gamma^4$ to a vertex in one of these pieces, hence $d_{ X}(\gamma^4,v)\leq 1+\left\lceil\frac54\right\rceil=3$.  Similarly, $d_{ X}(\gamma^2,v)\leq 3$.  Therefore, $d_{ X}(\gamma^2,\gamma^4)\leq 6$, which contradicts (\ref{eq:longthin}).  Therefore, $P^2_{ess}\cap P^4_{ess}=\emptyset$ and $P^1_{ess}\cap P^3_{ess}\neq\emptyset$.

\textbf{Step b)} If $P^1_{ess}\cap P^3_{ess}$ is not connected, then there is a disc diagram $D$ whose boundary is contained in $P^1_{ess}\cap P^3_{ess}$. Recall that $D$ must be a diagram of type $I_1$ in Strebel's classification. Let $F$ be a face with $e(F)=1$ and $i(F)\leq 1$.  If there is no $i$ such that $\partial F=R^1_i$, then by Lemmas \ref{numberoffacesbound} and \ref{lem:Pesslocgeod}
\[
 \abs{\partial F} < \left(\frac{5}{24} + \frac{1}{24} + \frac34\right) \abs{\partial F},
\]
which is a contradiction. Similarly there is some $j$ such that $\partial F=R^3_j$ contradicting our assumption that (i) fails.

\textbf{Step c)} Now suppose $D$ is a disc diagram whose boundary is contained in $P^1_{ess}\cup P^2_{ess}\cup P^3_{ess}$ and contains an edge in each. Let $F$ be the face containing the vertex in $P^1_{ess}\cap P^3_{ess}\cap\partial D$. If $\partial F$ does not contain an edge in $P^2_{ess}$ then (as in step b)) either $\partial F=R^1_i=R^3_j$ for some $i,j$ (which is a contradiction), or
\[
 \abs{\partial F} < \abs{\partial F}\left(\frac{5}{24} +\frac{18}{24} + \frac{i(F)}{24}\right),
\]
and so $i(F)\geq 2$.  Thus $D$ is of type $III_1$, $IV$ or $V$, and the boundaries of all other faces in $D$ intersect $P^2_{ess}$ in an edge. To see that $D$ is not of type $IV$ note that in a type $IV$ diagram there is a face $F'$ satisfying $e(F')=1$ and $i(F')=4$ whose external boundary is contained in one of the $P^t_{ess}$, but no such faces can exist in an essential $S$--triangle (see the remark after Lemma \ref{quadranglefacts}).

\textbf{Step d)} Let $x,y$ be the end vertices of $P^1_{ess}\cap P^3_{ess}$ where $x$ is closest to the initial vertex $v^1$ of $P^1_{ess}$. Since we are assuming (\ref{eq:longthin}) it suffices to prove that $d_X(x,\gamma^2)\leq 11$ and $d_X(y,\gamma^4)\leq 11$.  Either $x\in P^2_{ess}$ or there exists a disc diagram $D$ as described in step c). 

\textbf{Case d)(i):} $x\in P^2_{ess}$. If $x\in P^2_{ess}$ and the two edges in $P^2_{ess}$ with $x$ as an end vertex do not lie in a common $R^2_k$, then $d_X(x,\gamma^2)\leq 1$ by Lemma \ref{disttoPess}. If they do lie in a common $R^2_k$, then either $R^2_k$ is not equal to any $R^1_i$ or not equal to any $R^3_j$, and without loss of generality we may assume the former is true. Let $Q^2$ denote the subpath of $P^2_{ess}$ connecting $v^1$ to $x$. If $R^2_k\cap Q^2 \subseteq P^1_{ess}$ then there is a path from $x$ to the other end vertex of $R^2_k\cap Q^2$ which is a union of at most $5$ pieces, so by Lemma \ref{disttoPess}
\[
 d_X(x,\gamma^2) \leq \left\lceil \frac54 \right\rceil + 1 =3.
\]
Otherwise there is a combinatorial geodesic bigon $B$ whose boundary is contained in $Q^2\cup P^1_{ess}$. Let $F$ be the face in $B$ whose boundary contains the point on $Q^2$ closest to $x$ along $P^2_{ess}$, so $e(F)=i(F)=1$ and $\partial F$ contains an edge in $Q^2\cap R^2_k$. If $\partial F\neq R^2_k$ then $\partial F$ is a union of at most $5$ pieces (coming from its intersection with $Q^2$) one additional piece from its internal boundary and a path of length less than $\frac34\abs{\partial F}$ from its intersection with $P^1_{ess}$, which contradicts the small cancellation assumption. For the same reasoning there must be some $i$ such that $\partial F=R^1_i$ contradicting the assumption that $R^2_k$ is not equal to any $R^1_i$.

\textbf{Case d)(ii):} $x\not\in P^2_{ess}$. Let $F$ be the face in $D$ whose boundary contains $x$. Without loss of generality we may assume that there is no $i$ such that $\partial F=R^1_i$, so $\partial F\cap P^1_{ess}$ is contained in the union of at most $5$ pieces. When $D$ is not of type $V$ there is a path of length at most $2$ connecting $x$ to one of the end vertices $x'$ of $\partial F\cap P^2_{ess}$.  When $D$ is of type $V$ there is a face $F'$ neighbouring $F$ and a path of length at most $2$ connecting $x$ to one of the end vertices $x'$ of $\partial F'\cap P^2_{ess}$. If $\partial F$ (respectively $\partial F'$) is equal to one of the $R^2_k$, then by Lemma \ref{disttoPess}, $d_X(x,\gamma^2)\leq 3$. Otherwise, pick $k$ such that $x'\in R^2_k$ and notice that by Figure \ref{fig:treecases} there is a path from some vertex in $\gamma^2$ to $x'$ which is a union of at most three pieces in $R^2_k$. Thus $d_X(\gamma^2,x)\leq 3$. Similarly, we may deduce that $d_X(\gamma^4,y)\leq 3$.
\end{proof}

In what follows, we say a vertex of a quadrangle $Q=(Q^1_{ess}, Q^2_{ess}, Q^3_{ess}, Q^4_{ess})$ is {\bf distinguished} if it is the end vertex of some $Q^j_{ess}$.  In a reduced diagram $D$ with boundary $Q$, we say a face $F$ of $D$ is distinguished if there is a distinguished vertex in the interior of its exterior boundary. Let us first rule out two scenarios using the assumption $(\ref{eq:longthin})$.

\begin{lemma}\label{lem:noladder} Suppose $Q_S=(P^1_{ess}, P^2_{ess}, P^3_{ess}, P^4_{ess})$ is an essential $S$--quadrangle associated to $(\gamma^1,\gamma^2, \gamma^3, \gamma^4)$ and $(\ref{eq:longthin})$ holds.  If there is a diagram $D$ with boundary $Q_S$ admitting an internal arc connecting $P^2_{ess}$ to $P^4_{ess}$, then there exist $i,j$ such that $R^1_i=R^3_j$.
\end{lemma}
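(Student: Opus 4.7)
The plan is to proceed by contradiction: I would assume that no pair of indices satisfies $R^1_i = R^3_j$ and derive a contradiction from the existence of the internal arc. Observe that by Lemma \ref{lem:R2R4match}, this contradictory hypothesis also forces that no $R^2_k$ equals any $R^4_{k'}$.

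First, I would note that, as a shared subpath of two adjacent faces of a reduced diagram, the arc $\alpha$ has a label that is a piece, hence lies in $P^1\subseteq P^4\subseteq X$. Thus $\alpha$ corresponds to a single edge in $\Cay(G,X)$ with $d_X(p,q)\leq 1$, where $p\in P^2_{ess}$ and $q\in P^4_{ess}$ are its endpoints. Picking one of the two faces $F$ of $D$ adjacent to $\alpha$, with boundary relator $R=\partial F$, I then have $p,q\in R$, so $R$ simultaneously meets $P^2_{ess}$ and $P^4_{ess}$. I would also record that cutting along $\alpha$ divides $D$ into two sub-diagrams $D_1$ (containing $P^1_{ess}$) and $D_2$ (containing $P^3_{ess}$), each a combinatorial geodesic polygon by Proposition \ref{prop:combgeods} with no interior faces (since an interior face would need $i(F)\geq 7$, contradicting $C'(\tfrac{1}{24})$).

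The heart of the argument is to establish that $R = R^2_k$ and $R = R^4_{k'}$ for some indices $k, k'$, at which point Lemma \ref{lem:R2R4match} immediately supplies the desired $R^1_i = R^3_j$, yielding the contradiction. Equivalently, by Lemma \ref{lem:essRconn}(ii), I want to show that each of $R\cap P^2_{ess}$ and $R\cap P^4_{ess}$ has $X$-diameter at least $3$. The strategy is to analyse the decomposition of $\partial F$ into its interior arcs (each a piece, of length less than $\tfrac{1}{24}|\partial F|$) and its exterior arcs lying on the sides $P^t_{ess}$. Combining Lemma \ref{numberoffacesbound} with Lemma \ref{lem:Pesslocgeod} and the $C'(\tfrac{1}{24})$ bound, I would rule out configurations in which $R$ is a ``foreign'' relator (equal to no $R^t_k$) by a direct piece count. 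The configurations $R=R^1_i$ or $R=R^3_j$ (which do not immediately contradict our standing assumption) would be excluded by a finer analysis invoking the long-thin hypothesis (\ref{eq:longthin}), which caps the contribution $\partial F$ can receive from $P^1_{ess}$ or $P^3_{ess}$ without over-running $\gamma^2$ or $\gamma^4$ in $X$-length.

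The hard part will be the case analysis in this boundary argument: $\partial F$ can in principle touch all four essential paths together with multiple interior arcs, and careful bookkeeping — comparing piece contributions with the essential-path portions using Lemmas \ref{lem:Pesslocgeod} and \ref{lem:essRconn} — is needed to show that the only configuration compatible with our contrary assumption is the one forcing $R=R^2_k$, after which a symmetric argument on the $P^4_{ess}$ side forces $R=R^4_{k'}$. A related subtlety is that two distinct faces of $D$ may have boundaries realising the same relator type; one must work consistently with embedded cycles in $\Cay(G,S)$ rather than abstract relators when applying Lemmas \ref{lem:essRconn} and \ref{lem:R2R4match} to $R$.
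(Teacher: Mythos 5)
Your opening observations are fine (the arc $\alpha$ is a piece, so its endpoints $z^2\in P^2_{ess}$ and $z^4\in P^4_{ess}$ satisfy $d_X(z^2,z^4)\leq 1$; Lemma \ref{lem:R2R4match} converts $R^2_k=R^4_{k'}$ into the desired conclusion), but the heart of your argument has a genuine gap. You propose to show that the face $F$ adjacent to $\alpha$ satisfies $\partial F=R^2_k$ \emph{and} $\partial F=R^4_{k'}$, by proving that each of $\partial F\cap P^2_{ess}$ and $\partial F\cap P^4_{ess}$ has $X$--diameter at least $3$ and invoking Lemma \ref{lem:essRconn}(ii). This cannot be carried out: there is no reason for those intersections to be long. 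Generically $\partial F$ meets $P^2_{ess}$ in a single piece $\partial F\cap R^2_r$ near the endpoint $z^2$ of $\alpha$ (and may even have $e(F)=2$, meeting $P^2_{ess}$ and $P^4_{ess}$ only at the vertices $z^2,z^4$), while the bulk of its exterior boundary lies on $P^1_{ess}$ or $P^3_{ess}$. The configurations $\partial F=R^1_i$ or $\partial F=R^3_j$ are perfectly consistent with the contradiction hypothesis (only the simultaneous equality $R^1_i=R^3_j$ is excluded), and no piece count rules them out. Your remark that (\ref{eq:longthin}) ``caps the contribution $\partial F$ can receive from $P^1_{ess}$'' is backwards: long overlap with $P^1_{ess}$ is allowed and merely forces $\partial F=R^1_i$.

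The paper's proof runs in the opposite direction, and this is the idea you are missing. Assuming $R^1_i\neq R^3_j$ for all $i,j$, one shows that $z^2$ is $X$--close to $\gamma^2$ and $z^4$ is $X$--close to $\gamma^4$ (the bound is $5$, obtained from Lemmas \ref{disttoPess}, \ref{numberoffacesbound} and \ref{lem:bdgammatoPess}, together with one sub-case where both components $N^1,N^3$ of $R^2_r\cap P^2_{ess}\cap P^s_{ess}$ being long would force $R^2_r=R^1_i=R^3_j$ via Lemma \ref{lem:essRconn}(ii), so at least one is short). Precisely because the intersections of faces with $P^2_{ess}$ near $z^2$ are unions of few pieces, $z^2$ cannot drift far from $\gamma^2$ in $\Cay(G,X)$. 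Combining with $d_X(z^2,z^4)\leq 1$ gives $d_X(\gamma^2,\gamma^4)\leq 11$, which contradicts (\ref{eq:longthin}) since the long sides $\gamma^1,\gamma^3$ keep $\gamma^2$ and $\gamma^4$ far apart. Your proposal never sets up this distance contradiction, and its intended target ($R^2_k=R^4_{k'}$ realized by a single face boundary) is not a consequence of the hypotheses.
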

\begin{proof} Suppose for a contradiction that $D$ is such a diagram containing an internal arc $\alpha$ and that for all $i,j$, $R^1_i\neq R^3_j$. Let $F^1,F^3$ be the faces in $D$ such that $\alpha=\partial F^1\cap\partial F^3$, and for $t=2,4$ let $z^t$ be the unique vertex in $P^t_{ess}\cap\alpha$. We claim that $d_X(\gamma^t,z^t)\leq 5$ for $t=2,4$.  Once this is verified we obtain a contradiction to $(\ref{eq:longthin})$, since 
\[
d_X(\gamma^2,\gamma^4)\leq d_X(\gamma^2,z^2)+d_X(z^2,z^4)+d_X(z^4,\gamma^4)\leq 11.
\]
If $e(F)=2$, then by Lemma \ref{lem:bdgammatoPess} we have $d_X(\gamma^t,z^t)\leq 3$ for $t=2,4$, and so we may assume $\partial F^s$ contains an edge of $P^s_{ess}$ for $s=1,3$. If there exists some $r$ such that $R^2_r\cap P^2_{ess}$ has an end vertex in $\partial F^s$, then $d_X(\gamma^2,z^2)\geq 1$ by Lemma \ref{disttoPess}, so we may also assume this does not happen.  Under these assumptions, $(\partial F^1\cap\partial F^3)\cap P^2_{ess}$ is strictly contained in some $R^2_r$.

Suppose there is another face $F\not\in\set{F^1,F^3}$ such that $\partial F\cap R^2_r$ contains an edge. If the boundary of this face contains an end vertex $v$ of $R^2_r\cap P^2_{ess}$, then there is an edge connecting $\gamma^2$ to $v$ by Lemma \ref{disttoPess}, and since there is a path from $v$ to $z^2$ which is contained in a union of two pieces in $R^2_r$, it follows that $vz^2$ is an edge. Thus $d_X(\gamma^2,z^2)\leq 2$. Otherwise, since $\partial F\neq R^2_r$ and $e(F)=i(F)\in\set{1,2}$, $\partial F$ is the union of $P^3_{ess}\cap \partial F$ and at most 3 pieces, contradicting the small cancellation assumption.

Hence we may now assume that $R^2_r\cap P^2_{ess}$ consists of two subpaths $N^s$ of $P^2_{ess}\cap P^s_{ess}$ for $s=1,3$ and the pieces $R^2_r\cap F^t$ for $t=2,4$. If both of the $N^s$ have diameter at least $3$ in $\Cay(G,X)$, then it follows from Lemma \ref{lem:essRconn}(ii) that $R^2_r=R^1_i=R^3_j$ for some $i,j$, which is a contradiction. If, without loss of generality, $N^1$ has diameter at most $2$, then there is a path from $\gamma^2$ to $z^2$ of length at most $4$ in $X$ consisting of a path of length at most 2 (from $\gamma^2$ to an end vertex of $R^2_r\cap P^2_{ess}$ contained in $N^1$), a path of length at most $2$ to a vertex in $\partial F'\cap N^1$, and an edge connecting this vertex to $z^2$.
\end{proof}

\begin{lemma}\label{lem:nozipper} If $Q_S=(P^1_{ess}, P^2_{ess}, P^3_{ess}, P^4_{ess})$ is an essential $S$--quadrangle associated to $(\gamma^1,\gamma^2, \gamma^3, \gamma^4)$ and $(\ref{eq:longthin})$ holds, then no diagram whose boundary is contained in $Q_S$ has the form of Figure $\ref{fig:zipper}$ below.
\end{lemma}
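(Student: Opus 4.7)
The strategy is to argue by contradiction: suppose $D$ is a diagram whose boundary is contained in $Q_S$ and has the zipper form of Figure \ref{fig:zipper}. A zipper is a chain of faces $F_1,\ldots,F_n$ in which $F_1$ meets $P^2_{ess}$, $F_n$ meets $P^4_{ess}$, consecutive faces $F_k,F_{k+1}$ share an internal arc $\alpha_k$, and each $F_k$ meets both of the long sides $P^1_{ess}$ and $P^3_{ess}$ in an edge. This configuration would ``zip'' the two short sides together through a bounded number of faces, which should be incompatible with the long--thin hypothesis $(\ref{eq:longthin})$, unless we are already in the situation where some $R^1_i=R^3_j$.

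The first step is a local analysis of a typical interior face $F_k$ with $2\leq k\leq n-1$. Such an $F_k$ has $e(F_k)=2$, its exterior boundary is split between $\partial F_k\cap P^1_{ess}$ and $\partial F_k\cap P^3_{ess}$, and its interior boundary consists of the two arcs $\alpha_{k-1}$ and $\alpha_k$, plus possibly a bounded number of additional interior pieces. If $\partial F_k\neq R^1_i$ for any $i$ and $\partial F_k\neq R^3_j$ for any $j$, then Lemma \ref{numberoffacesbound} forces each of $\partial F_k\cap P^1_{ess}$ and $\partial F_k\cap P^3_{ess}$ to lie in a union of at most $5$ pieces; together with the at most $4$ interior arcs allowed by Strebel's classification (Lemma \ref{quadranglefacts}(ii)), $\partial F_k$ is a union of at most $14$ pieces, contradicting $C'(\frac{1}{24})$. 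If instead $\partial F_k=R^1_i$ (say), Lemma \ref{lem:Pesslocgeod} gives $\abs{\partial F_k\cap P^1_{ess}}<\tfrac34\abs{\partial F_k}$, and the remainder of $\partial F_k$ is contained in a union of a bounded number of pieces (at most $5$ from $\partial F_k\cap P^3_{ess}$ and at most $4$ interior arcs), which again violates the small cancellation bound. The outcome is that each interior face must satisfy $\partial F_k=R^1_i=R^3_j$ for some $i,j$, immediately yielding conclusion (i) of Theorem \ref{thm:quadclass}.

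The second step handles the end faces $F_1$ and $F_n$, which are also ruled out unless $R^1_i=R^3_j$ occurs. For $F_1$, at most one edge of its exterior boundary lies in $P^2_{ess}$ (otherwise one applies Lemma \ref{lem:bdgammatoPess} to conclude that the end vertices of $\partial F_1\cap P^1_{ess}$ and $\partial F_1\cap P^3_{ess}$ are within $X$--distance $3$ of $\gamma^2$, and combined with the symmetric estimate for $F_n$ relative to $\gamma^4$ and the resulting sub--chain of interior faces, one obtains $d_X(\gamma^2,\gamma^4)\leq C\max\set{l_X(\gamma^2),l_X(\gamma^4)}+C'$ with $C,C'$ small enough to contradict $(\ref{eq:longthin})$). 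The piece--counting argument of step one applies to $F_1$ with a minor modification to account for its single edge in $P^2_{ess}$ and forces the same alternative: either $\partial F_1=R^1_i=R^3_j$, or the length of $\partial F_1$ contradicts $C'(\tfrac{1}{24})$. The same applies to $F_n$.

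The main obstacle I anticipate is the careful case analysis in step one when $\partial F_k$ coincides with one of $R^1_i, R^3_j$ but not both: here I must keep track of which pieces of the boundary are ``real'' pieces versus subpaths of $P^1_{ess}$ or $P^3_{ess}$ bounded by Lemma \ref{lem:Pesslocgeod}, and ensure the interior arcs $\alpha_{k-1},\alpha_k$ do not arise from relator repetitions already forbidden by Lemma \ref{diffrels}. The other delicate point is verifying that internal arcs along the zipper are pieces (so that the piece counts above are genuinely bounded), which follows from the Van--Kampen lemma for $C'(\tfrac16)$ presentations applied to a reduced $D$.
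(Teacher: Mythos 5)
Your proof addresses a different configuration from the one the lemma actually rules out, and as a result it both misses the key step and proves a weaker statement. Figure \ref{fig:zipper} depicts a zipper of \emph{length $0$} with ends of type $1$: it has exactly four faces $B^1,\dots,B^4$, where $\partial B^t$ meets the two \emph{consecutive} sides $P^t_{ess}$ and $P^{t+1}_{ess}$, and consecutive faces are separated by internal arcs so that the vertex $z^2\in P^2_{ess}\cap\partial B^1\cap\partial B^2$ is joined to $z^4\in P^4_{ess}\cap\partial B^3\cap\partial B^4$ by a chain of at most three internal arcs. In particular \emph{no} face of this diagram meets both long sides $P^1_{ess}$ and $P^3_{ess}$, so your ``interior faces $F_k$ meeting both $P^1_{ess}$ and $P^3_{ess}$'' do not exist here, and your step one is vacuous for the configuration in question. (Chains of faces crossing from $P^1_{ess}$ to $P^3_{ess}$ are instead the subject of Lemma \ref{lem:noladder} and Case 1 of Proposition \ref{prop:longthin}.)

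The second, independent problem is that your intended conclusion is ``the diagram is impossible \emph{unless} some $R^1_i=R^3_j$,'' whereas the lemma asserts unconditional impossibility under $(\ref{eq:longthin})$; this matters because Lemma \ref{lem:nozipper} is invoked in Case 2 of Proposition \ref{prop:essentialquads} precisely to exclude the zipper outright, with no such escape clause available. The argument the configuration actually calls for is the one you relegate to a parenthetical in your second step: the three internal arcs joining $z^2$ to $z^4$ are pieces, so $d_X(z^2,z^4)\leq 2$; Lemma \ref{lem:bdgammatoPess} applied to $B^1$ and to $B^3$ (each of which has exterior boundary meeting exactly two sides and interior degree at most $4$) gives points $y^2\in\gamma^2$, $y^4\in\gamma^4$ with $d_X(y^t,z^t)\leq 3$; hence $d_X(\gamma^2,\gamma^4)\leq 8$ and
\[
 l_X(\gamma^1)\;\leq\; d_X(x,y^2)+d_X(y^2,y^4)+d_X(y^4,x')\;\leq\; 2\max\set{l_X(\gamma^2),l_X(\gamma^4)}+8,
\]
which contradicts $(\ref{eq:longthin})$ directly, with no case analysis on whether face boundaries coincide with relators. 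You should restructure the proof around this distance estimate rather than around piece-counting on faces that the figure does not contain.
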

\begin{figure}[H]

\begin{tikzpicture}[yscale=1.5,xscale=1, 
vertex/.style={draw,fill,circle,inner sep=0.3mm}]

\draw[thick] (3,1)--(-3,1)--(-2.5,0.5)--(-2.5,-0.5)--(-3,-1)--(3,-1)--(2.5,-0.5)--(2.5,0.5)--(3,1);

\draw[very thin, dotted] 	(-2.5,0.5) -- (-2.2,1)
					(-2.5,-0.5) -- (-2.2,-1)
					(-2.625,0.625) -- (-2.4,1)
					(-2.625,-0.625) -- (-2.4,-1)
					(-2.75,0.75) -- (-2.6,1)
					(-2.75,-0.75) -- (-2.6,-1)
					(2.5,0.5) -- (2.2,1)
					(2.5,-0.5) -- (2.2,-1)
					(2.625,0.625) -- (2.4,1)
					(2.625,-0.625) -- (2.4,-1)
					(2.75,0.75) -- (2.6,1)
					(2.75,-0.75) -- (2.6,-1);
					
\draw[very thin]	(-2.5,0.2) -- (-0.7,0.2) -- (0.7,-0.2) -- (0.7,-1)
					(-0.7,0.2) -- (-0.7,1)
					(0.7,-0.2) -- (2.5,-0.2);

\path (-1.2,-1) node[below] {$Q_S\cap P^1_{ess}$};
\path (-2.7,0) node[left] {$Q_S\cap P^2_{ess}$};
\path (0,1) node[above] {$Q_S\cap P^3_{ess}$};
\path (2.7,0) node[right] {$Q_S\cap P^4_{ess}$};

\node[vertex] (a1) at (-2.5,0.2) {};
\path (a1) node[below right] {$z^2$};

\node[vertex] (a2) at (2.5,-0.2) {};
\path (a2) node[above left] {$z^4$};

\path (-1.2,-0.5) node[] {$B^1$};
\path (-1.6,0.6) node[] {$B^2$};
\path (1.2,0.5) node[] {$B^3$};
\path (1.6,-0.6) node[] {$B^4$};

\end{tikzpicture}

\caption{A diagram containing a zipper of length $0$ whose ends are of type $1$. Optional internal arcs are indicated by dotted lines.}\label{fig:zipper}
\end{figure}
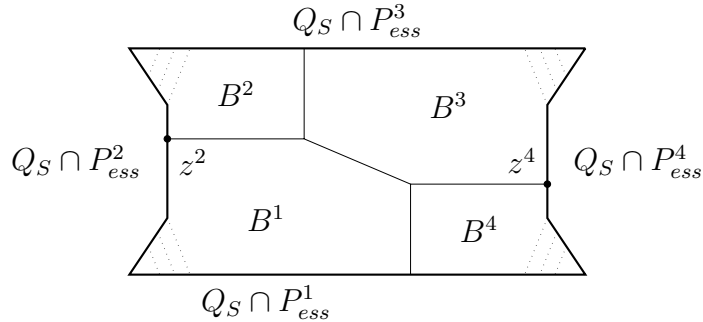
\begin{proof} Suppose for a contradiction that $D$ is such a diagram.  Then $D$ has four faces $B^t$ such that $\partial B^t\cap D'$ intersects both $P^t_{ess}$ and $P^{t+1}_{ess}$. We now show that (\ref{eq:longthin}) excludes this possibility.

Let $z^2$ be the vertex in $P^2_{ess}\cap\partial B^1\cap \partial B^2$ and let $z^4$ be the vertex in $P^4_{ess}\cap\partial B^3\cap \partial B^4$.   There is a path from $z^2$ to $z^4$ in $\Gamma(G,X)$ which is a union of at most 3 pieces in $\partial B^1$ and $\partial B^4$, so $d_{ X}(z^2,z^4)\leq 2$.  By Lemma \ref{lem:bdgammatoPess}, there are points $y^2\in \gamma^2$ and $y^4\in\gamma^4$ such that $d_{ X}(y^t,z^t)\leq 3$ for $t=2,4$, and thus $d_{ X}(y^2,y^4)\leq 8$. Therefore,
\[
 l_{{X}}(\gamma_1) \leq d_{ X}(x,y^2)+d_{ X}(y^2,y^4) + d_{ X}(y^4,x') \leq 2\max\set{l_{ X}(\gamma^2),l_{ X}(\gamma^4)}+8, 
\]
which contradicts the assumption (\ref{eq:longthin}).
\end{proof}

\begin{proposition}\label{prop:essentialquads} If the essential quadrangle $(P^1_{ess}, P^2_{ess}, P^3_{ess}, P^4_{ess})$ associated to $(\gamma^1,\gamma^2, \gamma^3, \gamma^4)$ admits a simple cycle $Q$ which intersects each $P^t_{ess}$ in at least an edge and $(\ref{eq:longthin})$ holds, then any diagram $D$ with boundary $Q$ and no internal arc connecting $P^2_{ess}$ to $P^4_{ess}$ has one of the forms given in Figure $\ref{fig:reduciblequads}$.
\end{proposition}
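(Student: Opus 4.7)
The plan is to apply Strebel's curvature formula \eqref{Strebcurv} to $D$ and then eliminate, by case analysis, all shapes of diagrams that do not match Figure \ref{fig:reduciblequads}, using the small cancellation hypothesis together with Lemmas \ref{lem:noladder} and \ref{lem:nozipper}. The starting point is that, since $Q$ is a simple cycle in an essential $S$-quadrangle, Proposition \ref{prop:combgeods} guarantees $D$ is a combinatorial geodesic quadrangle in the sense of Definition \ref{def:combinatorialgeodesicpolygon}, so the classification of faces recalled in Lemma \ref{quadranglefacts} is available.

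First I would restrict the allowed face types. As noted in the remark following Lemma \ref{quadranglefacts}, the $C'(\tfrac{1}{24})$ assumption together with Lemmas \ref{numberoffacesbound} and \ref{lem:Pesslocgeod} eliminates both interior faces ($e(F)=0$) and faces whose exterior boundary lies in a single side: in either case $\partial F$ would be covered by too few pieces (at most $8$, or the union of a geodesic and at most $7$ pieces), contradicting $\tfrac{1}{24}$ small cancellation. Hence every face of $D$ has exterior boundary meeting at least two sides, and Lemma \ref{quadranglefacts}(i)--(ii) then bounds each such face by $i(F)\leq 4$. Inserted into \eqref{Strebcurv}, only faces with $e(F)\in\{1,2\}$ and $i(F)\leq 3$ can contribute positively, and for each pair of sides (consecutive or opposite) at most a bounded number of such faces may occur.

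The next step is to translate these numerical constraints into a shape. Using the hypothesis that no internal arc connects $P^2_{ess}$ to $P^4_{ess}$ together with Lemma \ref{lem:noladder}, any face whose exterior meets both opposite sides $P^2_{ess}$ and $P^4_{ess}$ is severely restricted; and Lemma \ref{lem:nozipper} excludes the zipper configuration of Figure \ref{fig:zipper}. What remains is a finite list of candidate adjacency patterns for the faces of $D$, which I would normalise by the edge and face reductions of \cite[\S 3.2]{ACGH2}: each reduction strictly decreases the number of faces while preserving the essential $S$-quadrangle structure and the absence of $P^2$--$P^4$ internal arcs, so induction on the face count reduces to a finite set of irreducible base cases, which are exactly the shapes shown in Figure \ref{fig:reduciblequads}.

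The main obstacle is the combinatorial bookkeeping of the case analysis: at each step one must verify that no prohibited internal arc is created, that the distinguished vertices lie where the ``corner'' faces dictate, and that every face respects both the Lemma \ref{quadranglefacts} degree bounds and the small cancellation piece counts of Lemma \ref{numberoffacesbound}. The $C'(\tfrac{1}{24})$ threshold is used repeatedly here, since in each excluded configuration one obtains an estimate of the form $\abs{\partial F}<\abs{\partial F}(\tfrac{k}{24}+\tfrac{1}{2})$ or $\abs{\partial F}<\tfrac{k}{24}\abs{\partial F}$ with $k\leq 23$, producing the desired contradiction.
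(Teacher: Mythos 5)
Your proposal assembles the right tools (Lemma \ref{quadranglefacts}, the piece counts from Lemmas \ref{numberoffacesbound} and \ref{lem:Pesslocgeod}, Lemma \ref{lem:nozipper}, and the reductions of \cite[\S 3.2]{ACGH2}), but the step that actually produces the shapes in Figure \ref{fig:reduciblequads} is asserted rather than proved. Saying that ``induction on the face count reduces to a finite set of irreducible base cases, which are exactly the shapes shown'' is assuming the conclusion: the whole content of the proposition is to identify what those irreducible shapes are, and nothing in your outline explains why the ladder of $e_{1,3}$ arcs with optional tripods $(\dagger)$, $(\ddagger)$ is the only possibility. Note also that the diagrams in the figure can have arbitrarily many faces, so the curvature formula does not bound the face count, and a reduction does not ``preserve the essential $S$-quadrangle structure'' --- cutting along an arc from $P^1_{ess}$ to $P^3_{ess}$ produces two \emph{triangles}, which is precisely the point you need and do not exploit.

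The missing argument is a dichotomy on $e_{1,3}$, the number of interior arcs joining $P^1_{ess}$ to $P^3_{ess}$. If $e_{1,3}>0$, reducing such an arc splits $D$ into two combinatorial geodesic triangles, and Strebel's classification (Theorem \ref{thm:Strebel}, with type $IV$ excluded by Lemma \ref{quadranglefacts}(iii) and the remark following it) forces each half to be of type $I_2$, $I_3$, $II$, $III$ or $V$; matching these types to the presence or absence of $e_{1,2}$, $e_{2,3}$ arcs and of the tripod $(\dagger)$ is exactly what yields the figure. If $e_{1,3}=0$, one must show some face meets both $P^1_{ess}$ and $P^3_{ess}$ in edges (so that an arc can be inserted and the previous case applies); otherwise the subdiagram $D'$ cut out by the extremal arcs $e^t$ is a special diagram in the sense of \cite[Definition 3.14]{ACGH2}, it cannot be extraordinary because no face may have exterior boundary in a single side, hence it is a zipper, necessarily of length $0$ with both ends of type $1$, and this is where Lemma \ref{lem:nozipper} (and through it the hypothesis (\ref{eq:longthin})) is used to derive a contradiction. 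Your outline invokes Lemma \ref{lem:nozipper} but never places it in this role, and it invokes Lemma \ref{lem:noladder}, which is not needed here since the hypothesis already gives $e_{2,4}=0$. Without the Strebel-triangle step and the special-diagram classification, the case analysis you describe cannot terminate in the stated list of forms.
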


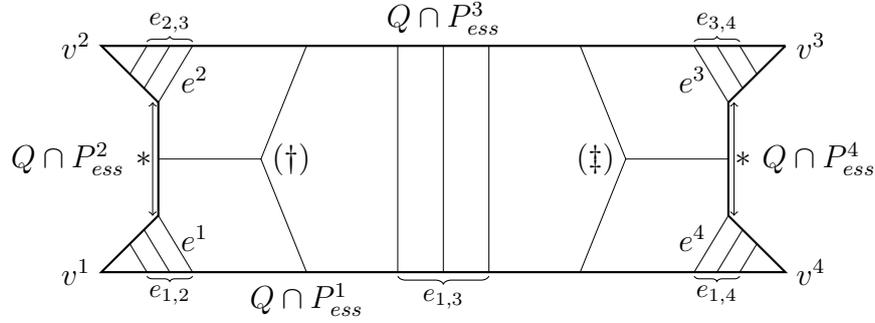
\begin{figure}[H]
\begin{tikzpicture}[yscale=1.5,xscale=1.5, 
vertex/.style={draw,fill,circle,inner sep=0.3mm}]

\draw[thick] (3,1)--(-3,1)--(-2.5,0.5)--(-2.5,-0.5)--(-3,-1)--(3,-1)--(2.5,-0.5)--(2.5,0.5)--(3,1);

\draw[very thin] 	(-2.5,0.5) -- (-2.2,1)
					(-2.5,-0.5) -- (-2.2,-1)
					(-2.625,0.625) -- (-2.4,1)
					(-2.625,-0.625) -- (-2.4,-1)
					(-2.75,0.75) -- (-2.6,1)
					(-2.75,-0.75) -- (-2.6,-1)
					(-2.5,0) -- (-1.6,0) -- (-1.2,1)
					(-1.6,0) -- (-1.2,-1)
					(-0.4,-1) -- (-0.4,1)
					(0,-1) -- (0,1)
					(0.4,-1) -- (0.4,1)
					(2.5,0.5) -- (2.2,1)
					(2.5,-0.5) -- (2.2,-1)
					(2.625,0.625) -- (2.4,1)
					(2.625,-0.625) -- (2.4,-1)
					(2.75,0.75) -- (2.6,1)
					(2.75,-0.75) -- (2.6,-1)
					(2.5,0) -- (1.6,0) -- (1.2,1)
					(1.6,0) -- (1.2,-1);

\path (-1.6,0) node[right] {($\dagger$)};
\path (1.6,0) node[left] {($\ddagger$)};

\path (-1.2,-1) node[below] {$Q\cap P^1_{ess}$};
\path (-2.7,0) node[left] {$Q\cap P^2_{ess}$};
\path (0,1) node[above] {$Q\cap P^3_{ess}$};
\path (2.7,0) node[right] {$Q\cap P^4_{ess}$};

\draw[very thin, <->]	(-2.55,-0.5) -- (-2.55,0.5);
\draw[very thin, <->]	(2.55,-0.5) -- (2.55,0.5);

\path (-2.63,0) node[] {$\ast$};
\path (2.63,0) node[] {$\ast$};

\path (-3,-1) node[left] {$v^1$};
\path (-3,1) node[left] {$v^2$};
\path (3,1) node[right] {$v^3$};
\path (3,-1) node[right] {$v^4$};

\path (-2.4,-0.7) node[right] {$e^1$};
\path (-2.4,0.7) node[right] {$e^2$};
\path (2.4,0.7) node[left] {$e^3$};
\path (2.4,-0.7) node[left] {$e^4$};

\draw [decorate,decoration={brace,amplitude=2pt},xshift=0pt,yshift=-1pt] (-2.2,-1) -- (-2.6,-1) node [black,midway,yshift=-0.25cm] {\footnotesize $e_{1,2}$};

\draw [decorate,decoration={brace,amplitude=2pt},xshift=0pt,yshift=1pt] (-2.6,1) -- (-2.2,1) node [black,midway,yshift=0.25cm] {\footnotesize $e_{2,3}$};

\draw [decorate,decoration={brace,amplitude=2pt},xshift=0pt,yshift=1pt] (2.2,1) -- (2.6,1) node [black,midway,yshift=0.25cm] {\footnotesize $e_{3,4}$};

\draw [decorate,decoration={brace,amplitude=2pt},xshift=0pt,yshift=-1pt] (2.6,-1) -- (2.2,-1) node [black,midway,yshift=-0.25cm] {\footnotesize $e_{1,4}$};

\draw [decorate,decoration={brace,amplitude=3pt},xshift=0pt,yshift=-1pt] (0.4,-1) -- (-0.4,-1) node [black,midway,yshift=-0.3cm] {\footnotesize $e_{1,3}$};

\end{tikzpicture}

\caption{Diagrams with boundary $Q$ have the following form: the tripods marked $(\dagger)$ and $(\ddagger)$ are optional, and $e_{t,t'}$ denotes the number of interior arcs with end vertices in $P^t_{ess}$ and $P^{t'}_{ess}$. When the tripod does not appear, the segments marked $\ast$ may have length 0.}\label{fig:reduciblequads}
\end{figure}

\begin{proof}
Using Lemma \ref{quadranglefacts} we see that every face of $Q$ has external boundary which intersects at least two of the sides $P^t_{ess}$. 

For $1\leq t<t'\leq 4$, define $E_{t,t'}$ to be the set of edges in $D$ which have one end vertex in $P^t_{ess}$ and the other in $P^{t'}_{ess}$, and let $e_{t,t'}=\abs{E_{t,t'}}$. By assumption $e_{2,4}=0$. Notice that no two edges in $E_{t,t'}$ can have an end vertex in common.  Indeed, if this were the case, then there would be a face $F\subset D$ whose boundary is contained in at most 2 pieces and its intersection with $P^t_{ess}$ (or $P^{t'}_{ess}$), which contradicts Lemma \ref{lem:Pesslocgeod}.

For each $t$ (considered modulo $4$) let $v^t$ be the vertex in $Q\cap P^t_{ess}\cap P^{t+1}_{ess}$, and whenever $E_{t,t+1}\neq \emptyset$, choose $e^t\in E_{t,t+1}$ such that the end vertex of $e^t$ on $P^t_{ess}$ is furthest from $v^t$ along $P^t_{ess}$. 

{\bf Case 1.}  If there is an interior arc $\alpha$ connecting $P^1_{ess}$ to $P^3_{ess}$ (i.e., $e_{1,3}>0$) consider the two diagrams $D_2,D_4$ obtained by removing the closure of $\alpha$ in $D$ and then reattaching a copy of it to each connected component of $D\setminus\overline{\alpha}$. The two resulting diagrams $D_t$ are combinatorial geodesic triangles with sides $P^t_{ess}\cap Q$, $P^1_{ess}\cap \partial D^t$ and the union of $P^3_{ess}\cap \partial D^t$ and $\alpha$. Using Strebel's classification, the triangle $D^2$ is of type: $I_2$ if and only if $e_{1,2}=e_{2,3}=0$; $I_3$, $II$ or $III_2$ if and only if at least one of $e_{1,2},e_{2,3}\neq 0$ and $(\dagger)$ does not appear (with $III_2$ happening if and only if the segment marked $*$ has length $0$); and $V$ if and only if $(\dagger)$ does appear. Recall that the triangle cannot be of type $IV$ by Lemma \ref{quadranglefacts}(iii). A similar analysis can be done for $D^4$.

{\bf Case 2.}  Suppose that $e_{1,3}=0$. Let us prove that there is a face whose boundary intersects both $P^1_{ess}$ and $P^3_{ess}$ in edges. If this is the case then we can add an internal arc connecting $P^1_{ess}$ to $P^3_{ess}$ to $D$ and obtain a new combinatorial geodesic quadrangle $D''$ satisfying all the properties of Lemma \ref{quadranglefacts}, so we may follow the same analysis as in Case 1.

Suppose for a contradiction that no face intersects both $P^1_{ess}$ and $P^3_{ess}$ in edges. Let $D'$ be the subdiagram of $D$ whose boundary consists of $e^t$ and the subpath of $Q\cap P^t_{ess}$ between $e^{t-1}\cap P^t_{ess}$ and $e^t\cap P^t_{ess}$ for each $t$.  By construction, $D'$ does not contain $v^t$ for any $t$ such that $e_{t,t+1}>0$, and no pair of consecutive sides in $D'$ are connected by an edge (see Figure \ref{fig:reduciblequads}). The diagram $D'$ is special in the sense of \cite[Definition $3.14$]{ACGH2}, but it cannot be extraordinary, as all such diagrams have faces which intersect only one side of the quadrangle and this is prohibited in our case by Lemma \ref{quadranglefacts}(iii). Hence, $D'$ is a zipper, and this zipper has length $0$ and both ends are of type $1$ (any other zipper has a face which intersects only one side of the quadrangle). But such a diagram is not possible by Lemma \ref{lem:nozipper}.
\end{proof}

Notice that the assumption (\ref{eq:longthin}) is only used in the above proof when $e_{1,3}=e_{2,4}=0$. We are now ready to deal with the ``non-degenerate'' case of Theorem \ref{thm:quadclass}.

\begin{proposition}\label{prop:longthin} If an essential quadrangle $(P^1_{ess},P^2_{ess},P^3_{ess},P^4_{ess})$ associated to a geodesic quadrangle $Q_S = (\gamma^1,\gamma^2,\gamma^3,\gamma^4)$ satisfying $(\ref{eq:longthin})$ admits a simple cycle $Q$ which intersects each $P^t_{ess}$ in an edge, then there exist $i,j$ such that $R^1_i=R^3_j$.
\end{proposition}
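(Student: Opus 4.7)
The plan is to argue by contradiction, assuming $R^1_i\neq R^3_j$ for all $i,j$ and combining the structural results above with Lemma \ref{lem:essRconn}(ii) and (\ref{eq:longthin}) to force a contradiction. Take any reduced diagram $D$ with boundary the simple cycle $Q$. If $D$ admits an interior arc connecting $P^2_{ess}$ to $P^4_{ess}$, Lemma \ref{lem:noladder} immediately produces a coincidence $R^1_i=R^3_j$, contradicting the assumption. So we may assume no such arc exists, and Proposition \ref{prop:essentialquads} then forces $D$ into one of the shapes of Figure \ref{fig:reduciblequads}. In particular $e_{1,3}\geq 1$, and $D$ contains a nonempty ``middle ladder'' of interior arcs $\alpha_1,\ldots,\alpha_m$ joining $P^1_{ess}$ to $P^3_{ess}$, with faces $F_k$ of exterior degree $2$ and interior degree $2$ wedged between consecutive arcs.

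Next I would analyse a single ladder face $F$, write $\partial F=\rho$, and decompose $\partial F = A\cup\alpha_k\cup B\cup\alpha_{k+1}$, where $A=\partial F\cap P^1_{ess}$, $B=\partial F\cap P^3_{ess}$, and each $\alpha$ is a single piece. Applying Lemma \ref{numberoffacesbound} separately to $A$ and $B$: if $\rho$ is neither an $R^1_i$ nor an $R^3_j$, then $A$ is covered by at most $5$ pieces $R^1_k\cap\rho$ and $B$ by at most $5$ pieces $R^3_l\cap\rho$, so $\partial F$ is a union of at most $12$ pieces, violating $C'(\tfrac{1}{24})$. Hence $\rho$ equals some $R^1_i$ or some $R^3_j$. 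If ever a single face achieves both equalities simultaneously, $R^1_i=\rho=R^3_j$ is exactly the coincidence sought. The task therefore reduces to excluding the possibility that at every ladder face only one of these equalities holds.

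Under that bad assumption, say $\rho_k=R^1_{i_k}$ with $\rho_k\neq R^3_j$ for every $j$ (the symmetric case is analogous), Lemma \ref{lem:essRconn}(ii) applied to $\rho_k$ and $P^3_{ess}$ forces the $X$-diameter of $B_k=\rho_k\cap P^3_{ess}$ to be at most $2$, so the endpoints $q_k\in P^3_{ess}$ of the arcs $\alpha_k$ satisfy $d_X(q_k,q_{k+1})\leq 2$. Since each $\alpha_k$ is a piece and hence an edge of $\Cay(G,X)$, the opposite endpoints $p_k\in P^1_{ess}$ satisfy $d_X(p_k,q_k)\leq 1$. Summing and applying the triangle inequality through the rungs yields $d_X(q_0,q_m)\leq 2m$ and $d_X(p_0,p_m)\leq 2m+2$.

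The main obstacle is to convert these local upper bounds into a global contradiction with (\ref{eq:longthin}). The plan is to mimic the tripod/corner-face analysis from the proofs of Lemmas \ref{lem:bdgammatoPess} and \ref{lem:noladder} to bound the $X$-distances from $p_0,p_m,q_0,q_m$ to the corresponding corners of $\gamma^1$ and $\gamma^3$ by $O(\max\{l_X(\gamma^2),l_X(\gamma^4)\})$, giving the lower bounds $d_X(p_0,p_m)\geq l_X(\gamma^1)-O(\max\{l_X(\gamma^2),l_X(\gamma^4)\})$ and analogously for $d_X(q_0,q_m)$. Combined with the quadrangle inequality $l_X(\gamma^3)\geq l_X(\gamma^1)-2\max\{l_X(\gamma^2),l_X(\gamma^4)\}$ and the hypothesis (\ref{eq:longthin}) that $l_X(\gamma^1)\geq 3\max\{l_X(\gamma^2),l_X(\gamma^4)\}+25$, a careful constant chase should force the local upper bounds to be violated once the number of ladder faces $m$ is itself controlled via the small-cancellation-forced overlap structure among the $\rho_k$ along $P^1_{ess}$. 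Producing this last bound on $m$ is the most delicate ingredient; without it the local analysis only shows $m$ must be large, rather than producing the desired outright contradiction. Once this contradiction is reached, we conclude that some ladder face must satisfy $\rho=R^1_i=R^3_j$, yielding the required coincidence.
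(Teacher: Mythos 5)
Your setup is right and matches the paper's: dispose of $e_{2,4}\neq 0$ via Lemma \ref{lem:noladder}, invoke Proposition \ref{prop:essentialquads} to put $D$ in the form of Figure \ref{fig:reduciblequads}, and observe that a ``pure'' ladder face (exterior boundary meeting only $P^1_{ess}$ and $P^3_{ess}$, with $e(F)=i(F)=2$) must have boundary equal to some $R^1_i$ or some $R^3_j$, since otherwise it is a union of at most $12$ pieces. But the gap you flag yourself is fatal to your route: your local estimates give $d_X(q_0,q_m)\leq 2m$ against a lower bound of order $l_X(\gamma^3)$, which only proves $m$ is \emph{large}; there is no way to close this into a contradiction without an a priori upper bound on the number of rungs, and no such bound follows from $(\ref{eq:longthin})$. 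You also implicitly assume $e_{1,3}\geq 1$ and the existence of at least one pure ladder face, which Figure \ref{fig:reduciblequads} does not guarantee.

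The paper avoids both problems by splitting on $e_{1,3}$ rather than on the ladder as a whole. When $e_{1,3}\geq 3$ there are two \emph{adjacent} pure ladder faces $F,F'$ sharing an internal arc, and a purely local small-cancellation argument on this pair already gives the contradiction (no use of $(\ref{eq:longthin})$, no bound on $m$ needed): if $\partial F=R^1_i$ and $\partial F'=R^1_{i'}$, then $(\partial F\cup\partial F')\cap P^3_{ess}$ is covered by at most $9$ pieces via Lemma \ref{numberoffacesbound} and Lemma \ref{disttoPess}, forcing one of the two boundaries to be shorter than $\left(\frac34+\frac{6}{24}\right)$ of itself; if $\partial F=R^1_i$ and $\partial F'=R^3_j$, a similar count on $\partial F\cap P^3_{ess}$ (using that the index $j$ lies outside the interval of indices meeting $\partial F$) gives the same contradiction. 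This is exactly the extra step your ``bad assumption'' analysis is missing: you must play two adjacent faces off against each other, not just bound the diameter of each $B_k$ separately via Lemma \ref{lem:essRconn}(ii). In the remaining case $e_{1,3}\leq 2$, the middle of the diagram consists of at most three faces, and chaining paths of boundedly many pieces through them shows the two ends of the quadrangle satisfy $d_X\bigl(\set{z^{1,2},z^{2,1}},\set{z^{3,4},z^{4,3}}\bigr)\leq 8$ with each end within $X$-distance at most $5$ of $\gamma^2$ resp.\ $\gamma^4$; this contradicts $(\ref{eq:longthin})$ directly, without any reference to corners of $\gamma^1$ or $\gamma^3$. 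Your proposal covers neither of these two steps, so as written it does not constitute a proof.
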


\begin{proof} If $e_{2,4}\neq 0$ then the conclusion holds by Lemma \ref{lem:noladder}, so we may apply Proposition \ref{prop:essentialquads}. 

Let us assume for a contradiction that $R^1_i\neq R^3_j$ for all $i,j$. Now consider a diagram $D$ with boundary $Q$, which necessarily has one of the forms given in Figure \ref{fig:reduciblequads}. 

\textbf{Case 1:}  $e_{1,3}\geq 3$. In this case, there are two faces $F,F'\subset D$ whose boundaries intersect in an internal arc in $D$ such that $e(F)=i(F)=e(F')=i(F')=2$ and the exterior boundary of each face has one connected component in $P^1_{ess}$ and another in $P^3_{ess}$. If $\partial F$ or $\partial F'$ is not one of the $R^1_i$ or $R^3_j$, then its boundary is a union of at most 12 pieces by Lemma \ref{numberoffacesbound}, which is a contradiction. If $\partial F=R^1_i$ and $\partial F'=R^1_{i'}$ for some $i,i'$, then let $\alpha$ be the subpath $P^3_{ess}\cap (\partial F\cup \partial F')$ and define $J$ to be the set of indices such that $R^3_j$ contains an edge in $\alpha$. Let $j_1,j_2$ be the minimal (resp. maximal) elements of $J$. By Lemma \ref{disttoPess}, $y^3_{j_1}$ and $y^3_{j_2-1}$ are connected to $\alpha$ by edges. Now $\alpha$ is contained in a union of at most 5 pieces in $\partial F$ and at most 5 pieces in $\partial F'$, hence
\[
 (j_2-1)-j_1 \leq d_X(y^3_{j_1},y^3_{j_2-1}) \leq 2 + 2\left\lceil\frac54\right\rceil = 6.
\]
Thus, in total, $\alpha$ is contained in a union of at most $9$ pieces (since one element of $J$ could contribute a piece in each of $\partial F$ and $\partial F'$), hence one of $\partial F$ or $\partial F'$ is a union of at most $4$ pieces which intersect $\alpha$, two internal arcs in $D$, and $P^1_{ess}\cap \alpha$.  Therefore, $\abs{\partial F} < \left(\frac34+\frac{6}{24}\right)\abs{\partial F}$, which is a contradiction.

Hence we may assume $\partial F=R^1_i$ and $\partial F'=R^3_j$ for some $i,j$. Now let $J'$ be the set of $j'$ such that $\partial F\cap P^3_{ess}$ contains an edge in $R^3_{j'}$, and let $j'_1$ and $j'_2$ be the minimal/maximal elements of $J'$.   Note that $j$ is not in the interval $j'_1,\ldots,j'_2$. Assume $j<j'_1$; the other possibility can be handled in the same way. There are edges connecting $y^3_j$ and $y^3_{j'_2-1}$ to $\partial F\cap P^3_{ess}$, and so
\[
 j'_2-j'_1 \leq (j'_2-1)-j \leq d_X(y^3_{j},y^3_{j'_2-1}) \leq 2 + \left\lceil\frac{(j'_2-1)-j}4\right\rceil.
\]
We conclude that $j'_2-j'_1\leq 3$. Therefore, $\partial F$ is the union of $\partial F\cap R^1_i$ and a union at most 4 pieces in $\partial F\cap P^3_{ess}$ and two internal arcs in $D$. Thus $\abs{\partial F}<\left(\frac34+\frac{6}{24}\right)\abs{\partial F}$, which is a contradiction.

\medskip

\textbf{Case 2.}  From now on we assume that $e_{1,3}\leq 2$. Let $x,x'$ be the end vertices of $\gamma^1$, with $x\in\gamma^2$, $x'\in\gamma^4$. For each $t$ define vertices $z^{t,t+1}\in P_{ess}^t$, $z^{t+1,t} \in P_{ess}^{t+1}$ as follows. If $(\dagger)$ is present ($(\ddagger)$ when $t=3,4$), define them to be the vertices contained in the tripod, so in this case $z^{2,3}=z^{2,1}$, for example.  If the tripod does not appear and $E_{t,t+1}\neq\emptyset$, define them to be the end vertices of $e^t$.  Finally, if the tripod does not appear and $E_{t,t+1}=\emptyset$, define them both to be $v^t$.  See Figure \ref{fig:ztconfig} for one possible configuration. The proof will be in two steps: first we show $d_X(\set{z^{1,2},z^{2,1}},\set{z^{3,4},z^{4,3}})\leq 8$, and then we show $d_X(z^{s-1,s},\gamma^s)\leq 4$ and $d_X(z^{s,s-1},\gamma^s)\leq 4$ for $s=2,4$. This contradicts $(\ref{eq:longthin})$. 

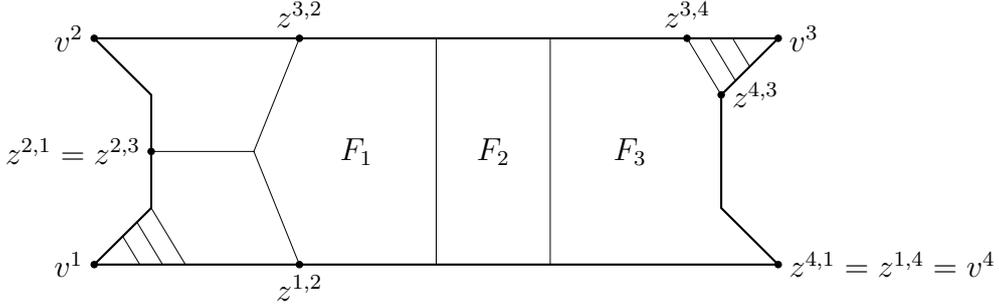
\begin{figure}[H]
  \centering%
\begin{tikzpicture}[yscale=1.5,xscale=1.5, 
vertex/.style={draw,fill,circle,inner sep=0.3mm}]

\draw[thick] (3,1)--(-3,1)--(-2.5,0.5)--(-2.5,-0.5)--(-3,-1)--(3,-1)--(2.5,-0.5)--(2.5,0.5)--(3,1);

\draw[very thin] 	(-2.5,-0.5) -- (-2.2,-1)
					(-2.625,-0.625) -- (-2.4,-1)
					(-2.75,-0.75) -- (-2.6,-1)
					(-2.5,0) -- (-1.6,0) -- (-1.2,1)
					(-1.6,0) -- (-1.2,-1)
					(0,-1) -- (0,1)
					(1,-1) -- (1,1)
					(2.5,0.5) -- (2.2,1)
					(2.625,0.625) -- (2.4,1)
					(2.75,0.75) -- (2.6,1)

					;
					
\path (-0.7,0) node[] {$F_1$};
\path (0.5,0) node[] {$F_2$};
\path (1.7,0) node[] {$F_3$};

\node[vertex]
(x1) at (-3,-1) {};
\path (x1) node[left] {$v^1$};

\node[vertex]
(x2) at (-3,1) {};
\path (x2) node[left] {$v^2$};

\node[vertex]
(x3) at (3,1) {};
\path (x3) node[right] {$v^3$};

\node[vertex]
(x4) at (3,-1) {};
\path (x4) node[right] {$z^{4,1}=z^{1,4}=v^4$};

\node[vertex]
(y1) at (-1.2,1) {};
\path (y1) node[above] {$z^{3,2}$};
\node[vertex]
(y2) at (-1.2,-1) {};
\path (y2) node[below] {$z^{1,2}$};
\node[vertex]
(y3) at (2.2,1) {};
\path (y3) node[above] {$z^{3,4}$};
\node[vertex]
(y4) at (2.5,0.5) {};
\path (y4) node[right] {$z^{4,3}$};
\node[vertex]
(y5) at (-2.5,0) {};
\path (y5) node[left] {$z^{2,1}=z^{2,3}$};

\end{tikzpicture}\\
	\caption{One possibility for the $z^{t,t+1}$ and $z^{t+1,t}$.} \label{fig:ztconfig}
\end{figure}

Label the interior arcs in $E^{2,4}$ by $\alpha_1,\alpha_2$ (when they exist) so that $\alpha_1$ starts closer to $v^1$ along $P^1_{ess}$ than $\alpha^2$.  Let us define a path of length at most $8$ connecting one of $\set{z^{1,2},z^{2,1}}$ to one of $\set{z^{3,4},z^{4,3}}$ in the worst case scenario, which is when $e^{2,4}=2$. Now $z^{3,2}$ and $z^{1,2}$ lie on the boundary of a unique face $F_1\subseteq D$. Since $\partial F_1$ is not simultaneously one of the $R^1_i$ and one of the $R^3_j$, by Lemma \ref{numberoffacesbound}, there are paths in $\partial F$ connecting one of $z^{3,2}$ or $z^{1,2}$ to any vertex of $\alpha_1$ which are unions of at most $6$ pieces in $\partial F_1$.  This implies that $d_X(\set{z^{3,2},z^{1,2}},\alpha_1)\leq 2$. Similarly, there is a path from any vertex in $\alpha_1$ to any vertex in $\alpha_2$ which is a union of at most 7 pieces in $\partial F_2$ (the neighbour of $\partial F_1$ whose boundary contains $\alpha_1$), and there is a path from any vertex in $\alpha_2$ to one of $\set{z^{3,4},z^{1,4}}$ which is a union of at most 6 pieces in $\partial F_3$ (the neighbour of $\partial F_2$ whose boundary contains $\alpha_2$). Combining these observations we see that $d_X(\set{z^{1,2},z^{2,1}},\set{z^{3,4},z^{4,3}})\leq 8$, as required.

For the second step, we will bound $d_X(z^{2,1},\gamma^2)$; all the other bounds can be found in the same way. Note that if the tripod $(\dagger)$ is present or $e^{1,2}\neq 0$, then $d_X(z^{2,1},\gamma^2)\leq 3$ by Lemma \ref{lem:bdgammatoPess}. Hence the only case we have to consider is $z^{2,1}=v^1$. The remainder of the argument closely follows the second half of Lemma \ref{lem:noladder}. Let $\overline D$ be a diagram with boundary $Q_S$ containing $D$ as a subdiagram. Let $F$ be the unique face in $D$ whose boundary contains $v^1$. If there is some $R^2_r$ such that $P^2_{ess} = R^2_r$, then $d_X(\gamma^2,v^1)\leq 2$ by Lemma \ref{disttoPess}.  If this does not happen, but for some $r'$, $P^2_{ess} \cap R^2_{r'}$ has an end vertex $v$ in $\partial F$, then $d_X(\gamma^2,v^1)\leq d_X(\gamma^2,v)+d_X(v,v^1)\leq 2 + \left\lceil\frac54\right\rceil=4$, by Lemmas \ref{disttoPess} and \ref{numberoffacesbound}(i). We may now assume that there is some $r$ such that $P^2_{ess} \cap R^2_{r}$ (strictly) contains $\partial F\cap P^2_{ess}$. We deduce that there is no other face $F'\subset \overline D$ such that $\partial F'\cap R^2_r$ contains an edge using the penultimate paragraph of the proof of Lemma \ref{lem:noladder}. Using the final paragraph of the same proof, we see that either there exist $i,j$ such that $R^1_i=R^3_j$ which is a contradiction, or one of the subpaths $N^s=R^2_r\cap P^2_{ess}\cap P^s_{ess}$ ($s=1,3$) has diameter at most $2$. Therefore,
\[
 d_X(\gamma^2,v^1) \leq d_X(\gamma^2,N^s) + \diam(N^s) + d_X(N^s,v^1) \leq 2+2+1=5,
\]
where the bound on $d_X(\gamma^2,N^s)$ comes from Lemma \ref{disttoPess} and the bound on $d_X(N^s,v^1)$ follows since $N^s$ contains one of the two end vertices of the piece $R^2_r\cap\partial F$.
\end{proof}

\subsection{Acylindricity}

\begin{theorem} \label{thm:acyl} Let $\mathcal P=\fpres{S}{r_1,r_2,\dots}$ be a presentation for a $C'(\frac{1}{24})$ group $G$.  If $G$ is uniformly power-free and $[X]\in\TCG$, then $G\curvearrowright \Cay(G,X)$ is acylindrical.
\end{theorem}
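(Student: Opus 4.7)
The plan is to deduce acylindricity by applying the classification of essential $S$--quadrangles (Theorem~\ref{thm:quadclass}) to the ``long, thin'' quadrangles that witness almost-fixed pairs, and then to combine the two alternatives with uniform power-freeness. Fix $\varepsilon>0$ and set $M=3\varepsilon+25$, so that any geodesic quadrangle of the shape constructed below automatically satisfies $(\ref{eq:longthin})$. Given $x,y\in\Cay(G,X)$ with $d_X(x,y)\geq M$ and $g\in G$ with $d_X(x,gx),d_X(y,gy)\leq\varepsilon$, I will form the geodesic quadrangle $(\gamma^1,\gamma^2,\gamma^3,\gamma^4)$ whose sides are a geodesic $\gamma^1$ from $x$ to $y$, a geodesic $\gamma^2$ from $y$ to $gy$, the translate $\gamma^3=g\cdot\bar\gamma^1$ from $gy$ to $gx$, and a geodesic $\gamma^4$ from $gx$ to $x$. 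Choosing the $S$--path data for $\gamma^3$ to be the $g$--translate of that for $\gamma^1$ yields an essential $S$--quadrangle with $P^3_{ess}=g\cdot\overline{P^1_{ess}}$ and $R^3_j=g\cdot R^1_{k(j)}$ for an appropriate reindexing $k$; Theorem~\ref{thm:quadclass} then applies.

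In Case (A), $R^1_i=R^3_j$ for some $i,j$, so $g$ carries the relator $R^1_{k(j)}$ to the relator $R^1_i$, placing $g$ in a coset of the setwise stabilizer $\mathrm{Stab}_G(R^1_i)$. This stabilizer acts on the cycle $R^1_i$ as a subgroup of a dihedral group, and its rotation subgroup is cyclic of some order $k$ dividing $|r_i|$; this forces $r_i=u^k$ in $F(S)$, so uniform power-freeness provides a uniform upper bound on $k$ and hence on $|\mathrm{Stab}_G(R^1_i)|$. Moreover, any such $R^1_i$ must contain a vertex within $X$--distance $O(\varepsilon)$ of $\{x,y\}$ (tracked through the displacements $l_X(\gamma^2),l_X(\gamma^4)\leq\varepsilon$), so only finitely many target relators can appear, yielding finitely many candidate $g$.

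In Case (B), the common subpath $\alpha\subseteq P^1_{ess}\cap P^3_{ess}$ has $X$--diameter at least $6$. Unwinding $P^3_{ess}=g\cdot\overline{P^1_{ess}}$, one checks that $g$ acts on $\alpha$ either as a reflection about some midpoint $p\in\alpha$, or as a nontrivial translation along $\alpha$. In the reflection subcase, $g^2$ fixes $\alpha$ vertex-wise and therefore $g^2=1$ in $G$, so $g$ is an involution determined by its midpoint $p$, which lies within $X$--distance $O(\varepsilon)$ of $\gamma^1$; this leaves only finitely many such involutions. In the translation subcase, the label of $\alpha$ must be periodic with period equal to the translation length, and I plan to show that once $\alpha$ is sufficiently long (which it is, once $M$ is taken large) this periodicity propagates into the label of some relator $r_i$, producing a long power $u^n$ as a subword and contradicting uniform power-freeness.

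The main obstacle will be the translation subcase. Since $P^1_{ess}$ is a concatenation of geodesic segments cutting through several relators $R^1_i$, a priori periodicity of the label of $\alpha$ need not manifest inside any single $r_i$. The technical heart of the argument is therefore to use the fine structure of essential $S$--paths developed in Section~\ref{sec:conedoffgraph}, together with the $C'(\tfrac{1}{24})$ assumption, to show that a sufficiently long periodic portion of $P^1_{ess}$ must be absorbed into a single relator (or into a bounded number of overlapping relators in a way that still yields a long power of a fixed word as a subword of some cyclic conjugate of some $r_i$), so that uniform power-freeness can be invoked to rule out nontrivial translations and complete the count.
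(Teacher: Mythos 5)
Your setup (the quadrangle $(\gamma^1,\gamma^2,g\bar\gamma^1,\gamma^4)$ satisfying $(\ref{eq:longthin})$, the appeal to Theorem~\ref{thm:quadclass}, and the use of uniform power-freeness in both alternatives) is the paper's strategy, and your Case (A) is essentially the paper's Case 1: if $g_1R^1_j=R^1_i=g_2R^1_j$ then $g_1^{-1}g_2$ is a label-preserving automorphism of the cycle $R^1_i$, and uniform power-freeness bounds that automorphism group by $N$. But be careful where finiteness comes from: ``$R^1_i$ contains a vertex within $X$--distance $O(\eps)$ of $\{x,y\}$'' gives nothing, since balls in $\Cay(G,X)$ are infinite. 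The correct point is that $i$ and $j$ range over the at most $l_X(\gamma^1)=3\eps+25$ relators used to build the \emph{fixed} path $P^1_{ess}$, so a pigeonhole over pairs $(i,j)$ yields the uniform bound $N(3\eps+22)^2$. Uniformity in $\eps$ is the entire content of acylindricity, so this cannot be left vague.

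The genuine gap is Case (B). First, the reflection subcase does not arise: since $\gamma^3$ is $g\gamma^1$ with reversed orientation, $P^1_{ess}$ and $gP^1_{ess}$ traverse the common subpath in the same direction, so $g$ shifts $P^1_{ess}$ along itself by a nonzero offset (a zero offset fixes a vertex of $\Cay(G,X)$ and forces $g=1$). More seriously, your stated goal for the translation subcase --- to \emph{rule out} nontrivial translations by propagating periodicity into a single relator --- is the wrong target and cannot succeed: nontrivial translations do occur, and a single $g$ whose offset $r$ is comparable to $\abs{\alpha}_S$ produces only a square, never an $N$--th power, so no contradiction with uniform power-freeness is available from one element. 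What is missing is the counting mechanism. Each Case (B) element $g$ is determined by the position of $g^{-1}$ applied to the start of a fixed subpath $P$ of $P^1_{ess}$; these positions lie on the at most $3\eps+22$ relators $R^1_i$; if more than $N(3\eps+22)$ such $g$ occur, then $N+1$ positions $z_0,\dots,z_N$ land on a single $R^1_i\cap P^1_{ess}$, two of them are at $S$--distance $r\leq\frac1N\abs{P}$, and the resulting period-$r$ repetition places an $N$--th power inside the single relator $R^1_i$ (after excluding, via the five-piece bound of Lemma~\ref{numberoffacesbound}, the degenerate case where the label of $P$ is an initial subword of the label of one $P^1_i$). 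This pigeonhole is what converts power-freeness into the uniform bound $N(\eps)$; without it your argument bounds nothing. The same uniformity problem afflicts your reflection subcase: ``finitely many midpoints on $\alpha$'' is not a bound depending only on $\eps$, because $\abs{\alpha}_S$ is governed by the lengths of the relators, which are unbounded.
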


\begin{proof}  
As $[X]\in\TCG$, it follows from Theorem \ref{thm:thinconehyp} that $\Cay(G,X)$ is hyperbolic.

Let $N$ be the uniformly power-free constant, that is, the constant such that no subpath of a relator is labelled by an $N$--th power of a non-trivial word. 

Fix $\varepsilon>0$.  Let $x,y\in \Cay(G,X)$ such that $$d_{ X}(x,y)=3\varepsilon+25 .$$  Let $\gamma^1$ be a geodesic from $x$ to $y$ in $\Cay(G,X)$, and let $g\in G\setminus\{1\}$ be such that 
 \begin{align} \label{acylineq} d_{ X}(x,gx)\leq \varepsilon \quad \textrm{ and } \quad d_{ X}(y,gy)\leq \varepsilon.\end{align}  By \cite[Lemma 2.4]{Os16}, it suffices to show that there is a uniform bound on the number of such $g$. Fix a geodesic $\gamma^1$ in $\Cay(G,X)$ from $y$ to $x$. For each $g$ satisfying $(\ref{acylineq})$, let $\gamma^2$ be a geodesic from $x$ to $gx$ in $\Cay(G,X)$, let  $\gamma^4$ be a geodesic from $gy$ to $y$ in $\Cay(G,X)$, and let $\gamma^3$ be the geodesic $g\gamma^1$ with the opposite orientation.  Define  $Q_{ X}=(\gamma^1,\gamma^2,\gamma^3,\gamma^4)$ to be the quadrangle associated to the choice of $x,y$ and $g$, and let $Q_S=(P^1_{ess},P^2_{ess},P^3_{ess},P^4_{ess})$ be an essential quadrangle associated to $Q_{ X}$ where $P^3_{ess}$ is $gP^1_{ess}$ with the opposite orientation.

 By construction, \[\min\{l_X(\gamma^1),l_X(\gamma^3)\}\geq 3\max\{l_X(\gamma^2),l_X(\gamma^4)\}+25,\]  so $Q_{ X}$ is a ``long, thin quadrangle'' which satisfies ($\ref{eq:longthin}$).  Thus we can apply the classification of quadrangles from Section \ref{subsec:quads}.  By Theorem \ref{thm:quadclass}, either there exists some $i$ and some $j$ such that $R^1_i=R^2_j$, or $P^1_{ess}\cap gP^1_{ess}$ contains a path whose end vertices are at distance at most $\varepsilon +3$ from $x$ and $y$, respectively, in $\Cay(G,X)$.
 \medskip
  
\noindent \textbf{Case 1.} Suppose there exist $i,j$ such that $R^1_i=R^3_j$.  For each $g$ yielding a diagram in this case we have $R^1_i=R^3_j=gR^1_j$, where the last inequality follows because $P^3_{ess}$ is $gP^1_{ess}$ with the opposite orientation. If there are more than $N(3\varepsilon+22)^2$ such $g$, then at least $N+1$ different $g$ satisfy $R^1_i=gR^1_j$ for some fixed $i,j$. If $g_1R^1_{j}=R^1_i=g_2R^1_{j}$, then $g_1^{-1}g_2\in \operatorname{Aut}(C)$, where $C\subseteq \Gamma$ is the cycle corresponding to $R^1_i$.  Hence 
$\abs{\textup{Aut}(C)}\geq N+1$, contradicting the uniformly power-free assumption. 

\medskip
\noindent \textbf{Case 2.} Suppose Case 1 does not hold.  Then for each $g$ yielding such a diagram, $P^1_{ess}\cap gP^1_{ess}$ contains a path whose end vertices are at distance at most $\varepsilon + 3$ from $x$ and $y$, respectively, in $\Cay(G,X)$. Fix a subpath $P$ of $P^1_{ess}$ starting at the last vertex within distance $\varepsilon + 4$ of $x$ and  ending at the first vertex within distance $\varepsilon + 4$ of $y$. It is clear that the end vertices of this path are at least $\varepsilon + 17$ far apart in $\Cay(G,X)$, and that $P$ is a subpath of $gP^1_{ess}$ for all $g$ under consideration.  The element $g$ is uniquely determined  by the length of the subpath of $gP^1_{ess}$ connecting $gx$ to the starting point of $P$ on $P^1_{ess}$.  Suppose there are more than $N(3\varepsilon+22)$ such different elements $g$.  Then there is some $i$ and $N+1$ different starting points $z_0,\ldots,z_N$ of $P$ on $R^1_i\cap P^1_{ess}$ such that one of the following occurs. \begin{enumerate}[(i)]
\item The label of the subpath of $P^1_i$ starting at $z_0$ is equal to an initial subword of the label of $P$
\item The label of $P$ is equal to an initial subword of the label of the subpath $P^1_i$ starting at $z_0$. 
\end{enumerate}

Let us show that the second option cannot happen. If it does, then there is some $g$ and some $i$ such that $P\subseteq gP^1_i\cap gP^1_{ess}$. Since we are not in case 1, $gR^1_i\neq R^1_j$ for any $j$, and so $P$ is contained in a union of at most $5$ pieces $gR^1_i\cap R^1_j$.  Thus its end vertices are at distance at most $ \lceil \frac54 \rceil =2$ in $\hat X$, which is a contradiction.

Therefore, there exist $l\neq m$ such that $0 < r=d_S(z_l,z_m)\leq \frac1N \abs{P}$ and the label of $P$ has an initial subword which is the $N$--th power of the label of its initial subpath of length $r$. This contradicts the uniformly power free assumption.

\end{proof}

\begin{proof}[Proof of Theorem \ref{thm:hypandacyl}]
Let $\mathcal P$ be a $C'(\frac{1}{24})$  presentation for a group $G$.  Then it follows from Theorem \ref{thm:thinconehyp} that $\TCG\subset\mathcal H(G)$. If $\mathcal P$ is additionally uniformly power-free, then $\TCG\subset \mathcal{AH}(G)$ by Theorem \ref{thm:acyl}.
\end{proof}

\section{Thin cones and $\mathcal{(A)H}$--inaccessibility}\label{sec:largest}

The goal of this section is to prove Theorems \ref{thmi:TCG} and \ref{thmi:notwklyaccess}, which will be done in subsections \ref{sec:subsec1} and \ref{sec:subsec2}, respectively.

\subsection{The structure of the subposet $\TCG$}\label{sec:subsec1}

Throughout this section $\mathcal{P}=\fpres{S}{R}$ is a $C'(\frac{1}{24})$ presentation, each element of $r_i$ is cyclically reduced in $F(S)$ and we enumerate $R=\set{r_1,r_2,\ldots}$. Define $\overline R$ to be the set of all cyclically reduced conjugates of the $r_i$ and their inverses. For each $i$ we define $C_i$ to be a cyclic graph whose label is $r_i$. We define $L$ to be the set of all initial subwords of elements of $\overline R$ and $P^k$ to be the set of words which are a product of at most $k$ pieces.

Recall that given $Y\subset F(S)$, $C_i^Y$ is the graph obtained from $C_i$ by adding an edge connecting any two vertices in $C_i$ such that there is a path between them whose label is in $Y$. We use the shorthand $\overline{C_i}=C_i^{P^4}$.

Let us recall the construction of the laced cone from Example \ref{def:lacedcone1}.

\begin{definition} For each $i$, fix a vertex $x_i$ in $\overline{C_i}$, and define $\mathcal{P}_i$ to be the set of all paths in $C_i$ which connect two points $y,z$ such that $d_{\overline{C_i}}(x_i,y)=d_{\overline{C_i}}(x_i,z)$. Set
\[
 LC((x_i)_i) = S\cup P^4 \cup \bigcup_{i\geq 1} \setcon{\Lab(P)}{P\in\mathcal P_i} \subseteq L.
\]
We call $LC((x_i)_i)$ the \textbf{laced cone based at $(x_i)_i$} (cf. Figure \ref{fig:lacedcone}).
\end{definition}
It is clear that $[LC((x_i)_i)]\in\TCG$. We now begin our study of the poset $\TCG$.

\begin{lemma}\label{lem:TCG1}
$|\TCG|=1$ if and only if each $C_i$ is a union of a uniformly bounded number of pieces (or equivalently $P^k=L$ for some $k$).
\end{lemma}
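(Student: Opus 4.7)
The plan is to first observe the equivalence of the two right-hand conditions and then argue each direction of the biconditional, with the substantive work being the reverse implication. For the equivalence itself, suppose each $r_i$ decomposes as a product of at most $k$ pieces $r_i=p_1\cdots p_n$ with $n\le k$. Then an arbitrary initial subword has the form $p_1\cdots p_{j-1}q$ where $q$ is an initial subword of the piece $p_j$; since $p_j$ is an initial subword of two distinct elements of $\overline{R}$, so is $q$, making $q$ itself a piece and the whole initial subword an element of $P^j\subseteq P^k$. Hence $L\subseteq P^k$. Conversely, each $r_i$ lies in $L$, so if $L=P^k$ then $r_i\in P^k$, i.e.\ $r_i$ is a product of at most $k$ pieces.

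For the forward direction ($\Leftarrow$), suppose $P^k=L$ and let $[X]\in\TCG$ have representative $P^4\subseteq X\subseteq L$. From $X\subseteq L$, every $x\in X$ has $|x|_L\le 1$, so $L\preceq X$. Conversely, every $l\in L=P^k$ can be written as a product of at most $k$ pieces, each of which lies in $P^1\subseteq P^4\subseteq X$, giving $|l|_X\le k$ and $X\preceq L$. Hence $[X]=[L]$, so $|\TCG|=1$.

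For the reverse direction ($\Rightarrow$), I argue the contrapositive: assuming $\diam(\overline{C_i})$ is unbounded over $i$ (by the equivalence above), I will exhibit some $[X]\in\TCG$ with $[X]\ne[L]$. The natural candidate is the laced cone $LC=LC((x_i)_i)\in\TCG$ of Example \ref{def:lacedcone1}, for any choice of basepoints $x_i$. Since $LC\subseteq L$ we have $L\preceq LC$; the task is to show $LC\not\preceq L$, i.e., to produce elements of $L$ of arbitrarily large length in $LC$. The crux is the distance identity
\[
 d_{C_i^{LC}}(x_i,v)=d_{\overline{C_i}}(x_i,v)\quad\text{for every } v\in C_i.
\]
The upper bound is immediate since $\overline{C_i}\subseteq C_i^{LC}$, and the lower bound is a monotonicity argument: any $P^4$-edge has endpoints at $\overline{C_i}$-distance $1$, so by the triangle inequality in $\overline{C_i}$ it changes $d_{\overline{C_i}}(x_i,\cdot)$ by at most one, while any lacing edge, by construction, connects vertices with equal $d_{\overline{C_i}}(x_i,\cdot)$ and so preserves this quantity exactly. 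Hence along any $C_i^{LC}$-path based at $x_i$, $d_{\overline{C_i}}(x_i,\cdot)$ grows by at most one per step.

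Choosing $v_i\in C_i$ with $d_{\overline{C_i}}(x_i,v_i)=\diam(\overline{C_i})$ and letting $l_i\in L$ be the label of a subpath of $C_i$ from $x_i$ to $v_i$, cone convexity (Lemma \ref{lem:qcxity}) gives $|l_i|_{LC}=d_{C_i^{LC}}(x_i,v_i)=\diam(\overline{C_i})$, which is unbounded in $i$. So $LC\not\preceq L$, giving $[L]\prec[LC]$ and $|\TCG|\ge 2$. The hard part is the monotonicity-based distance identity above; the rest is a routine unwinding of definitions.
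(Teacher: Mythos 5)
Your proof is correct and takes essentially the same route as the paper's: the forward direction is the same bookkeeping, and for the reverse direction the paper likewise uses a laced cone together with the convexity of cones (Lemma \ref{lem:qcxity}) to exhibit elements of $L$ of unbounded $X$--length, the only cosmetic difference being that the paper places the basepoints $x_i$ at the ends of the long subpaths rather than choosing arbitrary basepoints and locating far vertices afterwards, and your monotonicity argument makes explicit a distance identity the paper leaves implicit. The one small slip is that one can only guarantee a vertex $v_i$ with $d_{\overline{C_i}}(x_i,v_i)\geq \frac12\diam(\overline{C_i})$ rather than equality, which changes nothing.
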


\begin{proof} It is clear that if each $C_i$ is a union of a uniformly bounded number of pieces $M$, then for any $X\subseteq P^4$, the $C_i^X$ have uniformly bounded diameter $M/4$, and therefore $X$ is equivalent in $\TCG$ to the smallest thin cone, $L$. 

Now suppose this is not the case. For each $j\in \N$, let $w_j$ be the label of a subpath $P_j$ in some $C_{i_j}$ of length at most $\frac{\abs{r_{i_j}}}{2}$ such that $w\not\in P^{4j}$. If this cannot be done for some $j$, then $L\subseteq P^{8j}$. Without loss of generality, we may assume the map $j\mapsto i_j$ is injective. Let $x_j$ and $w_j$ be the end vertices of $P_j$, and consider any laced coned-off graph $\Gamma=\Cay(G,X((x_i)_i))$. By Lemma \ref{lem:qcxity}, the cones $C_{i_j}^{X((x_i)_i)}$ isometrically embed into $\Gamma$, so
$d_{X((x_i)_i)}(1,w_j)\geq j$ for all $j$, while $d_L(1,w_j)=1$. Therefore $[X((x_i)_i)]\not\preceq [L]$.
\end{proof}

We next show that $\TCG$ has a largest element if and only if it has exactly one element.

\begin{lemma}\label{lem:nothingabove} If $|\TCG|\neq 1$ then there are two elements $[X],[Y]\in \TCG$ such that no $[Z]\in \TCG$ satisfies both $[X]\preceq[Z]$ and $[Y]\preceq[Z]$.
\end{lemma}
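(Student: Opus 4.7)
The plan is to construct $[X],[Y]\in\TCG$ as laced cones whose basepoints are mutually far in $\overline{C_i}$ along an infinite subsequence, and then to argue that any $[Z]\in\TCG$ dominating both must have generators of uniformly bounded length in $\overline{C_i}$, which forces $C_{i_k}^Z$ to inherit the unbounded diameter of $\overline{C_{i_k}}$ and so fail to be uniformly hyperbolic.

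By Lemma~\ref{lem:TCG1}, the hypothesis $|\TCG|\neq 1$ yields an infinite subsequence $i_1<i_2<\cdots$ along which $\diam(\overline{C_{i_k}})\to\infty$. For each $k$ I pick vertices $x_{i_k},y_{i_k}\in V(C_{i_k})$ with $d_{\overline{C_{i_k}}}(x_{i_k},y_{i_k})\to\infty$ but strictly less than $\diam(\overline{C_{i_k}})$, so that the two basepoints are neither coincident nor antipodal (antipodal basepoints generate identical lacings). At indices outside the subsequence I set $x_i=y_i$ arbitrarily. Define $X=LC((x_i)_i)$ and $Y=LC((y_i)_i)$; both are thin cones by Example~\ref{def:lacedcone1}.

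Suppose toward a contradiction that $[Z]\in\TCG$ satisfies $[X]\preceq[Z]$ and $[Y]\preceq[Z]$, witnessed by a common constant $K$ so that $|z|_X,|z|_Y\leq K$ for every $z\in Z$. Fix $z\in Z$ labelling a subpath of $C_{i_k}$ between vertices $u,v$. Using the laced-cone metric at $x_{i_k}$,
\[
 d_{C_{i_k}^X}(u,v)\leq \min\bigl(d_{\overline{C_{i_k}}}(u,v),\ 1+\bigl|d_{\overline{C_{i_k}}}(u,x_{i_k})-d_{\overline{C_{i_k}}}(v,x_{i_k})\bigr|\bigr),
\]
and analogously at $y_{i_k}$. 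Combining $|z|_X,|z|_Y\leq K$ gives two possibilities: either $d_{\overline{C_{i_k}}}(u,v)\leq K$, or $u,v$ are simultaneously near-reflections through $x_{i_k}$ and through $y_{i_k}$ up to additive error $K$. A one-dimensional reflection argument in the cycle-like graph $\overline{C_{i_k}}$ shows the second case forces $d_{\overline{C_{i_k}}}(x_{i_k},y_{i_k})\leq 2K$, which is ruled out for large $k$ by the growth of this quantity. Hence every $Z$-edge on $C_{i_k}$ is an $\overline{C_{i_k}}$-chord of length at most $K$.

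Consequently a $C_{i_k}^Z$-path of combinatorial length $m$ advances by at most $mK$ in the $\overline{C_{i_k}}$-metric, yielding the lower bound $d_{C_{i_k}^Z}(u,v)\geq d_{\overline{C_{i_k}}}(u,v)/K$. Taking $u,v\in V(C_{i_k})$ to be $\overline{C_{i_k}}$-antipodes and noting that once $K<\tfrac12\diam(\overline{C_{i_k}})$ no $Z$-edge can cross between the two arcs of $C_{i_k}$ joining them, these two arcs become distinct geodesics in $C_{i_k}^Z$ forming a geodesic bigon. Their midpoints are themselves $\overline{C_{i_k}}$-antipodal, hence at $C_{i_k}^Z$-distance at least $\diam(\overline{C_{i_k}})/K\to\infty$, contradicting uniform $\delta$-hyperbolicity of $\{C_i^Z\}_i$. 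The main obstacle is the cycle-reflection step: carefully converting the combined laced-cone bounds into a cycle-distance bound on $(u,v)$, given that $\overline{C_{i_k}}$ is only a cycle up to $P^4$-chord shortcuts and the laced-cone metric mixes radial motion with equidistance jumps through the basepoint.
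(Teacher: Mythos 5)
Your construction and overall strategy coincide with the paper's: two laced cones based at points chosen far apart in $\overline{C_{i_k}}$ along a subsequence with $\diam(\overline{C_{i_k}})\to\infty$, then the observation that a common upper bound $[Z]$ would force $d_{C_{i_k}^Z}$ to be uniformly comparable to $d_{\overline{C_{i_k}}}$, which is incompatible with hyperbolicity. The genuine gap is in the reflection step. The correct output of the ``simultaneous near-reflection through $x_{i_k}$ and through $y_{i_k}$'' argument is a dichotomy, not a single alternative: the composite of the two reflections is a rotation by twice the separation of the basepoints, and the existence of a point moved a bounded amount by it forces that separation to be either at most $O(K)$ \emph{or} at least $\diam(\overline{C_{i_k}})-O(K)$. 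Since reflection through a point and through its antipode are the same map, near-antipodal basepoints give essentially identical lacings (as you yourself note), and your condition ``$d_{\overline{C_{i_k}}}(x_{i_k},y_{i_k})\to\infty$ but strictly less than the diameter'' does not exclude, say, separation $\diam(\overline{C_{i_k}})-1$, in which case $[X]=[Y]$ and the conclusion of the lemma fails for that pair. You must pin the separation away from both $0$ and the diameter --- the paper takes it to be exactly $\lfloor\diam(\overline{C_{i_k}})/2\rfloor$ --- and you should be aware that the paper reaches the key estimate $\max\{d_X(u,v),d_Y(u,v)\}\geq\tfrac12 d_{\overline{C_{i_k}}}(u,v)$ by a direct case analysis on $d_{\overline{C_{i_k}}}(u,v)$ versus $\lfloor\diam/2\rfloor$ rather than by composing reflections; the issue you flag as ``the main obstacle'' (that $\overline{C_{i_k}}$ is only coarsely a circle) is real and is where the work lies in either formulation.

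The bigon endgame is also incorrect as stated: the two arcs of $C_{i_k}$ are not geodesics of $C_{i_k}^Z$ (they are shortcut by $P^4$-edges), and a $Z$-chord of $\overline{C_{i_k}}$-length at most $K$ can perfectly well join the two arcs near $u$ or near $v$, so they need not bound a geodesic bigon. But this paragraph is unnecessary: once you have
\[
\frac{1}{K}\,d_{\overline{C_{i_k}}}(u,v)\;\leq\; d_{C_{i_k}^Z}(u,v)\;\leq\; d_{\overline{C_{i_k}}}(u,v),
\]
the cycles $C_{i_k}$ are uniformly quasi-isometrically embedded circles of unbounded diameter in the $C_{i_k}^Z$ (equivalently, by Lemma \ref{lem:qcxity}, in $\Cay(G,Z)$), and this alone contradicts uniform hyperbolicity --- which is exactly how the paper concludes.
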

\begin{proof} Applying Lemma \ref{lem:TCG1}, if $|\TCG|\neq 1$, then for each $j\in \N$ there is some $C_{i(j)}$ whose boundary word cannot be written as a product of fewer than $16j$ relators. Thus $d_j=\diam(\overline{C_{i(j)}})\geq 2j$. Choose $x_{i(j)},y_{i(j)}\in C_{i(j)}$ satisfying
\[
 d_{\overline{C_{i(j)}}}(x_{i(j)},y_{i(j)})=\left\lfloor \frac{d_j}{2}\right\rfloor,
\]
 and fix a vertex $x_i=y_i\in C_i$ for any $i$ which is not equal to one of the $i(j)$. Consider the laced cones $X=LC((x_i)_i)$ and $Y=LC((y_i)_i)$.

Suppose for a contradiction that there is some $[Z]\in\TCG$ such that $[X]\preceq[Z]$ and $[Y]\preceq[Z]$. For any representative $Z$ of $[Z]$ there is a constant $K$ such that
\begin{equation}\label{eqn:biLip} d_Z(a,b)\geq \frac1K\max\set{d_{X}(a,b),d_Y(a,b)}-K
\end{equation}
holds for all $a,b\in G$.

Now suppose $a,b$ are contained in a common cone $C^Z$. If $d_{\overline{C_{i(j)}}}(a,b)\leq \lfloor \frac{d_j}{2}\rfloor$ then by Lemma \ref{lem:qcxity} either $d_{X}(a,b)=d_{\overline{C_{i(j)}}}(a,b)$ or $d_{Y}(a,b)=d_{\overline{C_{i(j)}}}(a,b)$. If $d_{\overline{C_{i(j)}}}(a,b)> \lfloor \frac{d_j}{2}\rfloor$, then either $d_{X}(a,b)> \frac12\lfloor \frac{d_j}{2}\rfloor$ or $d_{Y}(a,b)> \frac12\lfloor \frac{d_j}{2}\rfloor$. Since in either case $d_{\overline{C_{i(j)}}}(a,b)\leq d_j=\diam(\overline{C_{i(j)}})$, it follows from Lemma \ref{lem:qcxity} that $\max\{d_{X}(a,b),d_Y(a,b)\}\geq \frac12d_{\overline{C_{i(j)}}}(a,b)$.  Combining this with (\ref{eqn:biLip}), we have
\[
 d_{\overline{C_{i(j)}}}(a,b) \geq d_Z(a,b) \geq \frac{1}{2K} d_{\overline{C_{i(j)}}}(a,b).
\]
Hence the $\overline{C_{i(j)}}$ are uniformly biLipschitzly embedded in $\Cay(G,Z)$, so if $Z$ is a thin cone in $[Z]$, by Lemma \ref{lem:qcxity} $\Cay(G,Z)$ contains biLipschitzly embedded cycles of arbitrary diameter.  However, this implies that $\Cay(G,Z)$  is not hyperbolic and hence by Theorem \ref{thm:thinconehyp}, $Z$ is not equivalent to any element of $\TCG$, contradicting our assumption.
\end{proof}

\begin{remark} It is not possible to infer from Lemma \ref{lem:nothingabove} that $G$ is not $\mathcal{H}$--accessible since there are cobounded actions $G\curvearrowright Z$ such that $\sigma([G\curvearrowright Z])$ does not contain any thin cones. In the proof of the lemma we need to assume that $Z$ is a thin cone in order to apply Lemma \ref{lem:qcxity}. Later we will give a stronger hypothesis from which we can deduce that a group is not $\mathcal{H}$--accessible.
\end{remark}

We next show that for every element of $\TCG\setminus\set{[L]}$ there is another element $[Y]\in\TCG$ which is incomparable to $[X]$. Recall that given $X\subseteq F(S)$, $X_i$ is the set of all elements of $X$ which are the initial subword of a cylically reduced conjugate of either $r_i$ or its inverse.

\begin{lemma}\label{lem:incomparable} For all  $[X]\in\TCG\setminus\set{[L]}$ there exists a $[Y]\in\TCG$ such that $[X]\not\preceq[Y]$ and $[Y]\not\preceq[X]$.
\end{lemma}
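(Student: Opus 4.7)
The plan is to construct $[Y]$ as a laced cone $LC((y_i)_i)$ for a suitable choice of basepoints $(y_i)_i$. Two observations set the stage. Since $[X]\neq[L]$ we have $|\TCG|\neq 1$, so Lemma \ref{lem:nothingabove} produces a pair of elements of $\TCG$ without a common upper bound; this excludes $[P^4]$ from $\TCG$, for if $[P^4]$ belonged to $\TCG$ every $[Z]\in\TCG$ would satisfy $[Z]\preceq[P^4]$ (because $P^4\subseteq Z$ forces $\sup_{p\in P^4}|p|_Z\leq 1$), making $[P^4]$ a common upper bound for any pair in $\TCG$. Thus $[X]\neq[P^4]$, and combined with the trivial $[X]\preceq[P^4]$ we obtain $\sup_{x\in X}|x|_{P^4}=\infty$. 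On the other hand, Lemma \ref{lem:qcxity} applied to $[X]\neq[L]$ gives $\sup_i\diam C_i^X=\infty$.

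Using Lemma \ref{lem:qcxity} again to identify $|x|_{P^4}$ with the $\overline{C_i}$-distance between the endpoints of the arc of $C_i$ labeled by $x$, these two unboundedness statements produce (after passing to subsequences) two disjoint sequences of distinct indices: a sequence $(a_j)$ with pairs $u'_j,v'_j\in C_{a_j}$ satisfying $d_{C_{a_j}^X}(u'_j,v'_j)\to\infty$, and a sequence $(b_j)$ with $X$-generators $x_j=u_j^{-1}v_j$ (for $u_j,v_j\in C_{b_j}$) satisfying $d_{\overline{C_{b_j}}}(u_j,v_j)\to\infty$. (The indices are eventually distinct because each $r_i$ contributes only finitely many subwords to $X$ or $L$.) Now define
\begin{itemize}
\item $y_{b_j}:=u_j$;
\item $y_{a_j}$ to be the vertex at the midpoint of an $\overline{C_{a_j}}$-geodesic from $u'_j$ to $v'_j$; if $d_{\overline{C_{a_j}}}(u'_j,v'_j)$ is odd, first replace $v'_j$ by its predecessor on this geodesic, which changes $d_{C_{a_j}^X}(u'_j,v'_j)$ by at most $1$ and hence leaves it tending to infinity;
\item $y_i$ an arbitrary vertex of $C_i$ for every other $i$.
\end{itemize}
By Example \ref{def:lacedcone1}, $Y:=LC((y_i)_i)\in\TCG$.

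For $Y\not\preceq X$: either arc of $C_{a_j}$ from $u'_j$ to $v'_j$ lies in $\mathcal P_{a_j}$ by construction, so its label $\tilde y_j$ belongs to $Y$, and Lemma \ref{lem:qcxity} gives $|\tilde y_j|_X=d_{C_{a_j}^X}(u'_j,v'_j)\to\infty$. For $X\not\preceq Y$: in $C_{b_j}^Y$, every edge alters the value $d_{\overline{C_{b_j}}}(y_{b_j},\cdot)$ by at most one (by the triangle inequality for $\overline{C_{b_j}}$-edges, and by equidistance for the additional laced edges), so
\[
 d_{C_{b_j}^Y}(p,q)\geq \bigl|d_{\overline{C_{b_j}}}(y_{b_j},p)-d_{\overline{C_{b_j}}}(y_{b_j},q)\bigr|
\]
for all $p,q\in C_{b_j}$; taking $y_{b_j}=u_j$ this yields $d_{C_{b_j}^Y}(u_j,v_j)\geq d_{\overline{C_{b_j}}}(u_j,v_j)$, so Lemma \ref{lem:qcxity} gives $|x_j|_Y\to\infty$.

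The main obstacle is the extraction of the sequence $(b_j)$: the hypothesis $[X]\neq[L]$ only forces large diameters in the cones $C_i^X$ and does not by itself produce $X$-generators whose endpoints are far apart in $\overline{C_i}$. Ruling out $[P^4]\in\TCG$ via Lemma \ref{lem:nothingabove} is what bridges this gap and enables the $X\not\preceq Y$ direction.
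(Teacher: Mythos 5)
Your proof is correct, and it reaches the conclusion by a genuinely different construction from the paper's. The paper also starts from Lemma \ref{lem:nothingabove}, but uses it to extract an element $[Z]\in\TCG$ with $[Z]\not\preceq[X]$ and then defines $Y$ coordinatewise as a hybrid, $Y_l=L_l$ off an infinite index set $I$ and $Y_l=Z_l$ on $I$: the $L$-coordinates supply generators of $Y$ that are long in $X$, and the $Z$-coordinates supply generators of $X$ that are long in $Y$. You instead use Lemma \ref{lem:nothingabove} only to rule out $[P^4]\in\TCG$ (hence $P^4\not\preceq X$), and then build $Y$ as a single laced cone with adversarially chosen basepoints: midpoints on the indices $a_j$ witnessing $[X]\not\preceq[L]$, and the endpoint $u_j$ on the indices $b_j$ witnessing $P^4\not\preceq X$. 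This is more concrete --- the incomparable element is an explicit laced cone rather than a mixture of $L$ with an abstract $Z$ --- at the cost of the extra observation that every chord of $C_{b_j}^Y$, including chords labelled by elements of $\mathcal P_i$ for $i\neq b_j$ (which are pieces and hence lie in $P^4$), moves $d_{\overline{C_{b_j}}}(u_j,\cdot)$ by at most one; your parenthetical covers this, and your derivation of $\sup_{x\in X}\abs{x}_{P^4}=\infty$ via the nonexistence of a largest element is valid. One cosmetic slip: with the paper's convention $X\preceq Y\iff\sup_{y\in Y}\abs{y}_X<\infty$, your first paragraph (exhibiting $\tilde y_j\in Y$ with $\abs{\tilde y_j}_X\to\infty$) proves $[X]\not\preceq[Y]$, and your second (exhibiting $x_j\in X$ with $\abs{x_j}_Y\to\infty$) proves $[Y]\not\preceq[X]$; the two labels are swapped, but both required directions are established.
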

\begin{proof} Let $[X]$ be any element of $\TCG$ other than $[L]$, the smallest element.  By Lemma \ref{lem:nothingabove}, there exists $[Z]\in\TCG$ such that $[Z]\not\preceq [X]$. Let $Z$ be a thin cone in $[Z]$. Since $[X]\not\preceq [L]$, there exists an infinite set $\{i(j)\mid j\in\mathbb N\}$ and a sequence of elements $w_{i(j)}\in L_{i(j)}$ such that $\sup_{j}|w_{i(j)}|_X=\infty$.  Similarly, since $[Z]\not\preceq[X]$, there exists an infinite set $\{i'(k)\mid k\in\mathbb N\}$ and a sequence of elements $w'_{i'(k)}\in X_{i'(k)}$ such that $\sup_{k}|w'_{i'(k)}|_Z=\infty$.  Let $I$ be an infinite subset of $\{i'(k)\mid k\in\mathbb Z\}$ such that $\{i(j)\mid j\in \mathbb Z\}\setminus I$ is infinite.  Define 
\[Y_l=\begin{cases} L_l& \textrm{ if } l\not\in I \\ Z_{i'(k)} & \textrm{ if } l=i'(k)\in I \textrm{ for some $k$.}\end{cases}\] 
Let $Y=\bigcup_l Y_l$, it is clear that $Y$ is a thin cone.  Now $[X]\not\preceq [Y]$, as for all $l$ such that $l=i(j)\not\in I$, we have $w_{i(j)}\in L_{i(j)}=Y_{i(j)}$, which implies that $\sup_j|w_{i(j)}|_Y=1$, while $\sup_j|w_{i(j)}|_X=\infty$.  Similarly, $[Y]\not\preceq [X]$ since for all $l$ such that $l=i'(k)\in I$, we have $w'_{i'(k)}\in X_{i'(k)}$, which implies that $\sup_k|w'_{i'(k)}|_X=1$, while $\sup_k|w'_{i'(k)}|_Y=|w'_{i'(k)}|_Z=\infty$.
\end{proof}

Recall that $\mathcal P(\omega)/Fin$ is the poset of equivalence classes of subsets of $\mathbb N$, where two subsets $A,B\subseteq \mathbb N$ are equivalent if $|A\triangle B|<\infty$ and $A\leq B$ if $|A\setminus B|<\infty$. Our next goal is to show that $\TCG$ is large.

\begin{proposition} \label{prop:PFin}
For any distinct elements $[X^1],[X^2]\in\TCG$ such that $[X^1]\preceq [X^2]$, there is an embedding of posets $\phi\colon\mathcal P(\omega)/Fin\hookrightarrow \TCG$ such that for each $[A]\in\mathcal P(\omega)/Fin$, $\phi([\emptyset])=[X^1]\preceq \phi([A]) \preceq[X^2]=\phi([\N])$.
\end{proposition}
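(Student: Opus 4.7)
The plan is to construct, for each $B \subseteq \mathbb N$, a thin cone $Y^B$ interpolating between $X^1$ and $X^2$ by toggling cones according to their ``distortion levels''. For $i \in I := \{i : X^1_i \neq X^2_i\}$ put $D_i := \max\{|y|_{X^1} : y \in X^2_i \setminus X^1_i\}$; this is finite because $C_i$ is a finite cycle. I would first argue that $\sup_{i \in I} D_i = \infty$: otherwise $\sup D_i < \infty$ would uniformly bound $|y|_{X^1}$ on $X^2$ and force $X^2 \preceq X^1$, contradicting $[X^1] \neq [X^2]$. Hence the set $V := \{D_i : i \in I\} \subseteq \mathbb N$ is infinite; enumerate $V = \{d_1 < d_2 < \dots\}$. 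For $B \subseteq \mathbb N$ set
\[
 Y^B \;=\; X^1 \;\cup\; \bigcup_{\substack{i \in I \\ D_i \in \{d_k \,:\, k \in B\}}} (X^2_i \setminus X^1_i),
\]
so $Y^B$ agrees with $X^2$ on the cones whose distortion level is indexed by $B$ and with $X^1$ on all other cones. Each cone $C_i^{Y^B}$ coincides with either $C_i^{X^1}$ or $C_i^{X^2}$, so $Y^B$ is a thin cone with uniform hyperbolicity constant $\max\{\delta_{X^1}, \delta_{X^2}\}$; set $\phi([B]) := [Y^B]$. By construction $Y^\emptyset = X^1$ and $Y^{\mathbb N} = X^2$.

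The verification that $\phi$ factors through $\mathcal{P}(\omega)/Fin$ as an order-preserving map is routine: if $B \triangle B'$ is finite, then any $y \in Y^B \triangle Y^{B'}$ satisfies $|y|_{X^1} \leq \max_{k \in B \triangle B'} d_k < \infty$, and since $X^1 \subseteq Y^B \cap Y^{B'}$ this bounds $|y|_{Y^{B'}}$ uniformly and gives $[Y^B] = [Y^{B'}]$; order-preservation follows from $B \subseteq B' \Rightarrow Y^B \subseteq Y^{B'}$.

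The main step, which I expect to be the hardest, is order-reflection: if $B \setminus B'$ is infinite, we must exhibit $y \in Y^B$ with $|y|_{Y^{B'}}$ unboundedly large. For each $k \in B \setminus B'$ I would pick $i \in I$ with $D_i = d_k$ and $y \in X^2_i \setminus X^1_i$ realising the maximum $|y|_{X^1} = d_k$. Since $P^4 \subseteq X^1$, every piece belongs to $X^1$, so $y \in X^2 \setminus X^1$ is not a piece and is therefore an initial subword of a unique element of $\overline R$; hence $y$ lies in exactly one cone, namely $C_i$. As $d_k \notin \{d_j : j \in B'\}$ the construction gives $C_i^{Y^{B'}} = C_i^{X^1}$, and applying Lemma~\ref{lem:qcxity} (convexity of cones in $\Cay(G, Y^{B'})$, which is a thin cone) yields
\[
 |y|_{Y^{B'}} \;=\; d_{C_i^{Y^{B'}}}(\text{endpoints of } y) \;=\; d_{C_i^{X^1}}(\text{endpoints of } y) \;=\; |y|_{X^1} \;=\; d_k.
\]
Since $d_k \to \infty$ along the infinite set $\{k : k \in B \setminus B'\}$, we conclude $\sup_{y \in Y^B} |y|_{Y^{B'}} = \infty$, so $[Y^B] \not\preceq [Y^{B'}]$. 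Together with the previous paragraph, this shows $\phi$ is a poset embedding $\mathcal{P}(\omega)/Fin \hookrightarrow \TCG$ with the required endpoints. The delicate point is the uniqueness of the cone containing $y$, which is where the small-cancellation hypothesis enters essentially.
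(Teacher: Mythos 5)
Your overall strategy --- interpolating between $X^1$ and $X^2$ by toggling, cone by cone, which of the two generating sets is used, indexed by subsets of an infinite index set --- is the paper's strategy. But your execution has the partial order $\preceq$ running the wrong way relative to containment of generating sets, and this is fatal rather than cosmetic. Recall that $X\preceq Y$ means $\sup_{y\in Y}\abs{y}_X<\infty$, so a \emph{larger} generating set is a \emph{smaller} element of the poset (which is why $[L]$ is the minimum of $\TCG$). The hypothesis $[X^1]\preceq[X^2]$ therefore says precisely that $\sup_{y\in X^2}\abs{y}_{X^1}<\infty$. Consequently your quantities $D_i=\max\set{\abs{y}_{X^1}: y\in X^2_i\setminus X^1_i}$ are \emph{uniformly bounded} over $i$, your set $V$ is finite, and your opening claim ``$\sup_i D_i=\infty$, else $X^2\preceq X^1$'' is backwards: a uniform bound on $\abs{y}_{X^1}$ for $y\in X^2$ is exactly the hypothesis $X^1\preceq X^2$, not its negation. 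Worse, since $Y^B\supseteq X^1$ and $Y^B\setminus X^1\subseteq X^2$ consists of elements of uniformly bounded $X^1$--length, one gets both $Y^B\preceq X^1$ and $X^1\preceq Y^B$, i.e.\ $[Y^B]=[X^1]$ for every $B$: the entire family collapses to a single point of $\TCG$, and in particular $\phi([\N])=[X^1]\neq[X^2]$. Relatedly, $B\subseteq B'\Rightarrow Y^B\subseteq Y^{B'}$ yields $[Y^{B'}]\preceq[Y^B]$, so even formally your $\phi$ would be order--reversing, which is incompatible with $\phi([\emptyset])=[X^1]\preceq[X^2]=\phi([\N])$ unless the two classes coincide.

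The repair is to dualize everything. After replacing $X^1$ by $X^1\cup X^2$ (which does not change $[X^1]$) one may assume $X^2\subseteq X^1$; the witnesses of $[X^1]\neq[X^2]$ are then elements $w_{n(i)}\in X^1_{n(i)}$ with $\sup_i\abs{w_{n(i)}}_{X^2}=\infty$, supported on infinitely many distinct cones, and the interpolants are built by \emph{adding} the $X^1$--edges to $X^2$ on a selected subfamily of those cones: $X^A_j=X^1_j$ for $j\in I\setminus I_A$ and $X^A_j=X^2_j$ otherwise, so that $X^2\subseteq X^A\subseteq X^1$ and hence $[X^1]\preceq[X^A]\preceq[X^2]$. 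With that reversal the rest of your outline is sound and matches the paper: finite symmetric differences of index sets change only finitely many generators (cones are finite), containment gives order--preservation, and order--reflection comes from the fact that a non--piece $w_{n(i)}$ lies on a unique relator so that, by convexity of cones (Lemma \ref{lem:qcxity}), $\abs{w_{n(i)}}_{X^B}$ is computed inside $C_{n(i)}^{X^B}$ and is unbounded whenever infinitely many of the relevant cones carry only the $X^2$--edges.
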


\begin{proof}
Let $\mathcal P=\fpres{S}{r_1,\ldots}$ be a $C'(\frac{1}{24})$ presentation.
Let $[X^1],[X^2]\in\TCG$ be two distinct elements satisfying $[X^1]\preceq[X^2]$. We have $X^1\sim X^1\cup X^2$, so by possibly changing representatives of the equivalence classes, we may assume without loss of generality that $X^2\subset X^1$. Since $X^2\not\sim X^1$, 
there is an infinite set $I=\{n(i)\}\subset \mathbb N$ and 
a sequence of elements $w_{n(i)}\in X^2_{n(i)}$ such that \begin{equation}\label{eqn:supI}\sup_{n(i)\in I}|w_{n(i)}|_{X^1}=\infty.\end{equation} 

Given a subset $A\subseteq \mathbb N$,  let $I_A=\{n(i)\in I\mid i\in A\}\subset I.$  Define
\[ X^A_j=\begin{cases}  X^2_j & \textrm{ if } j\not\in I \textrm{ or } j\in I_A \\ X^1_j & \textrm{ if } j\in I\setminus I_A\end{cases}.\]  

Let $X^A=\sqcup_j X^A_j$.  By construction, $[X^A]\in \TCG$.  Consider the map \[\phi\colon \PFin \to \TCG\] defined by \[[A]\mapsto[X^A].\]

We first show that $\phi$ is well-defined. Suppose $A,B\subseteq\mathbb N$ are equivalent in $\PFin$, i.e, $|A\triangle B|<\infty$.  It suffices to consider the case $B=A\cup\{b\}$. Then $X^A$ and $X^B$ differ only in $X^A_b$ and $X^B_b$.  Recall that elements of $X^A_b$ and $X^B_b$ are labels of the edges added to the cycle $C_b$.  Since $C_b$ has only finitely many vertices, it follows that $|X^A\triangle X^B|<\infty$, and so $[X^A]\sim [X^B]$.

We next show $\phi$ is injective. Suppose $A,B\subseteq\mathbb N$ are not equivalent in $\PFin$, i.e., $|A\triangle B|=\infty$.  We may assume without loss of generality that $B\setminus A$ is infinite.  We will show that $X^A$ and $X^B$ are not equivalent.   
Since $B\setminus A$ is infinite, there is an infinite subsequence $(n(i,k))$ of $(n(i))$ such that $w_{n(i,k)}\in X^A_{n(i,k)}= X^1_{n(i,k)}$ while $ X^B_{n(i,k)}= X^2_{n(i,k)}$.  Since the sequence $(n(i,k))$ is infinite, we must have $n(i,k)\to \infty$ as $k\to \infty$, and so by (\ref{eqn:supI}) we have \[\sup_k|w_{n(i,k)}|_{X^B}=\sup _k|w_{n(i,k)}|_{X^2}= \infty.\]  Therefore $X^A\not\sim X^B$.

Finally, we show $\phi$ is order-preserving. Suppose $[A],[B]\in\PFin$ satisfy $[A]\leq [B]$.  By changing representatives, we may assume that $A\subseteq B$. It then follows from the definition that $ X^A \supseteq X^B$, and so $[X^A]\preceq[X^B]$.

Finally, by construction $X^1\supseteq X^A\supseteq X^2$  for all $[A]\in\PFin$, and so $[X^1]\preceq [X^A]\preceq[X^2]$ for all $[A]\in\PFin$.  Moreover, by construction $\phi(\emptyset)= [X^\emptyset]=[X^1]$ and $\phi(\mathbb N)=[X^\mathbb N]=[X^2]$.
\end{proof}

\begin{proposition} \label{prop:antichain}
Every $[X]\in\TCG$ which is not the smallest element is contained in an uncountable chain and in an uncountable antichain in $\TCG$.
\end{proposition}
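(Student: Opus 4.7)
The plan is to handle the two claims separately: the chain containing $[X]$ follows almost immediately from Proposition \ref{prop:PFin}, while the antichain requires a parameterized version of the mixing construction from Lemma \ref{lem:incomparable}.

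For the chain, since $[L]$ is the minimum of $\TCG$ and $[X] \neq [L]$, we have $[L] \prec [X]$, so Proposition \ref{prop:PFin} yields an order-preserving injection $\phi \colon \PFin \into \TCG$ with $\phi([\emptyset]) = [L]$ and $\phi([\N]) = [X]$. To produce an uncountable chain in $\PFin$ containing $[\N]$, fix an enumeration $q_1, q_2, \dots$ of $\mathbb Q$ and take $\setcon{[A_r]}{r \in \R} \cup \set{[\N]}$ where $A_r = \setcon{n \in \N}{q_n < r}$; density of $\mathbb Q$ in $\mathbb R$ makes this a strictly increasing chain of cardinality $2^{\aleph_0}$ with $[\N]$ on top. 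Applying $\phi$ gives the desired chain in $\TCG$ containing $[X]$.

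For the antichain, I would extract the infinite index sequence $\set{i(j)}$ and words $w_{i(j)} \in L_{i(j)}$ with $\abs{w_{i(j)}}_X \to \infty$ from $[X] \prec [L]$, and then obtain (via Lemma \ref{lem:nothingabove}, as in the proof of Lemma \ref{lem:incomparable}) some $[Z] \in \TCG$ with $[Z] \not\preceq [X]$ together with an infinite $\set{i'(k)}$ and $w'_{i'(k)} \in X_{i'(k)}$ satisfying $\abs{w'_{i'(k)}}_Z \to \infty$. After passing to subsequences on the overlap of $\set{i(j)}$ and $\set{i'(k)}$ (and discarding at most one cofinite member from the family below, using that at most one element of an almost disjoint family in a countable set can be cofinite in it), I fix an uncountable almost disjoint family $\setcon{I_\alpha}{\alpha < 2^{\aleph_0}}$ of infinite subsets of $\set{i'(k)}$ such that $\set{i(j)} \setminus I_\alpha$ is infinite for every $\alpha$. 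For each $\alpha$ I set
\[
Y^\alpha_l = \begin{cases} L_l & \text{if } l \notin I_\alpha, \\ Z_l & \text{if } l \in I_\alpha, \end{cases}
\]
so that $[Y^\alpha] \in \TCG$ since each $C_l^{Y^\alpha}$ agrees with $C_l^L$ or $C_l^Z$, both of which are uniformly hyperbolic.

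The remaining step is to verify that $\set{[X]} \cup \setcon{[Y^\alpha]}{\alpha < 2^{\aleph_0}}$ is pairwise incomparable. Incomparability of $[Y^\alpha]$ with $[X]$ is exactly the argument of Lemma \ref{lem:incomparable} applied to $I = I_\alpha$: the words $w_{i(j)}$ with $i(j) \notin I_\alpha$ witness $[X] \not\preceq [Y^\alpha]$, and the words $w'_{i'(k)}$ with $i'(k) \in I_\alpha$ witness $[Y^\alpha] \not\preceq [X]$, computing word lengths inside cones via Lemma \ref{lem:qcxity}. For $\alpha \neq \beta$, almost disjointness forces $I_\alpha \setminus I_\beta$ to be infinite and hence to contain infinitely many $i'(k)$; for any such index, $w'_{i'(k)} \in X_{i'(k)} \subseteq L_{i'(k)} = Y^\beta_{i'(k)}$ while $\abs{w'_{i'(k)}}_{Y^\alpha} = \abs{w'_{i'(k)}}_Z \to \infty$, so $[Y^\alpha] \not\preceq [Y^\beta]$, and symmetry yields the reverse. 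The main obstacle is not any single incomparability but rather coordinating the almost disjoint family with the sequences $\set{i(j)}$ and $\set{i'(k)}$ so that all the incomparabilities hold simultaneously---this is precisely what the initial case analysis on their overlap is designed to handle.
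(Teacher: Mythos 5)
Your proof is correct and follows essentially the same route as the paper: the chain is obtained by pushing an uncountable chain in $\PFin$ through the embedding of Proposition \ref{prop:PFin}, and the antichain by splicing thin cones over an uncountable almost disjoint family and verifying pairwise incomparability with the diverging word-length witnesses from Lemma \ref{lem:incomparable}. The only (immaterial) difference is the mixing recipe: the paper interleaves $X$, an incomparable $[Y]$, and $L$ indexed by an antichain in $\PFin$, whereas you interleave $L$ and $Z$ over an almost disjoint family and adjoin $[X]$ directly, handling the single possibly-bad member of the family by discarding it.
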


\begin{proof}
Recall that $[L]\in\TCG$ is the smallest element, and let $[X]\in\TCG$ such that $X\not\sim L$.   By Proposition \ref{prop:PFin}, there is an embedding of posets $\phi\colon\PFin\hookrightarrow \TCG$ such that $[L]\preceq\phi([A])\preceq [X]$ for all $[A]\in\PFin$.  Since $\PFin$ contains uncountable chains, $[X]$ is contained in an uncountable chain.

To show that $[X]$ is contained in an uncountable antichain, first choose some $[Y]\in\TCG$ such that $[X]$ and $[Y]$ are incomparable, which exists by Lemma \ref{lem:incomparable}.  Since $[X]$ and $[Y]$ are incomparable, there exist subsequences $(n(i))$ and $(n'(j))$ of the natural numbers and words $w_{n(i)}\in X_{n(i)}$ and $w'_{n'(j)}\in Y_{n'(j)}$ such that \begin{equation} \label{eqn:supY}\sup_i|w_{n(i)}|_Y=\infty\end{equation} and \begin{equation}\label{eqn:supX}\sup_j|w_{n'(j)}|_X=\infty.\end{equation}

There are two cases to consider.  If $|\{n(i)\}\triangle \{n'(j)\}|=\infty$, then by passing to subsequences we may assume that $\{n(i)\}\cap \{n'(j)\}=\emptyset$.  In this case, given $A\subseteq\mathbb N$, let $I_A=\{n(i)\mid i\in A\}$ and $J_A=\{n'(j)\mid j\in A\}$.  If $|\{n(i)\}\triangle \{n'(j)\}|<\infty$, then by passing to subsequences, we may assume that $\{n(i)\}= \{n'(j)\}$.  In this case, given $A\subseteq\mathbb N$, let $I_A=\{n(i)\mid i\in A\}$ and $J_A=\{n(i)\mid i\not\in A\}$.  

In either case, define 
\[ W^A_k=\begin{cases}  X_k & \textrm{ if $k\in I_A$} \\
 Y_k& \textrm{ if $k\in J_A$} \\
 L_k & \textrm{ else}
\end{cases}.\]  
Let $W^A=\sqcup_k W_A^k$.  Define a map \[\phi\colon \PFin \to \TCG\] by \[[A]\mapsto [W^A].\]  By a similar argument as in the proof of Proposition \ref{prop:PFin}, this map is well-defined.

We now show that if $[A],[B]\in\PFin$ are incomparable, then $[W^A]$ and $[W^B]$ are incomparable in $\TCG$.  Suppose $A,B\subseteq\mathbb N$ satisfy $|A\setminus B|=\infty$ and $|B\setminus A|=\infty$.      

If $|\{n(i)\}\triangle\{n'(j)\}|=\infty$, then for $i\in A\setminus B$, consider the subsequence of words $w_{n(i)}\in W^A_{n(i)}=X_{n(i)}$ and for $j\in B\setminus A$ consider the sibsequence $w'_{n'(j)}\in W^B_{n'(j)}=Y_{n'(j)}$.   Since $A\setminus B$ and $B\setminus A$ are both infinite sets, these are both infinite sequences of words.  Applying (\ref{eqn:supY}) yields  \[\sup_i|w_{n(i)}|_{W^B}=\sup_i|w_{n(i)}|_Y=\infty,\] so $W^B\not\preceq W^A$.  Similarly, applying (\ref{eqn:supX}) yields \[\sup_j|w'_{n'(j)}|_{W^A}=\sup_j|w'_{n'(j)}|_X=\infty,\] and so $W^A\not\preceq W^B$.  Therefore, $[W^A]$ and $[W^B]$ are incomparable.  

If $|\{n(i)\}\triangle\{n'(j)\}|<\infty$, then since $B\setminus A\subseteq A^c$, a similar argument shows that $[W^A]$ and $[W^B]$ are incomparable.

Suppose $[A]\in\PFin$ is such that $A\not\sim \emptyset$ and $A\not\sim\mathbb N$.  Then $A^c$ is an infinite set, and a similar argument shows that  $[X]$ and $[W^A]$ are incomparable and $[Y]$ and $[W^A]$ are incomparable.

Therefore, for any antichain  $\{[A_\alpha]\}$  in $\PFin$, $\{[W^{A_\alpha}]\}$ is an antichain in $\TCG$ which can be extended to include $[X]$ and $[Y]$.  Since $\PFin$ contains uncountable antichains, the result follows.

\end{proof}

\begin{proof}[Proof of Theorem \ref{thmi:TCG}]
If $G$ is uniformly power-free then Theorem \ref{thm:acyl} implies that $\TCG\subseteq \mathcal{AH}(G)$.

Part (i) is Lemma \ref{lem:TCG1}, while part (ii) follows  from Propositions \ref{prop:PFin} and \ref{prop:antichain}.

\end{proof}

\subsection{$\mathcal H$-- and $\mathcal{AH}$--inaccessible groups}\label{sec:subsec2}
In this section, we construct examples of groups that are neither $\mathcal H$-- nor $\mathcal{AH}$--accessible.  We in fact prove an even stronger result: there is no largest (not necessarily cobounded) action under the partial ordering on a hyperbolic space.  Moreover, our examples are all groups which admit universal acylindrical actions.  Our main tool is to use thin cones to construct many ``sufficiently different" actions of a group on hyperbolic spaces.  We then show that if the group admits an action that is larger than all of these different actions in the partial ordering, it cannot be on a hyperbolic space.

\begin{theorem}\label{thm:notMA} Let $\mathcal{P}=\fpres{S}{R}$ be a $C'(\frac{1}{14})$ presentation where each $r\in R$ is cyclically reduced. Suppose that for every $n$ there is some $r_n\in R$, which is not a proper power, satisfying
\[ \lim_{n\to\infty}\abs{r_n}_S=+\infty \quad \textup{and} \quad \inf_{n\to\infty}\frac{\abs{r_n}_S^\frac12}{p(r_n)\log_2\abs{r_n}_S}>0,
\]
where $p(r_n)$ is the length of the longest piece in $r_n$. Then $G=F(S)/\ngen{R}$ does not admit a largest action on a hyperbolic space.
\end{theorem}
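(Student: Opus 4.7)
The plan is to construct an uncountable family of thin cones $\{[X^\alpha]\}$ and show that for any fixed hyperbolic action $G \curvearrowright Y$, at most countably many of these cones are dominated by it; hence no hyperbolic action dominates them all, ruling out a largest element on a hyperbolic space. For each $n$, using the growth condition $|r_n|_S^{1/2} \geq c\, p(r_n)\log_2|r_n|_S$, let $D_n = \sqrt{|r_n|_S \cdot p(r_n)\log_2|r_n|_S}$; a short calculation shows $D_n/(p(r_n)\log_2|r_n|_S) \to \infty$ and $k_n := \lfloor |r_n|_S/D_n\rfloor \to \infty$. Choose base points $x_n^{(1)}, \ldots, x_n^{(k_n)} \in C_n$ with pairwise $C_n$-spacing at least $D_n$. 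For each $\alpha \in \prod_n\{1,\ldots,k_n\}$, let $X^\alpha = LC((x_n^{\alpha_n})_n)$ be the corresponding laced cone of Example \ref{def:lacedcone1}; each is a thin cone giving a hyperbolic action by the $C'(\tfrac{1}{14})$ form of Theorem \ref{thm:hypandacyl}, and each $r_n$ being a non-proper-power is consistent with the acylindricity hypotheses (though only hyperbolicity is needed here).

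Suppose for contradiction that $G \curvearrowright Y$ is a largest action on a hyperbolic space. For each $\alpha$, $X^\alpha \preceq Y$ yields a constant $C_\alpha$ with $|g|_{X^\alpha} \leq C_\alpha|g|_Y$ for all $g$; in particular every lacing generator of $C_n^{X^\alpha}$—which connects two vertices of $C_n$ at equal $\overline{C_n}$-distance from $x_n^{\alpha_n}$—represents an element of $Y$-length at most $C_\alpha$. Using Proposition \ref{prop:combgeods} to realize $Y$-geodesics between points of $C_n$ as combinatorial geodesic polygons, together with $\delta_Y$-hyperbolicity and the thin-bigon property of hyperbolic spaces, I would identify a canonical symmetry region $Z_n^Y \subseteq C_n$ of $C_n$-diameter $O(p(r_n)\log_2|r_n|_S)$ about which the image of $C_n$ in $\Gamma(G,Y)$ behaves as a hyperbolic quasi-bigon.

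The quantitative crux is: for each fixed $C$, if $X^\alpha \preceq Y$ with constant $\leq C$, then the symmetry of the $X^\alpha$-lacing forces $x_n^{\alpha_n}$ to lie within $C_n$-distance $O(C \cdot p(r_n)\log_2|r_n|_S)$ of $Z_n^Y$, as otherwise the short $Y$-paths witnessing laced pairs would be incompatible with the bigon structure dictated by $Y$'s hyperbolicity. Since $D_n/(p(r_n)\log_2|r_n|_S) \to \infty$, for each fixed $C$ there is $N(C)$ such that for $n > N(C)$ the region contains at most one base point $x_n^{(j)}$, so the set of $\alpha$ with constant $\leq C$ has cardinality at most $\prod_{n \leq N(C)} k_n$, which is finite. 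Taking the countable union over $C \in \mathbb{N}$ shows at most countably many $\alpha$ satisfy $X^\alpha \preceq Y$; but $\prod_n\{1,\ldots,k_n\}$ is uncountable since $k_n \geq 2$ for all but finitely many $n$, producing the contradiction.

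The main obstacle is the rigorous construction of the symmetry region $Z_n^Y$ and the alignment estimate $d_{C_n}(x_n^{\alpha_n}, Z_n^Y) = O(C \cdot p(r_n)\log_2|r_n|_S)$. This requires carefully combining the classification of essential polygons from Proposition \ref{prop:combgeods} with hyperbolic bigon-approximation in $\Gamma(G,Y)$, and converting between $C_n$-, $\overline{C_n}$-, and $Y$-metrics using $p(r_n)$ as the natural conversion factor; the $\log_2|r_n|_S$ factor should emerge from a tree-approximation argument applied to the laced structure, and the linear dependence on $C$ from the scaling between $X^\alpha$- and $Y$-distances.
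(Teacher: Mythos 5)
There is a genuine gap, and it sits exactly at what you call the ``quantitative crux.'' You correctly record that $X^\alpha\preceq Y$ means $|g|_{X^\alpha}\leq C_\alpha|g|_Y$ (plus an additive constant), but you then deduce that ``every lacing generator of $C_n^{X^\alpha}$ \dots represents an element of $Y$-length at most $C_\alpha$.'' That is the inequality read in the wrong direction: a lacing generator $g$ has $|g|_{X^\alpha}=1$, and the domination only gives $1\leq C_\alpha|g|_Y$, i.e.\ a useless lower bound on $|g|_Y$. Domination by the (putative) largest action $Y$ bounds $d_{X^\alpha}$ \emph{above} by $d_Y$, equivalently bounds $d_Y$ \emph{below} by $d_{X^\alpha}$; it gives no upper bound whatsoever on the $Y$-length of laced pairs, which can be as large as $K|r_n|_S$ (they are only known to be $S$-close along $C_n$). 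Without short $Y$-paths witnessing the lacing there is nothing to play off against the thin-bigon structure of $Y$, so the ``symmetry region'' $Z_n^Y$ and the alignment estimate $d_{C_n}(x_n^{\alpha_n},Z_n^Y)=O(C\,p(r_n)\log_2|r_n|_S)$ have no foundation, and the counting argument that rests on them collapses. (The cardinality bookkeeping at the end is fine, but it never gets off the ground.)

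For comparison, the paper's proof needs only \emph{two} laced cones, with base points chosen roughly antipodally in $\overline{C_n}$ as in Lemma \ref{lem:nothingabove}: if both are dominated by $Y$, then $\max$ of the two coned-off distances gives $d_Y(a.z,b.z)\geq \frac{1}{2K}d_{\overline{C_n}}(a,b)-K$ for \emph{all} pairs $a,b$ on a relator, while the orbit map gives $d_Y(a.z,b.z)\leq K d_{C_n}(a,b)+K$. One then runs the exponential-divergence argument: the two arcs of $C_n$ between maximally separated points $a_n,b_n$ map to paths in $Y$ of length $\leq 2K|r_n|_S$, so by hyperbolicity each passes within $\delta\log_2(2K|r_n|_S)+1$ of the midpoint of a geodesic $[a_n.z,b_n.z]$, producing points $c_n,d_n$ on opposite arcs with $d_{\overline{C_n}}(c_n,d_n)=O(\log|r_n|_S)$; hence they are joined by a path through $a_n$ or $b_n$ consisting of $O(\log|r_n|_S)$ pieces, each of $S$-length $\leq p(r_n)$, which contradicts the hypothesis $\inf |r_n|_S^{1/2}/(p(r_n)\log_2|r_n|_S)>0$ for large $n$. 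If you want to rescue your strategy you would have to replace the false ``laced pairs are $Y$-short'' step with this kind of lower-bound-plus-divergence argument, at which point the uncountable family and the countability count become unnecessary.
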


The strategy of the proof is to use the two laced cones considered in Lemma \ref{lem:nothingabove} to prove that if $G\curvearrowright Z$ is an action such that for any $[X]\in\TCG$,
\[
 d_{{X}}(g,h)\leq Kd_Z(g,h) + C,
\] 
then $Y$ does not have exponential divergence, and hence it is not hyperbolic.

\begin{proof} Fix $z\in Z$, and for each $n$ let $C_n$ denote the labelled cyclic subgraph of $\Cay(G,S)$ which contains the vertex $1_G$ and has label $r_n$ when read from $1_G$. We denote by $\overline{C_n}$ the induced subgraph of $\Cay(G,S\cup P^4)$ with the same vertex set as $C_n$. Using the proof of Lemma \ref{lem:nothingabove} we find $[X],[Y]\in\TCG$ and $K_1\geq 1$ such that for all $a,b$ in a common $C_n$ we have
\begin{equation}\label{ZvsolCndist}
 \frac{1}{2K_1}d_{\overline{C_n}}(a,b)-K_1 \leq d_Z(a.z,b.z).
\end{equation}
Moreover, since the orbit map $g\mapsto g.z$ is $K_2$--Lipschitz, we have
\begin{equation}\label{ZvsCndist}
d_Z(a.z,b.z)\leq  K_2d_{C_n}(a,b)+K_2.
\end{equation}
Set $K=\max\set{K_1,K_2}$.

Define $f:\N\to\R$ by $f(n)=n^{\frac12}/\log_2(n)$. By hypothesis, there is an infinite subset $I\subseteq\N$ such that for all $n\in I$
\[ \frac{f(\abs{r_n}_S)}{p(r_n)} \geq \eps>0.
\]
Therefore, for all pairs of vertices $a,b$ in $C_n$, we have
\[
d_{\overline{C_n}}(a,b) \geq \frac{d_{C_n}(a,b)}{4p(r_n)} \geq \frac{\epsilon d_{C_n}(a,b)}{4f(\abs{r_n}_S)}.
\]
For each $n$ choose $a_n,b_n$ with $d_{C_n}(a_n,b_n)$ maximal (in particular, it is greater than $\frac13\abs{r_n}_S$). From the above equation we see that 
\begin{equation}\label{Cnbardist}
 d_{\overline{C_n}}(a_n,b_n)\geq \frac{\epsilon}{12} \frac{\abs{r_n}_S}{f(\abs{r_n}_S)}.
\end{equation} 
Let $P_n^1$ and $P_n^2$ be the two different embedded paths in $C_n$ from $a_n$ to $b_n$, and for $k=1,2$, let $q_n^k$ be a path in $Z$ obtained by connecting the images of consecutive vertices of $P_n^k$ under the orbit map $g\mapsto g.z$ by geodesics of length at most $2K$. This is possible by (\ref{ZvsCndist}). The length of the path $q_n^k$ is at most $2K\abs{r_n}_S$. Let $\ell$ be a geodesic connecting $a_n.z$ and $b_n.z$ in $Z$ and let $m_n$ be the midpoint of $\ell$. By \cite[Proposition 3.H.1.6]{BH99}, since $Z$ is $\delta$-hyperbolic, $d_Z(m_n,q_n^k)\leq \delta \log_2(2K\abs{r_n}_S)+1$, therefore there exist points $c_n,d_n\in C_n$ such that
\begin{equation}\label{vclosepoints}
 (2K)^{-1}d_{\overline{C_n}}(c_n,d_n)\leq d_Z(c_n.z,d_n.z) \leq 2\delta\log_2(2K\abs{r_n}_S)+2+4K.
\end{equation}
Moreover, for all $n$ sufficiently large, $\delta \log_2(2K\abs{r_n}_S) + 1 + 2K\leq \frac{\epsilon}{96K}\frac{\abs{r_n}_S}{f(\abs{r_n}_S)}$.  We claim that
\begin{equation}\label{vfarpoints}
 d_Z(\set{c_n.z,d_n.z},\set{a_n.z,b_n.z}) \geq \left(\frac{\eps}{96K}\right)\frac{\abs{r_n}_S}{f(\abs{r_n}_S)}.
\end{equation}
To see this, suppose that that $d_Z(c_n.z,a_n.z)<\left(\frac{\eps}{96K}\right)\frac{\abs{r_n}_S}{f(\abs{r_n}_S)}$; the other possibilities are similar.  Then 
\[d_Z(a_n.z,m_n)=d_Z(a_n.z,c_n.z)+d_Z(c_n.z,m_n)< 2\left(\frac{\eps}{96K}\right)\frac{\abs{r_n}_S}{f(\abs{r_n}_S)},\]
and so 
$d_Z(a_n.z,b_n.z)< 4\left(\frac{\eps}{96K}\right)\frac{\abs{r_n}_S}{f(\abs{r_n}_S)}$. However, by (\ref{ZvsolCndist}), this implies that
\[d_{\overline{C_n}}(a_n,b_n)< 8K\left(\frac{\eps}{96K}\right)\frac{\abs{r_n}_S}{f(\abs{r_n}_S)} = \left(\frac{\eps}{12}\right)\frac{\abs{r_n}_S}{f(\abs{r_n}_S)},\] 
which contradicts (\ref{Cnbardist}).  Thus (\ref{vfarpoints}) holds.

From (\ref{vclosepoints}) we see that there is a path in $C_n$ from $c_n$ to $d_n$ which is a union of at most $16K(\delta\log_2(2K\abs{r_n}_S)+1+2K)$ pieces. Since this path contains either $a_n$ or $b_n$, and each piece has length at most $p(r_n)\leq \epsilon^{-1}f(\abs{r_n}_S)$ in $C_n$,  we see that
\begin{align*}
 16\delta\epsilon^{-1} K\left(\log_2(2K\abs{r_n}_S)+1+2K\right)f(\abs{r_n}_S) &\geq d_{C_n}(c_n,d_n) \\
& \geq d_{C_n}(\set{c_n,d_n},\set{a_n,b_n}) \\
&\geq \frac{\epsilon\abs{r_n}_S}{96K^2f(\abs{r_n}_S)},
\end{align*}
where the final inequality follows from (\ref{vfarpoints}).
Thus there exists some $M\geq 1$, such that for all $n$ sufficiently large we have
\[
 Mf(\abs{r_n}_S)^2\log_2\abs{r_n}_S \geq \frac{1}{M} \abs{r_n}_S.
\]
However, this contradicts the definition of $f$, since for all sufficiently large $n\in I$,
\[
 \frac{\abs{r_n}_S}{f(\abs{r_n}_S)^2\log_2\abs{r_n}_S} \geq \epsilon^2\log_2\abs{r_n}_S>M^2. \qedhere
\]
\end{proof}

We conclude by building the first examples of universally acylindrically hyperbolic groups which are neither weakly $\mathcal H$-- nor weakly $\mathcal {AH}$--accessible.

\begin{theorem} There are uncountably many quasi-isometry classes of finitely generated acylindrically hyperbolic groups admitting a universal acylindrical action but no largest action on a hyperbolic space.
\end{theorem}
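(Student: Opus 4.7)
The plan is to apply Theorem~\ref{thm:notMA} to an uncountable family $\{G_A\}_{A\subseteq\N}$ of small cancellation groups obtained by varying the relator set, and then to distinguish $2^{\aleph_0}$ quasi-isometry classes among them by a Bowditch-style argument.

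For the construction, I would fix $S=\{a,b\}$ and a rapidly growing sequence of integers $N_1<N_2<\cdots$ (for instance $N_k=2^{2^k}$), and produce cyclically reduced words $r_k\in F(S)$ with $\abs{r_k}_S=N_k$ satisfying: no $r_k$ is a proper power, every piece in the collection $\{r_k\}$ has length at most $N_k^{1/2}/(\log_2 N_k)^2$, and the full presentation $\fpres{S}{r_1,r_2,\dots}$ is both $C'(\tfrac{1}{14})$ and uniformly power-free. Such words exist by a generic/random construction, or by explicit combinatorial constructions using Sturmian-type labels. For each infinite $A\subseteq\N$ set $R_A=\{r_k:k\in A\}$ and $G_A=F(S)/\ngen{R_A}$. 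Restricting to a sub-collection of relators can only shrink the set of pieces, so $\mathcal{P}_A$ inherits $C'(\tfrac{1}{14})$ and remains uniformly power-free. The hypotheses of Theorem~\ref{thm:notMA} are satisfied on $\mathcal{P}_A$ since $\abs{r_k}_S^{1/2}/(p(r_k)\log_2\abs{r_k}_S)\geq\log_2 N_k\to\infty$, so $G_A$ admits no largest action on a hyperbolic space. Meanwhile, Theorem~\ref{thm:hypandacyl} together with the Coulon--Gruber acylindricity result recalled in the introduction shows that $G_A$ acts acylindrically on the Gruber--Sisto coned-off graph $\Cay(G_A,L)$ with every infinite-order element loxodromic, so this is a universal acylindrical action.

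The main obstacle is extracting $2^{\aleph_0}$ distinct quasi-isometry classes from $\{G_A\}$. I would follow the template used in \cite{AH18}: with relator lengths chosen sparsely enough, the multiset $\{N_k:k\in A\}$ is recoverable, up to a finite symmetric difference, from the quasi-isometry type of $\Cay(G_A,S)$. The mechanism is that in a $C'(\tfrac{1}{6})$ presentation, essential embedded cycles in the Cayley graph correspond bijectively (up to translation) to cyclic conjugates of the defining relators, and the super-exponential separation $N_{k+1}\geq N_k^2$ ensures that individual relator lengths can be detected by geometric quasi-isometry invariants such as the filling-length or isodiametric spectrum. It follows that if $G_A$ is quasi-isometric to $G_B$ then $\abs{A\triangle B}<\infty$, so each quasi-isometry class meets $\{G_A\}$ in countably many groups; since there are $2^{\aleph_0}$ infinite subsets of $\N$ modulo finite difference, at least $2^{\aleph_0}$ quasi-isometry classes appear. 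The quasi-isometry invariance of the cycle-length spectrum is the delicate technical input; the remainder is a direct assembly of results established earlier in the paper.
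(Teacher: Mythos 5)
Your overall architecture matches the paper's: build one family of relators satisfying the hypotheses of Theorem \ref{thm:notMA} together with uniform power-freeness, pass to infinite subcollections indexed by subsets of $\N$, and separate quasi-isometry classes with a Bowditch-type loop-length spectrum. That last step is exactly what the paper does (it invokes Bowditch's taut loop spectrum as a black box), so your acknowledged reliance on such an invariant is legitimate rather than a gap, and your counting argument from $\mathcal P(\omega)/Fin$ is the same as the paper's.

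The genuine gap is in the construction of the relators, which is the real content of the proof. You assert that words $r_k$ over $\set{a,b}$ with $\abs{r_k}_S=N_k$ and pieces of length at most $N_k^{1/2}/(\log_2 N_k)^2$, forming a uniformly power-free $C'(\frac{1}{14})$ presentation, exist ``by a generic/random construction, or by \ldots Sturmian-type labels.'' Neither suggestion works as stated. A random reduced word of length $N$ in $F(a,b)$ contains $a^k$ for $k\sim\log_3 N$ with high probability, so for any fixed exponent $n$ the long relators contain $n$-th powers and the presentation is \emph{not} uniformly power-free --- and uniform power-freeness is precisely what Theorem \ref{thm:acyl} and the Coulon--Gruber result need for acylindricity, hence for the universal acylindrical action you claim. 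Sturmian-type words have factor complexity $n+1$, so distinct long relators built from them share long common factors, which destroys the small cancellation and piece-length conditions unless something extra is done. The paper resolves both issues simultaneously: it sets $r'_n=\Pi_{i=1}^{2^n}cw_n^i$ where the $w_n^i$ are $2^n$ pairwise distinct cube-free binary words of length $9n$. Cube-freeness of the blocks forces cube-freeness of $r'_n$ (a cube would force two blocks to coincide), giving uniform power-freeness with exponent $3$, while distinctness of the blocks confines every piece to a single $w_n^icw_n^{i+1}$, giving $p(r'_n)\leq 18n+1$ against $\abs{r'_n}\geq 9n2^n$ and hence both $C'(\frac{1}{24})$ and the required bound $\abs{r'_n}_S^{1/2}/(p(r'_n)\log_2\abs{r'_n}_S)\geq 1$. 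You need some such mechanism for your construction step to go through. A minor related point: Theorem \ref{thm:hypandacyl} is stated for $C'(\frac{1}{24})$, so if you only arrange $C'(\frac{1}{14})$ the acylindricity and universality claims must rest entirely on the Coulon--Gruber result (valid for $C'(\frac16)$), not on Theorem \ref{thm:hypandacyl}.
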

\begin{proof}
By Theorems \ref{thm:acyl} and \ref{thm:notMA} it suffices to define a uniformly power free $C'(\frac{1}{24})$ presentation $\mathcal P=\fpres{a,b,c}{r_1,\ldots}$ such that $\abs{r_n}_S\to\infty$ and $\frac{\abs{r_n}_S^\frac12}{p(r_n)\log_2\abs{r_n}_S} \geq 1$.

The number of cube-free binary words of length $n$ is at least $2^{\frac{n}{9}+1}$ by \cite[Theorem 7]{Bran-cubefree}.
For each $n\geq 6$ enumerate $2^n$ different cube-free words of length $9n$ in $\set{a,b}^{9n}$ as $w_n^1, \ldots w_n^{2^n}$ and define
\[
 r'_n = \Pi_{i=1}^{2^n} c w_n^i \in F(a,b,c).
\]
We claim that there is some $n_0$ such that $\mathcal P=\fpres{a,b,c}{r'_{n_0},r'_{n_0+1}\ldots}$ suffices. First, each $r'_n$ is cube-free. If $w^3$ is a subword of $r'_n$, then $w$ must contain a $c$ and have length in $F(a,b,c)$ which is a multiple of $9n+1$, but this implies that two of the $w_n^i$ are equal, which is a contradiction.

Secondly, the length of the word $r'_n$ is at least $9n2^n$, and any piece in $r'_n$ is a subword of some $w_n^icw_n^{i+1}$ (with $i$ considered modulo $2^n$), and so has length at most $18n+1$. Since $n\geq 6$, $9n2^n\geq 576n > 384n+24=24(18n+1)$.  Thus $\fpres{a,b,c}{r'_6,r_7',\ldots}$ satisfies $C'(\frac{1}{24})$. Moreover, there exists some $n_0\geq 6$ such that for all $n\geq n_0$,
\[
\frac{\abs{r'_n}_S^\frac12}{p(r'_n)\log_2\abs{r'_n}_S} \geq \frac{(9n)^\frac12 2^{\frac{n}{2}}}{(18n+1)(n+\log_2(9n+1))} \geq 1.
\]
Now take $r_m=r'_{n_0+m}$ for all $m\geq 1$. 
Taking various sparse infinite subcollections $\mathcal R$ of $\setcon{r_n}{n\geq n_0}$ and using Bowditch's taut loop spectrum \cite{BowditchUncQI} as an invariant we obtain uncountably many quasi--isometry classes of groups satisfying the hypotheses of the theorem.
\end{proof}

\bibliographystyle{alpha}
\bibliography{Largest_SC}

\end{document}